\documentclass{amsart}
\usepackage{amssymb,amsmath,amsthm}
\usepackage[inline,shortlabels]{enumitem}
\usepackage{eucal}
\usepackage{mathabx}
\usepackage{xcolor}
\usepackage{hyperref}

\hypersetup{
    colorlinks,
    linkcolor={red!75!black},
    citecolor={green!65!black},
    urlcolor={magenta!75!black}
}

\usepackage[backend=biber,sorting=nyt,style=alphabetic,minalphanames=3,maxbibnames=99]{biblatex}

\DeclareNameAlias{author}{family-given}
\let\oldsubs=\tocsubsection
\renewcommand{\tocsubsection}[2]{\hspace{2em}\oldsubs{#1}{#2}}

\theoremstyle{definition}
\newtheorem{Theorem}{Theorem}[section]
\newtheorem{Proposition}[Theorem]{Proposition}
\newtheorem{Lemma}[Theorem]{Lemma}
\newtheorem{Corollary}[Theorem]{Corollary}
 
\theoremstyle{definition}
\newtheorem{Definition}[Theorem]{Definition}
\newtheorem{Fact}[Theorem]{Fact}
\newtheorem{Conjecture}{Conjecture}
\newtheorem{Example}[Theorem]{Example}
\newtheorem{Remark}[Theorem]{Remark}
\newtheorem{Question}[Theorem]{Question}
\newtheorem{Notation}[Theorem]{Notation}
\newtheorem{Theorem1}{Theorem}

\def\strok{\restriction}

\def\Ind#1#2{#1\setbox0=\hbox{$#1x$}\kern\wd0\hbox to 0pt{\hss$#1\mid$\hss}
\lower.9\ht0\hbox to 0pt{\hss$#1\smile$\hss}\kern\wd0}

\def\ind{\mathop{\mathpalette\Ind{}}}

\def\notind#1#2{#1\setbox0=\hbox{$#1x$}\kern\wd0
\hbox to 0pt{\mathchardef\nn=12854\hss$#1\nn$\kern1.4\wd0\hss}
\hbox to 0pt{\hss$#1\mid$\hss}\lower.9\ht0 \hbox to 0pt{\hss$#1\smile$\hss}\kern\wd0}

\def\dep{\mathop{\mathpalette\notind{}}}

\newcommand{\wor}{\perp\hspace*{-0.4em}^{\mathit w}}
\newcommand{\nwor}{\not\perp\hspace*{-0.4em}^{\mathit w}}

\newcommand{\Mon}{\mathfrak C}
\newcommand{\Moneq}{\mathfrak C^{eq}}
\newcommand{\tp}{\mathrm{tp}}

\newcommand{\dcleq}{\mathrm{dcl^{eq}}}

\newcommand{\conv}{\mathrm{conv}}

\newcommand{\Aut}{\mathrm{Aut}}
\def\pp{\mathbf{p}}

\def\lif{\rightarrow}

\DeclareMathOperator{\supin}{in}
\DeclareMathOperator{\fin}{fin}
\DeclareMathOperator{\lgh}{lg}

\usepackage{etoolbox}
\usepackage{orcidlink}
\usepackage[margin=30mm]{geometry}

\title{Countable models of weakly quasi-o-minimal theories II}  
\author[S.\ Moconja]{Slavko Moconja\ \orcidlink{0000-0003-4095-8830}}
\address[S.\ Moconja]{University of Belgrade, Faculty of mathematics, Belgrade, Serbia}
\email{slavko@matf.bg.ac.rs}

\author[P.\ Tanovi\'c]{Predrag Tanovi\'c\ \orcidlink{0000-0003-0307-7508}}
\address[P.\ Tanovi\'c]{Mathematical Institute of the Serbian Academy of Sciences and Arts, Belgrade, Serbia}
\email{tane@mi.sanu.ac.rs}

\begin{document} 

\begin{abstract} 
 We confirm Martin's conjecture for a broad subclass of weakly quasi-o-minimal theories. 
\end{abstract}

\maketitle

\section{Introduction and preliminaries}

This paper continues \cite{MT} and \cite{MTwqom1}, where we started developing the classification theory for countable models of weakly quasi-o-minimal theories.
In \cite{MT}, the case of binary such theories was resolved, and in \cite{MTwqom1} we started the "Non-structure" part in the general case, that is, searching for conditions that imply the existence of the maximum number of countable models. There, we found one such condition: the existence of a definable family of semiintervals of $(\Mon,<)$ that contains a shift. In this paper, we study relatively definable families of semiintervals defined on the locus of a weakly o-minimal type and, motivated by the work of Baizhanov and Kulpeshov \cite{Baizhanov2006}, introduce the notion of simplicity of these semiintervals. The main result is the following theorem.

\begin{Theorem1}\label{Theorem main}
Let $T$ be a countable, weakly quasi-o-minimal theory. Suppose that at least one of the following two conditions holds.
\begin{enumerate}[\hspace{10pt}(1)]
    \item $T$ does not have simple semiintervals;
    \item There exists a non-convex type $p\in S_1(T)$. 
\end{enumerate}
Then $I(T,\aleph_0)=2^{\aleph_0}$.
\end{Theorem1}

The simplicity of semiintervals is equivalent to the nonexistence of shifts in small weakly quasi-o-minimal theories. However, it is a strong technical property that guaranties the exceptional simplicity of the relatively definable binary subsets of the locus of a complete 1-type. Together with the convexity of types, it will suffice to develop the "Structure" part of the classification theory for almost $\aleph_0$-categorical weakly quasi-o-minimal theories, for which we verify Martin's conjecture, a strengthening of Vaught's conjecture. 

\begin{Theorem1}\label{Theorem_AAC}
Martin's conjecture holds for almost $\aleph_0$-categorical weakly quasi-o-minimal theories.    
\end{Theorem1}

In Section \ref{Section_(R)}, we prove that every quasi-o-minimal theory satisfies the following condition:
\begin{enumerate}
    \item[(R)] For all $A$ and all $p\in S_1(A)$, every relatively $A$-definable equivalence relation on the locus of $p(\Mon)$ is relatively $0$-definable.
\end{enumerate}
We prove that (R) is also satisfied if a weakly quasi-o-minimal theory has few countable models and, in addition, is rosy or has a finite convexity rank. Then we prove that any weakly quasi-o-minimal theory that satisfies (R) and has simple semiintervals must be binary. Since, by \cite[Lemma 8.1]{MT}, any binary (weakly quasi-o-minimal) theory with few countable models is almost $\aleph_0$-categorical, as a corollary of Theorem \ref{Theorem_AAC} we obtain the following theorem. 

\begin{Theorem1}\label{Theorem_bin_ros_quasio} 
Martin's conjecture holds for weakly quasi-o-minimal theories with an additional feature: binary, rosy, quasi-o-minimal, or has finite convexity rank.
\end{Theorem1}

An example of a weakly o-minimal $\aleph_0$-categorical theory that is not binary (even not $n$-ary for all $n\in\mathbb N$) was found by Herwig et al.\ in \cite{Herwig}. Therefore, not all almost $\aleph_0$-categorical theories with few countable models are binary, so
Theorem \ref{Theorem_AAC} is stronger than Theorem \ref{Theorem_bin_ros_quasio}. However, there are weakly o-minimal theories with 3 countable models that are not almost $\aleph_0$-categorical, see Examples 7.9 and 7.10 in \cite{MTwqom1}, so Vaught's conjecture for weakly (quasi-) o-minimal theories remains widely open; we speculate on the further work in Section \ref{Section 7}.

\smallskip 
The paper is organized as follows. 
In Section \ref{Section 2}, we introduce and study $\mathbf p$-semiintervals, where $\mathbf p=(p,<)$ is a weakly o-minimal pair over $A$. 
In Proposition \ref{Prop_simple_semiintervals_basic} we prove that $\mathbf p$ has simple semiintervals if and only if every relatively $A$-definable subset of $p(\Mon)^2$ is relatively defined by a Boolean combination of formulae $x\leqslant y, y\leqslant x$, and $\{E(x,y)\mid E\in \mathcal E_p\}$,  where $\mathcal E_p$ is the set of all relatively $A$-definable equivalence relations on $p(\Mon)$. Then we deduce that the simplicity of semiintervals is a property of the type $p\in S(A)$: if some weakly o-minimal pair $(p,<)$ has simple semiintervals, then all such pairs do. We also show that $a\ind b\,({\dcleq(aA)\cap \dcleq(bA)})$ holds whenever $\tp(a/A)$ and $\tp(b/A)$ are weakly o-minimal, and at least one of them has simple semiintervals. This is used to explain that, assuming the simplicity of semiintervals, the binary forking dependence is measured by elements of a certain meet-tree.

The main result of Section \ref{Section 3} is Theorem \ref{Thm_noshifts_iff_simplesemiintervals}, in which we prove that a small weakly quasi-o-minimal theory $T$ has simple semiintervals if and only if there is no definable family of semiintervals of $(\Mon,<)$ that contains a shift. This together with \cite[Theorem 1]{MTwqom1} immediately proves one part of Theorem \ref{Theorem main}: if $T$ does not have simple semiintervals, then $I(T,\aleph_0)=2^{\aleph_0}$.

In Section \ref{Section_(R)}, we study condition (R). Using the techniques developed in Section 2, we prove that for all weakly quasi-o-minimal theories with simple semiintervals, condition (R) is equivalent to the binarity of the theory. This together with \cite[Theorem 1]{MT} suffices to confirm Vaught's conjecture for the classes of theories mentioned in Theorem \ref{Theorem_bin_ros_quasio}. To confirm Martin's conjecture for them, we need the results of
Section \ref{Section 5}. There, using the simplicity of semiintervals, we prove the second part of Theorem \ref{Theorem main}: all types $p\in S_1(T)$ in a weakly quasi-o-minimal theory with few countable models are convex. Then, in Proposition \ref{Proposition mixed kind is trivial} we prove a general result which relates the definabilty of a convex weakly o-minimal type to its triviality (in any $T$). In particular, Proposition \ref{Proposition mixed kind is trivial} implies that every definable convex weakly o-minimal 1-type over a model (in any $T$) is trivial; this generalizes \cite[Proposition 5.18]{MTwqom1}, which covers the case of complete 1-types over an o-minimal model $M$. These results are used essentially in Section \ref{Section Martin}, where we prove Theorems \ref{Theorem_AAC} and \ref{Theorem_bin_ros_quasio}. 

Section \ref{Section 7} is devoted to the questions that can be relevant for further work.

\subsection{Preliminaries} 

Throughout the paper $T$ is an arbitrary, possibly multi-sorted, complete first-order theory, and $\Mon$ is its monster model. 
We will rely on the notation and results of \cite{MTwqom1}, occasionally recalling the definitions and properties of the concepts introduced in that paper.

When working with a definable, dense linear order $(D,<)$ it is convenient to use $T^\mathrm{eq}$-sorts of the Dedekind completion, $\overline D$ (see \cite{Mac}). But in this paper, we deal with semiintervals in arbitrary linear orders, so the following complete extension of $(D,<)$ turns out to be more convenient:
\begin{itemize}
    \item $\overline D^{\,in}$ denotes the set of all initial parts of $(D,<)$.
\end{itemize}
Note that $(\overline D^{\,in}, \subset)$ is a complete linear order in which $(D,<)$, and hence also $(\overline D,<)$, can be embedded. In general,  $\overline D^{\,in}$ and  $\overline D$ are not isomorphic orders; for example,  when $D$ is dense, the intervals $(-\infty, d)$ and $(-\infty, d]$ are distinct members of $\overline D^{\,in}$. Usually, we will identify an element $d\in D$ with $(-\infty, d]\in \overline D^{\,in}$; in that case, since $I<d$ and $I\subset (-\infty, d]$ are equivalent for all $I\in \overline D^{\,in}$ and all $d\in D$, $x\longmapsto (-\infty, x]$ defines an embedding of $(D,<)$ in $(\overline D^{\,in},\subset)$. Recall that for $X\subseteq D$, $\supin_D(X)=\{d\in D\mid (\exists x\in X)\,D\leqslant x\}$ is the smallest initial part that contains $X$; if the meaning of $D$ is clear from the context, then we write $\supin (X)$. Also, if $P\subseteq D$, then $\supin_P(X)$ is the corresponding initial part of $(P,<)$. Similarly, $\fin(X)$ is the smallest final part that contains $X$. Two convex sets, $X_1$ and $X_2$, are equal if and only if $\supin(X_1)=\supin(X_2)$ and $\fin(X_1)=\fin(X_2)$.  

The sorts in $\overline D^{\,in}$ are defined routinely. For each formula $\phi(x,y)$ such that for all $b\in \Mon^{|y|}$ the set $\phi(\Mon,b)$ is an initial part of $D$, let $E_{\phi}$ be the equivalence relation on $\Mon^{|x|+|y|}$ defined by \ $(x,y)\sim (x',y')\Leftrightarrow \phi(\Mon,y)=\phi(\Mon,y')$. Elements of the sort $\Mon^{|x|+|y|}/E_{\phi}$ are $E_{\phi}$-classes and for each class $e$ the set $e_\phi=\phi(\Mon,y)$ is constant for all $(x,y)\in e$; note that $e_\phi$ is an initial part of $(D,<)$. Usually, we will not distinguish between $e$ and $e_\phi$; this is because $e$ and $e_{\phi}$ are interdefinable, that is, $e\in \Moneq$ is a canonical parameter of $e_{\phi}$. The uniformly definable family
$\{e_{\phi}\mid e\in\Mon^{|x|+|y|}/E_{\phi}\}$ is a sort in $\Moneq$; sorts of this kind will be called $\overline D^{\,in}$-sorts and will be identified with $\{\phi(\Mon, y)\mid y\in \Mon^{|y|}\}$. By a (relatively) definable function into $\overline D^{\,in}$ we always mean a function into one of these sorts;
if $f:C\to \overline D^{\,in}$ is such, then $f(x)<d$, $f(x)\subset (-\infty, d]$, and $d\notin f(x)$ have the same meaning.

\smallskip 

Recall the following convention for weakly o-minimal pairs $(p,<)$ over $A$: We will always assume that the order $<$ is relatively defined by the formula $x<y$, and when we say that the order $(D,<)$ is a definable extension of $(p(\Mon),<)$, we mean that $D=\theta(\Mon)$ for some $\theta(x)\in p$ such that $x<y$ defines a linear order on $D$. A similar convention applies when $E$ is a relatively definable equivalence relation on $p(\Mon)$.

\section{$\mathbf p$-semiintervals}\label{Section 2}

Let $(D,<)$ be a linear order. 
Recall from \cite[Section 6]{MTwqom1} that $S\subseteq D$ is a semiinterval of $(D,<)$ if $S$ is convex and $a=\min S$ exists; in that case, we denote it by $S_a$ to emphasize $a=\min S_a$. Also, recall that a family $\mathcal S=(S_x\mid x\in D)$ of semiintervals of $D$ is $A$-definable if $(D,<)$ is $A$-definable and there exists some $L_A$-formula $S(x,y)$ such that $S_a=S(a,\Mon)$ for all $a\in D$.
The family $\mathcal S$ is {\it monotone} if $\supin (S_x)\subseteq\supin (S_y)$ for all elements $x\leqslant y$ of $D$. In this section, we study relatively definable semiintervals mainly of the loci of weakly o-minimal types.

\begin{Definition}\label{Definition_P_semiinterval} 
Let $\mathbf p=(p,<)$ where $p\in S(A)$ and $<$ is a relatively definable order on $p(\Mon)$.
\begin{enumerate}[\hspace{10pt} (a)]
\item The set $S\subset p(\Mon)$ is a {\em $\mathbf p$-semiinterval} if it is convex in $(p(\Mon),<)$, $a=\min S$ exists, and $S$ is relatively $Aa$-definable within $p(\Mon)$; in that case, we write $S^{\mathbf p}_a$ instead of $S$; $S^{\mathbf p}_a$ is a {\em bounded $\mathbf p$-semiinterval} if $S^{\mathbf p}_a<b$ for some $b\in p(\Mon)$.
\item A {\em family of $\mathbf p$-semiintervals} is a collection $(S^{\mathbf p}_x\mid x\in p(\Mon))$ such that $S^{\mathbf p}_x=S(x,p(\Mon))$ holds for some $L_A$-formula $S(x,y)$ and all $x\in p(\Mon)$. 
\item The family of $\mathbf p$-semiintervals $(S^{\mathbf p}_x\mid x\in p(\Mon))$ is {\em monotone} if $\supin_{p(\Mon)} (S^{\mathbf p}_x)\subseteq \supin_{p(\Mon)} (S^{\mathbf p}_y)$ for all $x<y$ realizing $p$. 
\end{enumerate}
\end{Definition}

\begin{Remark}
If $p$ is a convex weakly o-minimal type, then every $\mathbf p$-semiinterval is definable.     
\end{Remark}

Let $\mathcal S=(S(x,D)\mid x\in D)$ be an $A$-definable family of semiintervals of $(D,<)$. 
\begin{itemize}
    \item For $D'\subseteq D$, denote $\mathcal S^{D'}=(S(x,D')\mid x\in D')$; we say that $\mathcal S^{D'}$ is the family of semiintervals of $D'$ that is {\em induced by} $\mathcal S$.
    \item If $p\in S_x(A)$ and if $p(x)$ implies $x\in D$, then by $\mathcal S^{\mathbf p}$ we denote the induced family of $\mathbf p$-semiintervals: $\mathcal S^{\mathbf p}=(S(x,p(\Mon))\mid x\in p(\Mon))$, where $\mathbf p=(p,<)$.
\end{itemize}

\begin{Lemma}\label{Lemma_new_defextof_seminiterval}
Suppose that $\mathcal S_0=(S(x,p(\Mon))\mid x\in p(\Mon))$ is a family of $\mathbf p$-semiintervals, where the formula $S(x,y)$ and the pair $\mathbf p=(p,<)$ are defined over $A$. Then there exists an $A$-definable extension, $(D,<)$, of $(p(\Mon),<)$ such that $\mathcal S= (S(x,D)\mid x\in D)$ is a family of semiintervals of $(D,<)$; in particular $\mathcal S$ induces $\mathcal S_0$. Moreover, if $\mathcal S_0$ is monotone, then the order $(D,<)$ can be found such that the family $\mathcal S$ is also monotone. 
\end{Lemma}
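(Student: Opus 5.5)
This is a compactness argument. We look for a single formula $\theta(x)\in p$ that defines the domain $D$, and we choose it so that every property of $\mathcal S_0$ we need is expressed first-order and holds on all of $\theta(\Mon)$.

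We write down the properties of $\mathcal S_0$ as formulas over $A$, in the variables $x,y,z$ ranging over $p(\Mon)$. Linearity of $<$ is covered by the paper's convention, which gives a formula $\theta_0(x)\in p$ such that $x<y$ linearly orders $\theta_0(\Mon)$. For each $a\models p$, the set $S(a,p(\Mon))$ contains $a$ as its minimum and is convex. So the following partial type is inconsistent:
\[ p(x)\cup p(y)\cup p(z)\cup\{\neg\psi(x,y,z)\}, \]
where $\psi(x,y,z)$ is the conjunction of
\[ S(x,x), \qquad S(x,y)\lif x\leqslant y, \qquad \bigl(S(x,y)\wedge S(x,z)\wedge y<w<z\bigr)\lif S(x,w). \]
The convexity clause involves a fourth variable $w$, so it should be stated with $w$ ranging over $p(\Mon)$ as well. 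The precise requirement is this: for $x,y,z,w$ all realizing $p$, the conjunction $\psi$ holds. By compactness there is $\theta_1\in p$ such that $\psi(x,y,z,w)$ holds for all $x,y,z,w\in\theta_1(\Mon)$. The argument is to add $p(w)$ to the partial type above, get inconsistency, and then extract a single formula of $p$ that bounds all four variables, using that $p$ is closed under conjunction. Setting $\theta=\theta_0\wedge\theta_1$ and $D=\theta(\Mon)$, each $S(a,D)$ for $a\in D$ is then convex in $(D,<)$ with minimum $a$. It is a subset of $D$ by definition. So $\mathcal S=(S(x,D)\mid x\in D)$ is an $A$-definable family of semiintervals of $(D,<)$. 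Since $p(\Mon)\subseteq D$, restricting $\mathcal S$ to $p(\Mon)$ gives back $\mathcal S_0$.

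For the monotone case we add one more clause of the same kind. Monotonicity of $\mathcal S_0$ means that for $x<y$ realizing $p$ and $z\models p$ with $S(x,z)$, there is $z'\models p$ with $S(y,z')$ and $z\leqslant z'$. The existential quantifier ranges over $p(\Mon)$, and this is the main obstacle, since the clause is not directly first-order. We handle it by using the witness $z'=z$ or $z'=y$, which avoids the existential. Suppose $S(x,z)$ holds. If $z<y$, then $z\leqslant\min S_y$, so the witness $y$ works. If $z\geqslant y$, then by convexity we get $y\in S_x$ (because $x\leqslant y\leqslant z$), but we do not get $z\in S_y$. So we use the monotonicity hypothesis directly: we pick a witness $z'$ realizing $p$ with $S(y,z')$ and $z\leqslant z'$. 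This gives the inconsistency of
\[ p(x)\cup p(y)\cup p(z)\cup\{x<y,\ S(x,z)\}\cup\{\neg\exists z'\,(\chi(z')\wedge S(y,z')\wedge z\leqslant z')\mid \chi\in p\}, \]
where $p$ is closed under conjunctions. This inconsistency yields $\chi_0\in p$, together with $\theta_2\in p$ bounding $x,y,z$, such that every triple in $\theta_2(\Mon)$ has a witness $z'$ in $\chi_0(\Mon)$. The witness need not lie in $D$, however. To fix this we shrink $D$ to $\theta\wedge\theta_2\wedge\chi_0$ and check the property again, which threatens an infinite descent.

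The cleanest way out is to replace the family rather than the domain. On $D$ we define
\[ S'(x,y):\ \ x\leqslant y\wedge\exists x'\,\bigl(x'\leqslant x\wedge S(x',y)\bigr). \]
Each $S'(a,D)$ is the part above $a$ of a union of intervals $S_{x'}$ with $x'\leqslant a$, and each such interval contains $x'$. So the union is convex, and $S'(a,D)$ is convex with minimum $a$. The family is monotone by construction. To see that it agrees with $S$ on $p(\Mon)$, take $x'\leqslant a$ with $x'\in D\setminus p(\Mon)$. Such an $x'$ may contribute points of $p(\Mon)$ lying beyond $\supin S_a$, and this we control by compactness exactly as above. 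If $y\models p$ and $y\in S'(a)$ but $y\notin S(a)$, then $y>S_a$, and monotonicity of $\mathcal S_0$ together with compactness lets us shrink $\theta$ so that no $x'\in D$ with $x'\leqslant a$ reaches past $\supin S_a$ within $p(\Mon)$. The statement that no such $x'$ exists is a type condition on $(a,y)$ with $x'$ ranging over $D$, so shrinking $D$ only helps and there is no descent. We expect this last compactness step to be the delicate part of the proof.
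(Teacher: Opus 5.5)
Your first half (the semiinterval clauses plus compactness) is correct and matches the paper's argument. The monotone half contains a genuine gap. It rests on an obstacle that does not exist, and it ends up proving a different statement.

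The obstacle is not real. Suppose $x<y$ and $z$ realize $p$, $S(x,z)$ holds and $y\leqslant z$. Monotonicity of $\mathcal S_0$ gives $z'\in S^{\mathbf p}_y$ with $z\leqslant z'$. Now $y\leqslant z\leqslant z'$, both $y$ and $z'$ lie in $S^{\mathbf p}_y$, $z\models p$, and $S^{\mathbf p}_y$ is convex in $p(\Mon)$. So you do get $z\in S^{\mathbf p}_y$. Hence the existential-free clause
\[(\forall x,y,z\models p)\bigl(x<y\wedge S(x,z)\lif z<y\vee S(y,z)\bigr)\]
holds, and it is $\tp$-universal. One more compactness step shrinks $\theta$ so that the clause holds on $\theta(\Mon)$. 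On a domain where $S$ already defines semiintervals, this clause says precisely that $(S(x,D)\mid x\in D)$ is monotone. That is the paper's whole proof of the ``moreover'' part, with no witnesses and no descent.

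Your workaround via $S'$ does not establish the lemma as stated. The ``moreover'' part asserts that $D$ can be chosen so that the family $\mathcal S=(S(x,D)\mid x\in D)$, given by the \emph{original} formula $S$, is monotone. You only obtain a monotone family defined by a different formula that induces $\mathcal S_0$.

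The compactness step you leave as ``delicate'' can also only be carried out via one fact: the inconsistency of
\[p(x')\cup p(a)\cup p(y)\cup\{x'\leqslant a\leqslant y,\ S(x',y),\ \neg S(a,y)\}.\]
That is exactly the fact you claimed was unavailable (``we do not get $z\in S_y$''), and it already gives the direct argument above.
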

\begin{proof} 
The fact that $S(x,p(\Mon))$, for $x\models p$, is a convex subset of $p(\Mon)$ whose minimum is $x$, is expressed by the conjunction of the following $\tp$-universal sentences\footnote{For the definition of tp-univarsal properties and sentences see \cite[Subsection 2.1]{MTwqom1}.}:
\begin{enumerate}[\hspace{10pt}(1)]
    \item \ $(\forall x\models p)\ S(x,x)$,
    \item \ $(\forall x,y\models p)(S(x,y)\lif x\leqslant y)$, and
    \item \ $(\forall x,y,z\models p)(x\leqslant z<y\land S(x,y)\lif S(x,z))$.
\end{enumerate}
Also, ``$x<y$ relatively defines a linear order on $p(\Mon)$" is a $\tp$-universal property, so by compactness applied simultaneously to both this and to (1)--(3), we find $\theta(x)\in p(x)$ such that $(\theta(\Mon),<)$ is an $A$-definable extension of $(p(\Mon),<)$, and $\mathcal S=(S(x,\theta(\Mon))\mid x\in\theta(\Mon))$ is an $A$-definable family of semiintervals of $(\theta(\Mon),<)$. Clearly, $\mathcal S^\pp=\mathcal S_0$.
For the ``moreover" part, note that the monotonicity of $\mathcal S$ is expressed by:
\begin{enumerate}[\hspace{10pt}(1)]\setcounter{enumi}{3}
    \item \ $(\forall x,y,z\models p)(x<y\land S(x,z)\lif z<y\lor S(y,z))$.
\end{enumerate}
By compactness, the formula $\theta(x)\in p(x)$ can be chosen such that the family $\mathcal S$ is also monotone.
\end{proof}
 
\noindent {\bf Convention.} \ From now on, when we denote some family of $\mathbf p$-semiintervals by $\mathcal S^{\mathbf p}$, where $\mathbf p=(p,<)$, we implicitly assume that $\mathcal S$ is an $A$-definable family of some (unspecified) order $(D,<)$, which is an $A$-definable extension of $(p(\Mon),<)$. Unless otherwise stated, $S(x,y)$ denotes a formula that defines $\mathcal S$. 
 
\begin{Remark}\label{Remark definable extension of a definable extension}
Let $\mathcal S=(S_x\mid x\in D)$ be an $A$-definable family of semiintervals of $(D,<)$ and let $p\in S_x(A)$ be consistent with $x\in D$. Denote $\mathbf p=(p,<)$.
If $D'\subseteq D$ is an $A$-definable set such that $(x\in D')\in p$, then the induced family $\mathcal S^{D'}=(S_x\cap D'\mid x\in D')$ of semiintervals of $(D',<)$ also induces the family $\mathcal S^{\mathbf p}$. Moreover, if $\mathcal S$ is monotone, then so are $\mathcal S^{D'}$ and $\mathcal S^{\mathbf p}$. 
\end{Remark}

Recall that powers of a family of semiintervals $\mathcal S=(S_x\mid x\in D)$
are defined recursively by: $S_x^1=S_x$ and $S^{n+1}_x=\bigcup_{t\in S_x^n}S_t$; $\mathcal S^n=(S_x^n\mid x\in D)$; if $\mathcal S$ is $A$-definable, then so is each $\mathcal S^n$. Also, recall that $S_a$ is an $\mathcal S$-shift, where $a\in D$, if the sequence of powers $(S_a^n\mid n\in\mathbb N)$ strictly $\subset$-increases or, equivalently, if it is not eventually constant. By induction, one easily sees that $S^{n+m}_x=\bigcup_{t\in S_x^n}S_t^m$; we will freely use this fact further on. 

\begin{Lemma}\label{Lemma extending family of semiintervals}
Suppose that $(D',<)$ is an $A$-definable order, $D\subseteq D'$ is $A$-definable, 
and $\mathcal S=(S_x\mid x\in D)$ is an $A$-definable family of semiintervals of $(D,<)$. Then:
\begin{enumerate}[\hspace{10pt}(a)]
\item  There exists an $A$-definable family $\mathcal R=(R_x\mid x\in D')$ of
semiintervals of $(D',<)$ such that $(\mathcal R^n)^D=\mathcal S^n$ for all $n\in\mathbb N$.
\item If $\mathcal S$ is monotone, then $\mathcal R$ as in (a) can also be found to be monotone. 
\end{enumerate} 
Moreover, the following holds in (a)-(b): for all $x\in D'$, $R_x$ is an $\mathcal R$-shift if and only if $S_x$ is an $\mathcal S$-shift.
\end{Lemma}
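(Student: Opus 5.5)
The plan is to define $\mathcal R$ by taking $D'$-convex hulls of the semiintervals $S_x$. For $x\in D$, set
$$R_x=\{y\in D'\mid x\leqslant y\ \land\ (\exists s\in S_x)\, y\leqslant s\},$$
the convex hull of $S_x$ in $(D',<)$. This set is $A$-definable uniformly in $x$, it is a semiinterval of $D'$ with minimum $x$, and $R_x\cap D=S_x$ because $S_x$ is convex in $D$. For $x\in D'\smallsetminus D$ there are two choices.

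\begin{itemize}
\item For (a), take $R_x=\{x\}$.
\item For (b), take
$$R_x=\{x\}\cup\{y\in D'\mid x\leqslant y\ \land\ (\exists d\in D)(\exists s\in S_d)(d\leqslant x\land y\leqslant s)\}.$$
This is again a uniformly $A$-definable semiinterval with minimum $x$.
\end{itemize}

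The key step is the following claim, proved by induction on $n$: for all $x\in D$, $R^n_x=\conv_{D'}(S^n_x)$, the set of $y\in D'$ lying between $x$ and some element of $S^n_x$.

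\begin{itemize}
\item The case $n=1$ is the definition.
\item For the inductive step, write $R^{n+1}_x=\bigcup_{t\in\conv(S^n_x)}R_t$ and split the index set into $t\in S^n_x$ and $t\in\conv(S^n_x)\smallsetminus D$.
\item For $t\in S^n_x$, the set $R_t=\conv(S_t)$ is convex and contains $t\in\conv(S^n_x)$.
\item The sets $R_t$ with $t\notin D$ contain $t$, so they cover $\conv(S^n_x)$.
\item Each set in the union is convex and meets the convex set $\conv(S^n_x)$. Hence the union is convex and contains $\bigcup_{t\in S^n_x}S_t=S^{n+1}_x$, which gives $\conv(S^{n+1}_x)\subseteq R^{n+1}_x$.
\item For the reverse inclusion in case (a), $R_t=\{t\}$ for $t\notin D$ is trivially inside $\conv(S^n_x)\subseteq\conv(S^{n+1}_x)$.
\item In case (b) I expect the main (though small) obstacle. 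Suppose $y\in R_t$ with $t\notin D$, witnessed by $d\in D$ and $s\in S_d$ with $d\leqslant t\leqslant y\leqslant s$. Since $t\in\conv(S^n_x)$, pick $s'\in S^n_x$ with $t\leqslant s'$. Then $d\leqslant s'$ are both in $D$. Monotonicity of $\mathcal S$ gives $\supin(S_d)\subseteq\supin(S_{s'})$, so $y\leqslant s''$ for some $s''\in S_{s'}\subseteq S^{n+1}_x$. Hence $y\in\conv(S^{n+1}_x)$.
\end{itemize}

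Tracing the claim on $D$ gives $(\mathcal R^n)^D=\mathcal S^n$, since each $S^n_x$ is convex in $D$. For $x\notin D$ in case (a), all powers are simply $\{x\}$. For the ``moreover'' part (for $x\in D$, where $S_x$ is defined), the map $C\mapsto\conv_{D'}(C)$ is injective on convex subsets of $D$ containing $x$, because $C$ is recovered as $\conv_{D'}(C)\cap D$. So $(R^n_x)_n$ is eventually constant if and only if $(S^n_x)_n$ is.

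Finally, monotonicity of $\mathcal R$ in (b) is checked by cases on whether $x<y$ lie in $D$. The defining clause for points outside $D$ is exactly $\supin$ of the union of all $S_d$ with $d\in D$ and $d\leqslant x$ (together with $x$ itself). So in every case $\supin(R_x)\subseteq\supin(R_y)$ follows from the monotonicity of $\mathcal S$ and from $x<y$.
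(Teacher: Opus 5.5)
Your proof is correct, and it differs from the paper's mainly in how part (b) is verified. Part (a) is the paper's argument: the same family ($R_x=\conv_{D'}(S_x)$ on $D$, singletons off $D$) and the same inductive identity $R^n_x=\conv_{D'}(S^n_x)$. The one difference in (a) is that you peel off the last step, $R^{n+1}_x=\bigcup_{t\in R^n_x}R_t$, so only the explicitly defined sets $R_t$ enter. The paper instead peels off the first step, $R^{n+1}_x=\bigcup_{t\in R_x}R^n_t$, and then needs the identity $\bigcup_{t\in S_x}\conv(S^n_t)=\conv(\bigcup_{t\in S_x}S^n_t)$.

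In (b) your family is the paper's; your formula for $x\notin D$ (the hull from $x$ up to some $s\in S_d$ with $d\leqslant x$) makes the convexity of $R_x$ evident. The induction, however, is different:
\begin{itemize}
\item The paper proves only the trace identity $R^n_x\cap D=S^n_x$. It discards the points $t\in\conv(S_x)\smallsetminus D$ using the monotonicity of the powers $\mathcal R^n$ and the cofinality of $S_x$ in $\conv(S_x)$.
\item You prove the full identity $R^n_x=\conv_{D'}(S^n_x)$ for $x\in D$. You handle $t\notin D$ directly by the monotonicity of $\mathcal S$.
\end{itemize}
Your route treats (a) and (b) uniformly and does not need monotonicity of $\mathcal R^n$. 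It also gives the ``moreover'' clause in case (b) at once, via injectivity of $C\mapsto\conv_{D'}(C)$ on convex subsets of $D$. The trace identity alone yields only one direction of that clause: if $S_x$ is a shift, so is $R_x$. The paper's treatment of (b) is shorter, given the monotonicity of powers.

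A minor wording point: in the covering step, it is the whole family $R_t$, $t\in\conv(S^n_x)$, that covers $\conv(S^n_x)$, not only the $R_t$ with $t\notin D$.
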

\begin{proof}
(a) For $x\in D'$ define: $R_x=\{x\}$ if $x\notin D$, and $R_x=\conv(S_x)$ if $x\in D$, where the convex hull is taken in $(D',<)$. We claim that for all $n\in\mathbb N$: $R_x^n=\{x\}$ if $x\notin D$, and $R_x^n=\conv(S_x^n)$ if $x\in D$. The first part of this claim is immediate and the second is proved by induction on $n$. The case $n=1$ is clear: $R_x\cap D=\conv (S_x)\cap D=S_x$. The induction step is justified by: 
$$R_x^{n+1}=\bigcup_{t\in R_x}R^n_t=\bigcup_{t\in R_x\cap D}R_t^n\cup \bigcup_{t\in R_x\smallsetminus D}R_t^n=\bigcup_{t\in S_x}\conv(S_t^n)\cup \bigcup_{t\in R_x\smallsetminus D}\{t\}=$$
$$=\bigcup_{t\in S_x}\conv(S_t^n)\cup (\conv(S_x)\smallsetminus D)=\bigcup_{t\in S_x}\conv(S_t^n)=\conv (\bigcup_{t\in S_x}S_t^n)=\conv \left(S_x^{n+1}\right).$$
Here, $\bigcup_{t\in S_x}\conv(S_t^n)= \conv (\bigcup_{t\in S_x}S_t^n)$ is justified as follows: For all $t\in S_x$, we have $S_t^n\cap  S_x\neq 0$. This implies $\conv(S_t^n)\cup \conv(S_x)=\conv(S_t^n\cup S_x)$, which together with $\conv(S_x)\subseteq \bigcup_{t\in S_x}\conv(S_t^n)$ justifies the first two equalities in
$$\bigcup_{t\in S_x}\conv(S_t^n)=\bigcup_{t\in S_x}(\conv(S_t^n)\cup \conv (S_x))=
\bigcup_{t\in S_x}\conv(S_t^n\cup S_x)=\conv(\bigcup_{t\in S_x}(S_t^n\cup S_x))=\conv(\bigcup_{t\in S_x}S_t^n).$$
The third equality holds because the family $(S_t^n\cup S_x\mid t\in S_x)$ has the finite intersection property, while the fourth is a consequence of $S_x\subseteq \bigcup_{t\in S_x}S_t^n$. This proves the claim. 
Now, $(\mathcal R^n)^D=\mathcal S^n$ easily follows: for $x\in D$ we have $D\cap R_x^n=D\cap\conv(S_x^n)=S_x^n$, completing the proof of (a). 

\smallskip
(b) Suppose that $\mathcal S$ is monotone. For $x\in D'$ define: $$\text{$R_x=\{x\}$ \ if \ $x<D$ \ and \ $R_x=\{x\}\cup\conv(\bigcup\limits_{t\in D,\ t\leqslant x}S_t\cap[x,+\infty)_{D'})$ \ if \ $x\nless D$}.$$
Clearly, $\mathcal R$ is monotone and for all $x\in D$ we have $R_x=\conv(S_x)$ and $R_x\cap D=S_x$. By induction on $n$, we prove $R_x^n\cap D=S^n_x$ for $x\in D$: 
$$R^{n+1}_x\cap D=\bigcup_{t\in R_x}R^n_t\cap D=\bigcup_{t\in \conv(S_x)}(R_t^n\cap D)=\bigcup_{t\in S_x}(R_t^n\cap D)=\bigcup_{t\in S_x} S_t^n=S^{n+1}_x.$$
Here, the third equality holds because $\mathcal R^n$ is monotone and $S_x$ is cofinal in $\conv(S_x)$. 
\end{proof}

\begin{Lemma}\label{Lemma monotone family not Ep determined is a shift}
Suppose that $\mathcal S=(S_x\mid x\in D)$ is a monotone $A$-definable family of semiintervals of $(D,<)$, $p(x)\in S(A)$ is consistent with $x\in D$, $\mathbf p=(p,<)$, $k\geqslant 2$, $a\models p$, and $(S_a^k)^{\mathbf p}\neq S^{\mathbf p}_a$. Then $S_a$ is an $\mathcal S$-shift.
\end{Lemma}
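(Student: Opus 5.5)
The plan is to iterate an automorphism over $A$ to build an increasing sequence of realizations of $p$ that escapes the powers $S_a^j$ at a controlled rate, while lying in the powers $S_a^{kj}$.

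First I extract a witness. Recall that $S_x\subseteq S_x^n$ for all $n\geqslant 1$, by induction from $x\in S_x$. Hence $S_a\subseteq S_a^k$, so the hypothesis $(S_a^k)^{\mathbf p}\neq S_a^{\mathbf p}$ gives some $b\models p$ with $b\in S_a^k\smallsetminus S_a$. Since $S_a^k$ is a union of semiintervals whose minima lie in $S_a^{k-1}$, every element of $S_a^k$ is $\geqslant a$. As $b\notin S_a\ni a$, we get $b>a$. Since $S_a$ is convex with minimum $a$ and $b\notin S_a$, it follows that $S_a<b$.

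Next I iterate. Both $a$ and $b$ realize $p\in S(A)$, so there is $f\in\Aut(\Mon/A)$ with $f(a)=b$. This $f$ preserves $D$, $<$, $S$ and hence every power $\mathcal S^n$. Define $a_0=a$ and $a_{j+1}=f(a_j)$. All $a_j$ realize $p$, and applying $f^j$ to the witness gives, for all $j$,
$$a_{j+1}\in S^k_{a_j}\smallsetminus S_{a_j} \qquad\text{and}\qquad S_{a_j}<a_{j+1}.$$
Two inductive claims then follow.
\begin{enumerate}[\hspace{10pt}(i)]
\item $a_j\in S_a^{kj}$. This follows from $S^{n+m}_x=\bigcup_{t\in S^n_x}S^m_t$: if $a_j\in S_a^{kj}$, then $a_{j+1}\in S^k_{a_j}\subseteq S_a^{kj+k}$.
\item $S_a^j<a_j$ for all $j\geqslant 1$. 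The case $j=1$ is $S_a<a_1$. For the step, let $t\in S_a^j$, so $t<a_j$ by the induction hypothesis. By monotonicity of $\mathcal S$, $\supin(S_t)\subseteq\supin(S_{a_j})$. Since $S_{a_j}<a_{j+1}$, this gives $S_t<a_{j+1}$. Taking the union over $t\in S_a^j$ yields $S_a^{j+1}<a_{j+1}$.
\end{enumerate}
Here $t$ need not realize $p$; this is harmless because monotonicity holds on all of $D$.

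Combining (i) and (ii), for every $j\geqslant 1$ we have $a_j\in S_a^{kj}\smallsetminus S_a^j$. So $S_a^j\subsetneq S_a^{kj}$ for all $j$, and the sequence $(S_a^n\mid n\in\mathbb N)$ is not eventually constant. By the definition recalled before the lemma, $S_a$ is an $\mathcal S$-shift.

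The main point needing care is step (ii). It is where monotonicity enters, and it must be applied to arbitrary $t\in D$ below $a_j$, not only to realizations of $p$. Without monotonicity, some $S_t$ with $t<a_j$ could overshoot $a_{j+1}$ and the escape argument would fail. The rest is routine bookkeeping with $S^{n+m}_x=\bigcup_{t\in S^n_x}S^m_t$.
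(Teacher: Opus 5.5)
Your proof is correct and follows the paper's argument. You build realizations $a_j$ of $p$ with $a_{j+1}\in S^k_{a_j}$ and $S_{a_j}<a_{j+1}$, then use monotonicity of $\mathcal S$ on all of $D$ to get $S^j_a<a_j$ while $a_j$ lies in a higher power. The differences are cosmetic: you iterate a single $A$-automorphism where the paper picks each $a_{n+1}$ recursively, and you get $a_j\in S^{kj}_a$ directly where the paper gets $S^{2j(k-1)}_a$ via intermediate points $b_n$.
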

\begin{proof}Clearly, $(S_a^k)^{\mathbf p}\neq S^{\mathbf p}_a$  implies that $S^{\mathbf p}_a$ is a proper initial part of $(S_a^k)^{\mathbf p}$, so $S^{\mathbf p}_a<x$ holds for some $x\in (S_a^k)^{\mathbf p}$.
Hence, we can recursively define a sequence $(a_n\mid n\in \omega)$ of realizations of $p$ such that $a_0=a$, $a_{n+1}\in (S_{a_n}^k)^{\mathbf p}$ \ and \ $S^{{\mathbf p}}_{a_n}<a_{n+1}$ \ for all $n\in \omega$; this is possible since $a_{n+1}\in (S_a^k)^{\mathbf p}$. Note that $a_{n+1}\models p$ and $S^{{\mathbf p}}_{a_n}<a_{n+1}$ imply $S_{a_n}<a_{n+1}$. Next, for each $n$ choose $b_n\in S_{a_n}$ such that $a_{n+1}\in S_{b_n}^{k-1}$; this is possible as $a_{n+1}\in (S_a^k)^{\mathbf p}$. Then $(a_0,b_0,a_1,b_1,\ldots,a_n)$ is an $\mathcal S^{k-1}$-path, so we conclude $a_n\in S^{2n(k-1)}_{a}$. 
By induction, we prove $S_a^n<a_{n}$ for all $n\geqslant 1$. The case $n=1$ is clear, so assume that $S_a^{n}<a_{n}$ holds for some $n$. Then we have
\[\supin_D (S_a^{n+1})=\supin_D (\bigcup_{t\in S_a^n}S_t)= \bigcup_{t\in S_a^n}\supin_D (S_t)\subseteq \supin_D (S_{a_{n}})<a_{n+1};\] 
Here, the first equality follows by the definition of $S_a^{n+1}$, the second is obvious, and the inclusion is a consequence of the monotonicity of $\mathcal S$: by the induction hypothesis, for each $t\in S_a^n$ we have $t<a_{n}$, so $\supin_D (S_t)\subseteq \supin_D (S_{a_{n}})$ follows by the monotonicity; the inequality follows from $S_{a_{n}}<a_{n+1}$. This proves $S_a^{n+1}<a_{n+1}$. 

Now,  $a_n\in S^{2n(k-1)}_{a} \smallsetminus S_a^n$ implies $S_a^n\subset S_a^{2n(k-1)}$, so the sequence $(S_a^n\mid n\in \mathbb N)$ is not eventually constant. Therefore, it is strictly increasing and $S_a$ is an $\mathcal S$-shift. 
\end{proof} 

\begin{Remark} The definition of shifts is motivated by the notion of quasi-succesors introduced by Alibek, Baizhanov, and Zambarnaya in  \cite{Alibek}. 
Let $\mathcal S=(S(D,x)\mid x\in D)$ be an $A$-definable family of semiintervals of $(D,<)$. The formula $S(x,y)$ is called a {\it quasi-successor on $D'\subseteq D$}
if $\mathcal S^{D'}=(S(D',x)\mid x\in D')$ is a monotone family of semiintervals of $(D',<)$ such that $(S^{D'}_x)^2\neq S^{D'}_x$ for all $x\in D'$; $S(x,y)$ is a {\it quasi-successor on $p\in S(A)$} if it is so on $p(\Mon)$. 
Note that if $S(x,y)$ is a quasi-successor on $p$, then Lemma \ref{Lemma monotone family not Ep determined is a shift} implies that $S_a$ is an $\mathcal S$-shift for all $a\models p$; if $S(x,y)$ is a quasi-successor on $D$, then the proof of Lemma \ref{Lemma monotone family not Ep determined is a shift} can be slightly modified to show that $S_x$ is an $\mathcal S$-shift for all $x\in D$. 
\end{Remark} 

\subsection{Simplicity of $\mathbf p$-semiintervals}

From now on, we will be dealing with $\mathbf p$-semiintervals where $\mathbf p$ is a weakly o-minimal pair. (For preliminaries on weakly o-minimal types see \cite[Subsection 2.2]{MTwqom1}.)
Recall that $\mathcal E_p$ denotes the set of all relatively $A$-definable equivalence relations on the locus of $p\in S(A)$; $\mathbf 1_p$ denotes the full equivalence relation $p(\Mon)^2$. If $(p,<)$ is a weakly o-minimal pair, then every $E\in \mathcal E_p$ is convex on $(p(\Mon),<)$ and $(\mathcal E_p, \subseteq)$ is a total order (see \cite[Lemma 3.6 and Proposition 3.10]{MTwom}).

\begin{Definition}\label{Definition_Ep_determined}
Let $\mathbf p=(p,<)$ be a weakly o-minimal pair over $A$.
\begin{enumerate}[(a)]
\item The semiinterval $S^{\mathbf p}_a$ is $\mathcal E_p$-determined if there exists some $E\in\mathcal E_p$ such that $S^{\mathbf p}_a=\{x\in [a]_E\mid a\leqslant x\}$; in that case, we say that $E$ determines $S^{\mathbf p}_a$.
\item The family $\mathcal S^{\mathbf p}$ is $\mathcal E_p$-determined if some (equivalently, any) of its members is such.
\end{enumerate}
\end{Definition}

This definition is taken from the work of Baizhanov and Kulpeshov in \cite{Baizhanov2006}. They introduce the notion of a {\it $p$-stable\footnote{Also called $p$-preserving.} convex to the right formula} that is {\it equivalence-generated}, where $(M,<)$ is a linearly ordered structure and $p\in S_1(A)$. In our terminology, the first notion corresponds to formulas $S(x,y)$ that relatively define a family of bounded non-trivial $(p,<)$-semiintervals such that the definable set $S(a,M)$ has an upper and a lower bound in $p(M)$ for all $a\models p$, while the second corresponds to the simplicity of these semiintervals. This proved useful in studying weakly o-minimal theories with few countable models, for example, in  \cite{Kulpeshov2007}, \cite{Altayeva2020}, \cite{Kulpeshov2020}, and \cite{Kulpeshov2021}.

\begin{Remark}\label{Remark 1.6}
Let $\mathcal S=(S(x,D)\mid x\in D)$ be an $A$-definable family of semiintervals of $(D,<)$ and let $\mathcal S^{\mathbf p}$ be the induced family of $\mathbf p$-semiintervals. 

(a) The family $\mathcal S^{\mathbf p}$ is $\mathcal E_p$-determined if and only if the formula $S(x,y)\lor S(y,x)$ relatively defines an equivalence relation on $p(\Mon)$, in which case this relation determines $\mathcal S^{\mathbf p}$. 

(b) $\mathcal S^\pp$ is $\mathcal E_p$-determined if and only if $\supin_{p(\Mon)} (S_x^\pp)=\supin_{p(\Mon)} (S_y^\pp)$ is a relatively definable relation on $p(\Mon)$, in which case it is relatively defined by $S(x,y)\vee S(y,x)$. One more equivalent condition is: 
\begin{center}for all $x<y$ realizing $p$: \ $y\in S^{\mathbf p}_x$ \ \ \ if and only if \ \ \ $\supin_{p(\Mon)} (S^{\mathbf p}_x)=\supin_{p(\Mon)} (S^{\mathbf p}_y)$.
\end{center}
\end{Remark}

\begin{Definition} Let $\mathbf p=(p,<)$ be a weakly o-minimal pair over $A$.
\begin{enumerate}[(a)]
\item  The pair $\mathbf p$ {\em has simple semiintervals} if every $\mathbf p$-semiinterval is $\mathcal E_p$-determined. 
 
\item The type $p$ {\em has simple semiintervals} if every weakly o-minimal pair $(p,<_p)$ over $A$ has simple semiintervals. 

\item The type $p$ {\em has hereditarily simple semiintervals} if every complete type that extends $p$ has simple semiintervals. 

\item A weakly quasi-o-minimal theory $T$ {\it has simple semiintervals} if all types $p\in S_1(T)$ have hereditarily simple semiintervals.  
\end{enumerate}
\end{Definition}

All $\aleph_0$-categorical and all almost $\aleph_0$-categorical\footnote{Almost $\aleph_0$-categoricity is defined in Definition \ref{Definition AAC}.} weakly o-minimal theories have simple semiintervals, see \cite{Baizhanov2006} and \cite{Altayeva2020}.  

\begin{Example}\label{Example_ssi_not hereditary}
It is possible that a weakly o-minimal type has simple semiintervals, but not hereditarily so. Consider the reduct of the additive ordered group of reals $(\mathbb R,m,<)$, where $m(x,y,z):=x-y+z$. This structure is o-minimal with a unique type $p\in S_1(\emptyset)$. There are exactly three complete 2-types, determined by $x<y$, $y<x$, and $x=y$, respectively; this implies that $p$ has simple semiintervals. However, if we name $0$ and put $q=\tp(1/0)$, then $S_a=\{t\mid  a\leqslant t\leqslant m(a,0,a)\}$ is a $\mathbf q$-semiiniterval that is not $\mathcal E_q$-determined. 
\end{Example}

In Proposition \ref{Prop_simple_semiintervals_basic} below, we find a syntactic condition equivalent to the simplicity of $\mathbf p$-semiintervals that offers a simple description of the relatively definable binary structure induced on the locus of $p$. As a corollary, we obtain that the simplicity of semiintervals of $p\in S(A)$ follows from the simplicity of semiintervals of {\em some} weakly o-minimal pair $(p,<)$ over $A$.

\begin{Remark}\label{Remark SEMI rel def set is BC of semiint}
Let $\mathbf p=(p,<)$ be a weakly o-minimal pair over $A$ and let $a\models p$.

(a) Every convex, relatively $Aa$-definable subset of the interval $(a,+\infty)$ (of $(p(\Mon),<)$) is a difference of two $\mathbf p$-semiintervals (with minimum $a$). Therefore, every relatively $Aa$-definable subset of $[a,+\infty)$ is a finite Boolean combination of $\mathbf p$-semiintervals with minimum $a$. Similarly, every relatively $Aa$-definable subset of $(-\infty,a]$ is a finite Boolean combination of $\mathbf p^*$-semiintervals with minimum $a$; here, $\mathbf p^*$ is the opposite order $\mathbf p^*=(p,>)$.

(b)  Every relatively $Aa$-definable subset of $p(\Mon)$ is a finite Boolean combination of $\mathbf p$-semiintervals and $\mathbf p^*$-semiintervals, where $\mathbf p^*$ is the reverse of $\mathbf p$.

(c) The set $\mathcal D_p(a)=\{b\models p\mid b\dep_A a\}$\ is the union of all bounded $\mathbf p$-semiintervals and $\mathbf p^*$-semiintervals (see \cite[Remark 2.4]{MTwqom1}). 
\end{Remark}

\begin{Proposition}\label{Prop_simple_semiintervals_basic}
A weakly o-minimal pair over $A$, $\mathbf p=(p,<)$, has simple semiintervals if and only if every relatively $A$-definable subset of $p(\Mon)^2$ is relatively defined by some finite Boolean combination of $x\leqslant y$, $y\leqslant x$ and formulae of the form $E(x,y)$ for $E\in\mathcal E_p$.
\end{Proposition}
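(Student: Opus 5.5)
The plan is to prove the two directions separately. The reverse direction is easy; the forward direction will go through Remark \ref{Remark SEMI rel def set is BC of semiint}.

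($\Leftarrow$) Assume every relatively $A$-definable subset of $p(\Mon)^2$ is a Boolean combination of $x\leqslant y$, $y\leqslant x$ and $E(x,y)$ for $E\in\mathcal E_p$. Let $S^{\mathbf p}_a$ be a $\mathbf p$-semiinterval, relatively defined by $S(a,y)$ with $S$ over $A$. Then $\{(x,y)\in p(\Mon)^2 \mid S(x,y)\}$ is relatively $A$-definable, so it is given by such a Boolean combination $\beta(x,y)$.

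Since $(\mathcal E_p,\subseteq)$ is a total order and only finitely many relations $E_1\subset\dots\subset E_n$ occur in $\beta$, the fiber $\beta(a,p(\Mon))$ is a union of pieces of the form
\begin{center}
$\{y \mid y\geqslant a,\ y\in [a]_{E_{i+1}}\smallsetminus [a]_{E_i}\}$, or $\{a\}$,
\end{center}
and similarly for $y<a$. Here we use that all $E$'s are convex, so these pieces are convex and linearly ordered. Set $E_0=\Delta$ and $E_{n+1}=\mathbf 1_p$.

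The fiber $S^{\mathbf p}_a$ contains $a$, lies in $[a,+\infty)$, and is convex. Hence it must be $\{a\}\cup\bigcup_{i<k}\bigl([a]_{E_{i+1}}\smallsetminus[a]_{E_i}\bigr)\cap[a,+\infty)$ for some $k$. This is $[a]_{E_k}\cap[a,+\infty)$, so $E_k$ determines $S^{\mathbf p}_a$. Note that $\Delta\in\mathcal E_p$ covers the degenerate case $S^{\mathbf p}_a=\{a\}$.

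($\Rightarrow$) Assume $\mathbf p$ has simple semiintervals, and let $R(x,y)$ relatively define an $A$-definable subset of $p(\Mon)^2$. Fix $a\models p$. By Remark \ref{Remark SEMI rel def set is BC of semiint}(a), the set $R(a,p(\Mon))\cap[a,+\infty)$ is a finite Boolean combination of $\mathbf p$-semiintervals $S^{\mathbf p}_{i,a}$ with minimum $a$. By simplicity, each of these equals $[a]_{E_i}\cap[a,+\infty)$ for some $E_i\in\mathcal E_p$.

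The $\mathbf p^*$-side is handled the same way, once we check that $\mathbf p^*$ also has simple semiintervals. This holds because a $\mathbf p^*$-semiinterval $[a]\cap(-\infty,a]$ can be rewritten through $\mathbf p$-semiintervals. If $S^*_a$ is a $\mathbf p^*$-semiinterval, then $x\mapsto\{y\geqslant x \mid x\in S^*_y\}$ gives a relatively $Ax$-definable convex set with minimum $x$. It is convex because the sets $S^*_y$ are convex and monotone in the suitable sense, which I would check via Remark \ref{Remark 1.6}(b). By simplicity this set is $E$-determined, and from that one reads off that $S^*_a=[a]_E\cap(-\infty,a]$.

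We thus get a Boolean combination $\beta(x,y)$ of $x\leqslant y$, $y\leqslant x$ and $E_i(x,y)$ with $R(a,p(\Mon))=\beta(a,p(\Mon))$. The defining formulas were over $A$ and all realizations of $p$ are $A$-conjugate, so the same $\beta$ works for every $x\models p$. Hence $R$ is relatively defined by $\beta$.

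The main obstacle is the $\mathbf p^*$ symmetry step: showing that simplicity for $\mathbf p$ yields simplicity for the reverse order. The converse-relation trick requires verifying that $\{y\geqslant x \mid x\in S^*_y\}$ is convex with minimum $x$. If that fails, I would instead use the weak o-minimality of $p$ to split $S^*_a$ into finitely many convex pieces and argue via Remark \ref{Remark 1.6}(b) directly.
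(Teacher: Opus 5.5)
Your ($\Leftarrow$) direction and your treatment of the part of the fiber above the diagonal in ($\Rightarrow$) are correct and match the paper. The gap is the part below the diagonal. You reduce it to the claim that $\mathbf p^*$ also has simple semiintervals, and your argument for that claim does not work as written.

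The assertion that $T_x=\{y\geqslant x\mid x\in S^*_y\}$ is convex needs a monotonicity property of $(S^*_y)$. Concretely, if $x<y_1<y_2$ and $x\in S^*_{y_2}$, you need the left end of $S^*_{y_1}$ to lie at or below that of $S^*_{y_2}$. Monotonicity is not automatic for relatively definable families of semiintervals; this is why the paper has to enlarge families to monotone ones in Lemma \ref{Lemma_notEdetermined_implies_exists_p_monotone}. Appealing to Remark \ref{Remark 1.6}(b) is circular, because that remark describes families that are already $\mathcal E_p$-determined, which is exactly what you want to prove for $(S^*_y)$. Nor is this a side issue: $\mathbf p^*$-simplicity is itself a consequence of the proposition, since its right-hand side is symmetric under reversing $<$. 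Your fallback is too vague to assess and again leans on Remark \ref{Remark 1.6}(b).

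The repair is short, because you do not need convexity of $T_x$ at all. $T_x$ is a relatively $Ax$-definable subset of $[x,+\infty)$, so Remark \ref{Remark SEMI rel def set is BC of semiint}(a) together with the simplicity of $\mathbf p$ gives $T_x=\beta(x,p(\Mon))\cap[x,+\infty)$ for a suitable Boolean combination $\beta$. By $A$-conjugacy the same $\beta$ works for all $x\models p$. Hence $S^*_a=\{x\leqslant a\mid\ \models\beta(x,a)\}$, and your convexity argument from ($\Leftarrow$) yields $S^*_a=[a]_E\cap(-\infty,a]$ for some $E\in\mathcal E_p$.

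Simpler still, and this is what the paper does, skip $\mathbf p^*$ entirely. Split $R$ into $D_1=R\cap\{x\leqslant y\}$ and $D_2=R\cap\{y\leqslant x\}$, and for $D_2$ fix the \emph{second} coordinate. The fiber $D_2(p(\Mon),a)=\{x\geqslant a\mid\ \models R(x,a)\}$ lies in $[a,+\infty)$, so it is handled exactly like $D_1$. Swapping the variables in the resulting Boolean combination gives a formula of the same kind, since each $E$ is symmetric and $x\leqslant y$, $y\leqslant x$ are interchanged.
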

\begin{proof} 
Let $\mathcal B$ denote the set of all Boolean combinations of $x\leqslant y$, $y\leqslant x$ and $E(x,y)$ for $E\in\mathcal E_p$.
To prove $(\Rightarrow)$, suppose that every $\mathbf p$-semiinterval is $\mathcal E_p$-determined, and let $D$ be a relatively $A$-definable subset of $p(\Mon)^2$. 
It suffices to prove that each of the sets $D_1=\{(x,y)\in D\mid x\leqslant y\}$ and $D_2=\{(x,y)\in D\mid y\leqslant x\}$ is relatively defined by a formula from $\mathcal B$. Fix $a\models p$, and consider the fiber $D_1(a,p(\Mon))$. Note that $D_1(a,p(\Mon))\subseteq [a,+\infty)_{\mathbf p}$, so by Remark \ref{Remark SEMI rel def set is BC of semiint}, $D_1(a,p(\Mon))$ is a finite Boolean combination of $\mathbf p$-semiintervals with minimum $a$. Since $\mathbf p$ has simple semiintervals, each of these semiintervals is relatively defined by $a\leqslant x\land E(a,x)$ for some $E\in\mathcal E_p$; note that this formula is the $a$-instance of a $\mathcal B$-formula. 
Therefore, $D_1(a,p(\Mon))$ is relatively defined by an $a$-instance of a $\mathcal B$-formula; let $\phi(x,y)\in\mathcal B$ be such that $\phi(a,y)$ relatively defines $D_1(a,p(\Mon))$ within $p(\Mon)$. Then $\phi(x,y)\land x\leqslant y$ relatively defines $D_1$ within $p(\Mon)^2$. 
Similarly, considering the fiber $D_2(p(\Mon),a)$, we see that $D_2$ is relatively defined by some $\mathcal B$-formula. This proves $(\Rightarrow)$.

To prove $(\Leftarrow)$, assume that every relatively $A$-definable subset of $p(\Mon)^2$ is relatively defined by some $\mathcal B$-formula. Let $S_a$ be a bounded $\mathbf p$-semiinterval. We need to show that $S_a$ is $\mathcal E_p$-determined; without loss, assume $S_a\neq \{a\}$. 
Choose $\phi(x,y)\in \mathcal B$ which relatively defines $S_a$. Let $E_1,\dots,E_n$ be the sequence of all equivalence relations from $\mathcal E_p$ that appear in $\phi$. Since $\mathcal E_p$ is totally ordered by the inclusion, we may assume that the sequence is strictly increasing. In addition, we may assume that $E_1$ is trivial and $E_n$ is the full relation. Then
\[ \{E_1(a,p(\Mon))\}\cup\{(E_i(a,p(\Mon)))\smallsetminus E_{i-1}(a,p(\Mon))\cap [a,+\infty)_{\mathbf p}\mid i=2,\dots,n\}\] 
is a convex partition of $[a,+\infty)_\mathbf p$, with each piece being relatively defined by an $a$-instance of a $\mathcal B$-formula. It is easy to see that every relatively definable subset of $[a,+\infty)_{\mathbf p}$, defined by an $a$-instance of a $\mathcal{B}$-formula in which only equivalences $E_1,\dots,E_n$ appear, is equal to the union of some members of the partition. In particular, this holds for $S_a$, so if $i$ is maximal such that $(E_i(a,p(\Mon)))\smallsetminus E_{i-1}(a,p(\Mon))\cap [a,+\infty)_{\mathbf p}\subseteq S_a$, the convexity of $S_a$ and $a=\min S_a$ together imply that $S_a$ is relatively defined by $a\leqslant x\land E_i(a,x)$, and we are done.
\end{proof}

Let $(p,<)$ be a weakly o-minimal pair over $A$. In the next proof, we will use the characterization of relatively $A$-definable orders on $p(\Mon)$, proved in Theorem 3.2 of \cite{MTwom}. For a strictly increasing sequence $\vec E=(E_1,\dots,E_n)\in \mathcal E_p^n$, the order $<_{\vec E}$ is defined recursively by: 
\begin{itemize}
    \item $x<_{E_1}y $ \ if and only if \ $[x]_{E_1}<[y]_{E_1}$ or $[x]_{E_1}=[y]_{E_1}\land y<x$;
    \item $<_{(E_1,\dots,E_{k+1})}=\left(<_{(E_1,\dots,E_k)}\right )_{E_{k+1}}$.
\end{itemize}
Clearly, $<_{\vec E}$ is a relatively $A$-definable order on $p(\Mon)$,  defined by a Boolean combination of formulae $x\leqslant y$, $y\leqslant x$ and $E_i(x,y)$ ($1\leqslant i\leqslant n)$; here, $E_i(x,y)$ is any formula that relatively defines $E_i$.   
By \cite[Theorem 3.2]{MTwom}, every relatively $A$-definable order on $p(\Mon)$ equals $<_{\vec E}$ for some $\vec E$.

\begin{Corollary}\label{COr_simple_semiint_ind_of order}
A weakly o-minimal type $p\in S(A)$ has simple semiintervals if and only if    {\it some} weakly o-minimal pair $\mathbf p=(p,<)$ over $A$ has simple semiintervals. 
\end{Corollary}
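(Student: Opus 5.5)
The direction ($\Rightarrow$) is immediate from the definition: if $p$ has simple semiintervals, then every weakly o-minimal pair $(p,<_p)$ over $A$ has simple semiintervals, in particular some pair does. So the work is in ($\Leftarrow$). Assume $\mathbf p=(p,<)$ has simple semiintervals and let $(p,<')$ be another weakly o-minimal pair over $A$. By Proposition \ref{Prop_simple_semiintervals_basic} it suffices to show that every relatively $A$-definable subset of $p(\Mon)^2$ is relatively defined by a Boolean combination of $x\leqslant' y$, $y\leqslant' x$ and $E(x,y)$ for $E\in\mathcal E_p$. The set $\mathcal E_p$ depends only on $p$, not on the order, so the only thing to check is that the class $\mathcal B$ of Boolean combinations built from $<$ coincides with the class $\mathcal B'$ built from $<'$, as classes of relatively defined subsets of $p(\Mon)^2$.

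First, since $\mathbf p$ has simple semiintervals, Proposition \ref{Prop_simple_semiintervals_basic} shows that every relatively $A$-definable subset of $p(\Mon)^2$ is relatively defined by a formula in $\mathcal B$. In particular, the order $<'$ is. More to the point, \cite[Theorem 3.2]{MTwom} gives $<'=<_{\vec E}$ for some strictly increasing $\vec E=(E_1,\dots,E_n)$ in $\mathcal E_p$. By the recursive definition, $<_{\vec E}$ is defined by a Boolean combination of $x\leqslant y$, $y\leqslant x$ and the $E_i(x,y)$; hence $x\leqslant' y$ lies in $\mathcal B$, and therefore $\mathcal B'\subseteq\mathcal B$.

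For the reverse inclusion, I need $x<y$ to be expressible in $\mathcal B'$. The plan is to invert the recursion: $<_{E}$ reverses the order inside each $E$-class and keeps the order between classes. Hence $(<_{E})_{E}=<$, and more generally $<$ is recovered from $<_{\vec E}$ by applying the same operations in the reverse order, $E_n,\dots,E_1$. One has to check that the map $\prec\ \mapsto\ \prec_E$ is an involution on orders for which $E$ is convex, and that each $E_i$ remains convex with respect to the intermediate orders. The convexity holds because every $E\in\mathcal E_p$ is convex with respect to every relatively $A$-definable order, by \cite[Lemma 3.6]{MTwom} applied to that weakly o-minimal pair.

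Alternatively, and more simply, symmetry does the job. The order $<$ is itself relatively $A$-definable on $p(\Mon)$ and $(p,<')$ is weakly o-minimal, so \cite[Theorem 3.2]{MTwom} applied to $(p,<')$ gives $<=<'_{\vec F}$ for some $\vec F$ from $\mathcal E_p$. Hence $x\leqslant y$ lies in $\mathcal B'$, so $\mathcal B=\mathcal B'$, and the forward direction of Proposition \ref{Prop_simple_semiintervals_basic} followed by its backward direction for $(p,<')$ finishes the proof.

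The main obstacle is justifying this use of \cite[Theorem 3.2]{MTwom} for the pair $(p,<')$, i.e., confirming that the theorem and its notion of $\mathcal E_p$ apply to any weakly o-minimal pair. Since $\mathcal E_p$ is order-independent, this should be routine.
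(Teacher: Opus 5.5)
Your proof is correct and follows essentially the same approach as the paper. The route you settle on is to apply \cite[Theorem 3.2]{MTwom} to the pair $(p,<')$ and write $<$ as $<'_{\vec F}$, so that $x\leqslant y$ (and hence every $\mathcal B$-formula) lies in $\mathcal B'$, making $\mathcal B'$ all binary $L_A$-formulae; the backward direction of Proposition \ref{Prop_simple_semiintervals_basic} then applies to $(p,<')$, exactly as in the paper, and your preliminary inclusion $\mathcal B'\subseteq\mathcal B$ and the recursion-inversion sketch are not needed.
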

\begin{proof}Suppose that $\mathbf p=(p,<)$ has simple semiintervals. Let $\mathcal B_{<}$ denote the set of all $L_A$-formulae that relatively define within $p(\Mon)^2$ the same set as some Boolean combination of formulae $x\leqslant y$, $y\leqslant x$ and $E(x,y)$ ($E\in\mathcal E_p$). 
By Proposition \ref{Prop_simple_semiintervals_basic}, every relatively $A$-definable subset of $p(\Mon)^2$ can be defined by a formula from $\mathcal B_{<}$, so $\mathcal B_{<}$ is the set of all $L_A$-formulae in two variables. 
Now, let $\mathfrak p=(p,<_p)$ be another weakly o-minimal pair over $A$, and let $\mathcal B_{<_p}$ be the corresponding set of formulae.
By \cite[Theorem 4.2]{MTwom} we have $<=(<_p)_{\vec E}$ for some increasing sequence of equivalences from $\mathcal E_p$. From the definition of $(<_p)_{\vec E}$, we easily derive
$(x\leqslant y), (y\leqslant x)\in \mathcal B_{<_p}$, which implies $\mathcal B_<\subseteq \mathcal B_{<_p}$. As $\mathcal B_<$ is the set of all binary $L_A$-formulae, $\mathcal B_<=\mathcal B_{<_p}$ follows, so by Proposition \ref{Prop_simple_semiintervals_basic} again, the pair $\mathfrak p$ has simple semiintervals. Therefore, every weakly o-minimal pair $(p,<_p)$ has simple semiintervals and $p$ has simple semiintervals. 
\end{proof}

\begin{Remark}\label{Remark ultrametric on p(Mon)}
For a weakly o-minimal type $p\in S_x(A)$, fix the following notation.
\begin{itemize}
    \item $S_{p,p}(A):=\{P(x,y)\in S_{x,y}(A)\mid P(x,y)\supseteq p(x)\cup p(y)\}$;
    \item $\mathcal E(P)=\{E\in\mathcal E_p\mid E(x,y)\in P\}$ \ where $P\in S_{p,p}(A)$;
    \item $\mathcal F_p$ is the set of all the final parts of $(\mathcal E_p,\subseteq)$.
\end{itemize}

(a) Suppose that $p$ has simple semiintervals. By Proposition \ref{Prop_simple_semiintervals_basic}, for each final part $F\in \mathcal F_p$ the following partial type:
\[p(x)\cup p(y)\cup \{ E(x,y)\mid  E\in F\}\cup \{\lnot E(x,y)\mid E\in\mathcal E_p\smallsetminus F\}\]
determines a unique complete type $P\in S_{p,p}(A)$; note that $\mathcal E(P)=F$. Therefore, $P\mapsto \mathcal E(P)$ is a bijection between $S_{p,p}(A)$ and $\mathcal F_p$. Moreover, if we endow $\mathcal F_p$ with the order topology of $(\mathcal F_p,\supseteq)$, then this bijection is a homeomorphism. 

(b) Suppose that $\mathcal E_p$ is countable. Then there is a strictly increasing continuous function $\delta:\mathcal F_p\to [0,1]$ such that $\delta(\mathcal E_p)=0$ and $\delta(\emptyset)=1$.  
For $a,b\models p$ define $d(a,b)=\delta(\tp(a,b/A))$. 
Then $(p(\Mon),d)$ is an ultrametric space: The ultrametric inequality $d(x,y)\leqslant \max(d(x,z),d(y,z))$ translates to:
\[\left(\mathcal E(\tp(x,z))\cap \mathcal E(\tp(y,z))\right)\subseteq \mathcal E(\tp(x,y)).\]
This is easy to verify: if $E\in (\mathcal E(\tp(x,z))\cap \mathcal E(\tp(y,z)))$, then $\models E(x,z)\land E(y,z)$, so $\models E(x,z)$ and therefore $E\in\mathcal E(\tp(x,y))$. 

(c) In general, the ultrametric $d$ can be trivial, for example,  in $o$-minimal theories. However, if $p\in S(\emptyset)$ has simple semiintervals, then the distance $d(a,b)$ measures the amount of forking dependence of $a$ and $b$, as we prove in Proposition  \ref{Prop_simple_semiintervals_implies_interdef}.
The importance of this ultrametric space was first observed by
Herwig et al. in \cite[Theorems 2.2-2.3]{Herwig} for the case of 1-types in $\aleph_0$-categorical weakly o-minimal theories. In that case, $d(x,y)=r$ is $A$-definable for all $r\in[0,1]$, and they have shown that every complete 2-type over $A$ extending $p(x)\cup p(y)\cup \{x<y\}$ is isolated by some formula $d(x,y)=r$. In other words, the binary structure on $p(\Mon)$ is completely determined by the ultrametric and the order $<$. Proposition \ref{Prop_simple_semiintervals_basic} can be regarded as saying the same for any weakly o-minimal type with simple semiintervals. 
\end{Remark}

\begin{Proposition}\label{Prop_ssi_consequences}
Let $\mathbf p=(p,<)$ be a weakly o-minimal pair over $A$ and let $a\models p$. Suppose that $p$ is nonalgebraic and has simple semiintervals.   
\begin{enumerate}[(a)]
\item The image of $p$ under a relatively $A$-definable function is a weakly o-minimal type with simple semiintervals.
\item The only relatively $A$-definable map $f:p(\Mon)\to p(\Mon)$ is the identity map. 
\item $(p(\Mon)/E,<)$ is a dense linear order without endpoints for every $E\in\mathcal E_p\smallsetminus \{\mathbf 1_p\}$. 
\item If the class $[b]_F$ is $A\cup\{[a]_E\}$-invariant, where $b\in p(\Mon)$ and $E,F\in \mathcal E_p$, then $[a]_E\subseteq [b]_F$.
\end{enumerate}
\end{Proposition}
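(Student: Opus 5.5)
We prove the four items separately. The main tools are Proposition \ref{Prop_simple_semiintervals_basic} and its reformulation in Remark \ref{Remark ultrametric on p(Mon)}(a): $\tp(x,y/A)$ for $x,y\models p$ is determined by the order between $x,y$ together with the final part $\mathcal E(\tp(x,y))\in\mathcal F_p$. We also use the convex partition of $[a,+\infty)_{\mathbf p}$ into the ``rings'' $([a]_{E_i}\smallsetminus[a]_{E_{i-1}})\cap[a,+\infty)$ from the proof of that proposition.

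For (b), let $f$ be relatively $A$-definable. Its graph is a relatively $A$-definable subset of $p(\Mon)^2$, so it is given by a $\mathcal B$-formula. Hence the fibre $\{f(a)\}$ is a union of pieces of the convex partition of $[a,+\infty)$ or of $(-\infty,a]$. We show that no piece other than $\{a\}$ is a singleton. Suppose some piece equals $\{b\}$ with $a<b$. Then $[a,b]$ is a $\mathbf p$-semiinterval, so it is $\mathcal E_p$-determined: $[a,b]=\{x\geqslant a\mid E(a,x)\}$ for some $E$, and $b=\max$ of this set. Since $b\models p$ and the same $A$-formula works at $b$, the set $\{x\geqslant b\mid E(b,x)\}$ must contain an element $>b$. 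But $[b]_E=[a]_E$, so this set is $\{b\}$, a contradiction. The case $b<a$ is symmetric. Hence $f(a)=a$. Nonalgebraicity is used only to rule out the degenerate case.

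For (c), suppose $E\neq\mathbf 1_p$. A greatest (or least) $E$-class would be $A$-invariant, hence would contain all of $p(\Mon)$, forcing $E=\mathbf 1_p$; so there are no endpoints. For density, assume $[b]_E$ is the immediate successor of $[a]_E$, and put $F=\mathcal E(\tp(a,b))$, so $E\notin F$. Take $b'>[b]_E$. Convexity of each $E'\in\mathcal E_p$ gives $\mathcal E(\tp(a,b'))\subseteq F$. Equality is impossible: otherwise $\tp(a,b)=\tp(a,b')$ by Remark \ref{Remark ultrametric on p(Mon)}(a), and an automorphism fixing $Aa$ would move $b$ out of the $Aa$-invariant successor class. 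So the successor class is exactly the set of $x>a$ with $\mathcal E(\tp(a,x))=F$. We would then show that $[a]_E\cup\operatorname{succ}([a]_E)$ is a relatively $Aa$-definable $\mathbf p$-semiinterval, hence $\mathcal E_p$-determined by some $E^*\supsetneq E$. This makes $[a]_{E^*}/E$ a two-element order, contradicting the no-endpoints part applied inside $[a]_{E^*}$: the classes of $E$ inside $[a]_{E^*}$ carry no greatest element by the same invariance argument over $A[a]_{E^*}$. The main obstacle is the step from invariance to relative definability. I would first try to obtain it by compactness from the fact that the relevant set is cut out by the single complete type with final part $F$, and if that fails, pass to the $Aa$-definable $\overline{D}^{\,in}$-sort given by $\supin$ of the union.

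For (a), let $f$ be relatively $A$-definable and $q=f(p)$. The order $f(x)<f(y)$, suitably corrected on fibres, gives a weakly o-minimal pair on $q(\Mon)$. The fibres of $f$ form a relation in $\mathcal E_p$ (they are convex), and $\mathcal E_q$ pulls back to the part of $\mathcal E_p$ above the kernel. A relatively $A$-definable subset of $q(\Mon)^2$ pulls back to one of $p(\Mon)^2$, which is a $\mathcal B$-combination by Proposition \ref{Prop_simple_semiintervals_basic}. This pushes down to a combination of the order and of $\mathcal E_q$, and the converse direction of that proposition then shows that $q$ has simple semiintervals.

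For (d), if $F\supseteq E$ and $a\,F\,b$, we are done. Otherwise some $b'\models p$ has $\tp(b'/A[a]_E)=\tp(b/A[a]_E)$ but $[b']_F\neq[b]_F$. To find it, use (c) to pick $a'\in[a]_E$ and an $F$-class different from $[b]_F$ on the same side of $[a]_E$. Then, via Remark \ref{Remark ultrametric on p(Mon)}(a), the configuration $(a',b)$ and the new configuration realize the same type over $A$ and over the class. This contradicts the invariance of $[b]_F$.
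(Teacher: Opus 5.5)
Parts (a)--(c) are workable.

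\begin{itemize}
\item Your (b) is essentially the paper's argument: a singleton piece $\{b\}$ above $a$ makes $b$ the maximum of an $E$-class, which is an $A$-invariant property, so it transfers to $a$.
\item In (a) you apply Proposition~\ref{Prop_simple_semiintervals_basic} in both directions, instead of pulling back a single $\mathbf q$-semiinterval as the paper does. That is fine.
\item In (c) your ``main obstacle'' is not a real one, but neither of your proposed fixes is the right tool, since compactness over the type with final part $F$ gives only type-definability. The successor class $[b]_E$ is fixed setwise by $\Aut(\Mon/Aa)$. Hence the imaginary $[b]_{\hat E}$ lies in $\dcleq(Aa)$, and so $\{x\geqslant a\mid x\in[a]_E\cup[b]_E\}$ is relatively $Aa$-definable. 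For comparison, the paper gets density from (a) and (b): a successor map on $p/\hat E$ would be a non-identity relatively definable self-map.
\end{itemize}

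The genuine gap is (d). Your sentence ``otherwise some $b'$ with $\tp(b'/A[a]_E)=\tp(b/A[a]_E)$ and $[b']_F\neq[b]_F$ exists'' is the entire content of (d), and your recipe does not produce such a $b'$. By the classification of $2$-types, $b''\equiv b\,(Aa')$ requires $\mathcal E(\tp(a',b''))=\mathcal E(\tp(a',b))$. An arbitrary $F$-class ``on the same side of $[a]_E$'' changes this set in general. You need to split into cases.

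\begin{itemize}
\item If $[b]_F\subsetneq[a]_E$, any $A$-automorphism moving $b$ into $[a]_E\smallsetminus[b]_F$ fixes $[a]_E$.
\item If $[a]_E\cap[b]_F=\emptyset$ and $F\subsetneq E$, take $b'\in[b]_E\smallsetminus[b]_F$; then $\tp(a,b'/A)=\tp(a,b/A)$.
\item The real case is $E\subseteq F$, $a<b$, $\neg F(a,b)$, and here density must be used in a specific way. Put $\mathcal F_0=\mathcal E(\tp(a,b))$, $K=\bigcap\mathcal F_0$ and $H=\bigcup(\mathcal E_p\smallsetminus\mathcal F_0)$. Then the locus of $\tp(b/Aa)$ is $R=([a]_K\smallsetminus[a]_H)\cap(a,+\infty)$, which is a union of $H$-classes. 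Suppose $R=[b]_F$. Then $H\subseteq F$, so every $x$ strictly between $[a]_F$ and $[b]_F$ would satisfy $\mathcal E(\tp(a,x))=\mathcal F_0$, i.e.\ $x\in R=[b]_F$. So $[b]_F$ would be the immediate successor of $[a]_F$, contradicting (c). Hence $R$ meets another $F$-class, and that gives the required $b'$.
\end{itemize}

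The paper reaches the same point by a different route. Since $[b]_{\hat F}=h([a]_{\hat E})$, the set $\{x\mid a\leqslant x<[b]_F\}$ is a bounded $\mathbf p$-semiinterval, so it is determined by some $G$. Density of $p/F$ forces $F\subset G$, and then $[a]_G,[b]_G$ are consecutive $G$-classes, contradicting density of $p/G$. Without one of these two arguments, (d) remains unproved.
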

\begin{proof} (a) Let $f:p(\Mon)\to \Mon$ be a relatively $A$-definable function, and let $q=f(p)$. By \cite[Proposition 3.13(i)]{MTwom},  $f^{-1}(\{y\})<f^{-1}(\{y'\})$ defines a relatively $A$-definable linear order, denoted by $<_f$, on $q(\Mon)$ such that $\mathbf q=(q,<_f)$ is a weakly o-minimal pair over $A$. To verify that $q$ has simple semiintervals, let $b=f(a)$, let $S_b$ be a $\mathbf q$-seminterval which is relatively defined by $S(b,y)$, and let $X_a\subseteq p(\Mon)$ be relatively defined by $(a\leqslant x \land S(f(a),f(x)))$; clearly $X_a$ is a $\mathbf p$-semiinterval. Choose $E\in \mathcal E_p$ which determines $X_a$. Then by \cite[Proposition 2.13]{MTwom}(b),
$f(E)\in \mathcal E_q$ and it is easy to see that $S_a$ is $f(E)$-determined. 
Therefore, $q$ has simple semiintervals. 

(b) Suppose that $f:p(\Mon)\to p(\Mon)$ is relatively $A$-definable. Reversing the order $<$ if necessary, assume $a\leqslant f(a)$.
Consider the set $S_a=\{x\in p(\Mon)\mid a\leqslant x\leqslant f(a)\}$; clearly, $S_a$ is a $\mathbf p$-semiinterval and $f(a)=\max S_a$. Let $E\in\mathcal E_p$ determine $S_a$. Since $[a]_E=[f(a)]_E$, the tuple $f(a)$ satisfies: $x\in p(\Mon)$ is the maximal element of its $E$-class; so does $a$, and $a=f(a)$ follows.  

(c) Let $\hat E$ be an $A$-definable extension of $E$. Then $(p(\Mon)/E,<)$ and $(p/\hat E(\Moneq),<)$ are naturally identified. By part (a), the type $p/\hat E$ is weakly o-minimal and has simple semiintervals. Observe that $(p/\hat E(\Moneq),<)$ is not a discrete order, because otherwise the successor function would contradict part (b). Therefore, $(p/\hat E(\Moneq),<)$ is a dense order.

(d) Without loss assume that $A=\emptyset$. Suppose that $E,F\in \mathcal E_p$ and that the class $[b]_F$ is $\{[a]_E\}$-invariant. Choose $\hat E$ and $\hat F$, definable extensions of $E$ and $F$, respectively. Then $[b]_{\hat F}\in \dcleq([a]_{\hat E})$, say $[b]_{\hat F}=h([a]_{\hat E})$. Without loss of generality, assume $a<b$. Since $E$ and $F$ are $\subseteq$-comparable, the classes $[a]_E$ and $[b]_F$ are $\subseteq$-comparable or disjoint, so to establish $[a]_E\subseteq [b]_F$, it is enough to eliminate the cases where $[a]_E \supset [b]_F$ and where $[a]_E\cap [b]_F=\emptyset$. 
If $[a]_E\supset [b]_F$ were true, then there would be some $b'\in [a]_E$ with $[b]_F\neq [b']_F$, so any $f\in \Aut(\Mon)$ moving $b$ to $b'$ would fix $[a]_E=[b]_E=[b']_E$; this contradicts the $\{[a]_E\}$-invariance of $[b]_F$. Therefore, $[a]_E\nsupset [b]_F$.  
Next, assume that $[a]_E\cap [b]_F=\emptyset$. Then $a<b$ implies $[a]_E<[b]_F$, so $a<[b]_F$ implies $[a]_F<[b]_F$. 

Consider the set $X_a=\{x\in p(\Mon)\mid a\leqslant x<[b]_F\}$ and note that it is relatively defined by the formula $a\leqslant x<h([x]_{\hat E})$. Hence $X_a$ is
a bounded $\mathbf p$-semiinterval, so by the simplicity of semiintervals of $p$ there is some $G\in\mathcal E_p$ such that $X_a$ is a final part of the class $[a]_G$. In particular, the set $Y=[a]_G\cup [b]_F$ is convex in $(p(\Mon),<)$. By (c), $(p(\Mon)/F,<)$ is a dense endless order, so $[a]_F<[b]_F$ implies that there are infinitely many $F$-classes lying strictly between $[a]_F$ and $[b]_F$; since $Y$ is convex, at least one of these classes is properly contained in $[a]_G$; $F\subset G$ follows. 
Now, as $[a]_G<b$ implies $[a]_G<[b]_G$, $[b]_F\subset[b]_G$, and $Y$ is convex in $p(\Mon)$, $[a]_G\cup[b]_G$ is convex in $p(\Mon)$ too, so $[a]_G$ and $[b]_G$ are two consecutive $G$-classes. This is a contradiction as $(p(\Mon)/G,<)$ is a dense order by (c).
\end{proof}

\begin{Lemma}\label{Lemma O EKVIVALENCIJAMA}
Let $A\subseteq B$, let $p\in S(A)$ be a weakly o-minimal type, and let $q\in S(B)$ be its extension. Suppose that $E\in\mathcal E_q\smallsetminus\{\mathbf 1_q\}$. The following conditions are equivalent:
\begin{enumerate}[\hspace{10pt}1)]
\item There exists $E_0\in\mathcal E_p$ such that $(E_{0})_{\strok q(\Mon)}= E$;
\item $E$ is relatively $A$-definable;
\item For some (all) $a\models q$, the class $[a]_E$ is $Aa$-invariant.
\end{enumerate}
\end{Lemma}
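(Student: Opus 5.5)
We prove the cycle $1)\Rightarrow 2)\Rightarrow 3)\Rightarrow 1)$. The first two implications are routine. For $1)\Rightarrow 2)$: if $E_0\in\mathcal E_p$ restricts to $E$ on $q(\Mon)$, then any $L_A$-formula relatively defining $E_0$ on $p(\Mon)$ also relatively defines $E$ on $q(\Mon)\subseteq p(\Mon)$. For $2)\Rightarrow 3)$: if $E$ is relatively defined within $q(\Mon)$ by an $L_A$-formula $\phi(x,y)$, then $[a]_E=\phi(a,\Mon)\cap q(\Mon)$. Any automorphism fixing $Aa$ preserves $\phi(a,\Mon)$. We cannot claim it preserves $q(\Mon)$, since it need not fix $B$. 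However, it preserves $p(\Mon)$, so it preserves $\phi(a,\Mon)\cap p(\Mon)$.

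We therefore work with the set $[a]_{E'}=\phi(a,\Mon)\cap p(\Mon)$ and argue that it equals $[a]_E$ after shrinking $\phi$. Since $q(\Mon)$ is convex in $p(\Mon)$ and $E$ is a proper convex equivalence, $[a]_E$ is bounded inside $q(\Mon)$. We are given $E$ is relatively defined by $\phi$ on $q(\Mon)$. By compactness (tp-universality, as in the proof of Lemma \ref{Lemma_new_defextof_seminiterval}), we can pass to a formula $\psi(x,y)$ over $A$, strengthening $\phi$ by conjunctions over $A$ only, with two properties:
\begin{itemize}
\item $\psi$ relatively defines a convex equivalence relation $E'$ on $p(\Mon)$;
\item $\psi$ agrees with $E$ on $q(\Mon)$.
\end{itemize}
Once this holds, $1)$ also follows directly with $E_0=E'$. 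The subtle point is that the "convex equivalence on $p(\Mon)$" condition is tp-universal over $A$ and is witnessed on $q(\Mon)$. We would need it on all of $p(\Mon)$. For that we use that any $x,y\models p$ with $\psi(x,y)$ can be moved by an $A$-automorphism into $q(\Mon)$, because $q$ is a type over $B\supseteq A$ extending $p$. This step requires care, and I would rather route the argument through $3)$.

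For the main implication $3)\Rightarrow 1)$, assume $[a]_E$ is $Aa$-invariant for some $a\models q$. Then the map $x\mapsto[x]_E$ is $\Aut(\Mon/A)$-equivariant wherever defined. Define $E_0$ on $p(\Mon)$ by
\[
E_0(x,y)\iff y\in \sigma([\sigma^{-1}x]_E)\quad\text{for some } \sigma\in\Aut(\Mon/A) \text{ with } \sigma^{-1}x\models q.
\]
Invariance makes this well defined, and it is $A$-invariant, convex, and an equivalence relation. Its restriction to $q(\Mon)$ is $E$: for $x,y\models q$ and $\sigma$ fixing $A$, the set $\sigma([\sigma^{-1}x]_E)=[x]_E$, by $Ax$-invariance applied after conjugating. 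This uses the "all $a$" form of $3)$. To get "some $\Rightarrow$ all", note that $q$-realizations are $B$-conjugate, and the property is invariant under $\Aut(\Mon/B)$.

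The main obstacle is showing $E_0$ is relatively \emph{definable}, not merely $A$-invariant. Each class $[a]_{E_0}=[a]_E$ is relatively $Ba$-definable and $Aa$-invariant. Since $q(\Mon)$ is convex in $p(\Mon)$ and $[a]_E$ is a bounded convex subset of $q(\Mon)$, the class $[a]_E$ is a convex subset of $p(\Mon)$ with supremum and infimum invariant over $Aa$. Writing $[a]_E\cap[a,+\infty)$ as a difference of $\mathbf q$-semiintervals, I expect to show that its $\supin$ is a relatively $Aa$-type-definable and co-type-definable cut in $p(\Mon)$, hence relatively $Aa$-definable by compactness. This should be combined with the weak o-minimality of $p$ (\cite[Subsection 2.2]{MTwqom1}), which gives that $Aa$-invariant convex subsets of $p(\Mon)$ cut out by $Ba$-formulas are relatively $Aa$-definable. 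Uniformity in $a$ then yields an $L_A$-formula relatively defining $E_0$, giving $1)$.
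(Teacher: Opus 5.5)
\textbf{The gap is $2)\Rightarrow 3)$: you never prove it.} Your compactness step producing $\psi$ is unjustified. The fact that realizations of $p$ can be moved into $q(\Mon)$ by $A$-automorphisms concerns single elements, not pairs, so it cannot transfer ``$\psi$ relatively defines a convex equivalence relation'' from $q(\Mon)^2$ to $p(\Mon)^2$. For instance, if $q(\Mon)$ lies inside one class of some $F\in\mathcal E_p$, a pair of realizations of $p$ in distinct $F$-classes is not $A$-conjugate to any pair from $q(\Mon)^2$. What you attempt there is really $2)\Rightarrow 1)$ directly, the hard part of the lemma. ``Routing through 3)'' is circular, since 3) is exactly what this implication must deliver.

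The missing idea is short and uses only facts you already noted. Let $\phi(x,y)\in L_A$ relatively define $E$ on $q(\Mon)$. Since $[a]_E=\phi(a,\Mon)\cap q(\Mon)$ is bounded on both sides in $q(\Mon)$, and $q(\Mon)$ is convex in $p(\Mon)$, the set $[a]_E$ is exactly one of the convex components of $\phi(a,\Mon)\cap p(\Mon)$. Indeed, a convex piece of that set containing $a$ and some point outside $q(\Mon)$ would contain all of $\{y\in q(\Mon): y\geqslant a\}$ or all of $\{y\in q(\Mon): y\leqslant a\}$, contradicting boundedness. By weak o-minimality of $p$ this set has finitely many convex components. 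An automorphism fixing $Aa$ preserves the set and $<$, so it permutes these components order-preservingly and hence fixes each one. Thus $[a]_E$ is $Aa$-invariant; this is essentially the paper's argument.

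By contrast, your $3)\Rightarrow 1)$ is sound and takes a different route from the paper. The paper writes $E_0$ as the partial type $\Pi(x,y)=(\exists\bar z')(r(x,\bar z')\wedge r(y,\bar z')\wedge E(x,y;\bar z'))$, checks transitivity syntactically, and then extracts a finite $\pi$ by compactness. You obtain the same relation semantically by transporting $E$-classes along $A$-automorphisms, which makes well-definedness and the equivalence-relation properties immediate.

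For the definability step you need neither cuts nor weak o-minimality. First, $[a]_E=E(a,\Mon;\bar b)\cap(b_1,b_2)\cap p(\Mon)$ for any $b_1<[a]_E<b_2$ in $q(\Mon)$, so $[a]_E$ is relatively definable in $p(\Mon)$. Second, any relatively definable $Aa$-invariant subset of $p(\Mon)$ is relatively $Aa$-definable, by a general compactness fact. If $\chi(y,\bar d)$ defines it, pick $\rho(\bar z)\in\tp(\bar d/Aa)$ and $\theta(y)\in p$ with $\rho(\bar z)\wedge\rho(\bar z')\wedge\theta(y)\vdash\chi(y,\bar z)\leftrightarrow\chi(y,\bar z')$, and take $(\exists\bar z)(\rho(\bar z)\wedge\chi(y,\bar z))$. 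The resulting $L_A$-formula in $a,y$ then relatively defines $E_0$ on all of $p(\Mon)$, because realizations of $p$ are $A$-conjugate and $E_0$ is $A$-invariant.
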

\begin{proof} Without loss of generality, assume $A=\emptyset$ and fix a weakly o-minimal pair $\mathbf p=(p,<)$ over $\emptyset$. 
1)$\Rightarrow$2) is obvious. 

2)$\Rightarrow$3) Assume 2). Let $E(x,y)$ be an $L$-formula that relatively defines $E$ on $q(\Mon)$, and let $a\models q$. By weak o-minimality of $q$ and since $E\neq\mathbf 1_q$, the class $[a]_E= E(a,q(\Mon))$ is a convex bounded subset of $q(\Mon)$, so, since $q(\Mon)$ is a convex subset of $p(\Mon)$ (see \cite[Lemma 3.6(ii)]{MTwom}), $[a]_E$ is one of the convex components of the set $E(a,p(\Mon))$ within $p(\Mon)$. Since each component is relatively $a$-definable within $p(\Mon)$ (see \cite[Lemma 3.6(i)]{MTwom}), so is $[a]_E$. In particular, $[a]_E$ is $a$-invariant, proving 3). 

3)$\Rightarrow$1)
Let $E(x,y;\bar b)$ be a formula that relatively defines $E$ on $q(\Mon)$, where $\bar b$ are parameters from $B$, and let $a\models q$. Denote:  $s(\bar z)=\tp(\bar b)$ and $r(x,\bar z)=\tp(a,\bar b)$. Clearly, $r(x,\bar z)\supset p(x)$ and $q(x)\supseteq r(x,\bar b)$.  

From the proof of 2)$\Rightarrow$3), we know that the class $[a]_E=E(a,q(\Mon);\bar b)$ is one of the convex components of $E(a,p(\Mon);\bar b)$ within $p(\Mon)$ and that $[a]_E$ is convex and bounded within $q(\Mon)$. We also know that each of these components is relatively $a\bar b$-definable within $p(\Mon)$, so we can replace $E(x,y;\bar z)$ by a formula whose instance $E(a,y;\bar b)$ relatively defines the class $[a]_E$ within $p(\Mon)$, that is, assume $E(a,p(\Mon);\bar b)=[a]_E$. Since $r(x,\bar b)=\tp_x(a/\bar b)\subseteq q(x)$, after this replacement, $E(x,y;\bar b)$ still relatively defines $E$ on $q(\Mon)$. Hence, we have
\begin{equation}\tag{1}
    s(\bar z)\vdash"\text{$E(x,y;\bar z)$ relatively defines an equivalence relation on $r(\Mon,\bar z)$."}
\end{equation}
Further, $E(a,q(\Mon);\bar b)= E(a,p(\Mon);\bar b)$ implies $p(y)\cup\{E(a,y;\bar b)\}\vdash q(y)$; as $q(x)=r(x,\bar b)$, we have:
\begin{equation}\tag{2}
    r(x,\bar z)\cup p(y)\cup\{E(x,y;\bar z)\}\vdash r(y,\bar z).
\end{equation}
Now, note that $a\bar{b'}\models r$ implies $E(a,p(\Mon);\bar b)=E(a,p(\Mon);\bar{b'})$ as $[a]_E=E(a,p(\Mon);\bar b)$ is $a$-invariant by 3). This together with (2) gives:
\begin{equation}\tag{3}
    r(x,\bar z)\cup r(x,\bar{z'})\cup p(y)\vdash E(x,y;\bar z)\leftrightarrow E(x,y;\bar{z'}).
\end{equation}
Consider the type: $\Pi(x,y)=(\exists\bar z')(\bigwedge r(x,\bar z')\land \bigwedge r(y,\bar z')\land E(x,y;\bar z')).$ From (2) and (3) we derive 
\begin{equation}\tag{4}
    r(x,\bar z)\cup \Pi(x,y)\vdash \bigwedge r(y,\bar{z})\land E(x,y;\bar{z}).  \ 
\end{equation}
This, together with (1) gives:
\begin{equation}\tag{5}
\Pi(x,y)\cup \Pi(y,u)\cup r(y,\bar z)\vdash \bigwedge r(x,\bar{z})\land \bigwedge r(u,\bar{z})\land E(x,u;\bar{z}). 
\end{equation}
Note that applying $(\exists \bar z)$ to the right-hand side in (5) gives the definition of $\Pi(x,u)$, so, in particular, $\Pi(x,y)\land \Pi(y,u)\vdash \Pi(x,u)$ holds. It follows that the locus of $\Pi(x,y)$, denoted by $E_0$, is an equivalence relation on $p(\Mon)$; here, reflexivity and symmetry are clear from the definition of $\Pi$.
Clearly, $E\subseteq E_0$ holds as the existential quantifier in $\Pi(x,y)$ is witnessed by $\bar b$. On the other hand, if $a_1,a_2\models q$ and $\models \Pi(a_1,a_2)$, then $\models E(a_1,a_2;\bar b)$ follows by (4); this proves $(E_0)_{\strok q(\Mon)}\subseteq E$. Therefore, $(E_0)_{\strok q(\Mon)}=E$.

By now, we know that $E$ is the restriction of $E_0$, which is type-definable over $\emptyset$. To prove 1), it remains to show that $E_0$ is relatively 0-definable. 
By compactness, applied to $r(x,\bar z)\cup\Pi(x,y)\vdash E(x,y;\bar z)$ from (4),  there is a finite conjunction $\pi(x,y)$ of formulae from $\Pi(x,y)$ such that: 
\begin{equation}\tag{6}
    r(x,\bar z)\cup \{\pi(x,y)\}\vdash E(x,y;\bar z).
\end{equation}
We {\em claim} that $E_0$ is relatively defined by $\pi(x,y)$ on $p(\Mon)$. To prove this, suppose that $a_1,a_2\models p$ and $\models\pi(a_1,a_2)$. Pick $\bar{b'}\models s$ such that $a_1\bar{b'}\models r$. Then (6) yields $\models E(a_1,a_2;\bar{b'})$, which together with $a_1\bar{b'}\models r$ implies $a_2\bar{b'}\models r$ by (2), so $\models \Pi(a_1,a_2)$ holds by the definition (where $\bar{b'}$ witnesses the existential quantifier). Therefore, $E_0$ is relatively 0-definable on $p(\Mon)$, completing the proof of the lemma.
\end{proof}

\begin{Corollary}\label{Fact_imaginaries_Epqdetermined_semiintervals}
Suppose that $p\in S(A)$ is weakly o-minimal, $a\models p$, $A\subseteq B\subseteq \dcleq(Aa)$, and $q=\tp(a/B)$. 
\begin{enumerate}[\hspace{10pt}(a)]
\item All equivalences $E\in\mathcal E_q$ are relatively $A$-definable. 
\item Every bounded $(q,<)$-semiinterval $S_a$ is also a $(p,<)$-semiinterval. Moreover, $S_a$ is $\mathcal E_p$-determined if and only if it is $\mathcal E_{q}$-determined. 
\item If $p$ has simple semiintervals, then $q$ also has simple semiintervals. 
\end{enumerate}
\end{Corollary}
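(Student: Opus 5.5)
The main tool for (a) is Lemma \ref{Lemma O EKVIVALENCIJAMA}, and (b) and (c) follow from (a) together with the convexity of $q(\Mon)$ inside $p(\Mon)$. Throughout, assume $A=\emptyset$ and fix a weakly o-minimal pair $(p,<)$. Since $q$ extends $p$, the set $q(\Mon)$ is a convex subset of $p(\Mon)$ by \cite[Lemma 3.6(ii)]{MTwom}, and $(q,<)$ is a weakly o-minimal pair.

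For (a), take $E\in\mathcal E_q$ with $E\neq\mathbf 1_q$; the full relation is trivially relatively $A$-definable. By condition 3) of Lemma \ref{Lemma O EKVIVALENCIJAMA}, it suffices to show that $[a]_E$ is $Aa$-invariant. The class $[a]_E$ is relatively $B$-definable over $a$, so it is $Ba$-invariant. Every element of $B$ lies in $\dcleq(Aa)$, so any automorphism fixing $Aa$ fixes $B$ pointwise, and hence fixes $[a]_E$ setwise. This gives 3), so 1)--2) of the lemma yield that $E$ is the restriction of some $E_0\in\mathcal E_p$, i.e.\ relatively $A$-definable.

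For (b), let $S_a$ be a bounded $(q,<)$-semiinterval, so $S_a<b$ for some $b\models q$. Then $S_a$ is convex in $q(\Mon)$, and since $q(\Mon)$ is convex in $p(\Mon)$, it is also convex in $p(\Mon)$. Boundedness in $q(\Mon)$ on both sides (by $a$ and $b$) lets us isolate $S_a$ as a convex component of a $Ba$-definable set within $p(\Mon)$. Because $B\subseteq\dcleq(Aa)$, the set $S_a$ is $Aa$-invariant. A relatively $Ba$-definable subset of $p(\Mon)$ that is $Aa$-invariant should be relatively $Aa$-definable: replacing the parameters $\bar b\in B$ by their $\dcleq(Aa)$-definitions $\bar b=h(a)$ turns the $Ba$-formula into an $L_{Aa}$-formula, working in $\Moneq$. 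I expect this step to need the most care, namely ensuring that the defining formula, relativized to $p(\Mon)$ and not merely to $q(\Mon)$, cuts out exactly $S_a$. The fix is the convex-component trick from the proof of 2)$\Rightarrow$3) in Lemma \ref{Lemma O EKVIVALENCIJAMA}, using \cite[Lemma 3.6(i)]{MTwom} that convex components are relatively definable.

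For the equivalence in (b): if $E\in\mathcal E_q$ determines $S_a$, then by (a) $E=(E_0)_{\strok q(\Mon)}$ with $E_0\in\mathcal E_p$. Since $E\ne\mathbf 1_q$ (boundedness), $[a]_E$ is bounded in $q(\Mon)$, and by the proof of 2)$\Rightarrow$3) it equals $[a]_{E_0}$. Hence $E_0$ determines $S_a$ as a $(p,<)$-semiinterval. Conversely, suppose $E_0\in\mathcal E_p$ determines $S_a$. Then $S_a=[a]_{E_0}\cap[a,+\infty)$ is bounded in $q(\Mon)$, and $[a]_{E_0}$ is convex and meets $q(\Mon)$ at $a$. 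Its final segment $S_a$ lies inside $q(\Mon)$, and also $[a]_{E_0}\cap(-\infty,a]$ lies inside $q(\Mon)$: this holds by applying the same reasoning to the reversed order, or more simply by the fact that $\tp(a'/B)=q$ for $a'E_0a$. The latter follows because the $E_0$-class of $a$ is $Aa$-invariant and $B\subseteq\dcleq(Aa)$, with an automorphism argument as in (a) using $E_0$-classes. Hence $(E_0)_{\strok q(\Mon)}$ determines $S_a$.

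For (c), an unbounded $(q,<)$-semiinterval $S_a$ is a final part of $q(\Mon)$ and is therefore determined by $\mathbf 1_q$, so only bounded ones matter. These are $(p,<)$-semiintervals by (b), hence $\mathcal E_p$-determined because $p$ has simple semiintervals, hence $\mathcal E_q$-determined by (b). So $(q,<)$ has simple semiintervals, and by Corollary \ref{COr_simple_semiint_ind_of order} $q$ does.
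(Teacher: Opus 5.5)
Your proposal is correct and follows the paper's route. Part (a) goes via condition 3) of Lemma \ref{Lemma O EKVIVALENCIJAMA} using $B\subseteq\dcleq(Aa)$, and (b), (c) are derived from (a) and Corollary \ref{COr_simple_semiint_ind_of order}, with you supplying the convex-component details that the paper calls ``easy''. One minor point: in the converse direction of (b), the inclusion $[a]_{E_0}\cap(-\infty,a]\subseteq q(\Mon)$ is unnecessary, since $S_a\subseteq q(\Mon)$ already shows that $(E_0)_{\strok q(\Mon)}\in\mathcal E_q$ determines $S_a$; moreover, the reason you sketch for that inclusion, the $Aa$-invariance of $[a]_{E_0}$, would not suffice on its own.
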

\begin{proof}(a) If $E=\mathbf 1_q$, the conclusion holds trivially, so suppose $E\neq\mathbf 1_q$.
By Lemma \ref{Lemma O EKVIVALENCIJAMA} it suffices to show that the class $[a]_E$ is $Aa$-invariant. However, this is clear as $B\subseteq\dcleq(Aa)$. 

(b) easily follows from (a), while (c) follows from (b) and Corollary \ref{COr_simple_semiint_ind_of order}. 
\end{proof}

\subsection{Simplicity of semiintervals and binary forking dependence}
\label{Subsection 4}
 
In this subsection, in Proposition \ref{Prop_simple_semiintervals_implies_interdef}, we prove that the set $\dcleq(a)\cap \dcleq(b)$ measures quite accurately the amount of forking dependence of $a$ on $b$ whenever $\tp(a)$ and $\tp(b)$ are weakly o-minimal and at least one of them has simple semiintervals. This looks like a very weak form of 1-basedness, but the intuition behind this is that the simplicity of semiintervals implies that the binary forking dependence is measured by elements of a certain meet-tree, as evidenced in Corollary \ref{Corollary_meet_tree}.

\begin{Notation} For any $D\subseteq \Moneq$ we will use the following notation:
\begin{itemize}
\item $\mathcal W_A(D)= \{a\in D\mid \mbox{$\tp(a/A)$ is weakly o-minimal}\}$;
\item $\mathcal W_A^{ss}(D)= \{a\in \mathcal W_A(D)\mid \mbox{$\tp(a/A)$ has simple semiintervals}\}$;
\item $\mathcal T_A(D)=\{X\subset \Moneq\mid \mbox{$X=\dcleq(AX)\subseteq \dcleq(Aa)$ for some $a\in \mathcal W_A(D)$}\}$;
\item $m_A(a,b)=\dcleq(Aa)\cap\dcleq(Ab)$. 
\end{itemize}
If $A=\emptyset$, we omit stressing it in the index.
\end{Notation}

In Corollary \ref{Corollary_meet_tree} we will prove that $(\mathcal T_A(D),\subset)$ is a meet-tree with $m_A$ its meet operation, which will be used in the proof of Proposition \ref{Prop_simple_semiintervals_implies_interdef}.

\begin{Remark}\label{Remark increasing A in W_A and W_A^ss}
(a) Note that $A\subseteq B$ implies $\mathcal W_A(D)\subseteq\mathcal W_B(D)$: if $a\in\mathcal W_A(D)$, then $a\in\mathcal W_B(D)$ as any extension of a weakly o-minimal type is weakly o-minimal. 

(b) In general,  $A\subset B$ does not imply $\mathcal W_A^{ss}(D)\subseteq\mathcal W_B^{ss}(D)$: It is possible that $\tp(a/A)$ has simple semiintervals but $\tp(a/B)$ does not, as we know from Example \ref{Example_ssi_not hereditary}.

(c) $a\mapsto \dcleq(Aa)$ is a natural identification of elements of $\mathcal W_A(\Moneq)$ with elements of $\mathcal T_A(\Moneq)$, so we can view $\mathcal W_A(\Moneq)$ as a subset of $\mathcal T_A(\Moneq)$.
\end{Remark}

\begin{Lemma}\label{Lemma_wom_dcl_is_linear}
Suppose that the type $p=\tp(a/A)$ is weakly o-minimal. 
\begin{enumerate}[\hspace{10pt}(a)] 
\item For every element $e\in\dcleq (Aa)$ there exists $E\in\mathcal E_p$ such that $e$ and (the $A$-hyperimaginary element) $[a]_E$ are interdefinable over $A$; that is, $\Aut_{Ae}(\Moneq)=\Aut_{A\{[a]_E\}}(\Moneq)$. 
\item The set $\dcleq(Aa)$ is totally preordered by $x\in\dcleq(Ay)$.
\end{enumerate}
\end{Lemma}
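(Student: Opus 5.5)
Without loss of generality $A=\emptyset$. For part (a), fix $e\in\dcleq(a)$, say $e=f(a)$ for a $0$-definable function $f$. The kernel $\ker f=\{(x,y)\in p(\Mon)^2\mid f(x)=f(y)\}$ is relatively $0$-definable on $p(\Mon)$, because it is defined by the formula $f(x)=f(y)$. It is an equivalence relation, so $E:=\ker f\in\mathcal E_p$. I claim $e$ and $[a]_E$ are interdefinable, and I prove both inclusions of automorphism groups.

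First, suppose $\sigma\in\Aut(\Moneq)$ fixes $e$. Since $a\models p$, for every $x\models p$ we have
$$x\in[a]_E\iff f(x)=e .$$
Then $\sigma$ preserves $p(\Mon)$ and the set $\{x\models p\mid f(x)=e\}$, so $\sigma$ fixes $[a]_E$ setwise.

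Conversely, suppose $\sigma$ fixes $[a]_E$ setwise. Then $\sigma(a)\in[a]_E$, so $f(\sigma(a))=f(a)$. Since $f$ is $0$-definable, $\sigma(e)=\sigma(f(a))=f(\sigma(a))=f(a)=e$.

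Hence $\Aut_{e}(\Moneq)=\Aut_{\{[a]_E\}}(\Moneq)$, which is (a). The only care needed is the convention that $\dcleq$ elements are images of $a$ under $0$-definable maps into sorts of $\Moneq$, and that the relevant formula only needs to define $E$ relatively on $p(\Mon)$.

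For part (b), take $e_1,e_2\in\dcleq(a)$. By (a) there are $E_1,E_2\in\mathcal E_p$ with $e_i$ interdefinable with $[a]_{E_i}$. Since $(\mathcal E_p,\subseteq)$ is totally ordered (cited from \cite{MTwom}), we may assume $E_1\subseteq E_2$. Then $[a]_{E_2}$ is the union of those $E_1$-classes of realizations of $p$ that are $E_2$-related to $a$; equivalently,
$$[a]_{E_2}=\{x\models p\mid (\exists y\in[a]_{E_1})\,E_2(x,y)\}.$$
Hence any automorphism fixing $[a]_{E_1}$ setwise fixes $[a]_{E_2}$ setwise, so $\Aut_{e_1}\subseteq\Aut_{e_2}$.

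It remains to show $e_2\in\dcleq(e_1)$. By (a), $e_2$ is fixed by every automorphism fixing $e_1$, and $e_2\in\dcleq(a)$ lies in a sort with small orbits. A standard compactness argument in $\Moneq$ then gives $e_2\in\dcleq(e_1)$, since an imaginary that is invariant over a small set lies in the definable closure of that set. Thus the relation $x\in\dcleq(y)$ is total on $\dcleq(a)$. It is reflexive and transitive, hence a total preorder.

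The main obstacle is this final step from invariance to definability. The tool is the standard monster-model fact that for a small set $B$, an element $e\in\Moneq$ fixed by $\Aut_B(\Moneq)$ lies in $\dcleq(B)$. I apply it with $B=\{e_1\}$.
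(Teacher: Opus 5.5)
Your proof is correct and follows the paper's argument. In (a) you take $E$ to be the kernel of a $0$-definable $f$ with $f(a)=e$, restricted to $p(\Mon)$, and verify both inclusions of automorphism groups, which the paper leaves as ``easy to see''. In (b) you use that $(\mathcal E_p,\subseteq)$ is linearly ordered to get $\Aut_{e_1}(\Moneq)\subseteq\Aut_{e_2}(\Moneq)$, and then conclude $e_2\in\dcleq(e_1)$ by the standard fact that invariance over a small set implies definability, exactly as the paper does.
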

\begin{proof} Assume, without loss of generality, that $A=\emptyset$ and $\Mon=\Moneq$.

(a) Let $e\in\dcleq(a)$. Choose a $0$-definable function $f$ such that $f(a)=e$ and let $E\in\mathcal E_p$ be the kernel relation of $f_{\strok p(\Mon)}$. It is easy to see that the hyperimaginary element $[a]_E$ is as claimed.  

(b) We only need to check that for $b_1,b_2\in\dcleq(a)$, $b_1\in\dcleq(b_2)$ or $b_2\in\dcleq(b_1)$ is true. By part (a), for $i=1,2$ there is $E_i\in \mathcal E_p$ such that $b_i$ and $[a]_{E_i}$ are interdefinable. By weak o-minimality, $E_1,E_2\in \mathcal E_p$ are comparable, so without loss suppose $E_1\subseteq E_2$. Then we have 
$$\Aut_{b_1}(\Moneq)= \Aut_{\{[a]_{E_1}\}}(\Moneq)\subseteq \Aut_{\{[a]_{E_2}\}}(\Moneq) =\Aut_{b_2}(\Moneq).$$
Now, $\Aut_{b_1}(\Moneq) \subseteq \Aut_{b_2}(\Moneq)$ 
implies $b_2\in \dcleq (b_1)$. 
\end{proof}

\begin{Corollary}\label{Corollary_meet_tree}
$(\mathcal T_A(D),\subset)$ is a meet-tree with root $\dcleq(A)$ for all $D\subseteq \Moneq$.  \end{Corollary}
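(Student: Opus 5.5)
To verify that $(\mathcal T_A(D),\subset)$ is a meet-tree with root $\dcleq(A)$, I check three things: $\dcleq(A)$ is the least element, the set of predecessors of each element is linearly ordered by $\subset$, and any two elements have an infimum given by $m_A$. Without loss I assume $A=\emptyset$ and $\Mon=\Moneq$, as in the proof of Lemma \ref{Lemma_wom_dcl_is_linear}, and I view $\dcleq$ as the closure operator.

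\emph{Root.} For any $X\in\mathcal T(D)$ we have $X=\dcleq(X)\supseteq\dcleq(\emptyset)$. Moreover $\dcleq(\emptyset)$ itself belongs to $\mathcal T(D)$ as soon as $\mathcal W(D)\neq\emptyset$: it is $\dcleq$-closed and contained in $\dcleq(a)$ for any $a\in\mathcal W(D)$. (If $\mathcal W(D)$ is empty, the tree is empty and the statement is vacuous.) Hence $\dcleq(\emptyset)$ is the minimum.

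\emph{Linearity below an element.} Let $X\in\mathcal T(D)$ be witnessed by $a\in\mathcal W(D)$, so $X\subseteq\dcleq(a)$. Let $Y,Z\in\mathcal T(D)$ with $Y,Z\subseteq X$. Suppose $Y\not\subseteq Z$, and pick $y\in Y\smallsetminus Z$. For any $z\in Z$, both $y$ and $z$ lie in $\dcleq(a)$, so by Lemma \ref{Lemma_wom_dcl_is_linear}(b) we have $y\in\dcleq(z)$ or $z\in\dcleq(y)$. The first option would give $y\in\dcleq(Z)=Z$, which is impossible. So $z\in\dcleq(y)\subseteq Y$. Since $z\in Z$ was arbitrary, $Z\subseteq Y$. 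This is the step where weak o-minimality enters, and it is entirely handled by Lemma \ref{Lemma_wom_dcl_is_linear}(b). In the same way, any $\dcleq$-closed subset of $\dcleq(a)$ is an initial segment of the total preorder on $\dcleq(a)$ given by ``$x\in\dcleq(y)$''. This also shows that $\mathcal T$ of a single element is a chain.

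\emph{Meets.} Let $X,Y\in\mathcal T(D)$, witnessed by $a$ and $b$ respectively. Then $X\cap Y$ is $\dcleq$-closed, since an intersection of closed sets is closed, and it is contained in $\dcleq(a)$, so $X\cap Y\in\mathcal T(D)$. It is clearly the greatest lower bound of $X$ and $Y$ under $\subset$. In particular, for the elements $\dcleq(a)$ and $\dcleq(b)$ themselves, the meet is $m(a,b)=\dcleq(a)\cap\dcleq(b)$. This justifies calling $m_A$ the meet operation, under the identification of Remark \ref{Remark increasing A in W_A and W_A^ss}(c).

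The only potential subtlety is the bookkeeping: elements of $\mathcal T(D)$ are required to be subsets of $\dcleq(Aa)$ for a witness $a$ taken from $D$, and closure is always taken over $A$. Both properties are preserved under intersection, so no genuine obstacle arises. Combining the three items, $(\mathcal T_A(D),\subset)$ is a meet-tree with root $\dcleq(A)$.
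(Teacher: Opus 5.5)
Your proposal is correct and follows the paper's proof. Both arguments use Lemma \ref{Lemma_wom_dcl_is_linear}(b) to see that $\dcleq$-closed subsets of $\dcleq(a)$ are initial parts of a total preorder, hence form a chain; both take the meet to be intersection and the root to be $\dcleq(A)$. The only differences are that you spell out the ``initial parts of a total preorder are linearly ordered'' step explicitly and note the degenerate case $\mathcal W(D)=\emptyset$.
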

\begin{proof} 
For simplicity, assume $A=\emptyset$. 
Let $X\in \mathcal T(D)$. Choose $a\in \mathcal W(D)$ such that $X=\dcleq(X)\subseteq \dcleq(a)$. By Lemma \ref{Lemma_wom_dcl_is_linear}, the set $\dcleq(a)$ is totally preordered by $x\in\dcleq(y)$. Note that every $\dcleq$-closed subset of $\dcleq(a)$ is an initial part of that preorder. Since the set of all initial parts of a total preorder is totally ordered by inclusion, we conclude that the set $\{Y\in \mathcal T(D)\mid Y\subset X\}$ is a chain in $(\mathcal T(D),\subset)$. This shows that $(\mathcal T(D),\subset)$ is a tree. 
It is easy to see that $X\cap Y\in \mathcal T(D)$ for all $X,Y\in\mathcal T(D)$, so $X\cap Y\in \mathcal T(D)$ is the meet of $X,Y$; $(\mathcal T(D),\subset)$ is a meet-tree. Obviously, $\dcleq(\emptyset)$ is the root of $\mathcal T(D)$.  
\end{proof}

\begin{Lemma}\label{Lemma_ssi_implies_a ind b_iff_m(a b)=0}   
For all $a\in \mathcal W_A^{ss}(\Moneq)$ and $b\in \mathcal W_A(\Moneq)$: \  $a\dep_Ab$ \ if and only if \ $m_A(a,b)\neq\dcleq(A)$. 
\end{Lemma}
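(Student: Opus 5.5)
Without loss of generality I take $A=\emptyset$ and $\Mon=\Moneq$, and write $p=\tp(a)$, $q=\tp(b)$. I prove the two directions separately.

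\emph{The direction ($\Leftarrow$).} Suppose $m(a,b)\neq\dcleq(\emptyset)$, and pick $e\in\dcleq(a)\cap\dcleq(b)\smallsetminus\dcleq(\emptyset)$. By Lemma \ref{Lemma_wom_dcl_is_linear}(a) there is $E\in\mathcal E_p$ such that $e$ and $[a]_E$ are interdefinable. Since $e\notin\dcleq(\emptyset)$, we have $E\neq\mathbf 1_p$, so $[a]_E$ is a bounded convex set and $\tp(e)$ is nonalgebraic.
\begin{enumerate}
    \item Write $e=g(b)$. The formula $g(x)=e$ defines a set containing $a$'s partner $b$ and only the realizations of $q$ in a proper nonalgebraic fibre.
    \item By weak o-minimality of $q$, this set is a bounded convex piece of $q(\Mon)$ over $e$, and hence over $a$.
    \item So $\tp(b/a)$ contains a formula whose trace on $q(\Mon)$ is a bounded convex set. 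Such a formula divides over $\emptyset$, because $a$ has infinitely many conjugates with pairwise disjoint $[\cdot]_E$-classes.
\end{enumerate}
This gives $b\dep a$, equivalently $a\dep b$ (cf.\ Remark \ref{Remark SEMI rel def set is BC of semiint}(c) and \cite[Remark 2.4]{MTwqom1}). I expect this direction to be routine.

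\emph{The direction ($\Rightarrow$).} Suppose $a\dep b$, and fix a weakly o-minimal pair $\mathbf p=(p,<)$.
\begin{enumerate}
    \item The forking set $\mathcal D_p$ relative to $b$, namely the realizations of $p$ forking with $b$, is witnessed by a $b$-definable subset of $p(\Mon)$ that is bounded and contains $a$.
    \item Take its convex component containing $a$. Its convex hull $C_b$ is a bounded convex subset of $p(\Mon)$, relatively $b$-definable and containing $a$.
    \item The set $S_a=\{x\models p\mid a\leqslant x<\sup C_b\}$, or its reverse, gives a bounded $\mathbf p$-semiinterval. It is relatively $ab$-definable but not yet $a$-definable.
\end{enumerate}

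To make this $a$-definable, I use $b$ as a function of $a$ up to conjugacy.
\begin{enumerate}
    \item Consider the relation $\sim$ on $p(\Mon)$: $x\sim y$ iff $x,y$ lie in a common conjugate $C_{b'}$ with $b'\models q$.
    \item Since $q$ is weakly o-minimal, the family $\{C_{b'}\mid b'\models q\}$ should be ordered monotonically in $b'$.
    \item From this I expect the $\mathbf p$-semiintervals $\{x\mid a\leqslant x,\ \exists b'\models q\,(a,x\in C_{b'})\}$ to be type-definable.
    \item A compactness argument, as in the proof of Lemma \ref{Lemma O EKVIVALENCIJAMA}, should then make them relatively $a$-definable and bounded.
\end{enumerate}

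Simplicity of semiintervals of $p$ then gives $E\in\mathcal E_p\smallsetminus\{\mathbf 1_p\}$ determining this semiinterval, with $C_b\subseteq[a]_E$ or comparable. The goal is to show $[a]_E\in\dcleq(b)$ up to interdefinability:
\begin{enumerate}
    \item Conjugates of $[a]_E$ over $b$ must meet $C_b$.
    \item Since the $E$-classes are pairwise disjoint convex sets, this forces $[a]_E$ to be $b$-invariant.
    \item This step mirrors the argument of Proposition \ref{Prop_ssi_consequences}(d).
\end{enumerate}
Then, for a definable extension $\hat E$ with $\hat E\neq$ full on $p$, the element $[a]_{\hat E}$ lies in $\dcleq(a)\cap\dcleq(b)$ but not in $\dcleq(\emptyset)$, so $m(a,b)\neq\dcleq(\emptyset)$.

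The main obstacle is to pass from the $ab$-definable semiinterval to an $E\in\mathcal E_p$ whose class is fixed by $b$. In particular, I must choose $E$ minimal so that $C_b$ is not split by $E$-classes. Here I would use density of $p(\Mon)/E$ (Proposition \ref{Prop_ssi_consequences}(c)) to rule out $C_b$ being a union of several $E$-classes when $E$ is chosen as the least relation with $C_b\subseteq[a]_E$.
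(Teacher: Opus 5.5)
Your ($\Leftarrow$) direction is fine. The fibre $\{y\in q(\Mon)\mid g(y)=e\}$ is a class of a proper relation in $\mathcal E_q$, hence bounded. Since $e\in\dcleq(a)$, it is defined by a formula of $\tp(b/a)$.

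The ($\Rightarrow$) direction has a genuine gap, exactly at the step you call the main obstacle. Simplicity of semiintervals applies only to \emph{relatively $a$-definable} $\mathbf p$-semiintervals. You never produce a bounded one containing $C_b\cap[a,+\infty)$.
\begin{itemize}
\item Your candidate $X_a=\{x\mid a\leqslant x,\ \exists b'\models q\,(a,x\in C_{b'})\}$ is a projection of a partial type over $a$. It is therefore only type-definable and $a$-invariant.
\item Such a convex set need not be relatively $a$-definable. Take $[a,+\infty)\cap\bigcap_n[a]_{E_n}$ for a strictly decreasing chain $(E_n)$ in $\mathcal E_p$ whose intersection is not in $\mathcal E_p$.
\item Replacing $b'\models q$ by $b'\in\theta(\Mon)$ with $\theta\in q$ can enlarge the union uncontrollably.
\item The compactness in Lemma~\ref{Lemma O EKVIVALENCIJAMA} rests on entailments specific to that situation, which have no counterpart here.
\item Even the boundedness of $X_a$ is not argued.
\end{itemize}
Your last paragraph also aims at the wrong target. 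A least $E$ with $C_b\subseteq[a]_E$ need not exist and is not needed, and ``or comparable'' is not enough. What you need is \emph{some} proper $E$ with $C_b\subseteq[a]_E$, because then $[a]_E$ is the only $E$-class meeting the $b$-invariant set $C_b$.

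The missing idea, which the paper uses, is to exchange the roles of $a$ and $b$, so that nothing has to be quantified over $q(\Mon)$.
\begin{itemize}
\item Witness the forking by $\phi(y,a)\in\tp(b/a)$ with $\phi(q(\Mon),a)$ bounded.
\item Encode the two ends of $\phi(\Mon,x)$ by the relatively definable maps $F(x)=\{y\in D_q\mid y<_q\phi(\Mon,x)\}$ and $G(x)=\supin(\phi(\Mon,x))$ from $p(\Mon)$ to $\overline D_q^{\,in}$.
\item Use the monotonicity theorem to pick a relatively definable order $<_p$ on $p(\Mon)$ making $F$ increasing. This is allowed, since $p$ has simple semiintervals with respect to every relatively definable order.
\end{itemize}
Then $S_a=\{x\in p(\Mon)\mid a\leqslant_p x\wedge F(x)\subset G(a)\}$ is a bounded $\mathbf p$-semiinterval that is genuinely relatively $a$-definable, because comparing $F(x)$ with $G(a)$ is first order.

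Now let $E$ determine $S_a$. Then $[a_1]_E<_p[a_2]_E$ implies $F(a_1)\subset G(a_1)\subseteq F(a_2)\subset G(a_2)$. So the sets $G(t)\smallsetminus F(t)$ are disjoint for $t$ in distinct $E$-classes. Hence $[a]_E$ is the unique $E$-class containing some $t$ with $b\in G(t)\smallsetminus F(t)$, and $[a]_{\hat E}\in m(a,b)\smallsetminus\dcleq(\emptyset)$. Running this with $\phi(y,x)$ taken to be $x\in C_y$, suitably bounded, also yields your desired inclusion $C_b\subseteq[a]_E$.
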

\begin{proof} Without loss of generality, assume $A=\emptyset$. Denote $p=\tp(a)$ and $q=\tp(b)$. Let $\mathbf q=(q,<_q)$ be a weakly o-minimal pair and let $(D_q,<_q)$ be a $0$-definable extension of $(q(\Mon),<_q)$.

To prove the right-to-left implication, assume that $m(a,b)\neq \dcleq(\emptyset)$ and let $c\in m(a,b)\smallsetminus\dcleq(\emptyset)$. By Lemma \ref{Lemma_wom_dcl_is_linear}(a), there is some $E\in\mathcal E_p$ for which $c$ is interdefinable with the class $[a]_E$; similarly, $c$ is interdefinable with $[b]_F$  for some $F\in\mathcal E_q$. Note that the classes $[a]_E$ and $[b]_F$ are also interdefinable. 
Choose $b_1,b_2\models q$ such that $b_1\triangleleft^\mathbf q b\triangleleft^\mathbf q b_2$. Then, since $c\notin\dcleq(\emptyset)$ implies $F\neq \mathbf 1_q$, we conclude $[b_1]_F<_q[b]_F<_q[b_2]_F$. This, together with the fact that $[b]_F$ is $\{[a]_E\}$-invariant, yields $b_1\nequiv b\ (a)$ and $b\nequiv b_2\ (a)$. We conclude that the locus of $\tp(b/a)$ is a subset of the interval $[b_1,b_2]_{q(\Mon)}$, whence  a bounded subset of $q(\Mon)$; $a\dep b$ follows.  

To prove the other implication, assume that $a\dep b$. 
Witness the forking by a formula $\phi(x,a)\in \tp(b/a)$ such that $\phi(q(\Mon),a)$ is a bounded subset of $(q(\Mon),<_q)$:   $e_1<_q\phi(q(\Mon),a)<_qe_2$ where $e_1,e_2\in q(\Mon)$. By compactness, we can modify $\phi$ so that $e_1<_q\phi(\Mon,a)<_qe_2$; in particular, $\phi(\Mon,a)$ is a bounded subset of $D_q$. Define:
\[F(a)=\{y\in D_q\mid y<_q\phi(\Mon,a)\} \mbox{ \ and  \ } G(a)=\supin(\phi(\Mon,a)) . \]
Note that both $F(a)$ and $G(a)$ are initial parts of $D_q$, $F(a)<e_2$, and $G(a)<e_2$. Hence, we have just defined the functions $F,G:p(\Mon)\to \overline{D}_q^{\,in}$; clearly, each of them is relatively definable within $p(\Mon)\times D_q$. Also note that $F(x)\subset G(x)$ holds for all $x\in p(\Mon)$ (as $b\in G(a)\smallsetminus F(a)$). 
Let $<_p$ be a relatively definable linear order on $p(\Mon)$ such that the function $F$ is $<_p$-increasing; it exists by \cite[Theorem 1(i)]{MTwom}. Denote $\mathbf p=(p,<_p)$ and define:
\[S_a=\{x\in p(\Mon)\mid a\leqslant_p x\land F(x)\subset G(a) \}.\]
We show that $S_a$ is a bounded $\mathbf p$-semiinterval: 
Here, $\min S_a=a$ is obvious, while the convexity of $S_a$ in $(p(\Mon),<_p)$ follows by the monotonicity of $F$. Thus, $S_a$ is a $\mathbf p$-semiinterval. 
To see that it is bounded, choose $a'\in p(\Mon)$ with $G(a)\subset F(a')$; for example, let $a'$ be such that $e_2\in F(a')$.
Then $a'\notin S_a$. By the monotonicity of $F$, $F(a)\subset G(a)\subset F(a')$ implies $a<_pa'$, which combined with $a'\notin S_a$ gives $S_a<_pa'$. Hence $S_a$ is a bounded $\mathbf p$-semiinterval.  
Since $a\in \mathcal W^{ss}(\Moneq)$, $p$ has simple semiintervals, so
$S_a$ is $\mathcal E_p$-determined, say by $E\in\mathcal E_p$: $S_a=\{x\in p(\Mon)\mid a\leqslant_p x\land x\in[a]_{E}\}$; since $S_a$ is bounded, $E$ is proper. We claim that for all $a_1,a_2\in p(\Mon)$: 
\begin{itemize} 
\item[(1)] \  $[a_1]_{E}<_p[a_2]_{E} \ \text{ implies } \ F(a_1)\subset G(a_1)\subseteq F(a_2)\subset G(a_2).$
\end{itemize}
Here, $F(a_1)\subset G(a_1)$ and $F(a_2)\subset G(a_2)$ are consequences of $\tp(a)=\tp(a_1)=\tp(a_2)$. To prove $G(a_1)\subseteq F(a_2)$, note that $[a_1]_{E}<_p [a_2]_{E}$ and $S_{a_1}\subseteq [a_1]_{E}$ together imply $S_{a_1}<_pa_2$, which, by the definition of $S_{a_1}$, implies $F(a_2)\nsubset G(a_1)$; hence  $G(a_1)\subseteq F(a_2)$, proving (1). 

From (1) we find that the sets $G(a_1)\smallsetminus F(a_1)$ and $G(a_2)\smallsetminus F(a_2)$ are disjoint for all $a_1,a_2\in p(\Mon)$ belonging to distinct $E$-classes.
Therefore, the class $[a]_{E}$ is the unique $E$-class containing an element $t$ with $b\in G(t)\smallsetminus F(t)$; In particular, $[a]_{E}$ is fixed by $\Aut_b(\Mon)$. 
Let $(D_p,<_p)$ be a definable extension of $(p(\Mon),<_p)$ and let $\hat E$ be a $0$-definable convex equivalence on $D_p$ such that $E=\hat E_{\strok p(\Mon)}$. Then $[a]_{\hat E}\in \Moneq$ is fixed by $\Aut_b(\Moneq)$, so $[a]_{\hat E}\in \dcleq(b)$.
Since $[a]_{\hat E}\in\dcleq(a)$ holds trivially, we conclude $[a]_{\hat E}\in m(a,b)$. Since $E$ is proper, we have $[a]_{\hat E}\notin\dcleq(\emptyset)$, so $m(a,b)\supset\dcleq(\emptyset)$. This completes the proof of the lemma.
\end{proof}

\begin{Proposition}\label{Prop_simple_semiintervals_implies_interdef}  $a\ind_{m_A(a,b)}b$ \ holds for 
all $a\in \mathcal W_A^{ss}(\Moneq)$ and $b\in \mathcal W_A(\Moneq)$. 
\end{Proposition}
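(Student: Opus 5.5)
The plan is to reduce to Lemma \ref{Lemma_ssi_implies_a ind b_iff_m(a b)=0} by enlarging the base from $A$ to $B:=m_A(a,b)$. Since $B=\dcleq(Aa)\cap\dcleq(Ab)$ is $\dcleq$-closed and contains $A$, we have $m_B(a,b)=\dcleq(Ba)\cap\dcleq(Bb)=\dcleq(Aa)\cap\dcleq(Ab)=B=\dcleq(B)$, because $B\subseteq\dcleq(Aa)$ gives $\dcleq(Ba)=\dcleq(Aa)$, and likewise for $b$. Thus, if the lemma can be applied over $B$, its right-hand condition fails and we get $a\ind_B b$, which is exactly the claim.

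To apply the lemma over $B$ we must check its hypotheses: $a\in\mathcal W_B^{ss}(\Moneq)$ and $b\in\mathcal W_B(\Moneq)$. Weak o-minimality passes to extensions (Remark \ref{Remark increasing A in W_A and W_A^ss}(a)), so both $\tp(a/B)$ and $\tp(b/B)$ are weakly o-minimal. The delicate point is the simplicity of semiintervals of $\tp(a/B)$, since Remark \ref{Remark increasing A in W_A and W_A^ss}(b) warns that this is not preserved in general. Here, however, $A\subseteq B\subseteq\dcleq(Aa)$, so Corollary \ref{Fact_imaginaries_Epqdetermined_semiintervals}(c) applies directly: $p=\tp(a/A)$ has simple semiintervals, hence so does $q=\tp(a/B)$.

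One technical issue remains. $B$ may be a large (even infinite) subset of $\Moneq$, while the lemma and corollary are phrased for arbitrary small parameter sets. Since $B\subseteq\dcleq(Aa)$ is small, this is harmless. If necessary, I would note that by Lemma \ref{Lemma_wom_dcl_is_linear}(b) the set $\dcleq(Aa)$ is totally preordered by definability, so $B$ is an initial part of that preorder. It then suffices to argue with finite pieces of $B$ (forking is witnessed by a formula over finitely many parameters) and check that the argument is uniform.

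I expect the main obstacle to be precisely this hypothesis check: making sure Corollary \ref{Fact_imaginaries_Epqdetermined_semiintervals} applies to the imaginary base $B$ inside $\dcleq(Aa)$, and that the lemma is legitimately applied over a base that is not just $A$. The computation $m_B(a,b)=\dcleq(B)$ is routine.
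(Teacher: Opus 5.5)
Your proof is correct and is exactly the paper's argument. You pass to the base $B=m_A(a,b)$, use Corollary~\ref{Fact_imaginaries_Epqdetermined_semiintervals}(c) to get that $\tp(a/B)$ has simple semiintervals, apply Lemma~\ref{Lemma_ssi_implies_a ind b_iff_m(a b)=0} over $B$, and compute $m_B(a,b)=\dcleq(B)$. The worry in your third paragraph is unnecessary: $B\subseteq\dcleq(Aa)$ is small, and both the corollary and the lemma apply to it as stated, so no reduction to finite pieces is needed.
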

\begin{proof} Without loss of generality, assume $A=\emptyset$. Denote $B=m(a,b)$.  Then $b\in\mathcal W(\Moneq)$ implies $b\in\mathcal W_{B}(\Moneq)$ and, by Corollary \ref{Fact_imaginaries_Epqdetermined_semiintervals}(c) we know $a\in\mathcal W_{B}^{ss}(\Moneq)$. Hence, we can apply Lemma \ref{Lemma_ssi_implies_a ind b_iff_m(a b)=0}: 
\begin{itemize}
\item[(1)] \
$a\ind_{B}b$  \ \ if and only if \ $m_B(a,b)=\dcleq(B)$.
\end{itemize}
Note that $B\subseteq\dcleq(a)$ implies $\dcleq(Ba)=\dcleq(a)$ and, similarly, $\dcleq(Bb)=\dcleq(b)$. Hence, 
\begin{itemize} 
\item[(2)] \ $m_{B}(a,b)= \dcleq(Ba)\cap \dcleq(Bb)=\dcleq(a)\cap \dcleq(b)=B=\dcleq(B)$. 
\end{itemize}
From (1) and (2) we derive \ $a\ind_Bb$.
\end{proof}

\begin{Proposition}\label{Lemma_triple_noshiftsconsequence}
 Suppose that $a,b,c\in\mathcal W_A(\Moneq)$ and that at least two of them belong to $\mathcal W_A^{ss}(\Moneq)$. 
\begin{enumerate}[\hspace{10pt} (a)]
    \item  $a\ind_B b$ holds for all sets $B$ that satisfy $m_A(a,b)\subseteq B \subseteq \dcleq(Aa)$. 
    \item  The set $\{a,b,c\}$ is pairwise independent over $  m_A(a,b)\cup   m_A(b,c)\cup   m_A(a,c)$ for all $A\subset \Moneq$. 
\end{enumerate}
\end{Proposition}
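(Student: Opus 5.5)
The plan is to derive (a) from Proposition \ref{Prop_simple_semiintervals_implies_interdef} using symmetry and base monotonicity of forking. Part (b) will then follow from (a) together with the meet-tree structure of Corollary \ref{Corollary_meet_tree}. Throughout I assume $A=\emptyset$. The hypothesis that at least two of $a,b,c$ lie in $\mathcal W^{ss}(\Moneq)$ is used only in the following form: every pair from $\{a,b,c\}$ contains an element with simple semiintervals.

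\emph{Part (a).} First I would show $a\ind_{m(a,b)}b$. If $a\in\mathcal W^{ss}$, this is exactly Proposition \ref{Prop_simple_semiintervals_implies_interdef}. If instead $b\in\mathcal W^{ss}$, the same proposition gives $b\ind_{m(a,b)}a$, and I need to symmetrize. Here I would invoke the symmetry of forking between realizations of weakly o-minimal types, as recalled in \cite{MTwqom1}: over any base, $a\dep b$ holds if and only if $\tp(b/\cdot)$ restricted to the locus is bounded, and this is symmetric for weakly o-minimal types. Failing a citation, an alternative is to re-run the proof of Lemma \ref{Lemma_ssi_implies_a ind b_iff_m(a b)=0} with the roles of $a$ and $b$ swapped. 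Once $a\ind_{m(a,b)}b$ is available, let $B$ satisfy $m(a,b)\subseteq B\subseteq\dcleq(a)$. Since $B\subseteq\dcleq(a)$, invariance gives $aB\ind_{m(a,b)}b$. Symmetry gives $b\ind_{m(a,b)}aB$, base monotonicity gives $b\ind_{m(a,b)B}a$, that is $b\ind_B a$, and symmetry again gives $a\ind_B b$. Each symmetry step is applied to elements whose types over the relevant base are still weakly o-minimal, by Remark \ref{Remark increasing A in W_A and W_A^ss}(a). This is why I would only use symmetry of forking for weakly o-minimal types and never rely on it in full generality.

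\emph{Part (b).} The key observation is an isosceles property of the three meets. The sets $m(a,b)$ and $m(a,c)$ are $\dcleq$-closed subsets of $\dcleq(a)$, so by Lemma \ref{Lemma_wom_dcl_is_linear}(b) (initial parts of a total preorder) they are $\subseteq$-comparable; the same holds at $b$ and at $c$. Suppose, say, that $m(a,b)\subseteq m(a,c)$. Then $m(a,b)=m(a,b)\cap m(a,c)\subseteq m(b,c)$. Comparing $m(a,b)$ and $m(b,c)$ inside $\dcleq(b)$ then shows that the two smallest meets coincide and the third contains them, as in an ultrametric. Consequently the set $M=m(a,b)\cup m(b,c)\cup m(a,c)$ equals the largest of the three meets; after renaming, this is $m(a,c)$ or $m(a,b)$.

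Now fix a pair, for instance $a,b$; the other pairs are handled symmetrically. If $M=m(a,b)$, the conclusion $a\ind_M b$ is the first step of (a). Otherwise $M$ is $m(a,c)$ or $m(b,c)$ and strictly contains $m(a,b)$. If $M=m(a,c)$, then $m(a,b)\subseteq M\subseteq\dcleq(a)$. If $M=m(b,c)$, then $m(a,b)\subseteq M\subseteq\dcleq(b)$, and I apply (a) with the roles of $a$ and $b$ exchanged, which is legitimate because (a) is symmetric in $a$ and $b$ once symmetry of forking is in hand. Either way (a) yields $a\ind_M b$, since one of $a,b$ has simple semiintervals.

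The main obstacle is the symmetry of forking needed whenever the element with simple semiintervals is on the "wrong" side. I would settle this first, either by citing the weakly o-minimal symmetry from \cite{MTwqom1} or by redoing Lemma \ref{Lemma_ssi_implies_a ind b_iff_m(a b)=0} with $b$ as the simple-semiinterval element. The isosceles argument is routine given Lemma \ref{Lemma_wom_dcl_is_linear}.
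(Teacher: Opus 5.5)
Your proposal is correct. Part (b) follows the paper's argument. The three meets are pairwise $\subseteq$-comparable; the paper cites the meet-tree of Corollary~\ref{Corollary_meet_tree}, which amounts to your use of Lemma~\ref{Lemma_wom_dcl_is_linear}(b). Hence their union is one of them, and each pair is handled by (a), with the two elements exchanged when the union lies in $\dcleq$ of the second. Your three-case analysis makes the paper's ``say $B=m(b,c)$'' explicit.

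For (a) the routes differ.

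\textbf{The paper's route.} It argues by contradiction, working directly with the bounded-locus description of forking. A witness to $a\dep_B b$ is a bounded convex relatively $Ba$-definable subset of $\tp(b/B)(\Mon)$. Since $B\subseteq\dcleq(a)$, this set is $a$-definable. Since $\tp(b/B)(\Mon)$ is convex in $\tp(b/m(a,b))(\Mon)$, the set is one of the finitely many convex components of the corresponding subset of the latter, so it witnesses $a\dep_{m(a,b)}b$. This is a hands-on proof of base monotonicity for that description, using only convexity and the definability of convex components.

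\textbf{Your route.} You import $\dcleq$-invariance and base monotonicity from general forking calculus. This is legitimate: for weakly o-minimal types the bounded-locus notion coincides with Shelah forking (that is how the paper describes $\mathcal D_p(a)$), and base monotonicity holds for forking in any theory. It is shorter, at the price of that identification.

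\textbf{Trimming the symmetry.} In the paper's convention, $a\dep_B b$ is witnessed by a formula of $\tp(b/Ba)$. So $a\ind_{m(a,b)}b$ already says that $\tp(b/Ba)$ does not fork over $m(a,b)$, because $Ba\subseteq\dcleq(a)$. Base monotonicity then gives $a\ind_B b$ at once. Symmetry is needed only in two places, and the paper uses it tacitly at both:
\begin{itemize}
\item to obtain $a\ind_{m(a,b)}b$ when $b$ rather than $a$ has simple semiintervals;
\item in (b), when (a) is applied on the $b$ side.
\end{itemize}
Your step ``symmetry gives $b\ind_{m(a,b)}aB$'' applies symmetry to the tuple $aB$, not to a weakly o-minimal element. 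This is harmless because $aB$ and $a$ are interdefinable, but it is cleaner to symmetrize first and enlarge by $B$ afterwards.

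\textbf{Your fallback.} Redoing the lemma that characterizes $a\dep_A b$ by $m_A(a,b)\neq\dcleq(A)$, with $b$ as the simple element, would not supply the symmetry you need. With the roles swapped, the lemma controls forking of $\tp(a/b)$. What you need is that forking of $\tp(b/a)$ yields a nontrivial common imaginary, and that is exactly the direction whose proof used simple semiintervals of $\tp(a)$. So the step rests on the citation.
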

\begin{proof}
Without loss of generality, assume $A=\emptyset$.
 
(a) At least one of $a$ and $b$ belongs to $
\mathcal W^{ss}(\Moneq)$, so we can apply Proposition \ref{Prop_simple_semiintervals_implies_interdef}: $a\ind_{m(a,b)}b$. Consider the type $q=\tp(b/m(a,b))$ and its extension $q'=\tp(b/B)$; let $\mathbf q=(q,<)$ be a weakly o-minimal pair over $m(a,b)$, and let $\mathbf q'=(q',<)$. By way of contradiction, suppose $a\dep_Bb$. This means that there exists a relatively $Ba$-definable bounded subset $D$ of $q'(\Mon)$ that contains $b$; we may assume that $D$ is convex. Since $B\subseteq\dcleq(a)$, $D$ is $a$-definable; let $\phi(x,a)$ relatively define $D$. Since $\phi(q'(\Mon),a)$ is bounded within $q'(\Mon)$ and $q'(\Mon)$ is a convex subset of $q(\Mon)$, $\phi(q'(\Mon),a)$ is convex and bounded within $q(\Mon)$. This implies that $\phi(q'(\Mon),a)$ is one of the (finitely many) convex components of $\phi(q(\Mon),a)$ within $(q(\Mon),<)$. Modifying $\phi(x,a)$, if necessary, we may assume $\phi(q(\Mon),a)=\phi(q'(\Mon),a)$. Thus, $\phi(x,a)$ relatively defines a bounded subset of $q(\Mon)$ that contains $b$, so $a\dep_{m(a,b)}b$. Contradiction.

(b) Denote $B= m(a,b)\cup  m(b,c)\cup  m(a,c)$. Since $(\mathcal T(\Moneq),\subset)$ is a meet-tree, the meets $m(a,b), m(b,c)$, and $ m(a,c)$ are pairwise $\subseteq$-comparable, so $B$ is equal to one of them, say $B= m(b,c)$. Then $ m(a,b)\subseteq B \subseteq \dcleq(b)$, so 
$a\ind_B b$ holds by part (a) of the lemma; similarly,  $a\ind_B c$. Finally,   $b\ind_B c$ holds  by Proposition \ref{Prop_simple_semiintervals_implies_interdef}.
This completes the proof.
\end{proof}

\section{Existence of shifts vs simplicity of semiintervals}\label{Section 3}  

Unlike in the definable case studied in \cite[Section 6]{MTwqom1}, the notion of a shift is inappropriate for families of $\mathbf p$-semiintervals: if $\mathcal S^\pp$ is such a family, then there is no reason why the power $(S_a^\pp)^2=\{x\in p(\Mon)\mid (\exists t\models p)(t\in S_a\land x\in S_t)\}$ should be relatively definable at all. 
In this section, we compare the following two conditions for a weakly quasi-o-minimal $T$:
\begin{itemize}
    \item[(NS)] $T$ does not have simple semiintervals;
    \item[(SH)] $T$ admits a definable family of semiintervals of $(\Mon,<)$ that contains a shift. 
\end{itemize}
The main technical result of this section is Proposition \ref{Prop_p_notssi_exists_shift} in which we prove that every $\mathbf p$-semiinterval that is not $\mathcal E_p$-determined can be slightly enlarged, so that the new $\mathbf p$-semiinterval is induced by a shift of some monotone definable family of semiintervals. As an immediate corollary we see that (NS) implies (SH).  
Example \ref{Example Z < many colors} shows that the converse is not always true, but in Theorem \ref{Thm_noshifts_iff_simplesemiintervals}(b), we prove the converse assuming, in addition, that $T$ is small or weakly o-minimal.
As corollaries, we obtain that every weakly quasi-o-minimal theory with few countable models has simple semiintervals, and that every convex weakly o-minimal type in a theory with few countable models has hereditarily simple semiintervals.

\begin{Fact}\label{Fact_lemma 3_1 wom}
Let $\mathbf p=(p,<)$ be a weakly o-minimal pair over $A$ and let
$\mathcal S^{\mathbf p}=(S^{\mathbf p}_x\mid x\in p(\Mon))$ be a relatively $A$-definable family of $\mathbf p$-semiintervals. Then there do {\em not} exist $a,b,c\in p(\Mon)$ such that $S^{\mathbf p}_{a}<b$ and $S^{\mathbf p}_{a}\cup S^{\mathbf p}_{b}\subseteq S_{c}^\pp$.  
\end{Fact}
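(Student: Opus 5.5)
The plan is to argue by contradiction. Suppose $a,b,c\models p$ satisfy $S^{\mathbf p}_a<b$ and $S^{\mathbf p}_a\cup S^{\mathbf p}_b\subseteq S^{\mathbf p}_c$. From $a,b\in S^\pp_c$ and $c=\min S^\pp_c$ we get $c\leqslant a<b$. By convexity of $S^\pp_c$, the whole stretch from $c$ up to $\supin_{p(\Mon)}(S^\pp_b)$ lies in $S^\pp_c$. By the Convention I work inside an $A$-definable extension $(D,<)$ on which $\mathcal S$ is a family of semiintervals (Lemma \ref{Lemma_new_defextof_seminiterval}). There I consider the relatively $A$-definable function $g:p(\Mon)\to\overline D^{\,in}$ given by $g(x)=\supin_D(S_x)$.

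The goal is to turn the given configuration into an infinite definable pattern that weak o-minimality forbids. First I move the triple by automorphisms over $A$. Since $\tp(a/A)=\tp(b/A)=\tp(c/A)=p$, there is $\sigma_1\in\Aut_A(\Mon)$ with $\sigma_1(c)=b$. Then $S^\pp_{\sigma_1(a)}<\sigma_1(b)$ and $S^\pp_{\sigma_1(a)}\cup S^\pp_{\sigma_1(b)}\subseteq S^\pp_b\subseteq S^\pp_c$. Iterating, I get $c=c_0<c_1=b<c_2<\dots$ with $S^\pp_{c_n}<c_{n+2}$ and $S^\pp_{c_{n+1}}\subseteq S^\pp_{c_n}$ at the level of $p(\Mon)$; more precisely, each $c_{n+1}\in S^\pp_{c_n}$ and every $S^\pp_{c_{n+1}}$ sits inside $S^\pp_c$. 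So all the $c_n$ lie in the single relatively $Ac$-definable convex set $S^\pp_c$.

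Inside $S^\pp_c$ I look at the relatively $Ac$-definable set
$$X_c=\{x\in S^\pp_c\mid x\notin S^\pp_{c}\ \text{or}\ \neg S(x, y)\ \text{for some}\ y\in S^\pp_c\ \text{beyond } x\},$$
or, more concretely, at the relation $x\in S^\pp_c\wedge y\in S^\pp_c\wedge x<y\wedge\neg S(x,y)$. The aim is to show that the chain $(c_n)$ forces this relation to alternate, so that some $Ac$-definable subset of $p(\Mon)$ (for instance $\{x\in S^\pp_c\mid g(x)\ \text{is}\ \subset\text{-below the next witness}\}$) has infinitely many convex components. That would contradict the weak o-minimality of $p$ over $Ac$; relative definability of the subsets of $p(\Mon)$ is available via the tp-universal/compactness reformulation used in Lemma \ref{Lemma_new_defextof_seminiterval}.

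The main obstacle is producing a genuinely relatively $Ac$-definable subset, with parameters only $A$ and one realization $c$, that detects the alternation. Conditions of the form ``$S^\pp_x\subseteq S^\pp_c$'' quantify over $p(\Mon)$ and are not obviously relatively definable. My first attempt is to pass to $D$ and compactness: choose $\theta\in p$ so that $S(x,\theta(\Mon))\subseteq S(c,\theta(\Mon))$ is a first-order condition in $x,c$ that agrees with the $p$-version on realizations. Failing that, the fallback is to use an indiscernible sequence extracted from $(c_n)$ and the finiteness of convex components of $S(c_0,p(\Mon))$ relative to $\tp(c_1/Ac_0)$ to force $S^\pp_{c_1}\not<c_3$, a contradiction.
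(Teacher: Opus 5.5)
\textbf{There is a genuine gap.} The proposal stops exactly where the proof has to happen. The relatively definable subset of $p(\Mon)$ with infinitely many convex components is only announced (``the aim is to show\dots'', ``the main obstacle is\dots''), and none of your candidates is shown to alternate. Your ``first attempt'' would need a single $\theta\in p$ for which $S(x,\theta(\Mon))\subseteq S(c,\theta(\Mon))$ agrees with $S^{\mathbf p}_x\subseteq S^{\mathbf p}_c$ for all $x\models p$ simultaneously. Compactness only handles finitely many fixed parameters, so nothing you invoke provides this uniformity.

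Worse, the chain property $S^{\mathbf p}_{c_n}<c_{n+2}$ is false. From $S^{\mathbf p}_{c_{n+1}}\subseteq S^{\mathbf p}_{c_n}$ you get $c_{n+2}\in S^{\mathbf p}_{c_{n+2}}\subseteq S^{\mathbf p}_{c_n}$, as your own remark that all $c_n$ lie in $S^{\mathbf p}_c$ already shows. What the automorphism actually transports is $S^{\mathbf p}_a<b$. Take $\sigma\in\Aut_A(\Mon)$ with $\sigma(c)=b$, and put $c_n=\sigma^n(c)$, $a_n=\sigma^n(a)$; then $S^{\mathbf p}_{a_n}<c_{n+1}$. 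Consequently your fallback proves nothing: $S^{\mathbf p}_{c_1}\not<c_3$ is simply true and contradicts nothing.

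The missing idea is to take the parameter at the far end of the chain and look at the dual fibre $\{x\mid e\in S^{\mathbf p}_x\}$, rather than at a set defined from $c$.
\begin{enumerate}
\item First, $c<a$. Indeed $c\leqslant a$ because $a\in S^{\mathbf p}_c$, and $c=a$ would give $b\in S^{\mathbf p}_c=S^{\mathbf p}_a$, contrary to $S^{\mathbf p}_a<b$.
\item Hence, for all $n$, $c_n<a_n<c_{n+1}$, $S^{\mathbf p}_{a_n}<c_{n+1}$, and $S^{\mathbf p}_{c_{n+1}}\subseteq S^{\mathbf p}_{c_n}$.
\item The type $p(y)\cup\{S(c_n,y)\mid n<\omega\}$ is finitely satisfiable, by $c_N$ for large $N$, since $c_N\in S^{\mathbf p}_{c_N}\subseteq S^{\mathbf p}_{c_n}$ for $n\leqslant N$. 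Let $e$ realize it.
\item The relatively $Ae$-definable set $\{x\in p(\Mon)\mid\ \models S(x,e)\}$ contains every $c_n$. It contains no $a_n$, because $S^{\mathbf p}_{a_n}<c_{n+1}\leqslant e$.
\item So this set has infinitely many convex components in $(p(\Mon),<)$, contradicting the weak o-minimality of $\mathbf p$.
\end{enumerate}
With this step added, your iteration becomes a complete and short argument. Note that the paper does not prove the Fact itself; it quotes it as a rephrasing of \cite[Lemma 4.1]{MTwom}.
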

\begin{proof}
    This is a rephrased version of \cite[Lemma 4.1]{MTwom}.
\end{proof}

\begin{Lemma}\label{Lemma two conditions for S_a E_p determined}
Let $\mathcal S=(S_x\mid x\in D)$ be an $A$-definable family of semiintervals of $(D,<)$ and let $a\in D$. Assume that $p=\tp(a/A)$ is weakly o-minimal and let $\mathbf p=(p,<)$. Then the following are equivalent:

(1) \ $\mathcal S^{\mathbf p}$ is $\mathcal E_p$-determined; \ \
(2) \ $(\mathcal S^{\mathbf p})^2=\mathcal S^{\mathbf p}$; \ \ 
(3) \ $(\mathcal S^{\theta})^2=\mathcal S^\theta$ holds for some formula $\theta(x)\in p$; 
\end{Lemma}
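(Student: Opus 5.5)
The plan is to prove the cycle (1)$\Rightarrow$(2)$\Rightarrow$(1) directly from the definitions, using Fact \ref{Fact_lemma 3_1 wom} for the nontrivial direction, and then to handle (2)$\Leftrightarrow$(3) by a compactness argument of the kind used in Lemma \ref{Lemma_new_defextof_seminiterval}. Throughout, $(S^{\mathbf p}_a)^2=\bigcup_{t\in S^{\mathbf p}_a}S^{\mathbf p}_t$, and the powers of $\mathcal S^\theta$ are defined the same way inside $\theta(\Mon)$. Since $a\in S_a$, the inclusion $S^{\mathbf p}_a\subseteq (S^{\mathbf p}_a)^2$ is automatic, and the same holds for $\mathcal S^\theta$. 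So in each case only the inclusion $\subseteq$ has content.

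(1)$\Rightarrow$(2): suppose $E\in\mathcal E_p$ determines $\mathcal S^{\mathbf p}$. For $t\in S^{\mathbf p}_a$ we have $t\in[a]_E$ and $t\geqslant a$. Hence $S^{\mathbf p}_t=\{x\in[a]_E\mid t\leqslant x\}\subseteq S^{\mathbf p}_a$, so the union over $t$ stays inside $S^{\mathbf p}_a$.

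(2)$\Rightarrow$(1): I will verify the criterion of Remark \ref{Remark 1.6}(b), namely that for $x<y$ realizing $p$ we have $y\in S^{\mathbf p}_x$ if and only if $\supin_{p(\Mon)}(S^{\mathbf p}_x)=\supin_{p(\Mon)}(S^{\mathbf p}_y)$.
\begin{itemize}
\item Assume first that $y\in S^{\mathbf p}_x$. Then (2) gives $S^{\mathbf p}_y\subseteq S^{\mathbf p}_x$, so $\supin(S^{\mathbf p}_y)\subseteq\supin(S^{\mathbf p}_x)$.
\item If this inclusion were strict, pick $z\in S^{\mathbf p}_x$ with $S^{\mathbf p}_y<z$. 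By (2), $S^{\mathbf p}_z\subseteq S^{\mathbf p}_x$, and also $S^{\mathbf p}_y\subseteq S^{\mathbf p}_x$.
\item The triple $(y,z,x)$ then satisfies $S^{\mathbf p}_y<z$ and $S^{\mathbf p}_y\cup S^{\mathbf p}_z\subseteq S^{\mathbf p}_x$, which contradicts Fact \ref{Fact_lemma 3_1 wom}. So the two initial parts are equal.
\item Conversely, assume the initial parts coincide and $x<y$. Then $y\in S^{\mathbf p}_y$ lies in $\supin(S^{\mathbf p}_x)$, so $y\leqslant s$ for some $s\in S^{\mathbf p}_x$. Convexity of $S^{\mathbf p}_x$ together with $x=\min S^{\mathbf p}_x$ gives $y\in S^{\mathbf p}_x$.
\end{itemize}
I expect this step to be the only real obstacle, since it is exactly where Fact \ref{Fact_lemma 3_1 wom} is needed.

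(3)$\Rightarrow$(2): let $a\models p$, $t\in S^{\mathbf p}_a$ and $y\in S^{\mathbf p}_t$. All three elements lie in $\theta(\Mon)$, so $y\in (S_a\cap\theta(\Mon))^2=S_a\cap\theta(\Mon)$, and since $y\models p$ we get $y\in S^{\mathbf p}_a$.

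(2)$\Rightarrow$(3): condition (2) is equivalent to the tp-universal sentence
\[(\forall x,t,y\models p)\,\bigl(S(x,t)\land S(t,y)\lif S(x,y)\bigr).\]
By compactness there is $\theta(x)\in p$ such that $\theta(\Mon)\subseteq D$ and this implication holds for all $x,t,y\in\theta(\Mon)$. This gives $(\mathcal S^\theta)^2\subseteq\mathcal S^\theta$, and the reverse inclusion is trivial. Note that $\mathcal S^\theta$ is automatically a family of semiintervals of $(\theta(\Mon),<)$ by Remark \ref{Remark definable extension of a definable extension}.
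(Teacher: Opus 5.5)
Your proposal is correct and follows the paper's proof essentially step for step. It uses the same compactness argument for (2)$\Rightarrow$(3), the same intersection argument for (3)$\Rightarrow$(2), and, for (2)$\Rightarrow$(1), the same reduction to the criterion of Remark \ref{Remark 1.6}(b), with Fact \ref{Fact_lemma 3_1 wom} ruling out $\supin(S^{\mathbf p}_y)\subset\supin(S^{\mathbf p}_x)$. The only cosmetic difference is that you read off $S^{\mathbf p}_y\subseteq S^{\mathbf p}_x$ directly from (2), whereas the paper isolates this as a separate claim proved by contradiction.
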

\begin{proof} 
(1)$\Rightarrow$(2) is easy. To prove (2)$\Rightarrow$(3), assume that $(S^{\mathbf p}_x)^2= S^{\mathbf p}_x$ holds for all $x\models p$. Then
\[\models (\forall x,y,z\models p)(y\in S_x\land z\in S_y\rightarrow z\in S_x).\]
By compactness, there is a formula $\theta(x)\in p$ that implies $x\in D$ such that
\[\models (\forall x,y,z\in \theta(D))(y\in S_x\land z\in S_y\rightarrow z\in S_x).\]
It is easy to see that this implies $(\mathcal S^\theta)^2=\mathcal S^\theta$. 

\smallskip(3)$\Rightarrow$(2)  Assume that $(\mathcal S^{\theta})^2=\mathcal S^\theta$ holds for some formula $\theta(x)\in p$. Then for all $a\models p$ we have \[(S^\pp_a)^2\subseteq (S_a^\theta)^2\cap p(\Mon)= S_a^\theta\cap p(\Mon)= S^\pp_a\subseteq (S^\pp_a)^2,\] 
from which $(S^\pp_a)^2=S^\pp_a$ follows.

\smallskip
(2)$\Rightarrow$(1) Assume $(\mathcal S^{\mathbf p})^2=\mathcal S^{\mathbf p}$, that is, $(S^{\mathbf p}_x)^2= S^{\mathbf p}_x$ holds for all $x\models p$. To verify (1), by Remark \ref{Remark 1.6} it suffices to prove the following
\[
\mbox{for all $x<y$ realizing $p$: \ \ $y\in S^{\mathbf p}_x$ \ { if and only if } \  $\supin (S^{\mathbf p}_x)=\supin (S^{\mathbf p}_y)$,}
\]
where the initial parts are taken in $(p(\Mon),<)$. 
The right-to-left implication is clear. To prove the other, let $a\models p$ and let $b\in S^{\mathbf p}_a$. First, we {\it claim} that for all $z\models p$, $z\in S^{\mathbf p}_a$ implies\  $\supin (S^{\mathbf p}_z)\subseteq \supin (S^{\mathbf p}_a)$. Otherwise, there would be some $t\in S^{\mathbf p}_z$ with $S^{\mathbf p}_a<t$, so $t\in S^{\mathbf p}_z\land z\in S^{\mathbf p}_a$ would imply
$t\in (S^{\mathbf p}_a)^2\smallsetminus S^{\mathbf p}_a$, which contradicts (2). Therefore, $\supin (S^{\mathbf p}_z)\subseteq\supin (S^{\mathbf p}_a)$, proving the claim. 

Since $b\in S^{\mathbf p}_a$, by the claim we have $\supin (S^{\mathbf p}_b)\subseteq  \supin (S^{\mathbf p}_a)$, and hence $S^{\mathbf p}_b\subseteq S^{\mathbf p}_a$. 
It remains to rule out $\supin (S^{\mathbf p}_b)\subset\supin (S^{\mathbf p}_a)$. 
If $\supin (S^{\mathbf p}_b)\subset\supin (S^{\mathbf p}_a)$ were true, there would be some $c\in S^{\mathbf p}_a\smallsetminus S^{\mathbf p}_b$ such that $S^{\mathbf p}_b<c$. By the claim, $c\in S^{\mathbf p}_a$ implies $\supin (S^{\mathbf p}_c)\subseteq \supin (S^{\mathbf p}_a)$, so $S^{\mathbf p}_c\subseteq \supin (S^{\mathbf p}_a)$.
Thus, $S^{\mathbf p}_b\cup S^{\mathbf p}_c\subseteq S^{\mathbf p}_a$ and $S^{\mathbf p}_b<c$; this contradicts Fact \ref{Fact_lemma 3_1 wom} and proves (1).  
\end{proof}
In the context of weakly o-minimal theories, the equivalence of conditions (1) and (2) from Lemma \ref{Lemma two conditions for S_a E_p determined} was first proved by Baizhanov and Kulpeshov in \cite{Baizhanov2006}.

\begin{Example}[A monotone family $\mathcal S$ such that $\mathcal S^\pp$ is $\mathcal E_p$-determined, but $S_a$ is an $\mathcal S$-shift for all $a\models p$]
Consider the structure $(\mathbb Z, <,P)$, where $P$ is the set of all integers divisible by 3. This structure is a colored linear order, so its theory is weakly quasi-o-minimal. Let $p\in S_1(T)$ be the type of elements divisible by 3. Define $S_x:=\{x,x+1\}$. Clearly, $\mathcal S= (S_x\mid x\in\mathbb Z)$ is a monotone family of semiintervals and $S_x^n=\{x,x+1,\ldots,x+n\}$; in particular, $S_0$ is a $\mathcal S$-shift. On the other hand, the $\mathbf p$-semiinterval $S_0^{\mathbf p}=S_0\cap p(\Mon)=\{0\}$ is obviously $\mathcal E_p$-determined; so is $\mathcal S^\pp$.  
\end{Example}

Our next aim is to prove that every $\mathbf p$-semiinterval that is not $\mathcal E_p$-determined can be slightly enlarged so that the new $\mathbf p$-semiinterval is induced by a shift of a monotone definable family of semiintervals. We do this in Proposition \ref{Prop_p_notssi_exists_shift} below, but before that, we clarify the meaning of two $\mathbf p$-semiintervals having approximately the same size.
For $\mathcal S^\mathbf p$, a family of $\mathbf p$-semiintervals, and $S^{\mathbf p}_a\in \mathcal S^{\mathbf p}$, define
\begin{itemize}
    \item $\mathcal E(\mathcal S^{\mathbf p})=\mathcal E(S^{\mathbf p}_a):=\{E\in\mathcal E_p\mid S^{\mathbf p}_a\subseteq [a]_E\}$.
\end{itemize}
Clearly, $\mathcal E(\mathcal S^{\mathbf p})$ is the final part of $(\mathcal E_p,\subseteq)$. Two $\mathbf p$-semiintervals, say $S_a^{\mathbf p}$ and $R_a^{\mathbf p}$, have approximately the same size if $\mathcal E(S_a^{\mathbf p})=\mathcal E(R_a^{\mathbf p})$; this is illustrated in the following remark.  

\begin{Remark} Let $\mathbf p=(p,<)$ be a weakly o-minimal pair over $\emptyset$ and let $S_a^\pp$ and $R_a^\pp$ be $\mathbf p$-semiintervals.

(a) Assume that $(\mathcal E_p,\subseteq)$ is countable and let $d$ be an ultrametric on $p(\Mon)$ as defined in Remark \ref{Remark ultrametric on p(Mon)}. We can define
the length of $\mathbf p$-semiintervals by: \[\lgh(S_a^\pp)=\max \{d(a,x)\mid x\in S_a^\pp\}.\] 
Here, observe that the set of all extensions of $p$ which are consistent with $x\in S^{\mathbf p}_a$ is a closed subset of $S_x(Aa)$ that is totally ordered by $<$, so by compactness the maximum exists. Note that if $p$ has simple semiintervals, then $S_a^\pp=R_a^\pp$ if and only if $\lgh(S_a^\pp)=\lgh(R_a^\pp)$.  

(b) If $\mathcal E(S_a^\pp)\subset \mathcal E(R_a^\pp)$, then $R_a^{\mathbf p}$ is considerably shorter than $S_a^{\mathbf p}$ in the following sense: $\bigcup_{n\in\mathbb N}(R^\pp_a)^n \subset S^\pp_a$. To verify this, let $E\in \mathcal E(R_a^\pp)\smallsetminus \mathcal E(S_a^\pp)$. By induction, it is straightforward to prove $(R^\pp_a)^n\subset [a]_E$ for all $n\in\mathbb N$, which implies $\bigcup_{n\in\mathbb N}(R^\pp_a)^n \subset S^\pp_a$. 
\end{Remark}

The following lemma is the main part of the proof of Proposition \ref{Prop_p_notssi_exists_shift}.

\begin{Lemma}\label{Lemma_notEdetermined_implies_exists_p_monotone}
Let $\mathbf p=(p,<)$ be a weakly o-minimal pair over $A$ and let $\mathcal S^{\mathbf p}=(S^\pp_x\mid x\in p(\Mon))$ be a family of $\mathbf p$-semiintervals that is not $\mathcal E_p$-determined. There exist an $A$-definable extension $(D,<)$ of $(p(\Mon),<)$ and a monotone $A$-definable family of semiintervals $\mathcal R=(R_x\mid x\in D)$ such that $S_x^{\mathbf p}\subseteq R^{\mathbf p}_x$ (for all $x\in p(\Mon)$), $\mathcal R^{\mathbf p}$ is not $\mathcal E_p$-determined, and $\mathcal E(\mathcal S^{\mathbf p})=\mathcal E(\mathcal R^{\mathbf p})$. 
\end{Lemma}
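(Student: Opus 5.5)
The plan is to replace $\mathcal S^{\mathbf p}$ by a ``monotone hull'' of itself and then use Lemma~\ref{Lemma_new_defextof_seminiterval} to pass to a definable extension. For $a\models p$ put
\[R^{\mathbf p}_a=\{y\in p(\Mon)\mid a\leqslant y\land (\exists t\models p)(t\leqslant a\land y\in S^{\mathbf p}_t)\}.\]
This is $S^{\mathbf p}_a$ together with everything reached from earlier $\mathbf p$-semiintervals that stick out to the right of $a$. The existential quantifier over realizations of $p$ is not first order, so I will take the definable version of it. Fix a definable extension $(D_0,<)$ of $(p(\Mon),<)$ on which $S(x,y)$ defines a family of semiintervals (Lemma~\ref{Lemma_new_defextof_seminiterval}). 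On $D_0$ let
\[R(x,y)\equiv x\leqslant y\land(\exists t\in D_0)(t\leqslant x\land S(t,y)).\]
On all of $D_0$ this defines a family of semiintervals: $R_x=\{x\}\cup\bigcup_{t\leqslant x}S_t\cap[x,\infty)$ is convex, because each $S_t$ is convex with minimum $t\leqslant x$. The family is also clearly monotone. Then $S_x\subseteq R_x$, hence $S^{\mathbf p}_x\subseteq R^{\mathbf p}_x$. Remark~\ref{Remark definable extension of a definable extension} lets us shrink $D_0$ to any $\theta\in p$ while keeping monotonicity.

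The key step is to choose $\theta(x)\in p$ so that the new family does not grow too much, i.e.\ so that $\mathcal E(\mathcal R^{\mathbf p})=\mathcal E(\mathcal S^{\mathbf p})$. The inclusion $\supseteq$ is automatic from $S_a^{\mathbf p}\subseteq R_a^{\mathbf p}$. For $\subseteq$, take $E\in\mathcal E(\mathcal S^{\mathbf p})$ and suppose $y\in R^{\mathbf p}_a$, witnessed by some $t\in\theta(\Mon)$ with $t\leqslant a$ and $y\in S_t$. If $t\models p$, then $t,a,y$ lie in $[t]_E$ by convexity, so $y\in[a]_E$. The problem is witnesses $t\notin p(\Mon)$, namely elements of $\theta(\Mon)$ to the left of $p(\Mon)$ whose $S_t$ reaches into $p(\Mon)$. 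Here I plan to use weak o-minimality. For $t$ to the left of the locus, $S_t\cap p(\Mon)$ is an initial part of $p(\Mon)$. By a compactness argument (as in the proof of Lemma~\ref{Lemma_new_defextof_seminiterval}), choose $\theta$ so that no $t\in\theta(\Mon)$ with $t<p(\Mon)$ has $S_t$ meeting $p(\Mon)$. I expect this step to be the main obstacle. It may fail if $\sup$ of $p(\Mon)$ is approached in a bad way, but since only finitely many convex pieces occur, a definable cut $x\leqslant y$ refinement should suffice. If this fails, I would instead require $t\in\theta(\Mon)$ with $S(t,x)$ and $\theta$ strong enough that every such $t$ near $p$ behaves like a realization, using that $\mathcal E(\mathcal S^{\mathbf p})$ is a final part and that each $E$ in it has a definable convex extension $\hat E$ on $D_0$ with $S_t\subseteq[t]_{\hat E}$ for $t\in\theta(\Mon)$. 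Compactness applies to each $E$ separately, and then the $p$-part is unaffected.

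Finally, I verify that $\mathcal R^{\mathbf p}$ is not $\mathcal E_p$-determined. Suppose it were, say by $E$. Then $E\in\mathcal E(\mathcal R^{\mathbf p})=\mathcal E(\mathcal S^{\mathbf p})$, so $S^{\mathbf p}_a\subseteq[a]_E\cap[a,\infty)=R^{\mathbf p}_a$. Since $\mathcal S^{\mathbf p}$ is not $E$-determined, there is a final part of $[a]_E$ beyond $S^{\mathbf p}_a$. Equality of the $\mathcal E$'s then gives $R^{\mathbf p}_a=[a]_E\cap[a,\infty)$. Now apply Fact~\ref{Fact_lemma 3_1 wom}-type reasoning, or Lemma~\ref{Lemma two conditions for S_a E_p determined}, to $\mathcal R^{\mathbf p}$: since $R^{\mathbf p}$ is the union of earlier $S^{\mathbf p}_t$, $\supin$ of the class would be attained by $\bigcup_{t\in[a]_E, t\leqslant a}S^{\mathbf p}_t$. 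This gives a smaller relatively definable equivalence (``$\supin(R_x)=\supin(R_y)$'') strictly inside $E$ that still contains each $S^{\mathbf p}_a$, contradicting $E\in$ minimality of $\mathcal E(\mathcal S^{\mathbf p})$. If $E$ is not minimal, I would use the intersection of the final part, which is type-definable. This contradiction step also needs care and I would work it out via Remark~\ref{Remark 1.6}(b).
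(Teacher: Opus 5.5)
There is a genuine gap. Your hull $R(x,y)\equiv x\leqslant y\land(\exists t\in\theta(\Mon))(t\leqslant x\land S(t,y))$ is a legitimate monotone definable family with $S^{\mathbf p}_x\subseteq R^{\mathbf p}_x$, and your handling of witnesses $t\models p$ is correct. What the proposal never proves is the one fact that both remaining conclusions depend on: that $R^{\mathbf p}_a$ is \emph{bounded} inside the convex set $Q_a=\bigcap\{[a]_E\mid E\in\mathcal E(\mathcal S^{\mathbf p})\}$. If $\mathcal R^{\mathbf p}$ were determined by some $E$, then $E\in\mathcal E(\mathcal R^{\mathbf p})=\mathcal E(\mathcal S^{\mathbf p})$ would give $R^{\mathbf p}_a=[a]_E\cap[a,+\infty)\supseteq Q_a\cap[a,+\infty)$. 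So non-determinedness amounts to boundedness in $Q_a$. Your last paragraph does not supply this, and no ``smaller'' equivalence exists: by Remark~\ref{Remark 1.6}, the relation $\supin(R^{\mathbf p}_x)=\supin(R^{\mathbf p}_y)$ would be $E$ itself.

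Your ways of choosing $\theta$ do not give the bound either. First, the condition that no $t\in\theta(\Mon)$ with $t<p(\Mon)$ has $S_t$ meeting $p(\Mon)$ is not a tp-universal property of realizations of $p$, so the compactness argument of Lemma~\ref{Lemma_new_defextof_seminiterval} does not produce it. It would not suffice anyway: if $p$ is not convex, every $\theta(\Mon)$ contains non-realizations lying between realizations of $p$, and these are not covered. Second, compactness ``for each $E$ separately'' gives a formula $\theta_E$ that depends on $E$. If $\mathcal E(\mathcal S^{\mathbf p})$ has no least element, no single $\theta$ results. If it has a least element $E_0$, you only get $R^{\mathbf p}_a\subseteq[a]_{E_0}$, which does not exclude $R^{\mathbf p}_a=[a]_{E_0}\cap[a,+\infty)$, since non-realizations $t<a$ in $[a]_{\hat E_0}$ may have $S_t$ filling the rest of the class.

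The missing ingredient is the auxiliary type used in the paper: $B=\{[a]_{\hat E}\mid E\in\mathcal E(\mathcal S^{\mathbf p})\}\subseteq\dcleq(a)$ and $q=\tp(a/B)$. Then $q(\Mon)=Q_a$, and $S^{\mathbf p}_a$ is a bounded $\mathbf q$-semiinterval, so $S^{\mathbf p}_t<a'$ whenever $t\models q$ and $t\triangleleft^{\mathbf q}a'$. Via Corollary~\ref{Fact_imaginaries_Epqdetermined_semiintervals}, no bounded $\mathbf q$-semiinterval containing $S^{\mathbf p}_a$ is $\mathcal E_q$- or $\mathcal E_p$-determined.

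With this in hand, your hull can be completed by a compactness argument that quantifies only over realizations of $p$. Fix $a'$ with $a\triangleleft^{\mathbf q}a'$. For $t\models q$ with $t\leqslant a$ we have $t\triangleleft^{\mathbf q}a'$, hence $a'\notin S_t$. For $t\models p$ with $t<q(\Mon)$, some $E\in\mathcal E(\mathcal S^{\mathbf p})$ separates $t$ from $a$, so $a\notin S_t$, whence $S_t<a<a'$. Thus, with $r(x,x')=\tp(a,a')$, the set $r(x,x')\cup p(t)\cup\{t\leqslant x,\ S(t,x')\}$ is inconsistent. Compactness then yields $\theta\in p$ such that the hull over $\theta(\Mon)$ satisfies $R_a<a'$. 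Now $R^{\mathbf p}_a$ is a bounded $\mathbf q$-semiinterval containing $S^{\mathbf p}_a$, which gives both $\mathcal E(\mathcal R^{\mathbf p})=\mathcal E(\mathcal S^{\mathbf p})$ and non-determinedness at once.

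The paper obtains the same bound by a different route. It applies the upper monotonicity theorem to $x\mapsto\supin_D(S_x)$ on $q$ and gets $F\in\mathcal E_p\smallsetminus\mathcal E(\mathcal S^{\mathbf p})$ with $[x]_{\hat F}<[y]_{\hat F}\Rightarrow\supin_D(S_x)\subset\supin_D(S_y)$ on some $\theta(\Mon)$. It then takes the union over the $\hat F$-class of $x$, rather than over all $t\leqslant x$.
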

\begin{proof}
 Without loss of generality, assume $A=\emptyset$ and we work in $\Moneq$. Suppose that the family $\mathcal S^{\mathbf p}$  is not $\mathcal E_p$-determined; in particular, it consists of bounded $\mathbf p$-semiintervals. Fix $a\models 
 p$ and the following objects:
\begin{itemize}
    \item $(D,<)$, a $0$-definable extension of $(p(\Mon),<)$;
    \item $\mathcal S=(S(x,D)\mid x\in D)$, a $0$-definable family of semiintervals of $(D,<)$ with $S^{\mathbf p}_a=S(a,p(\Mon))$; 
    \item for $E\in \mathcal E_p$, let $\hat E$ be a $0$-definable convex equivalence relations on $(D,<)$ with $\hat E_{\strok p(\Mon)}=E$.
\end{itemize}
Note that the set $D$ and the family $\mathcal S$ exist by Lemma \ref{Lemma_new_defextof_seminiterval}. For each $E\in\mathcal E_p$, by compactness there is a definable subset $D'\subseteq D$ containing $p(\Mon)$ and a definable convex equivalence relation $\hat E'$ on $D'$ extending $E$; we may now define $\hat E(x,y)$ by $x=y\lor (\exists u,v\in D')(\hat E'(u,v)\land u\leqslant x,y\leqslant v)$.
Next, we introduce the following notation:
\begin{itemize}
    \item $\Pi(x,y):=p(x)\cup p(y)\cup\{\hat E(x,y)\mid \hat E\in \mathcal E(\mathcal S^{\mathbf p})\}$;
    \item $B:=\{[a]_{\hat E}\mid E\in\mathcal E(\mathcal S^{\mathbf p})\}$;   \item $q(y):= \tp(a/B)$ and $\mathbf q=(q,<)$.
\end{itemize}
Clearly, $\Pi(x,y)$ is a partial type over $\emptyset$ and $B\subset \dcleq(a)$. In addition, it is easy to see that $\Pi(x,y)$ defines an equivalence relation on $p(\Mon)$. First, we {\em claim}:
\begin{enumerate}[(1)]
\item\label{Item1} \ $\Pi(a,\Mon)=\bigcap\{[a]_E\mid E\in \mathcal E (\mathcal S^{\mathbf p})\}=q(\Mon)$; in particular, $\Pi(a,\Mon)$ is a convex subset of $p(\Mon)$. 
\end{enumerate}
Here, the first equality and $q(\Mon)\subseteq \bigcap([a]_E\mid E\in \mathcal E (\mathcal S^{\mathbf p}))$ are immediate. To justify the reverse inclusion, assume $a'\in \bigcap\{[a]_E\mid E\in \mathcal E (\mathcal S^{\mathbf p})\}$, and we need to prove $a'\in q(\Mon)$. Note that $a,a'\models p$ implies that there is some $f\in \Aut(\Moneq)$ with $f(a)=a'$. Since $[a]_{\hat E}=[a']_{\hat E}$ for all $E\in\mathcal E (\mathcal S^{\mathbf p})$, $f$ fixes pointwise $B$. Then $f(a,B)=(a',B)$ implies $a'\models q$, as desired. This completes the proof of \ref{Item1}. Next, we {\em claim}: 
\begin{enumerate}[(1)]
\setcounter{enumi}{1}
    \item\label{Item2} $S^{\mathbf p}_a$ is a bounded $\mathbf q$-semiinterval that is not $\mathcal E_q$-determined. 
\end{enumerate}
Let $E\in \mathcal E (\mathcal S^{\mathbf p})$, that is, $S_a^{\mathbf p}\subseteq [a]_E$. Since $S^{\mathbf p}_a$ is not $\mathcal E_p$-determined, it is a bounded subset of $[a]_E$; this holds for all $E\in \mathcal E (\mathcal S^{\mathbf p})$ so, by compactness and saturation of $\Mon$, $S^{\mathbf p}_a$ is a bounded subset of $\bigcap\{[a]_E\mid E\in \mathcal E (\mathcal S^{\mathbf p})\}$, which, by \ref{Item1}, is equal to $q(\Mon)$. Hence, $S^{\mathbf p}_a$ is a bounded $\mathbf q$-semiinterval. Since $B\subset \dcleq(a)$, we can apply Corollary \ref{Fact_imaginaries_Epqdetermined_semiintervals}(b) to conclude $S^{\mathbf p}_a$ is not $\mathcal E_q$-determined. This proves \ref{Item2} whose immediate consequence is $S^{\mathbf p}_a\subseteq \mathcal D_q(a)$, which clearly implies the following:
\begin{enumerate}[(1)]
\setcounter{enumi}{2} 
    \item\label{Item3}  $a\triangleleft^\mathbf q a'$ implies  $S^{\mathbf p}_a<a'$. 
\end{enumerate}
An important place in this proof is to establish the following two properties; in fact, the type $q$ was introduced to satisfy them. 
\begin{enumerate}[(1)]
\setcounter{enumi}{3} 
    \item\label{Item4}  If $E\in\mathcal E_q$ and $S^{\mathbf p}_a\subseteq [a]_E$, then \ $E=q(\Mon)^2$. 
    \item\label{Item5} If $X^{\mathbf p}_a$ is a bounded $\mathbf q$-seminiterval with $S^{\mathbf p}_a\subseteq X^{\mathbf p}_a$, then $X^{\mathbf p}_a$ is not $\mathcal E_q$-determined.  
\end{enumerate}
To prove \ref{Item4}, assume $E\in \mathcal E_q$ and $S^{\mathbf p}_a\subseteq [a]_E$. By Corollary \ref{Fact_imaginaries_Epqdetermined_semiintervals}(a) we know that $E$ is equal to the restriction of some relation, say $E'$, from $\mathcal E_p$. Then $S^{\mathbf p}_a\subseteq [a]_{E'}$ implies $E'\in \mathcal E (\mathcal S^{\mathbf p})$, which by \ref{Item1} yields $q(\Mon)\subseteq [a]_{E'}$. Therefore, $E=q(\Mon)^2$, proving \ref{Item4}. To prove \ref{Item5}, suppose that some $\mathbf q$-semiinterval $X^{\mathbf p}_a$ is determined by $F\in \mathcal E_q$ and $S^{\mathbf p}_a\subseteq X^{\mathbf p}_a$. Then $X^{\mathbf p}_a$ is a final part of $[a]_F$, so by \ref{Item4} $S^{\mathbf p}_a\subseteq [a]_F$ implies $F=\mathbf 1_q$, so $X^{\mathbf p}_a$ is not bounded. 
Therefore, no bounded $\mathbf q$-semiinterval $X^{\mathbf p}_a$ which contains $S^{\mathbf p}_a\subseteq X^{\mathbf p}_a$ is $\mathcal E_q$-determined. 

\smallskip 
Consider $\supin_D(S_x)$, the smallest initial part of $(D,<)$ that contains $S_x$. Clearly, $t\in \supin_D(S_x)$ is $0$-definable, say by formula $I(x,t)$, and for each $x\in D$: $I(x,\Mon)$ is an initial part of $(D,<)$, $x\in I(x,\Mon)$, and $S(x,\Mon)$ is a final part of $I(x,\Mon)$. In particular, we have:
\begin{enumerate}[(1)]
\setcounter{enumi}{5} 
\item\label{Item6} $I(x,p(\Mon))$ is an initial part of $p(\Mon)$ and $S^{\mathbf p}_x$ is a final part of $I(x,p(\Mon))$;
\item\label{Item7} $a\triangleleft^\mathbf q a'$ implies $I(a,\Mon)<a'$.
\end{enumerate}
To justify \ref{Item7}, assume $a\triangleleft^\mathbf q a'$. By \ref{Item3} we know $S^{\mathbf p}_a<a'$. Since, by \ref{Item6}, $S^{\mathbf p}_a$ is a final part of $I(a,p(\Mon))$, we conclude $I(a,p(\Mon))<a'$, which implies $a'\in I(a',p(\Mon))\smallsetminus I(a,p(\Mon))$. Since $I(a,\Mon)$ and $I(a',\Mon)$ are initial parts of $D$, and since $a'\models p$, we conclude $I(a,\Mon)<a'$. 

\smallskip
Consider the function $I:q(\Mon)\to \overline D^{\,in}$ given by $x\mapsto I(x,\Mon)$; clearly, $I$ is relatively $B$-definable.  To see that $I$ is non-constant, choose $a'$ with $a\triangleleft^\mathbf qa'$. Then $I(a,\Mon)<a'$ holds by \ref{Item7}, and $a'\in I(a',\Mon)$ by \ref{Item6}; these two together imply $I(a,\Mon)\subset I(a',\Mon)$, so $I$ is non-constant. This allows us to apply the upper monotonicity theorem \cite[Theorem 3.6]{MTwom}: There exists  $F_q\in\mathcal E_q\smallsetminus\{\mathbf 1_q\}$ such that:
\begin{enumerate}[(1)]
\setcounter{enumi}{7}
\item\label{Item8} $q(x)\cup q(y)\cup \{[x]_{F_q}<[y]_{F_q}\}\vdash I(x,\Mon)\subset I(y,\Mon).$
\end{enumerate}
By Corollary \ref{Fact_imaginaries_Epqdetermined_semiintervals}(a), there exists $F\in\mathcal E_p$ such that $F_{\strok q(\Mon)}=F_q$. Then, since both $[a]_F$ and $q(\Mon)$ are convex subsets of $p(\Mon)$, and since $[a]_{F_q}=[a]_F\cap q(\Mon)$ is a bounded subset of $q(\Mon)$, we deduce $[a]_{F}=[a]_{F_q}$. In particular, $[a]_F$ is a bounded subset of $q(\Mon)=\Pi(a,\Mon)$; here the equality follows by \ref{Item1}. The key observation is that $F_q\neq \mathbf 1_q$, by \ref{Item4}, implies $S^{\mathbf p}_a\not\subseteq [a]_{F_q}$. This implies $S^{\mathbf p}_a\not\subseteq [a]_{F}$, so $F\notin \mathcal E(\mathcal S^{\mathbf p})$. This proves: 
\begin{enumerate}[(1)]
\setcounter{enumi}{8}
\item\label{Item8'} $F\notin \mathcal E(\mathcal S^{\mathbf p})$ and $[x]_F$ is a bounded subset of $\Pi(x,\Mon)$ for all $x\models p$. 
\end{enumerate}
Next, we prove:
\begin{enumerate}[(1)]
\setcounter{enumi}{9}
\item\label{Item9}  \ $p(x)\cup p(y)\cup \{[x]_{F}<[y]_{F}\}\vdash I(x,\Mon)\subset I(y,\Mon).$
\end{enumerate}
First, note that, unlike in \ref{Item8}, no parameters appear in \ref{Item9}, so it suffices to prove it with $x$ replaced by $a$. Let $a'\models p$ be such that $[a]_{F}<[a']_{F}$; we need to show $I(a,\Mon)\subset I(a',\Mon)$. We have two cases to consider. The first is where $a'\models q$, in which case $I(a,\Mon)\subset I(a',\Mon)$ is a consequence of \ref{Item8}. In the second case, since $q(\Mon)$ is a convex subset of $p(\Mon)$, $a'\not\models q$ and $a<a'$ together imply $q(\Mon)<a'$, so $\Pi(a,\Mon)<a'$ holds by \ref{Item1}. Since $\Pi(x,y)$ (type-) defines an equivalence relation on $p(\Mon)$, and since the classes of this relation are convex (again by \ref{Item1}), $q(\Mon)<a'$ implies $\Pi(a,\Mon)<\Pi(a',\Mon)$; as $S^{\mathbf p}_a$ is a bounded subset of $\Pi(a,\Mon)$ and $S^{\mathbf p}_{a'}$ a bounded subset of $\Pi(a',\Mon)$, we derive $S^{\mathbf p}_{a}<S^{\mathbf p}_{a'}$. Finally, since $S^{\mathbf p}_{a}$ is a final part of $I(a,\Mon)$, $a'\models p$, and $S^{\mathbf p}_{a}<a'$, we conclude $I(a,p(\Mon))<a'$. Clearly, this implies $I(a,\Mon)\subset I(a',\Mon)$, which completes the proof of \ref{Item9}.

Consider $\hat F$, a $0$-definable convex extension of $F$ to $(D,<)$. Then \ref{Item9} implies:
\[ \models (\forall x,y\models p)([x]_{\hat F}<[y]_{\hat F}\rightarrow I(x,\Mon)\subset I(y,\Mon)).\]
By compactness, there is a formula $\theta(x)\in p$ that implies $x\in D$ such that the following holds:
\begin{enumerate}[(1)]
\setcounter{enumi}{10}
\item\label{Item11}  \ $\models (\forall x,y\in \theta(\Mon))([x]_{\hat F}<[y]_{\hat F}\rightarrow I(x,\Mon)\subset I(y,\Mon)).$
\end{enumerate}
To simplify the notation used below, we will assume that $\theta(x)$ is $x\in D$; alternatively, we can replace $D$ with $\theta(D)$. Note that this does not affect the validity of \ref{Item1}--\ref{Item11}. We now modify the formula $I(x,t)$. Choose an $L$-formula $J(x,t)$ that expresses $t\in \bigcup_{z\in [x]_{\hat F}}I(z,\Mon)$; clearly, $J(x,\Mon)$ is the smallest initial part of $D$ that contains all initial parts $I(z,\Mon)$ for $z\in [x]_{\hat F}$. Consider the function $J:D\to \overline{D}^{\,in}$, defined by $x\mapsto J(x,\Mon)$. Clearly, $I(x,\Mon)\subseteq J(x,\Mon)$ holds for all $x\in D$. From \ref{Item11} it is easy to deduce that $J$ is increasing. 

Define \ $R(x,t):=x\leqslant t\land J(x,t)$. Then $\mathcal R=(R(x,\Mon)\mid x\in D)$ is a family of semiintervals of $(D,<)$, and 
$R(x,\Mon)$ is a final part of $J(x,\Mon)$ for all $x\in D$; $\mathcal R$ is monotone as $J$ is so. 
Consider the $\mathbf p$-semiinterval $R^{\mathbf p}_a=R(a,p(\Mon))$.
Since, by \ref{Item6}, $I(x,p(\Mon))$ is an initial part of $p(\Mon)$ and $S^{\mathbf p}_x$ is a final part of $I(x,p(\Mon))$, and since $I(x,\Mon)\subseteq J(x,\Mon)$, we conclude $S^{\mathbf p}_a\subseteq R^{\mathbf p}_a$. Next, we {\em claim}: 
\begin{enumerate}[\hspace{10pt}(1)]
\setcounter{enumi}{11}
\item\label{Item12}  \  $R^{\mathbf p}_a$ is a bounded $\mathbf q$-semiinterval. 
\end{enumerate}
To prove it, choose $a',a''$ with $a\triangleleft^{\mathbf q}a'\triangleleft^{\mathbf q}a''$, and it suffices to show $R^{\mathbf p}_a< a''$.
By \ref{Item8'} we know that the class $[a]_F$ is a bounded subset of $\Pi(a,\Mon)=q(\Mon)$, hence $[a]_F\subseteq \mathcal D_q(a)$, so $a\triangleleft^{\mathbf q}a'$ implies $[a]_F<a'$, which yields $[a]_F<[a']_F$ and hence $[a]_{\hat F}<a'$. From \ref{Item11} we derive $I(t,\Mon)\subset I(a',\Mon)$ for all $t\in [a]_{\hat F}$, from which $J(a,\Mon)=\bigcup_{t\in[a]_{\hat F}}I(t,\Mon)\subseteq I(a',\Mon)$ follows and together with $I(a',\Mon)<a''$ from \ref{Item3} implies $J(a,\Mon)<a''$. Then $R^{\mathbf p}_a\subseteq J(a,\Mon)$ implies $R^{\mathbf p}_a<a''$, as needed.

\smallskip
Since $R^{\mathbf p}_a$ is a bounded $\mathbf q$-semiinterval and $S^{\mathbf p}_a\subseteq R^{\mathbf p}_a$, by \ref{Item5} we have that the $\mathbf q$-semiinterval $R^{\mathbf p}_a$ is not $\mathcal E_q$-determined; consequently, it is not $\mathcal E_p$-determined. By now, we have found an 0-definable extension $(D,<)$ of $(p(\Mon),<)$ and a monotone 0-definable family of semiintervals $\mathcal R=(R_x\mid x\in D)$ such that $\mathcal R^{\mathbf p}$ is not $\mathcal E_p$-determined and $S_x^{\mathbf p}\subseteq R^{\mathbf p}_x$ (for all $x\in p(\Mon)$). To complete the proof of the lemma, it remains to verify $\mathcal E(\mathcal S^{\mathbf p})=\mathcal E(\mathcal R^{\mathbf p})$. Let $E\in\mathcal E(R^{\mathbf p}_a)$. Then $S_a^{\mathbf p}\subseteq R^{\mathbf p}_a\subseteq [a]_E$ implies $E\in \mathcal E(S^{\mathbf p}_a)$; hence $\mathcal E(R^{\mathbf p}_a) \subseteq \mathcal E(S^{\mathbf p}_a)$. The reverse inclusion follows from $R^{\mathbf p}_a\subseteq q(\Mon)=\bigcap\{[a]_E\mid E\in \mathcal E (\mathcal S^{\mathbf p})\}$. This completes the proof.
\end{proof}

\begin{Proposition}\label{Prop_p_notssi_exists_shift}
Let $\mathbf p=(p,<)$ be a weakly o-minimal pair over $A$ and let $\mathcal S^{\mathbf p}=(S^\pp_x\mid x\in p(\Mon))$ be a family of $\mathbf p$-semiintervals that is not $\mathcal E_p$-determined. There exist an $A$-definable extension $(D,<)$ of $(p(\Mon),<)$ and a monotone $A$-definable family of semiintervals $\mathcal R=(R_x\mid x\in D)$ such that $S_x^{\mathbf p}\subseteq R^{\mathbf p}_x$, $\mathcal E(\mathcal S^{\mathbf p})=\mathcal E(\mathcal R^{\mathbf p})$, $R^{\mathbf p}_x$ is not $\mathcal E_p$-determined, and $R_x$ is an $\mathcal R$-shift for all $x\models p$.
\end{Proposition}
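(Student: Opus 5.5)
The plan is to combine Lemma \ref{Lemma_notEdetermined_implies_exists_p_monotone} with Lemma \ref{Lemma two conditions for S_a E_p determined} and Lemma \ref{Lemma monotone family not Ep determined is a shift}.

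First, apply Lemma \ref{Lemma_notEdetermined_implies_exists_p_monotone} to $\mathcal S^{\mathbf p}$. This yields an $A$-definable extension $(D,<)$ of $(p(\Mon),<)$ and a monotone $A$-definable family $\mathcal R=(R_x\mid x\in D)$ of semiintervals such that, for all $x\models p$:
\begin{itemize}
\item $S^{\mathbf p}_x\subseteq R^{\mathbf p}_x$;
\item $\mathcal E(\mathcal S^{\mathbf p})=\mathcal E(\mathcal R^{\mathbf p})$;
\item $\mathcal R^{\mathbf p}$ is not $\mathcal E_p$-determined.
\end{itemize}
By Definition \ref{Definition_Ep_determined}(b), being $\mathcal E_p$-determined is a property of the whole family, equivalently of any single member, since all members are conjugate over $A$. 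Hence no $R^{\mathbf p}_x$ with $x\models p$ is $\mathcal E_p$-determined. This already gives every clause of the proposition except the shift clause.

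For the shift clause, apply Lemma \ref{Lemma two conditions for S_a E_p determined} to the family $\mathcal R$ and the type $p$ (with $D$ as the ambient order). Since $\mathcal R^{\mathbf p}$ is not $\mathcal E_p$-determined, condition (2) of that lemma fails, so $(\mathcal R^{\mathbf p})^2\neq\mathcal R^{\mathbf p}$. Thus $(R^{\mathbf p}_a)^2\neq R^{\mathbf p}_a$ for some, hence every, $a\models p$, again by $A$-invariance.

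One point needs care: the power $(\mathcal R^{\mathbf p})^2$ in Lemma \ref{Lemma two conditions for S_a E_p determined} is computed inside $p(\Mon)$, whereas Lemma \ref{Lemma monotone family not Ep determined is a shift} needs $(R_a^2)^{\mathbf p}\neq R^{\mathbf p}_a$, with the power computed in $D$ and then restricted to $p(\Mon)$. Since $p(\Mon)\subseteq D$, every element of $(R^{\mathbf p}_a)^2$ lies in $R_a^2\cap p(\Mon)=(R_a^2)^{\mathbf p}$. So
$$R^{\mathbf p}_a\subsetneq (R^{\mathbf p}_a)^2\subseteq (R_a^2)^{\mathbf p},$$
and therefore $(R_a^2)^{\mathbf p}\neq R^{\mathbf p}_a$.

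Finally, $\mathcal R$ is monotone and $A$-definable, $p$ is consistent with $x\in D$, and $(R_a^2)^{\mathbf p}\neq R^{\mathbf p}_a$ for every $a\models p$. Lemma \ref{Lemma monotone family not Ep determined is a shift} with $k=2$ then shows that $R_a$ is an $\mathcal R$-shift for all $a\models p$.

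All the substantive work is in Lemma \ref{Lemma_notEdetermined_implies_exists_p_monotone}, which is already proved. The only step needing attention here is the comparison between powers taken in $p(\Mon)$ and powers taken in $D$, and the inclusion $(R^{\mathbf p}_a)^2\subseteq (R_a^2)^{\mathbf p}$ settles it.
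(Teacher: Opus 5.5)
Your proposal is correct and matches the paper's proof: Lemma \ref{Lemma_notEdetermined_implies_exists_p_monotone} supplies $\mathcal R$, Lemma \ref{Lemma two conditions for S_a E_p determined} then gives $(R^{\mathbf p}_a)^2\neq R^{\mathbf p}_a$, and Lemma \ref{Lemma monotone family not Ep determined is a shift} yields that $R_a$ is an $\mathcal R$-shift. Your explicit check that $R^{\mathbf p}_a\subsetneq (R^{\mathbf p}_a)^2\subseteq (R_a^2)^{\mathbf p}$ is valid and fills in a step the paper leaves implicit.
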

\begin{proof} 
Most of the proof is contained in Lemma \ref{Lemma_notEdetermined_implies_exists_p_monotone}, 
where an $A$-definable extension $(D,<)$ of $(p(\Mon),<)$ and a monotone $A$-definable family of semiintervals $\mathcal R=(R_x\mid x\in D)$ have been found such that: $\mathcal R^{\mathbf p}$ is not $\mathcal E_p$-determined, $\mathcal E(\mathcal S^{\mathbf p})=\mathcal E(\mathcal R^{\mathbf p})$, and $S_x^{\mathbf p}\subseteq R^{\mathbf p}_x$ (for all $x\in p(\Mon)$).
Since $\mathcal R^{\mathbf p}$ is not $\mathcal E_p$-determined, by Lemma \ref{Lemma two conditions for S_a E_p determined} we derive $(R^{\mathbf p}_a)^2\neq R^{\mathbf p}_a$ for all $a\models p$, so $R_x$ is an $\mathcal R$-shift by Lemma \ref{Lemma monotone family not Ep determined is a shift}.
\end{proof}

\begin{Theorem}\label{Thm_noshifts_iff_simplesemiintervals}
Let $T$ be a weakly quasi-o-minimal theory and let $<$ be a $0$-definable order on $\Mon$. 

(a) If $T$ satisfies (NS), then it also satisfies (SH).

(b) If $T$ is small or weakly o-minimal and satisfies (SH), then it also satisfies (NS).
\end{Theorem}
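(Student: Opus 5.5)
For part (a) the plan is to combine the results of this section as follows. Assume (NS): some $p_0\in S_1(T)$ has a complete extension $p=\tp(a/A)$ without simple semiintervals. Since $T$ is weakly quasi-o-minimal, $(p,<)$ is a weakly o-minimal pair over $A$, where $<$ is the fixed $0$-definable order. By Corollary \ref{COr_simple_semiint_ind_of order}, already the pair $\mathbf p=(p,<)$ does not have simple semiintervals. So there is a $\mathbf p$-semiinterval $S^{\mathbf p}_a$ that is not $\mathcal E_p$-determined; writing it as $S(a,p(\Mon))$ with $S(x,y)$ over $A$ gives a family $\mathcal S^{\mathbf p}$ that is not $\mathcal E_p$-determined. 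Since $A$ may be infinite, we first shrink it: the data witnessing this live over a finite $A_0\subseteq A$, and the argument below is carried out over that $A_0$ (this is the only point needing some care). Proposition \ref{Prop_p_notssi_exists_shift} then yields an $A$-definable extension $(D,<)$ of $(p(\Mon),<)$, $D\subseteq\Mon$, and a monotone $A$-definable family $\mathcal R$ on $D$ in which $R_a$ is an $\mathcal R$-shift. Finally, Lemma \ref{Lemma extending family of semiintervals}(b), applied with $D'=\Mon$, extends $\mathcal R$ to a monotone definable family of semiintervals of $(\Mon,<)$ in which $a$ still induces a shift (by its ``moreover'' clause). This gives (SH).

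For part (b) we take the converse route. Let $\mathcal S$ be an $A$-definable family of semiintervals of $(\Mon,<)$ with $A$ finite, and let $S_a$ be an $\mathcal S$-shift. All powers $\mathcal S^n$ are $A$-definable, and the sets $S^n_a$ increase strictly. The goal is a type $q\in S_1(B)$, with $B\supseteq A$ countable, and some $n$ such that the induced family $(\mathcal S^n)^{\mathbf q}$ is not idempotent. Then Lemma \ref{Lemma two conditions for S_a E_p determined} shows it is not $\mathcal E_q$-determined, so the type $q\restriction\emptyset\in S_1(T)$ lacks hereditarily simple semiintervals, which is (NS). The first case is easy. If for $p=\tp(a/A)$ some power $(S^n)^{\mathbf p}_a$ is not $\mathcal E_p$-determined, we are done. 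Otherwise every $(\mathcal S^n)^{\mathbf p}$ is $\mathcal E_p$-determined, so the growth of $S^n_a$ happens outside $p(\Mon)$, as in the $(\mathbb Z,<,P)$ example.

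In the second case, pick $b_n\in S^{n+1}_a\smallsetminus S^n_a$. The plan is to use smallness: $S_1(Aa)$ is countable, so by a Cantor--Bendixson argument (or, in the weakly o-minimal case, by the fact that every $1$-type over any set is weakly o-minimal) we find one type $r\in S_1(Aa)$ realized by points $b$ lying in $S^{k}_a$-steps for unboundedly many $k$. We then pass to a point $c\models r$ and the type $q=\tp(c/Aa)$. We aim to show that for some $m$ the set $(S^m)^{\mathbf q}_c$ has a relatively definable square strictly larger than itself. The idea is that an $S^m$-path from $c$ leaves $(S^m)_c$ and returns to $q(\Mon)$, because the same type recurs along the shift. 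Using monotonicity (first replacing $\mathcal S$ by a monotone family of the form $x\le t\land t\in\supin(S_x)$, which still has a shift), Fact \ref{Fact_lemma 3_1 wom} and Lemma \ref{Lemma monotone family not Ep determined is a shift} should control the geometry.

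The main obstacle is this recurrence step: producing a single complete $1$-type that the shift passes through at two points separated by one application of $\mathcal S^m$. Without smallness this fails, since each step of the shift can land in a new type, as in the many-colors example. My first attempt would be a pigeonhole/compactness argument over the countably many $Aa$-types, combined with $\omega$-saturation to realize the limit type of $(b_n)$.
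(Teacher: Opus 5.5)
Your part (a) is the paper's argument: Proposition \ref{Prop_p_notssi_exists_shift} followed by Lemma \ref{Lemma extending family of semiintervals}(b). Appealing to Corollary \ref{COr_simple_semiint_ind_of order} to pass to the fixed order $<$ is a correct detail the paper leaves implicit. The detour through a finite $A_0\subseteq A$ is unnecessary, since the $L_A$-formulas produced use only finitely many parameters anyway.

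Part (b), however, has a genuine gap. The second case is exactly where all the work lies, and you leave it as an open ``main obstacle''. Moreover, the recurrence you propose cannot happen as formulated. Each $S^k_a$ is $Aa$-definable, so a complete type $r\in S_1(Aa)$ decides every formula $x\in S^k_a$. Hence $r$ is realized in $S^{k+1}_a\smallsetminus S^k_a$ for at most one $k$, and any limit type of $(b_n)$ over $Aa$ is realized beyond $\mathcal C=\bigcup_kS^k_a$. So the recurring type must be a type over the base, and the non-idempotent semiinterval cannot be read off from the powers of $\mathcal S$. Your monotonization also fails: $x\leqslant t\land t\in\supin(S_x)$ just defines $S_x$ again. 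Something like $\bigcup_{y\leqslant x}S_y\cap[x,+\infty)$ is needed. Finally, for weakly o-minimal $T$, which need not be small, you give no argument at all; your parenthetical remark holds in every weakly quasi-o-minimal theory and so distinguishes nothing.

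The missing ideas are as follows.

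\emph{Small case.} Make $\mathcal S$ monotone, name parameters, and arrange that the shift is $S_{c_0}$ with $c_0=\min\Mon$. Put $\mathcal C=\bigcup_nS^n_{c_0}$, and let $\Pi(x)$ be the minimal interval type over $\emptyset$ consistent with $\mathcal C<x$. Smallness gives a type $p$ isolated in the closed set $S_\Pi$ by some $\theta$, so $\Pi(\Mon)\cap\theta(\Mon)=p(\Mon)$.

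For $a\models p$, \cite[Lemma 6.12]{MTwqom1} gives $b\in D_a=\{x\mid S_a<x\land\theta(x)\}$ with three properties:
\begin{enumerate}[(i)]
\item $\tp(b/a)$ is isolated by some $\phi(x,a)$;
\item $\phi(\Mon,a)$ is a bounded subset of $\Pi(\Mon)$, hence of $p(\Mon)$, since $\phi(\Mon,a)\subseteq D_a\subseteq\theta(\Mon)$;
\item $b$ is $\mathcal S$-minimal in $D_a$, i.e.\ $b\in S_d$ for some $d<D_a$.
\end{enumerate}
Then $R_a=\bigcup_{t\in\phi(\Mon,a)}[a,t]\cap\theta(\Mon)$ is a $\mathbf p$-semiinterval. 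Every $c\in\phi(\Mon,b)$ lies in $R_a^2\smallsetminus R_a$. Indeed, suppose $c\in\phi(\Mon,a)$. Then $c\equiv b\,(a)$ yields $e$ with $ce\equiv bd\,(a)$, so $c\in S_e$ and $e<b$. Monotonicity together with $S_b<c$ (as $c\in D_b$) gives $S_e<c$, a contradiction. Lemma \ref{Lemma two conditions for S_a E_p determined} then finishes. This is the recurrence you were looking for: $a,b,c$ all realize the same $p\in S_1(\emptyset)$.

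\emph{Weakly o-minimal case.} The closed set of types over $\emptyset$ containing $\{P^n_x\subset P^{n+1}_x\mid n\in\mathbb N\}$ is linearly ordered by $<$, so it has a maximal element $q$. Maximality forces $P_x\subseteq q(\Mon)$ for $x\models q$. So $P_x$ itself is a $\mathbf q$-semiinterval with $P_x^2\neq P_x$.
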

\begin{proof} 
(a) Assume that $p\in S_1(A)$ does not have simple semiintervals and let $\mathbf p=(p,<)$. By Proposition \ref{Prop_p_notssi_exists_shift}, there exists an $A$-definable extension $(D,<)$ of $(p(\Mon),<)$ and an $A$-definable monotone family $\mathcal R$ of semiintervals of $(D,<)$ such that $R_a$ is an  $\mathcal R$-shift for any $a\models p$. As $(D,<)$ is a definable extension of $(p(\Mon),<)$ we have $D\subseteq \Mon$, so by Lemma \ref{Lemma extending family of semiintervals}(b) $\mathcal R$ can be extended to a monotone $A$-definable family of semiintervals, $\mathcal S$, of $(\Mon,<)$ such that $S_a$ is an $\mathcal S$-shift.

(b) First, assume that $T$ is small. Assume that $\mathcal S=(S_x\mid x\in\Mon)$ is a definable family of semiintervals, $c_0\in\Mon$, and $S_{c_0}$ is an $\mathcal S$-shift. We need to prove that $T$ does not have simple semiintervals. 
The proof closely follows the first part of the proof of \cite[Theorem 1]{MTwqom1}. 
First, by \cite[Lemma 6.7]{MTwqom1} we can assume that $\mathcal S$ is monotone. Next, absorb in the language the parameter $c_0$ and a finite set of parameters needed to define the family $\mathcal S$; note that this does not affect the desired conclusion. Then redefine $\mathcal S$ by setting $S_x=\{x\}$ for all $x<c_0$, and modify the order $<$ by moving the interval $(-\infty, c_0)$ to the right end. Notice that the redefined family is still monotone and $0$-definable, that $c_0=\min \Mon$, and that $\mathcal S_{c_0}$ is still a shift. 
Denote $\mathcal C=\bigcup_{n\in\mathbb N} S_{c_0}^n$, and define
 \[\Pi(x)=\{\phi(x)\in L\mid \phi(\Mon)\text{ is convex and contains a final part of }\mathcal C\}.\] 
By \cite[Lemma 6.9(b,c)]{MTwqom1}, the set $\mathcal C\cup\Pi(\Mon)$ is an initial part of $(\Mon,<)$ and $\Pi(x)$ is the minimal interval type over $\emptyset$ consistent with $\mathcal C<x$. As $T$ is small, there is an isolated point, say $p$, in the space $S_\Pi=\{p\in S_1(T)\mid \Pi(x)\subseteq p(x)\}$; let $\theta(x)\in p$ isolate $p$, that is:
\begin{enumerate}[(1)]
    \item \ $\Pi(x)\cup\{\theta(x)\}\vdash p(x)$, \ or, \ $\Pi(\Mon)\cap \theta(\Mon)=p(\Mon)$. 
\end{enumerate} 
Let $a\models p$. Consider the set $D_a=\{x\in \Mon\mid S_a<x\land\theta(x)\}$; clearly, it is $a$-definable, $D_a\subseteq \theta(\Mon)$, and $D_a\cap \Pi(\Mon)\neq \emptyset$. By \cite[Lemma 6.12(b,c)]{MTwqom1}, there exists $b\in D_a$ such that the type $\tp(b/a)$ is isolated by $\phi(x,a)$ say, $\phi(\Mon,a)$ is a bounded subset of $\Pi(\Mon)$, and $b$ is an $\mathcal S$-minimal element\footnote{$\mathcal S$-minimality is defined in \cite[Definition 6.11]{MTwqom1}.} of $D_a$; the latter means that for some $d\in \Mon$ we have: 
\begin{enumerate}[(1)]
\setcounter{enumi}{1}
    \item  \ $b\in S_d$  \ \  and \ \ $d<D_a$. 
\end{enumerate} 
Since $\phi(\Mon,a)$ is a bounded subset of $\Pi(\Mon)$ and since $\phi(\Mon,a)\subseteq \theta(\Mon)$, by (1), we deduce that $\phi(\Mon,a)$ is a bounded subset of $p(\Mon)$ with $\phi(\Mon,a)\subseteq D_a$ and $S_a<\phi(\Mon,a)$.

Define $R_a=\bigcup_{t\in \phi(\Mon,a)}([a,t]\cap \theta(\Mon))$; clearly,  $R_a$ is an $a$-definable $\mathbf p$-semiinterval and $\phi(\Mon,a)$ is its final part. 
Thus, $\mathcal R^\pp=(R_x\mid x\models p)$ is a family of definable $\mathbf p$-semiintervals. We {\em claim} that $\mathcal R^\pp$ is not $\mathcal E_p$-determined; by Lemma \ref{Lemma two conditions for S_a E_p determined}, we need to prove $(\mathcal R^\pp)^2\neq \mathcal R^\pp$ or, equivalently, $R_a^2\neq R_a$. Pick $c\in \phi(\Mon,b)$; clearly, $c\in D_b$ implies $S_b<c$. Also, since $\phi(\Mon,b)$ is a final part of $R_b$, we have $c\in R_b$, which together with $b\in R_a$ implies $c\in R_a^2$. To finish the proof of the claim, it is enough to prove $c\notin R_a$. As $\phi(\Mon,a)$ is a final part of $R_a$, $b\in \phi(\Mon,a)$, and $b<c$, it suffices to show $c\notin\phi(\Mon,a)$; otherwise, $c\in\phi(\Mon,a)$ would imply $c\equiv b\,(a)$ because $\phi(x,a)$ isolates $\tp(b/a)$, so there would be some $e\in\Mon$ such that $ce\equiv bd\,(a)$, and then, by (2), we would have $c\in S_e$ and $e<D_a$. Note that $e<D_a$ and $b\in D_a$ together imply $e<b$, so by the monotonicity of $\mathcal S$ we obtain $\supin (S_e)\subseteq \supin (S_b)$, which together with $S_b<c$ implies $S_e<c$; this contradicts $c\in S_e$, proving the claim.

Therefore, the family $\mathcal R^\pp$ is not $\mathcal E_p$-determined, so $T$ does not have simple semiintervals. This completes the proof in the case where $T$ is small.

Now, assume that $T$ is weakly o-minimal and that $\mathcal P=(P_x\mid x\in\Mon)$ is a 0-definable family of semiintervals that contains a shift; again, we may assume that $\mathcal P$ is monotone. Consider the partial type \ $\Sigma(x)=\{P_x^n\subset P_x^{n+1}\mid n\in\mathbb N\}$. Note that $\Sigma(\Mon)$ is the set of all elements for which $P_x$ is a shift. The set $S_{\Sigma}=\{p\in S_1(T)\mid \Sigma(x)\subset p(x)\}$ is a closed subset of $S_1(T)$, which is linearly ordered by $<$ as $T$ is weakly o-minimal. Therefore, it contains a maximal element, say $q\in S_1(T)$. By the maximality, we have $P_x\subset q(\Mon)$ for all $x\models q$, so $P_x$ is a $\mathbf q$-semiinterval for which $P_x^2\neq P_x$ holds. By Lemma \ref{Lemma two conditions for S_a E_p determined}, $P_x$ is not $\mathcal E_q$-determined. This completes the proof of the theorem. 
\end{proof}

The next example shows that the converse of Theorem \ref{Thm_noshifts_iff_simplesemiintervals}(a) is not valid in general.  

\begin{Example}[A weakly quasi-o-minimal theory with simple semiintervals and a definable shift]\label{Example Z < many colors}
Consider the theory $T$ of the structure $\mathcal Z=(\mathbb Z,<,R_{q,k})_{\substack{q\text{ prime}\\0\leqslant k<q}}$, where $R_{q,k}= q\mathbb Z+k$. Since $\mathcal Z$ is a colored linear order, $T$ is weakly quasi-o-minimal. It is routine to check that after adding the successor function, $T$ eliminates quantifiers. For each sequence $\bar k= (k_q)_{q\text{ prime}}$, where $0\leqslant k_q<q$ for all $q$, we have a unique type $p_{\bar k}\in S_1(\emptyset)$ that contains $\{R_{q,k_q}(x)\mid q\text{ prime}\}$. These are all $1$-types over $\emptyset$ as for each prime $q$ each element is colored by exactly one of the colors $R_{q,k}$($0\leqslant k<q$). Note that for any $a,b\in\Mon$ at a finite distance, $\tp(a)=\tp(b)$ if and only if $a=b$, so, using elimination of quantifiers, it is routine to check that for every $p\in S_1(A)$ every bounded $\mathbf p$-semiinterval is a singleton; $T$ has simple semiintervals. On the other hand, the successor function produces a shift on $(\Mon,<)$. 
\end{Example}

\begin{Theorem}[$I(T,\aleph_0)<2^{\aleph_0}$]\label{Theorem convex few ctble has hssi}
Every convex weakly o-minimal type over a finite domain has hereditarily simple semiintervals.
\end{Theorem}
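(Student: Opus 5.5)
The theorem is stated under the hypothesis $I(T,\aleph_0)<2^{\aleph_0}$, and I would prove the contrapositive. Let $p\in S_1(A)$ with $A$ finite be convex and weakly o-minimal, and let $q\in S(B)$ be an extension of $p$, with $B\supseteq A$, that does not have simple semiintervals. I then want to produce $2^{\aleph_0}$ countable models. Since $A$ is finite, naming $A$ changes neither the number of countable models up to the bound $2^{\aleph_0}$ nor convexity, so I may assume $A=\emptyset$.

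The first step is to pass from non-simplicity over $B$ to a definable shift over a finite set. Fix a weakly o-minimal pair $\mathbf q=(q,<)$ and a $\mathbf q$-semiinterval family $\mathcal S^{\mathbf q}$ that is not $\mathcal E_q$-determined. Proposition \ref{Prop_p_notssi_exists_shift} gives a $B$-definable extension $(D,<)$ of $q(\Mon)$ and a monotone $B$-definable family $\mathcal R$ on $D$ such that $R_a$ is an $\mathcal R$-shift for $a\models q$. Only finitely many parameters $b\in B$ are used, so $\mathcal R$ is $b$-definable.

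The next step is to move $\mathcal R$ onto a definable order. Since $p$ is convex, $p(\Mon)$ is a convex subset of some $\emptyset$-definable order $(D_0,<)$. Moreover, $q(\Mon)\subseteq p(\Mon)$ is convex, so by compactness $D$ can be shrunk to a $b$-definable subset of $D_0$ that is convex in $D_0$. Lemma \ref{Lemma extending family of semiintervals}(b) then extends $\mathcal R$ to a monotone $b$-definable family of semiintervals of $(D_0,<)$ that still contains a shift. At this point I would invoke the non-structure result \cite[Theorem 1]{MTwqom1}: a definable family of semiintervals containing a shift forces $2^{\aleph_0}$ countable models. Its proof only needs a definable order on which the family lives, inside a weakly o-minimal (convex) region, so it should apply over the finite set $b$. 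Since $I(T_b,\aleph_0)\le I(T,\aleph_0)\cdot\aleph_0$ fails to be small exactly when $I(T,\aleph_0)$ fails to be, this gives the contradiction.

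The main obstacle is that \cite[Theorem 1]{MTwqom1} is stated for weakly quasi-o-minimal $T$ with a shift on $(\Mon,<)$, whereas here $T$ is arbitrary and the shift lives only on $D_0$. The convexity of $p$ is exactly what should allow restricting to the definable set $D_0$, whose relevant part is weakly o-minimal. I would first try to check that the proof in \cite{MTwqom1}, via $\mathcal S$-minimal elements and the small-theory argument mirrored in Theorem \ref{Thm_noshifts_iff_simplesemiintervals}(b), uses only the order on $D_0$ and the type $p$. If it does not, the fallback is to build the $2^{\aleph_0}$ models directly, by coding subsets of $\mathbb Q$ through realizing or omitting the limit types of the shift powers $R_a^n$ above chosen elements.
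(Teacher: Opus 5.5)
Your first two steps match the paper: start from a non-$\mathcal E_q$-determined family and obtain a monotone definable family with shifts at realizations of $q$. The last step, however, has a genuine gap, and extending the family to $D_0$ goes in the wrong direction.

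\textbf{The gap.} The non-structure result you want needs the family of semiintervals to live on a definable set that is weakly quasi-o-minimal over a finite parameter set. \cite[Theorem 1]{MTwqom1} assumes $T$ itself is weakly quasi-o-minimal. Its local form, \cite[Theorem 6.17]{MTwqom1}, which the paper uses, assumes that every complete 1-type over the finite parameter set concentrating on the domain is weakly o-minimal. Convexity of $p$ only tells you that $p(\Mon)$ is convex in $D_0$; $D_0$ itself may carry many non-weakly-o-minimal 1-types.

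The argument of \cite{MTwqom1} can land on such a type. That argument is mirrored in Theorem \ref{Thm_noshifts_iff_simplesemiintervals}(b): take the minimal interval type $\Pi$ above $\bigcup_n R^n_a$, then an isolated completion of it. Since $\Pi$ is over a finite set that does not include bounds from $q(\Mon)$, nothing confines $\Pi(\Mon)$ to $p(\Mon)$. So ``its proof only needs a definable order inside a weakly o-minimal region'' is exactly the unproved point. The fallback of coding subsets of $\mathbb Q$ is not an argument.

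\textbf{The missing idea: shrink instead of enlarge.} This is where convexity is really used.
\begin{enumerate}
\item Since $q$ is convex (as an extension of the convex $p$), choose $\theta\in q$ with $q(\Mon)$ convex in $\theta(D)$, and pass to the induced family $\mathcal R^\theta$.
\item $\mathcal R^\theta$ is still monotone and induces the same non-$\mathcal E_q$-determined $\mathbf q$-family. Hence Lemma \ref{Lemma monotone family not Ep determined is a shift} shows $R_a$ is still a shift. Restriction does not preserve shifts automatically, so this re-check is also needed in your own shrinking step.
\item $R^{\mathbf q}_a$ is bounded in $q(\Mon)$ and $q(\Mon)$ is convex in $\theta(D)$. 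Monotonicity and saturation then give $b_1,b_2\models q$ with $b_1<\bigcup_n R^n_a<b_2$.
\item The segment $[b_1,b_2]_{\theta(D)}$ lies inside $q(\Mon)\subseteq p(\Mon)$, and the family induced on it has the same powers at $a$.
\item Every 1-type over the finite parameter set $A_0$ concentrating on this segment extends $p$, hence is weakly o-minimal. So the segment is weakly quasi-o-minimal over $A_0$, and \cite[Theorem 6.17]{MTwqom1} yields $2^{\aleph_0}$ countable models.
\end{enumerate}
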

\begin{proof}
Let $p\in S(A)$ be a convex weakly o-minimal type, where $A$ is finite. Assuming that some extension of $p$, say $q\in S(B)$, does not have simple semiintervals, we will prove $I(T,\aleph_0)=2^{\aleph_0}$. By Lemma \ref{Lemma_notEdetermined_implies_exists_p_monotone} there is some $B$-definable extension, $(D,<)$, of $(q(\Mon),<)$ and a monotone $B$-definable family $\mathcal S=(S_x\mid x\in D)$ of semiintervals of $(D,<)$ such that $S^{\mathbf q}_a$ is not $\mathcal E_q$-determined for some (all) $a\models q$. Since $q$ is convex, by \cite[Proposition 3.4]{MTwqom1}, there is a formula $\theta(x)\in q$ such that $q(\Mon)$ is convex in $(\theta(D), <))$. Denote by $\mathcal R$ the induced family $\mathcal S^{\theta}$, such that $R_x=\theta(D)\cap S_x$ for all $x\in\theta(D)$. Note that the family $\mathcal R$ is monotone and that, by Lemma \ref{Lemma monotone family not Ep determined is a shift}, $R_x$ is a shift for all $x\models q$. 

Fix $a\models q$, and choose $b_1$ and $b_2$ to realize $q$ with $b_1<R^\omega_a<b_2$. Let $\mathcal R'=(R'_x\mid x\in [b_1,b_2]_{\theta(D)})$ be the family induced by $\mathcal R$ on the interval $[b_1,b_2]_{\theta(D)}$: $R_x'=R_x\cap [b_1,b_2]_{\theta(D)}$. 
Then $\mathcal R'$ is monotone and, by the convexity of $q(\Mon)$ within $(\theta(\Mon),<)$, we have $[b_1,b_2]_{\theta(D)}\subseteq p(\Mon)$.  By our choice of $b_1$ and $b_2$, we have $R_a^n\subseteq [b_1,b_2]_{\theta(D)}$, so $(R'_a)^n=R_a^n$ for all $n\in \mathbb N$; hence $R'_a$ is an $\mathcal R'$-shift. Therefore, we have a definable family of semiintervals of $[b_1,b_2]_{\theta(D)}$ that contains a shift. Let $A_0\supseteq A$ be a finite set of parameters needed to define the interval $[b_1,b_2]_{\theta(D)}$. Note that all complete types $r\in S_1(A_0)$ consistent with $x\in [b_1,b_2]_{\theta(D)}$ are weakly o-minimal, since  they extend $p$. In the terminology of \cite[Definition 4.8]{MTwom}, this means that the segment $[b_1,b_2]_{\theta(D)}$ is weakly quasi-o-minimal over $A_0$. Therefore, we have a definable set that is weakly quasi-o-minimal over a finite parameter set and a definable family of semiintervals that contains a shift; in this situation, \cite[Theorem 6.17]{MTwqom1} guarantees $I(T,\aleph_0)=2^{\aleph_0}$. This completes the proof.  
\end{proof}

Recall that \cite[Theorem 1]{MTwqom1} states that any weakly quasi-o-minimal theory $T$ that satisfies (SH) has $2^{\aleph_0}$ countable models. By Theorem \ref{Thm_noshifts_iff_simplesemiintervals}, conditions (SH) and (NS) are equivalent for small $T$, and, in particular, for theories with few countable models. 

\begin{Theorem}\label{Theorem few models implies simple semiintervals}
A weakly quasi-o-minimal theory with few countable models has simple semiintervals. 
\end{Theorem}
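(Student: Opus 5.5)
We argue by contraposition: assume $T$ is weakly quasi-o-minimal with few countable models, i.e.\ $I(T,\aleph_0)<2^{\aleph_0}$, and suppose towards a contradiction that $T$ does not have simple semiintervals. That is, $T$ satisfies (NS).

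The plan has three steps. First, by Theorem \ref{Thm_noshifts_iff_simplesemiintervals}(a), (NS) implies (SH). So there is a definable family of semiintervals of $(\Mon,<)$ containing a shift, where $<$ is a $0$-definable order witnessing weak quasi-o-minimality. Second, \cite[Theorem 1]{MTwqom1} says that any weakly quasi-o-minimal theory satisfying (SH) has $2^{\aleph_0}$ countable models. This contradicts $I(T,\aleph_0)<2^{\aleph_0}$, so $T$ has simple semiintervals. (Smallness of $T$, which follows from having few countable models, is not needed in this direction; it only matters for the converse, part (b).)

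The one point to check carefully is the definition: ``$T$ has simple semiintervals'' means every $p\in S_1(T)$ has \emph{hereditarily} simple semiintervals. So (NS) means that some complete type $q\in S_1(A)$, extending some $p\in S_1(T)$, fails to have simple semiintervals, and $A$ may be infinite. We must make sure the proof of Theorem \ref{Thm_noshifts_iff_simplesemiintervals}(a) handles this. It does: Proposition \ref{Prop_p_notssi_exists_shift} gives an $A$-definable monotone family with a shift, and the definition involves only finitely many parameters from $A$. Lemma \ref{Lemma extending family of semiintervals}(b) then extends this family to all of $(\Mon,<)$, and the result is a definable family with a shift, as (SH) requires. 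For this, $q$ must be weakly o-minimal with respect to $<$. That holds because $T$ is weakly quasi-o-minimal, so every $1$-type over any parameter set is weakly o-minimal.

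An alternative route avoids the global order. One would use Theorem \ref{Theorem convex few ctble has hssi} for convex types together with convexity of all $1$-types under few models. However, convexity of types is proved later in the paper, so the route through Theorem \ref{Thm_noshifts_iff_simplesemiintervals}(a) and \cite[Theorem 1]{MTwqom1} is the one to follow. The main, and only nontrivial, obstacle is the parameter bookkeeping just described, and it is already settled by the cited results.
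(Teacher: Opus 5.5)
Your proof is correct and is essentially the paper's argument: (NS) yields (SH) by Theorem \ref{Thm_noshifts_iff_simplesemiintervals}(a), and \cite[Theorem 1]{MTwqom1} then gives $2^{\aleph_0}$ countable models. The paper also leaves one step implicit, and you should make it explicit: failure of simple semiintervals for $q$ only means some weakly o-minimal pair $(q,<_q)$ fails, so you need Corollary \ref{COr_simple_semiint_ind_of order} to conclude that $(q,<)$ with the global order fails as well, which is what lets Proposition \ref{Prop_p_notssi_exists_shift} apply to $(q,<)$.
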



\section{Condition (R)}\label{Section_(R)}

In this section, we study weakly quasi-o-minimal theories that satisfy the following condition. 
\begin{enumerate}[\hspace{10pt}(1)]
\item[(R)] Every equivalence relation $E\in \mathcal E_p$ is relatively 0-definable for all $A\subseteq \Mon$ and all $p\in S_1(A)$.
\end{enumerate}
We prove that (R) is satisfied in all quasi-o-minimal theories and all weakly quasi-o-minimal theories with simple semiintervals that are, in addition, rosy or have a finite convexity rank. 
The most technically demanding result of this section is Proposition \ref{Prop_equivalents_of_R}, in which we prove that a weakly quasi-o-minimal theory with simple semiintervals $T$ satisfies (R) if and only if it is binary; this suffices to confirm Vaught's conjecture for $T$.

\begin{Proposition}\label{Prop_quasio_satisfy_R} 
Quasi-o-minimal theories satisfy condition (R).
\end{Proposition}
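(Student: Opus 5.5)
We plan to derive (R) from Lemma \ref{Lemma O EKVIVALENCIJAMA} applied with base $\emptyset$. Fix $A$, $p\in S_1(A)$ and $E\in\mathcal E_p$; we may assume $E\neq\mathbf 1_p$. Let $a\models p$ and put $p_0=\tp(a)\in S_1(\emptyset)$. In a quasi-o-minimal theory every type in $S_1(\emptyset)$ is weakly o-minimal, so $p_0$ is weakly o-minimal and $p$ is one of its extensions. By Lemma \ref{Lemma O EKVIVALENCIJAMA}, applied to $\emptyset\subseteq A$ with $q=p$, it suffices to verify condition 3) there: the class $[a]_E$ is $a$-invariant. Lemma \ref{Lemma O EKVIVALENCIJAMA} then gives $E_0\in\mathcal E_{p_0}$ whose restriction to $p(\Mon)$ is $E$. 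Hence $E$ is relatively $0$-definable.

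\textbf{Step 1: shape of $[a]_E$.} Let $E(x,y;\bar b)$ relatively define $E$, with $\bar b\subseteq A$. By quasi-o-minimality, $E(a,\Mon;\bar b)$ is a finite Boolean combination of $0$-definable sets and intervals with endpoints in $\Mon\cup\{\pm\infty\}$. Each $0$-definable set either contains $p_0(\Mon)$ or is disjoint from it. Therefore $E(a,p_0(\Mon);\bar b)=p_0(\Mon)\cap U$, where $U$ is a finite union of intervals with endpoints in $\Mon$. The class $[a]_E$ is convex, contains $a$, and is bounded in $p(\Mon)$. Moreover, $p(\Mon)$ is convex in $p_0(\Mon)$ by \cite[Lemma 3.6(ii)]{MTwom}. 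Combining these, $[a]_E=p_0(\Mon)\cap I$ for the single interval $I=(c_1,c_2)$ of $U$ containing $a$, with the appropriate open or closed ends and $c_1,c_2\in\Mon$. So the class is cut out of the locus of a complete $\emptyset$-type by an interval.

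\textbf{Step 2: invariance over $a$.} Let $f\in\Aut(\Mon)$ fix $a$; we must show $f([a]_E)=[a]_E$. Now $f([a]_E)=p_0(\Mon)\cap f(I)$, and it is the class of $a$ under the conjugate relation $f(E)$ on $f(p)(\Mon)$. Both sets are convex subsets of $p_0(\Mon)$ containing $a$, and both are bounded inside the respective convex sets $p(\Mon)$ and $f(p)(\Mon)$, which are both contained in $p_0(\Mon)$ and contain $a$. The plan is to argue by contradiction. Suppose, say, $\supin_{p_0(\Mon)}([a]_E)\subset\supin_{p_0(\Mon)}(f([a]_E))$. Then some $e\in p_0(\Mon)$ lies in $f([a]_E)$ above $[a]_E$. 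Since $f$ fixes $a$, the pair $(a,e)$ has the same type over $\emptyset$ as a pair $(a,e')$ with $e'\in[a]_E$. We will then use the description from Step 1 as an $\emptyset$-type-definable family of interval-cuts of $p_0(\Mon)$, indexed by realizations of $\tp(\bar b,a)$, together with weak o-minimality of $p_0$ (for instance, the fact that relatively $a$-definable convex subsets of $p_0(\Mon)$ containing $a$ form a chain, \cite[Lemma 3.6]{MTwom}). The goal is to show that the family of such cuts, as $\bar b'$ ranges over $\tp(\bar b/a)$, is constant. Equivalently, the cut of $c_2$ in $p_0(\Mon)$ depends only on $a$.

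\textbf{Main obstacle.} Step 2 is the step I expect to be hardest. Showing that the endpoint cuts of $I$, restricted to $p_0(\Mon)$, do not move under $\Aut_a(\Mon)$ is exactly where quasi-o-minimality must be used beyond weak o-minimality. In a merely weakly o-minimal theory the analogous statement fails; cf.\ Example \ref{Example_ssi_not hereditary}, where naming $0$ creates new convex equivalences. The approach I would try first is a compactness argument. If the cut of $[a]_E$ varied with $\bar b'\models\tp(\bar b/a)$, then by weak o-minimality of $p_0$ over $a$ we would obtain an infinite strictly monotone sequence of such cuts, each determined by a single pair of endpoints. Taking the $E$-relation over distinct conjugates of $\bar b$ would then produce infinitely many distinct convex components inside a set of the form $p_0(\Mon)\cap(\text{finite union of intervals})$ defined by one formula. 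This contradicts the uniform finite bound on the number of interval pieces that quasi-o-minimality gives for a fixed formula.
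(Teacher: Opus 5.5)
\textbf{There is a genuine gap in Step 2.} Your reduction to condition 3) of Lemma \ref{Lemma O EKVIVALENCIJAMA} is the same as the paper's. Step 1 is essentially the paper's first step, with one correction: $[a]_E$ need not lie in a single piece of $U$, since several pieces separated by gaps containing no realizations of $p_0$ can all meet $[a]_E$; you should take the convex hull of the pieces meeting $[a]_E$. Step 2, however, carries the whole content of the proposition, and you only outline it. The compactness argument you propose does not give a contradiction. Conjugating $\bar b$ over $a$ gives infinitely many \emph{different instances} $E(a,x;\bar b_n)$ of one formula, each cutting a single convex piece out of $p_0(\Mon)$. A uniform bound on the number of interval pieces per instance is compatible with this; compare the instances of $x<z$ in a dense order. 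Any formula built from finitely many of the $\bar b_n$ still has boundedly many components, so no instance with unboundedly many components ever appears. Also, Example \ref{Example_ssi_not hereditary} is o-minimal and concerns a semiinterval that is not $\mathcal E_q$-determined, not new convex equivalences, so it does not illustrate a failure of (R).

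\textbf{The missing idea is to use the type of an endpoint.} This is where quasi-o-minimality really enters: the endpoints are elements of $\Mon$, not cuts, and their types are weakly o-minimal. Write $E_A$ for $E$ and $E_{A'}$ for its conjugates. Assume also $E_A\neq\mathrm{id}$ (otherwise $E_A$ is trivially relatively $0$-definable). Then no $E_A$-class has a least or greatest element, so $[a]_{E_A}=p_0(\Mon)\cap\theta(\Mon)\cap(b,c)$ with $\theta\in p_0$. Moreover, the set $C_A(a)=\{x\in p_0(\Mon)\mid [a]_{E_A}<x\}$ has no minimum. Hence $c\notin p_0(\Mon)$, and $c$ lies in the convex hull of no $E_A$-class.

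Now put $q_0=\tp(c)$ and $q_A=\tp(c/A)$. Since $q_0$ is weakly o-minimal, $q_A(\Mon)$ is convex in $q_0(\Mon)$. Choosing $a_0c_0\equiv ac\,(A)$ with $[a_0]_{E_A}<[a]_{E_A}$ gives $c_0<[a]_{E_A}<c$.

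Suppose some $A'\equiv A\,(a)$ moves the class, say $\supin([a]_{E_{A'}})\subset\supin([a]_{E_A})$. Take $c'$ with $c'A'\equiv cA\,(a)$. Then $c'$ lies in $\conv([a]_{E_A})$, with realizations of $p_A$ on both sides. Since $c_0<c'<c$ and $c'\models q_0$, convexity gives $c'\models q_A$, that is, $c'\equiv c\,(A)$. This contradicts the fact that $c$ lies in the convex hull of no $E_A$-class. Without an argument of this kind, your proposal does not establish the $a$-invariance of $[a]_E$.
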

\begin{proof}
Suppose that $T$ is quasi-o-minimal with respect to $<$, and let $p_A\in S_1(A)$ and $E_A\in\mathcal E_p$, and we prove that $E_A$ is relatively 0-definable. We may assume $E_A\neq\mathrm{id}_{p_A(\Mon)}$ and $E_A\neq \mathbf 1_p$. 
Let $p_0=(p_A)_{\strok\emptyset}$, and fix $a\models p_A$.
Since $p_A(\Mon)$ is a convex subset of $p_0(\Mon)$ and $[a]_E$ is a bounded and convex subset of $p_A(\Mon)$, $[a]_E$ is a bounded and convex subset of $p_0(\Mon)$, and it is relatively $Aa$-definable within $p_0(\Mon)$. We prove that there are $\theta(x)\in p_0(x)$ and $b,c\in\Mon$ such that:
\setcounter{equation}{0}
\begin{equation}
[a]_{E_A}=(\theta(\Mon)\cap (b,c))\cap p_0(\Mon).
\end{equation}
As $T$ is quasi-o-minimal, the formula relatively defining $[a]_{E_A}$ within $p_0(\Mon)$ can be chosen in the form $\bigvee_{i=1}^n\theta_i(x)\land x\in J_i$, where $\theta_i(x)$ are $L$-formulae and $J_i$ are intervals in $\Mon$. Since $[a]_{E_A}\subset p_0(\Mon)$, we may assume that $\theta_i(x)\in p_0(x)$ for all $i=1,\dots,n$, and note that $\theta(x)\land\bigvee_{i=1}^nx\in J_i$ still relatively defines $[a]_{E_A}$ within $p_0(\Mon)$, where $\theta(x)=\bigwedge_{i=1}^n\theta_i(x)$. Also, we may assume that each $J_i$ meets $[a]_{E_A}$. The left end, say $b$, of some $J_i$ satisfies $b\leqslant[a]_{E_A}$, and the right end, say $c$, of some $J_i$ satisfies $[a]_{E_A}\leqslant c$. It is easy to see that $[a]_{E_A}$ is relatively defined by $\theta(x)\land x\in J$ within $p_0(\Mon)$, where $J$ is some interval with endpoints $b$ and $c$. 
Now, note that the class $[a]_{E_A}$ does not have a minimal (maximal) element; otherwise, every realization of $p_A$ would be minimal (maximal) in its $E_A$-class, so we would have $E_A=\mathrm{id}_{p_A(\Mon)}$, which is not the case. Thus $b,c\notin[a]_{E_A}$, so we may assume that $J=(b,c)$, and we have established (1). 

By Lemma \ref{Lemma O EKVIVALENCIJAMA} it suffices to prove that the class $[a]_{E_A}$ is $a$-invariant. Toward a contradiction, suppose that there is $A'\equiv A\,(a)$ such that $[a]_{E_A}\neq [a]_{E_{A'}}$ (where $E_{A'}$ is the conjugate of $E_A$). Then, after possibly reversing the order and/or exchanging the roles of $A$ and $A'$, we can assume $\supin_{p_0(\Mon)} ([a]_{E_{A'}})\subset\supin_{p_0(\Mon)} ([a]_{E_A})$. 
%
%
%
Denote: \ $q_0=\tp(c)$, $q_A=\tp(c/A)$,  and $C_A(a)=\{x\in p_0(\Mon)\mid [a]_{E_A}<x\}$.  From (1) we derive:
\begin{equation}
[a]_{E_A}<c \ \text{ and } \  c\leqslant C_A(a).
\end{equation}
Note that the set $C_A(a)\cap p_A(\Mon)$ is an initial part of $C_A(a)$ that is closed for $E_A$-classes. Consequently, since no $E_A$-class contains a minimal element, neither does $C_A(a)$, so $c\notin C_A(a)$. Then $[a]_{E_A}<c<C_A(a)$ implies $c\notin p_0(\Mon)$, so $q_0\neq p_0$.

Choose $a_0c_0\equiv ac\,(A)$ with $[a_0]_{E_A}<[a]_{E_A}$. Then $c_0\models q_A$, $[a_0]_{E_A}<c_0$, and there are no realizations of $p_A$ between $[a_0]_{E_A}$ and $c_0$. Since $q_A\neq p_A$ and since $[a_0]_{E_A}<[a]_{E_A}$, we conclude $c_0<[a]_{E_A}<c$. Next, choose $c'\models q_0$ so that $c'A'\equiv cA\,(a)$. 
Note that $\supin_{p_0(\Mon)} ([a]_{E_{A'}})\subset\supin_{p_0(\Mon)} ([a]_{E_A})$ implies that $[a]_{E_{A'}}<d$ for some $d\in [a]_{E_A}$. 
Then $d\in C_{A'}(a)$, so $c'<d$ and $c'\in \conv([a]_{E_A})$ (the convex hull of $[a]_{E_A}$ in $(\Mon,<)$). 
We conclude that $c'$ divides the set $\conv([a]_{E_A})$ into two parts, each containing realizations of $p_A(\Mon)$. Also, $c'\in \conv([a]_{E_A})$ combined with $c_0<[a]_{E_A}<c$ yields $c_0<c'<c$. Since $q_A(\Mon)$ is a convex subset of $q_0(\Mon)$, $c_0<c'<c$ together with $c_0,c\models q_A$ and $c'\models q_0$ implies $c'\models q_A$; hence $c\equiv c'\,(A)$. By our choice of $c$, $[a]_{E_A}<c<C_A(a)$ implies that $c$ does not divide the convex hull of any $E_A$-class; on the other hand, $c'$ divides $\conv([a]_{E_A})$. This contradicts $c\equiv c'\,(A)$ and completes the proof of the proposition. 
\end{proof}

\begin{Remark}\label{Remark_R_for binary} It is rather straightforward to verify that condition (R) is satisfied in all binary first-order theories.    
\end{Remark}

Now, we examine weakly quasi-o-minimal theories in which condition (R) fails. 

\begin{Lemma}\label{Lemma_non_R} Let $<$ be a 0-definable linear order on $\Mon$, let $p_A\in S_1(A)$ be weakly o-minimal, and let $E_A\in\mathcal E_{p_A}$. Suppose that $E_A$ is not relatively $0$-definable and $(p_A(\Mon)/E_A,<)$ is a dense linear order. 
Then for all $b\models p_A$ there exists a sequence $(A_n\mid n\in\omega)$ such that $A_0=A$, $\tp(A_{n})=\tp(A)$, and the class $[b]_{E_{A_n}}$ contains infinitely many $E_{A_{n+1}}$-classes for all $n\in \omega$ (where $E_{A_n}$ is the conjugate of $E_A$).
\end{Lemma}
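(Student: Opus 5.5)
The plan is to build the sequence recursively, using an automorphism that fixes $b$ and moves $A$, and to iterate it. By Lemma \ref{Lemma O EKVIVALENCIJAMA}, since $E_A$ is not relatively $0$-definable (and $E_A\neq\mathbf 1_{p_A}$, otherwise it would be), the class $[b]_{E_A}$ is not $b$-invariant. So there is $A'\equiv A\,(b)$ with $[b]_{E_{A'}}\neq[b]_{E_A}$. Both classes are convex, contain $b$, and are relatively defined within $p_0(\Mon)$, where $p_0=(p_A)_{\strok\emptyset}$; the loci $p_A(\Mon)$ and $p_{A'}(\Mon)$ are convex subsets of $p_0(\Mon)$.

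First I would arrange that one class is properly contained in the other, with a strict difference on a fixed side. Possibly swapping $A,A'$ and reversing the order, I aim for $\supin_{p_0(\Mon)}([b]_{E_{A'}})\subset\supin_{p_0(\Mon)}([b]_{E_A})$. The key claim is that in this case $[b]_{E_A}$ contains infinitely many $E_{A'}$-classes.

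To see this, pick $d\in[b]_{E_A}$ with $[b]_{E_{A'}}<d$. I need $d\models p_{A'}$; if necessary, shrink by choosing $d$ close to $[b]_{E_{A'}}$, using convexity of $p_{A'}(\Mon)$ in $p_0(\Mon)$. The set $[b]_{E_A}\cap p_{A'}(\Mon)$ is convex and relatively $AA'b$-definable. Since $(p_{A'}(\Mon)/E_{A'},<)$ is dense (this transfers from $A$ by conjugacy), the $E_{A'}$-classes strictly between $[b]_{E_{A'}}$ and $[d]_{E_{A'}}$ are infinite in number. By convexity they all lie in $[b]_{E_A}$, which proves the claim.

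To make the conclusion uniform in $b$ and iterate, I note that the situation is conjugation-invariant. Having $(A_0,A_1)=(A,A')$ with $\tp(A_0b)=\tp(A_1b)$ and $[b]_{E_{A_0}}$ containing infinitely many $E_{A_1}$-classes, take $f\in\Aut(\Mon/b)$ with $f(A_0)=A_1$ and set $A_{n+1}=f(A_n)$. Then $f$ maps the pair $(A_n,A_{n+1})$ to $(A_{n+1},A_{n+2})$ and fixes $b$, so the property propagates and $\tp(A_n)=\tp(A)$ holds for all $n$. For an arbitrary $b\models p_A$, the statement transfers from one fixed $b$ by an $A$-automorphism.

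The main obstacle is the first step: ruling out the case where the two classes are incomparable, or where the strict inclusion fails on the chosen side. It also requires ensuring that the witness $d$ realizes $p_{A'}$, so that density of $p_{A'}(\Mon)/E_{A'}$ actually applies. I would handle the first issue by the symmetry of swapping $A$ and $A'$ and reversing the order, since $\supin$ and $\fin$ cannot both agree. For the second, I would use the fact that $[b]_{E_{A'}}$ is bounded in $p_{A'}(\Mon)$ together with the convexity of the locus.
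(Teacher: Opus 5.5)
Your proposal is correct and follows the paper's proof. As there, Lemma \ref{Lemma O EKVIVALENCIJAMA} gives $A'\equiv A\,(b)$ with $[b]_{E_{A'}}\neq[b]_{E_A}$. By symmetry (replacing the automorphism by its inverse and/or reversing $<$) you get a strict inclusion $\supin([b]_{E_{A'}})\subset\supin([b]_{E_A})$. The two classes need not be nested, and your argument never uses that they are, so the ``properly contained'' phrasing is unnecessary. Density of the conjugate quotient then yields infinitely many $E_{A'}$-classes inside $[b]_{E_A}$.

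The only difference is in the iteration. You iterate a single automorphism fixing $b$. The paper instead re-chooses $b$ inside one of the inner classes and re-applies its claim at each step, which additionally gives $[b]_{E_{A_{n+1}}}\subseteq[b]_{E_{A_n}}$; this is more than the statement asks for.
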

\begin{proof} First, we {\it claim} that for all $b\models p$ there exists an automorphism $f\in \Aut(\Mon)$ such that the $E_A$-class $[b]_{E_A}$ contains infinitely many $f(E_A)$-classes, one of which is $[b]_{E_{f(A)}}$. 

To prove the claim, fix $a\models p_A$ and let $p=\tp(a)$. Since $E_A$ is not relatively 0-definable, by Lemma \ref{Lemma O EKVIVALENCIJAMA} the class $[a]_{E_A}$ is not $a$-invariant, so there is an $f\in \Aut_a(\Mon)$ such that $f([a]_{E_A})\neq [a]_{E_A}$. Then $E_{f(A)} \in \mathcal E_{p_{f(A)}}$ and $f([a]_{E_A})=[a]_{E_{f(A)}}$, where $p_{f(A)}$ denotes the conjugate of $p_A$. The classes $[a]_{E_A}$ and $[a]_{E_{f(A)}}$ are distinct convex subsets of $p(\Mon)$, so the initial or final parts of $(p(\Mon),<)$ generated by them are distinct. Without loss we assume $\supin ([a]_{E_{f(A)}})\subset \supin ([a]_{E_A})$; the other cases are dealt with in a similar way. 
Since $(p_A(\Mon)/E_A,<)$ is a dense infinite order, for each $c>[a]_{E_A}$ there are infinitely many $E_A$-classes that satisfy $[a]_{E_A}<[x]_{E_A}<c$. In particular, $\supin ([a]_{E_{f(A)}})\subset \supin ([a]_{E_A})$ implies that there are infinitely many $E_{f(A)}$-classes within $[a]_{E_{A}}$; choose $b$ in one of them. Clearly, the class $[b]_{E_A}$ ($=[a]_{E_A}$) contains infinitely many $E_{f(A)}$-classes, one of which is $[b]_{E_{f(A)}}$. Since this holds for one $b\models p_A$, it does for all. This proves the {\it claim}. 

To finish the proof of the lemma, fix $b\models p$ and define by recursion: $A_0=A$, $A_{n+1}=f_n(A_n)$, where $f_n$ is given by the claim applied to $b$ and to $A_n$ in place of $A$.  
\end{proof} 

It follows that a weakly quasi-o-minimal theory satisfies (R) if and only if there do not exist a complete 1-type $p_A$, $b\models p_A$, and a sequence $(A_n\mid n\in\omega)$ that satisfy the conclusion of the lemma. This equivalent condition, in the context of any linearly ordered structure that also has simple semiintervals and all 1-types convex, was studied by Baizhanov and Zambarnaya in \cite{Baizhanov2025}.

\begin{Remark}\label{Rematk_finconv_rosywom} Consider the class of countable weakly quasi-o-minimal theories $T$. We have already noted that the binary $T$ and the quasi-o-minimal $T$ satisfy condition (R). Using Lemma \ref{Lemma_non_R} we obtain two new subclasses.

\smallskip 
(a) $T$ has a finite convexity rank and few countable models.\\
Kulpeshov in \cite{kulpeshov1998} introduced the convexity rank of a unary formula $\phi(x)$ in a weakly o-minimal theory, denoted by $RC(\phi(x))$, as follows:
\begin{enumerate}[(i)]
    \item   $RC(\phi(x))) \geqslant  0$, if $\phi(x)$ is consistent;
    \item   $RC(\phi(x))\geqslant n+1$, if there exist a parametrically deﬁnable convex equivalence relation $E(x, y)$ on $\Mon$ and an inﬁnite set $\{ b_i\mid i\in\omega\}$ of representatives of different $E$-classes such that $RC(\phi(x)\land E(x,b_i))\geqslant n$ for all $i\in\omega$;
    \item $RC(\phi(x))=n$ if $n$ is maximal satisfying $RC(\phi(x))\geqslant n$; if no such $n$ exists, $RC(\phi(x))=\infty$.   
\end{enumerate}
$T$ has finite convexity rank if $RC(x=x)<\omega$. Since $T$ has few countable models, Theorem \ref{Theorem few models implies simple semiintervals} applies and guarantees that the order $(p(\Mon)/E,<)$ is dense and endless for all nonalgebraic types $p\in S_1(A)$ and all $E\in \mathcal E_p\smallsetminus\{\mathbf 1_p\}$. This allows us to apply Lemma \ref{Lemma_non_R}; since its conclusion contradicts $RC(x=x)<\omega$, $E$ is relatively 0-definable. Therefore, $T$ satisfies (R). 

\smallskip 
(b) $T$ is rosy and has few countable models. \\
Ealy and Onshuus in \cite{ealy2007characterizing}
introduced the equivalence rank $eq-rk_{\Delta}(\Pi(\bar x))$, where $\Pi(\bar x)$ is a partial type and $\Delta$ a finite set of $L$-formulae in an arbitrary first-order theory. The main clause of the definition is the following: 
\begin{enumerate}[$\bullet$]
\item $eq-rk_{\Delta}(\Pi(\bar x))\geqslant \alpha+1$ if and only if there is some equivalence relation on $\Mon^{|\bar x|}$,
$E(\bar x,\bar y)$, defined by $\phi(\bar x,\bar y,\bar c)$ with $\phi(\bar x,\bar y,\bar z)\in \Delta$ and $\bar c\in\Mon$, and there are $(\bar b_j\mid j\in \omega)$ representatives of different $E$-classes, such that $eq-rk_{\Delta}(\Pi(\bar x)\land E(\bar x,\bar b_j))\geqslant\alpha$ ($j\in\omega$).
\end{enumerate}
They prove that a first-order theory is rosy if and only if all equivalence ranks $eq-rk_{\Delta}(\Pi(\bar x))$ are finite (for all $\bar x, \Pi(\bar x)$, and finite $\Delta$). 
Arguing like in the part (a), we conclude that $T$ satisfies (R).
\end{Remark}

\begin{Lemma}\label{Lema_R_on_eq}
 If a weakly quasi-o-minimal theory $T$ satisfies condition (R), then it also satisfies:
\begin{enumerate}
\item[\hspace{10pt} (R)$^\mathrm{eq}$] Every relation $E\in \mathcal E_p$ is relatively $0$-definable for all $A\subseteq \Moneq$ and all $p\in S_1(A)$.
\end{enumerate}
\end{Lemma}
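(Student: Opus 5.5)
The plan is to reduce (R)$^\mathrm{eq}$ to (R) by replacing a parameter set of imaginaries with a set of real tuples. Fix $A\subseteq\Moneq$, $p\in S_1(A)$ and $E\in\mathcal E_p$. Here $p$ is a 1-type in a home sort, so $p(\Mon)\subseteq\Mon$.

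Each $e\in A$ is $e=[\bar c_e]_{E_e}$ for some real tuple $\bar c_e$ and $0$-definable equivalence $E_e$. Let $C$ be a set of real tuples choosing one representative $\bar c_e$ for each $e\in A$, so that $A\subseteq\dcleq(C)$. Choose these representatives so that $C$ is as independent from a fixed $a\models p$ over $A$ as possible. Concretely, realize $\tp(C/A)$ so that $\tp(C/Aa)$ does not move the locus of $p$, for instance by taking $C$ with $\tp(C/Aa)$ finitely satisfiable, or in general a heir-like position.

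The cleaner route avoids choosing $C$ cleverly and works through Lemma \ref{Lemma O EKVIVALENCIJAMA}. It suffices to show that for $a\models p$ the class $[a]_E$ is $a$-invariant, with $E$ restricted to the $0$-type. Suppose $f\in\Aut_a(\Moneq)$. Let $p_C=\tp(a/C)$, an extension of $p$ to the real set $C$. Then $E':=E\strok p_C(\Mon)$ is relatively $C$-definable on $p_C(\Mon)$. If $E'\neq\mathbf 1_{p_C}$, then (R) applied to $p_C$ and $E'$ gives that $E'$ is relatively $0$-definable. By Lemma \ref{Lemma O EKVIVALENCIJAMA} (2)$\Rightarrow$3), the class $[a]_{E'}$ is $a$-invariant.

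The key point is then that $[a]_{E'}=[a]_E$. This holds when $[a]_E\subseteq p_C(\Mon)$, which means that $p_C(\Mon)$, a convex subset of $p(\Mon)$, contains the $E$-class of $a$. So the main obstacle is to choose $C$ such that $p_C(\Mon)\supseteq [a]_E$, i.e. $\tp(a/A)\cup\{E(x,a)\}\vdash\tp(a/C)$. I would try to achieve this by choosing $C$ realizing $\tp(C/A)$ with $C\ind_A a$ in the weakly o-minimal sense: $\tp(a/AC)$ should not be bounded inside $p(\Mon)$ by any relatively $AC$-definable set. Then $p_C(\Mon)$ is a union of convex pieces that is not bounded. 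Combining convexity of $p_C(\Mon)$ in $p(\Mon)$ (as in \cite[Lemma 3.6]{MTwom}) with non-boundedness gives $p_C(\Mon)=p(\Mon)$ on the relevant region when $E\neq\mathbf 1_p$. Such a $C$ exists by extension: take any representative set $C_0$ and move it by an $A$-automorphism to avoid forking with $a$. Nonforking extensions of the weakly o-minimal type $p$ exist, and taking $\tp(a/AC)$ to be the nonforking extension means $p_C(\Mon)$ is unbounded and convex. With $E$ bounded, this forces $[a]_E\subseteq p_C(\Mon)$.

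Finally, convert back. The class $[a]_E=[a]_{E'}$ is $a$-invariant. Applying Lemma \ref{Lemma O EKVIVALENCIJAMA} (3)$\Rightarrow$1) to the pair $\emptyset\subseteq A$ (this lemma was stated for arbitrary parameter sets and works in $\Moneq$) yields $E_0\in\mathcal E_{p\strok\emptyset}$ with $E_0\strok p(\Mon)=E$. Hence $E$ is relatively $0$-definable. The trivial cases $E=\mathbf 1_p$ and $p$ algebraic are immediate.
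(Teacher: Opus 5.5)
This is correct and is essentially the paper's proof. You pass to real representatives $C$ of $A$ (so $A\subseteq\dcleq(C)$) positioned so that $\tp(a/C)$ is a nonforking extension of $p$; the paper equivalently fixes a real $B$ and takes $b\models(\mathbf p_r)_{\strok B}$. You then note that the locus of this extension is a final or initial part of $p(\Mon)$ containing $[a]_E$, apply (R) to the restricted relation, and invoke Lemma~\ref{Lemma O EKVIVALENCIJAMA} twice.

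The one step you under-justify is $[a]_E\subseteq p_C(\Mon)$, which does not follow from unboundedness and convexity alone. It holds because $p_C(\Mon)$ is the locus of a complete type over $C$ and $E$ is relatively $C$-definable with bounded classes. So a $C$-automorphism sending $a$ to some $a''\in p_C(\Mon)$ above $[a]_E$ shows that no $E$-class can straddle the boundary of $p_C(\Mon)$, and the same $a''$ gives $E'\neq\mathbf 1_{p_C}$.
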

\begin{proof} 
Fix a 0-definable linear order $<$ on $\Mon$, $A\subseteq\Moneq$, a real type $p\in S_1(A)$, and $E\in\mathcal E_p$; denote $\mathbf p=(p,<)$.
If $E=\mathbf 1_p$, then the conclusion is obvious, so assume $E\neq\mathbf 1_p$. 
Choose $B\subseteq \Mon$ such that $A\subseteq \dcleq(B)$. Let $p_{AB}=(\mathbf p_r)_{\strok AB}$ and let $p_B=(\mathbf p_r)_{\strok B}$; clearly, the set $p_B(\Mon)=p_{AB}(\Mon)$ is a final part of $p(\Mon)$. Note that $E$ is relatively $B$-definable on $p(\Mon)$, and for all $b\models p_B$, $E\neq\mathbf 1_p$ implies that the class $[b]_E$ is a bounded, convex subset of $p_B(\Mon)$; it follows that the relation $E_B=E_{\strok p_B(\Mon)}$ is a relatively $B$-definable equivalence relation on $p_B(\Mon)$, $E_B\neq \mathbf 1_{p_B}$, and that $[b]_E=[b]_{E_B}$ holds for all $b\models p_B$. Then condition (R) applies to the type $p_B$ and the relation $E_B$: $E_B$ is relatively $0$-definable. 
Fix $b\models p_B$.
By Lemma \ref{Lemma O EKVIVALENCIJAMA} there is a $0$-definable equivalence relation $E_0$ on $p_{\strok \emptyset}(\Mon)$ such that $[b]_{E_0}=[b]_{E_B}$. Then $[b]_E=[b]_{E_0}$, so $[b]_E$ is $b$-invariant. By Lemma \ref{Lemma O EKVIVALENCIJAMA}, $E$ is relatively $0$-definable on $p(\Mon)$. 
\end{proof}

\begin{Proposition}\label{Prop_equivalents_of_R} 
The following conditions are equivalent for any weakly quasi-o-minimal theory $T$ with simple semiintervals.
\begin{enumerate}[\hspace{10pt} 1)]
    \item $T$ satisfies condition (R);
    \item For all $A\subset \Moneq$ every real sort type $p\in S_1(A)$ is trivial;
    \item $T$ is binary.
\end{enumerate}
\end{Proposition}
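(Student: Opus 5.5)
I would prove the cycle 3)$\Rightarrow$1)$\Rightarrow$2)$\Rightarrow$3). The implication 3)$\Rightarrow$1) is Remark \ref{Remark_R_for binary}: in a binary theory, every relatively $A$-definable equivalence relation on $p(\Mon)$ is determined by the 2-types over $\emptyset$ of the pairs it relates. Restricted to $p(\Mon)$, it is therefore relatively defined by a formula without parameters.

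For 1)$\Rightarrow$2), recall that $p$ is trivial if every set of pairwise independent realizations of $p$ is independent. By Lemma \ref{Lema_R_on_eq}, (R) gives (R)$^{\mathrm{eq}}$, so for $p\in S_1(A)$ with $A\subseteq\Moneq$ every $E\in\mathcal E_p$ is relatively $0$-definable. Triviality follows by induction on the number of realizations, so it suffices to treat $a_1,\dots,a_n,b\models p$ that are pairwise independent over $A$ with $(a_1,\dots,a_n)$ independent. Suppose $b\dep_A a_1\dots a_n$. I would pass to the type $q=\tp(b/Aa_1\dots a_{n})$ and consider the extension chain $A\subseteq Aa_1\subseteq\dots$. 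At the first step $Aa_1\dots a_k$ where forking occurs, Lemma \ref{Lemma_ssi_implies_a ind b_iff_m(a b)=0} applies over the base $Aa_1\dots a_{k-1}$. The simplicity of semiintervals of $\tp(b/Aa_1\dots a_{k-1})$ is supplied by the hypothesis that $T$ has hereditarily simple semiintervals. The lemma yields a nontrivial $e\in\dcleq(Aa_1\dots a_{k-1}b)\cap\dcleq(Aa_1\dots a_k)$, and by Lemma \ref{Lemma_wom_dcl_is_linear} this $e$ is interdefinable with a class $[b]_{E}$ for some $E\in\mathcal E_{\tp(b/Aa_1\dots a_{k-1})}$. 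By (R)$^{\mathrm{eq}}$, $E$ is relatively $0$-definable, hence relatively $A$-definable on $p(\Mon)$, so $[b]_E$ is a class of some $E'\in\mathcal E_p$. This class is now invariant over $Aa_1\dots a_k$ but not over $Aa_1\dots a_{k-1}$. Using Proposition \ref{Lemma_triple_noshiftsconsequence} and the meet-tree structure (Corollary \ref{Corollary_meet_tree}), I would then argue that the class is already in $m_A(b,a_i)$ for a single $i$. This contradicts the pairwise independence $b\ind_A a_i$ via Lemma \ref{Lemma_ssi_implies_a ind b_iff_m(a b)=0}. The key point is that classes of $0$-definable convex equivalences lying in $\dcleq(Aa_1\dots a_k)$ must be $\dcleq$ of a single coordinate, because by Proposition \ref{Prop_ssi_consequences}(d) an invariant class must contain a class of the parameter.

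For 2)$\Rightarrow$3), I would show that every $n$-type $\tp(c_1\dots c_n)$ is implied by its 2-subtypes, by induction on $n$ over the real sort. Write $A=c_1\dots c_{n-1}$ and $p=\tp(c_n/A)$. By triviality of the relevant 1-types over imaginary bases, together with Proposition \ref{Prop_simple_semiintervals_basic}, $p$ is determined by $p_{\restriction\emptyset}$, the relations $E\in\mathcal E_p$ holding at $c_n$, and the position of $c_n$ relative to the $\dcleq(Ac_n)$-data. By Proposition \ref{Lemma_triple_noshiftsconsequence}, the dependence of $c_n$ on $A$ is concentrated on a single $c_i$, through the largest meet $m(c_n,c_i)$. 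The cut of $c_n$ in $p_{\restriction\emptyset}$ relative to each $c_i$ is given by $\tp(c_i,c_n)$, so $\bigcup_i\tp(c_ic_n)\cup\tp(A)\vdash p$.

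The hardest step is 1)$\Rightarrow$2): transferring forking over a growing set $Aa_1\dots a_k$ down to a single meet. I expect it to need careful use of (R)$^{\mathrm{eq}}$ for the types over the intermediate imaginary bases $m(\cdot,\cdot)$.
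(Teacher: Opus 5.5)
Your 3)$\Rightarrow$1) matches Remark \ref{Remark_R_for binary}. Your 1)$\Rightarrow$2), however, starts from the wrong negation of triviality. For a weakly o-minimal pair $\mathbf p=(p,<)$, triviality amounts to every $\triangleleft^{\mathbf p}$-increasing sequence of realizations being Morley in $\mathbf p_r$ over $A$ (cf.\ \cite[Lemma 5.1]{MTwqom1}). It does not say that in a pairwise independent set each element is independent from the rest. In DLO, which satisfies all three conditions, take $a_1<b<a_2$: they are pairwise independent and $(a_1,a_2)$ is Morley, yet $b\dep a_1a_2$, because $\tp(b/a_1a_2)$ has the bounded locus $(a_1,a_2)$. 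So your hypotheses are consistent and no contradiction can come out.

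Concretely, the step ``at the first $k$ where forking occurs, Lemma \ref{Lemma_ssi_implies_a ind b_iff_m(a b)=0} applies over $Aa_1\dots a_{k-1}$'' fails. Here $b\ind a_1$ and $b\dep a_1a_2$, but $b\ind_{a_1}a_2$, since $\tp(b/a_1a_2)$ is the left nonforking extension of $\tp(b/a_1)$.

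The repair is to take $b$ to be the $<$-largest element of the set. Then $\tp(b/Aa_{<k})=(\mathbf p_r)_{\restriction Aa_{<k}}=\tp(a_k/Aa_{<k})$. Since $a_k<b$, the first failure of $b\models(\mathbf p_r)_{\restriction Aa_{\leqslant k}}$ is genuine forking $b\dep_{Aa_{<k}}a_k$. This is exactly the $\mathbf p_r$-triangle the paper obtains from \cite[Remark 5.8]{MTwqom1}. From there your endgame works without the meet-tree detour. Proposition \ref{Prop_ssi_consequences}(d) over $Aa_{<k}$ (or, as in the paper, simplicity of a semiinterval of that type) puts $a_k$ in a proper class $[b]_E$. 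Then (R)$^{\mathrm{eq}}$ with Lemma \ref{Lemma O EKVIVALENCIJAMA} makes this a bounded class of a relatively $0$-definable relation, which gives $b\dep_A a_k$.

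Your 2)$\Rightarrow$3) is only a sketch, and its central claims are unsupported. Proposition \ref{Lemma_triple_noshiftsconsequence} concerns three elements. When $c_1,\dots,c_{n-1}$ are mutually dependent, their pairwise meets can lie on different branches of the meet-tree, so there is no single base over which everything becomes pairwise independent. Even granting $c_n\ind_D c_i$ for every $i$ over the largest meet $D$, nothing in your sketch shows that $\tp(c_n/Dc_1\dots c_{n-1})$ is determined, since triviality only speaks about pairwise independent families. You also never treat the case $\tp(c_n)\wor\tp(c_i)$, where ``the cut of $c_n$ relative to $c_i$'' means nothing. The appeal to Proposition \ref{Prop_simple_semiintervals_basic} via ``relations $E\in\mathcal E_p$ holding at $c_n$'' does not parse for the $1$-type $p=\tp(c_n/A)$.

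The missing device is to prove only the three-element statement $\tp(ab/A)\cup\tp(bc/A)\cup\tp(ac/A)\vdash\tp(abc/A)$, for singletons and arbitrary $A\subseteq\Moneq$. Binarity then follows by the standard induction, absorbing $c_1\dots c_{n-2}$ into $A$. The paper proves this statement in three cases:
\begin{itemize}
\item Pairwise independent triples whose types are pairwise $\nwor$: use $\triangleleft^{\mathcal F}$-comparability and \cite[Proposition 5.13]{MTwqom1}.
\item Pairwise independent triples with a weakly orthogonal pair: use the transfer of $\wor$ between trivial types to nonforking extensions, \cite[Theorem 5.10(c)]{MTwqom1}.
\item Triples that are not pairwise independent: make them pairwise independent over the largest meet $D$ by Proposition \ref{Lemma_triple_noshiftsconsequence}(b). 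This is where imaginary bases in 2) are needed. Then remove $D$ using $D\subseteq\dcleq(a)\cap\dcleq(b)$.
\end{itemize}
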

\begin{proof}
1)$\Rightarrow$2) Suppose that $T$ satisfies (R). Let $\mathbf p=(p,<)$ be a weakly o-minimal pair over $A\subset \Moneq$, where $p\in S_1(A)$. We need to prove that $p$ is trivial. Suppose not. Then, by \cite[Remark 5.8]{MTwqom1}, after possibly slightly enlarging the parameter set $A$ we can find a $\mathbf p_r$-triangle (here $\mathbf p=(p,<)$): 
\setcounter{equation}{0}   
\begin{equation}  a_0,a_1,a_2\models p, \ \ a_0\triangleleft^{\mathbf p} a_1 \triangleleft^{\mathbf p}a_2 \ \ \text{ and }  \ \ a_1\dep_{Aa_0} a_2.
\end{equation}
Denote $q=\tp(a_1/Aa_0)$ and choose a formula $\phi(x,y)\in \tp(a_1,a_2/Aa_0)$ witnessing $a_1\dep_{Aa_0} a_2$:  the set $\phi(a_1,\Mon)$ has an upper bound in $q(\Mon)$. Modify $\phi$ in an obvious way so that $\phi(a_1, q(\Mon))$ is a $\mathbf q$-semiinterval. Since $T$ has simple semiintervals, there exists an equivalence relation $E\in \mathcal E_q$ such that  $\phi(a_1, q(\Mon))=\{y\in [a_1]_E\mid a_1\leqslant y\}$. Then $a_2\in[a_1]_E$. Also, since $\phi(a_1,q(\Mon))$ has an upper bound in $q(\Mon)$, so does the class $[a_2]_E$; hence, $E\neq \mathbf 1_q$. 
By Lemma \ref{Lema_R_on_eq} the theory $T$ satisfies condition (R)$^\mathrm{eq}$,  so $E$ is relatively 0-definable on $q(\Mon)$. Then, by Lemma \ref{Lemma O EKVIVALENCIJAMA}, there exists a $0$-definable equivalence relation $E_0$ on the locus of $\tp(a_2)$ such that $[a_2]_E=[a_2]_{E_0}$. It follows that the class $[a_2]_{E_0}$ is a bounded subset of $q(\Mon)$, so it is bounded in $(p(\Mon),<)$. Since $E_0$ is relatively $0$-definable, $a_1,a_2\models p$, and $a_2\in[a_1]_{E_0}$, we infer $a_1\dep_A a_2$, which contradicts $a_1 \triangleleft^{\mathbf p} a_2$ from (1). Therefore, $p$ is trivial, as needed.  

\smallskip
2)$\Rightarrow$3) Assuming 2) we prove that for all $a,b,c\in\Mon$ and all $A\subseteq \Moneq$ the following holds: 
 \setcounter{equation}{0}
 \begin{equation}\tp_{xy}(ab/A)\cup \tp_{yz}(bc/A)\cup \tp_{xz}(ac/A)\vdash \tp_{xyz}(abc/A).
 \end{equation}
Clearly, this suffices to conclude that $T$ is binary. 

To simplify the notation, absorb the parameters $A$ into the language; this does not affect the  assumptions on $T$. 
Fix $a,b,c\in \Mon$ and denote \  $p=\tp(a)$, $q=\tp(b)$, and $r=\tp(c)$. The proof is split into two cases;  the first has  two subcases, while  the second case reduces to the first one. 

\smallskip 
 Case Ia.  \ \ $\{a,b,c\}$ is a pairwise independent set  and the types $\{p,q,r\}$ are pairwise $\nwor$.  

\smallskip
 Equip each of the types $p,q$ and $r$ with an order  so that the weakly o-minimal pairs $\mathcal F=\{\mathbf p, \mathbf q,\mathbf r\}$ are directly non-orthogonal. Since elements $a$, $b$, and $c$  are pairwise independent, they are $\triangleleft^{\mathcal F}$-comparable by \cite[Theorem 2.12]{MTwqom1}. Without loss of generality, assume $c\triangleleft^{\mathcal F} b\triangleleft^{\mathcal F} a$. 
By \cite[Proposition 5.13]{MTwqom1}, every $\triangleleft^{\mathcal F}$-increasing sequence  of realizations of trivial weakly o-minimal types from $\mathcal F$ is $\mathcal F$-independent, that is, \ \begin{equation}\mbox{$c\triangleleft^{\mathcal F} b\triangleleft^{\mathcal F} x\vdash cb\triangleleft^{\mathcal F}  x$; in particular, this holds for $x=a$.}
\end{equation}
Note that the information $b\triangleleft^{\mathcal F} a$ means $\tp_x(a/b)=(\mathbf p_r)_{\strok b}$ and that $cb\triangleleft^{\mathcal F}  a$ means 
$\tp_x(a/bc)=(\mathbf p_r)_{\strok bc}$. So, (2) implies \ $\tp_x(a/b)\vdash \tp_x(a/bc)$, which clearly implies (1).  

 \smallskip 
Case Ib.  \ \ $\{a,b,c\}$ is a pairwise independent set and the types $\{p,q,r\}$ are not pairwise $\nwor$.  

\smallskip
Without loss of generality, assume $p\wor q$ and denote $p_c=\tp(a/c),\ q_c=\tp(b/c)$. Since $p$ and $q$ are trivial types, and since by \cite[Theorem 5.10(c)]{MTwqom1} the weak orthogonality of trivial types transfers to their nonforking extensions, we have $p_c\wor q_c$ and hence \ $\tp_x(a/c)\cup \tp_y(b/c)\vdash \tp_{xy}(ab/c)$. Clearly, this implies (1). 
 
\smallskip 
 Case II. \ \ $\{a,b,c\}$ is not a pairwise independent set.

Let $D= m(a,b)\cup m(b,c)\cup m(c,a)$ and, without loss of generality, assume $D= m(a,b)$. 
Then by Proposition \ref{Lemma_triple_noshiftsconsequence}(b)   the elements $\{a,b,c\}$ are pairwise independent over $D$, so by Case I we have:
\begin{equation}
\tp_{xy}(ab/D)\cup \tp_{yz}(bc/D)\cup \tp_{xz}(ac/D)\vdash \tp_{xyz}(abc/D).
 \end{equation}
Fix $a,b$ as the parameters in (3). We get \ $\tp_z(c/Db)\cup \tp_z(c/Da) \vdash \tp_z(c/Dab)$   \ which together with
 $D\subseteq \dcleq(a)$ and $D\subseteq  \dcleq(b)$ gives $\tp_z(c/b)\cup \tp_z(c/a) \vdash \tp_z(c/ab)$; (1) follows.  

 \smallskip
 3)$\Rightarrow$1) We have already noted in Remark \ref{Remark_R_for binary} that any binary $T$ satisfies (R).
\end{proof}

\begin{Theorem}\label{Theorem_VC for R}
Vaught's conjecture holds for weakly quasi-o-minimal theories that satisfy (R).
\end{Theorem}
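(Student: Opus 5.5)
We argue by cases on the number of countable models. Let $T$ be a countable weakly quasi-o-minimal theory satisfying (R). If $I(T,\aleph_0)=2^{\aleph_0}$ there is nothing to prove, so we assume $I(T,\aleph_0)<2^{\aleph_0}$ and show that $I(T,\aleph_0)\leqslant\aleph_0$. The first step is Theorem \ref{Theorem few models implies simple semiintervals}: a weakly quasi-o-minimal theory with few countable models has simple semiintervals. This places $T$ in the setting of Proposition \ref{Prop_equivalents_of_R}.

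Since $T$ satisfies (R) and has simple semiintervals, the implication 1)$\Rightarrow$3) of Proposition \ref{Prop_equivalents_of_R} shows that $T$ is binary. The conclusion then follows from the binary case settled in \cite[Theorem 1]{MT}, where Vaught's conjecture is proved for binary weakly quasi-o-minimal theories. Concretely, a binary weakly quasi-o-minimal theory with $I(T,\aleph_0)<2^{\aleph_0}$ has at most countably many countable models. As a consistency check for the Martin-type strengthening treated later, \cite[Lemma 8.1]{MT} shows that such a theory is almost $\aleph_0$-categorical.

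The main point to verify is that the hypotheses of Proposition \ref{Prop_equivalents_of_R} really apply. That proposition requires $T$ to have simple semiintervals in the hereditary sense of the definition, namely that all $p\in S_1(T)$ have hereditarily simple semiintervals. Theorem \ref{Theorem few models implies simple semiintervals} supplies exactly this notion: it rests on Theorem \ref{Thm_noshifts_iff_simplesemiintervals}(b), which applies because few countable models implies smallness, together with \cite[Theorem 1]{MTwqom1}, which shows that (SH) forces $2^{\aleph_0}$ models. One should also confirm that the notion of \emph{binary} obtained in Proposition \ref{Prop_equivalents_of_R} matches the one in \cite{MT}. The former is given by the triple-types condition over arbitrary $A\subseteq\Moneq$, which certainly implies binarity over $\emptyset$, so this matching is routine. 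Beyond these checks the argument is a direct chain of earlier results.
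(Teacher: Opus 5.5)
Your proposal is correct and is exactly the paper's argument: few countable models give simple semiintervals (Theorem \ref{Theorem few models implies simple semiintervals}), Proposition \ref{Prop_equivalents_of_R} combined with (R) then makes $T$ binary, and \cite[Theorem 1]{MT} finishes. One small correction to your side remark: Theorem \ref{Theorem few models implies simple semiintervals} rests on part (a) of Theorem \ref{Thm_noshifts_iff_simplesemiintervals}, i.e.\ that (NS) implies (SH), which needs no smallness, combined with \cite[Theorem 1]{MTwqom1}; part (b) is the converse direction and is not what is used.
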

\begin{proof}
Suppose that $T$ is weakly quasi-o-minimal, satisfies (R), and  $I(T,\aleph_0)<2^{\aleph_0}$. By Theorem \ref{Theorem few models implies simple semiintervals}, $T$ has simple semiintervals, so Proposition \ref{Prop_equivalents_of_R} applies: $T$ is binary. Then Vaught's conjecture holds for $T$ by \cite[Theorem 1]{MT}.
\end{proof}

\setcounter{equation}{0}

\begin{Corollary}\label{Cor_few_ros_ finconvquasi bin_are_binary}
Vaught's conjecture holds for any weakly quasi-o-minimal theory that is, in addition, quasi-o-minimal, rosy, or has a finite convexity rank. 
\end{Corollary}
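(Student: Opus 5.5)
The plan is to reduce the corollary to Theorem \ref{Theorem_VC for R} by showing that each of the three classes satisfies condition (R), at least under the assumption of few countable models. Vaught's conjecture for $T$ is the statement that $I(T,\aleph_0)<2^{\aleph_0}$ implies $I(T,\aleph_0)\leqslant\aleph_0$. So we may fix a countable weakly quasi-o-minimal $T$ with $I(T,\aleph_0)<2^{\aleph_0}$ and aim to verify (R) for it; the proof of Theorem \ref{Theorem_VC for R} then applies verbatim.

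First, the quasi-o-minimal case. Here Proposition \ref{Prop_quasio_satisfy_R} gives (R) outright, with no assumption on the number of models. Theorem \ref{Theorem_VC for R} therefore applies immediately.

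Second, the rosy case and the finite convexity rank case. By Theorem \ref{Theorem few models implies simple semiintervals}, $T$ has simple semiintervals. Proposition \ref{Prop_ssi_consequences}(c) then gives that $(p(\Mon)/E,<)$ is a dense linear order without endpoints for every nonalgebraic $p\in S_1(A)$ and every $E\in\mathcal E_p\smallsetminus\{\mathbf 1_p\}$.

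Now suppose (R) fails for some such $p_A$ and $E_A$. The degenerate cases, where $p_A$ is algebraic or $E_A\in\{\mathrm{id},\mathbf 1_p\}$, are trivially relatively $0$-definable, so we may assume neither occurs. Then Lemma \ref{Lemma_non_R} produces conjugates $E_{A_n}$ with each $[b]_{E_{A_n}}$ containing infinitely many $E_{A_{n+1}}$-classes.

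Take the formulas defining the $\hat E_{A_n}$, each a conjugate of a single $L$-formula $E(x,y,\bar z)$. This is the step I expect to be the main obstacle, because these relations are only relatively defined on the loci of types. They must be extended to definable convex equivalence relations on all of $\Mon$, and the infinitely many classes must be witnessed inside $\phi(x)\land E(x,b_j)$ for a single parameter family. My first attempt would be the convex-extension device already used in the proof of Lemma \ref{Lemma_notEdetermined_implies_exists_p_monotone}: compactness yields a definable convex $\hat E'$ on some $D'$, extended to $\Mon$ by $x=y$ outside, uniformly in $\bar z$. Once that is in place, an induction on $n$ gives $RC(x=x)\geqslant n$ for every $n$, contradicting finite convexity rank. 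In the rosy case the same construction with $\Delta=\{E(x,y,\bar z)\}$ gives $eq\text{-}rk_\Delta(x=x)\geqslant n$ for all $n$, contradicting the Ealy–Onshuus characterization, as sketched in Remark \ref{Rematk_finconv_rosywom}. Hence (R) holds in both classes.

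Finally, Theorem \ref{Theorem_VC for R} (few models, simple semiintervals and (R) together give binarity by Proposition \ref{Prop_equivalents_of_R}, and then \cite[Theorem 1]{MT}) yields $I(T,\aleph_0)\leqslant\aleph_0$, completing the corollary.
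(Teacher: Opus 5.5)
Your proposal is correct and is essentially the paper's proof. You get (R) from Proposition \ref{Prop_quasio_satisfy_R} in the quasi-o-minimal case, and in the rosy and finite-convexity-rank cases from simple semiintervals, density of the quotients and Lemma \ref{Lemma_non_R}, which contradicts finiteness of the rank as in Remark \ref{Rematk_finconv_rosywom}; Theorem \ref{Theorem_VC for R} then finishes. The extension of the $E_{A_n}$ to definable convex equivalence relations on $\Mon$ that you flag is left implicit in the paper, and it works as you describe, provided you state the rank induction for every formula containing a class $[b]_{E_{A'}}$ (the extended classes may leave the locus of $p_{A'}$).
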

\begin{proof} Assume that  $I(T,\aleph_0)<2^{\aleph_0}$. 
Then, by Theorem \ref{Theorem few models implies simple semiintervals}, $T$ has simple semiintervals. In addition, $T$ satisfies (R). For $T$ quasi-o-minimal this follows by Proposition \ref{Prop_quasio_satisfy_R}, while for $T$ rosy or of a finite convexity rank, this follows from Remark \ref{Rematk_finconv_rosywom}. By Theorem \ref{Theorem_VC for R} Vaught's conjecture holds for $T$.
\end{proof}

\section{Convex types, definabilty and triviality}\label{Section 5} 

The main technical result of this section is the following theorem.

\begin{Theorem}\label{Theorem_convex} Suppose that $T$ is countable and $I(T,\aleph_0)<2^{\aleph_0}$. Then every weakly o-minimal type with hereditarily simple semiintervals is convex. 
\end{Theorem}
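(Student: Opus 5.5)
We argue by contraposition: assuming a weakly o-minimal type $p\in S(A)$ with hereditarily simple semiintervals is not convex, we produce $2^{\aleph_0}$ countable models. First we reduce to the case where $A$ is finite. Non-convexity of $p$ is witnessed by finitely many parameters, and $I(T,\aleph_0)<2^{\aleph_0}$ is preserved under naming finitely many constants. So we may restrict to a finite $A_0\subseteq A$ such that $p_0=p_{\strok A_0}$ is still non-convex. One then checks that $p_0$ still has hereditarily simple semiintervals; if that fails, we instead work directly with an appropriate completion over a finite set. The relevant fact is that non-convexity means that for every $\theta(x)\in p$ the set $p(\Mon)$ is not convex in $(\theta(\Mon),<)$. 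Hence there are $a<c<b$ with $a,b\models p$, $c\in\theta(\Mon)$, $c\not\models p$, and, by compactness, a type $r=\tp(c/A)\neq p$ whose realizations are "interleaved" with $p(\Mon)$.

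The main construction runs as follows. Fix such an $r$. Its position relative to $p(\Mon)$ defines, for $a\models p$, a relatively $Aa$-definable convex piece of $p(\Mon)$: the set of $x\models p$ with $a\leqslant x$ such that no realization of $r$ satisfying a fixed formula $\psi\in r$ lies in $[a,x]$. After adjusting formulas, this is a $\mathbf p$-semiinterval. By simplicity of semiintervals it is $\mathcal E_p$-determined, say by $E$. So the realizations of $\psi$ that are "not in $r$" cut $p(\Mon)$ exactly along $E$-classes, and gaps between consecutive $E$-classes contain elements of $\theta(\Mon)\smallsetminus p(\Mon)$. Using Proposition \ref{Prop_ssi_consequences}(c), the quotient $p(\Mon)/E$ is dense without endpoints. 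Each cut of $p(\Mon)/E$ is then either filled by an element outside $p$ (realizing some type over $A$, possibly with $E$-classes on each side) or left unfilled. This freedom is exactly what should code continuum many models.

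To realize the coding we would follow the pattern of \cite[Theorem 1]{MTwqom1} and \cite[Theorem 6.17]{MTwqom1}. For each subset $X\subseteq\mathbb Q$ we build a countable model $M_X$ in which a dense countable chain of $E$-classes of realizations of $p$ has the non-$p$ elements between classes indexed by $q\in X$ realizing one complete extension of $\tp(c/A)$ over the neighbours, and those for $q\notin X$ realizing another (or being omitted). Hereditary simplicity is needed here. Passing to extensions $\tp(a/Ab)$ keeps the binary structure determined by $<$ and the relations in $\mathcal E_p$ (Proposition \ref{Prop_simple_semiintervals_basic}), and Proposition \ref{Prop_simple_semiintervals_implies_interdef} controls forking. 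Together these ensure that the relevant types over finitely many realized elements are determined by order and $E$-data, so the intended pattern is consistent and type-realizable by Ehrenfeucht--Mostowski or omitting-types arguments. Nonisomorphism would come from an invariant such as the order type of the set of filled cuts modulo a countable coarsening.

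The main obstacle will be ensuring that the coding is genuinely invariant, that is, that an isomorphism cannot reshuffle which cuts are filled. An isomorphism need not fix $A$ pointwise, but $A$ is finite and we may name it. The harder point is that the filling elements might be interdefinable with $p$-elements, which could collapse the distinction. My first attempt is to show, via Proposition \ref{Prop_ssi_consequences}(b),(d) and Lemma \ref{Lemma_wom_dcl_is_linear}, that no filling element lies in $\dcleq$ of finitely many $p$-realizations, and then to argue that a small theory with this configuration has non-isolated types over finite sets realized in independent patterns. Failing that, the fallback is to extract from the configuration a definable family of semiintervals containing a shift on a segment that is weakly quasi-o-minimal over finite parameters, and then to invoke \cite[Theorem 6.17]{MTwqom1}, as in the proof of Theorem \ref{Theorem convex few ctble has hssi}.
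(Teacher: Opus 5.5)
\textbf{Verdict: there is a genuine gap.} Your proposal never carries out the central step, namely producing $2^{\aleph_0}$ pairwise non-isomorphic countable models, and the coding you sketch does not give a workable construction.

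\emph{The coding.} Read $S_a$ as ``no element of $\psi(\Mon)$ in $[a,x]$''. Once $S_a$ is determined by $E$, the presence of a $\psi$-element between $a$ and $b$ for $[a]_E<[b]_E$ is a first-order fact about $a,b$. So in \emph{every} model, any two $E$-classes represented there already have a $\psi$-element of that model between them, and there is no freedom of ``filling'' at that level. Since $p(\Mon)/E$ is dense, there are also no ``neighbouring'' classes. The only remaining freedom is which Dedekind cuts of $p(M)/E$ are realized by non-$p$ elements. These cuts are not indexed by $\mathbb Q$, and nothing in your proposal controls them in a countable model.

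\emph{The appeal to Propositions \ref{Prop_simple_semiintervals_basic} and \ref{Prop_simple_semiintervals_implies_interdef}.} These only describe the relatively definable binary structure on the locus of a single complete extension of $p$. They say nothing about types of tuples containing the non-$p$ elements you want to realize or omit. You also leave the invariance problem explicitly open.

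\emph{The fallback.} \cite[Theorem 6.17]{MTwqom1} needs a segment that is weakly quasi-o-minimal over finitely many parameters. In the proof of Theorem \ref{Theorem convex few ctble has hssi}, that came exactly from convexity of $p$: the segment $[b_1,b_2]$ lies inside $p(\Mon)$. For non-convex $p$ in an arbitrary countable $T$, segments between realizations of $p$ contain elements whose types need not be weakly o-minimal. Nothing in your configuration produces a shift either.

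\emph{The reduction.} Passing to $p_{\strok A_0}$ loses hereditary simplicity, because extensions of $p_{\strok A_0}$ need not extend $p$. The statement should be read for types over a finite set, which after naming parameters is over $\emptyset$.

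\textbf{The missing idea is how non-convexity is actually used.} By Lemma \ref{Lemma_plargeconvex_implies_pconvex} and Corollary \ref{COr_isolated_plarge_is_convex}, a non-convex $p$ has no isolated $p$-large extension. The paper's construction runs as follows.
\begin{enumerate}[(i)]
\item Fix a countable endless order $\mathbb I$ and a Morley sequence $A=(a_i\mid i\in\mathbb I)$ in $\mathbf p_r$. Using smallness, take a countable model $M_{\mathbb I}$ that is almost atomic over $A$ and omits the non-isolated type $(\mathbf p_r)_{\strok A}$.
\item For $b\in p(M_{\mathbb I})$, the formula isolating some $\tp(b/A_0)$ must be $p$-small, so $\tp(b/A)$ is $p$-small.
\item Lemma \ref{Lemma_alatomic_plarge} is where hereditary simplicity enters. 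After exchanging $b$ with some $a_k$ via Fact \ref{Fact_Morley_replace}, it applies simplicity to $q=\tp(b/A_0\smallsetminus\{a_k\})$. It shows that every $b\in p(M_{\mathbb I})\cap\conv(A)$ is either in $\bigcup_i\mathcal D_p(a_i)$ or has $p$-large type over $A$; by (ii) it is the former.
\item Any $b\in p(M_{\mathbb I})$ above all of $\bigcup_i\mathcal D_p(a_i)$ would realize the omitted type. Hence $\bigcup_i(\mathcal D_p(a_i)\cap M_{\mathbb I})$ is a final part of $p(M_{\mathbb I})$.
\item The order type of $\{\mathcal D_p(b)\mid b\in p(M_{\mathbb I})\}$ is an isomorphism invariant, since $\mathcal D_p$ is $\emptyset$-invariant. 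It has a final part isomorphic to $\mathbb I$.
\item Choosing $\mathbb I$ from continuum many classes of countable orders with no isomorphic final parts gives $2^{\aleph_0}$ models.
\end{enumerate}
No cut-filling, Ehrenfeucht--Mostowski construction, or analysis of the non-$p$ elements is needed.
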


With this theorem and Theorem \ref{Theorem convex few ctble has hssi} in hand, we will be able to quickly complete the proof of Theorem \ref{Theorem main} in Subsection \ref{Subsection convex}. Then, in Subsection \ref{Subsection def triv}, we prove Proposition \ref{Proposition mixed kind is trivial}, which relates the definabilty of a convex weakly o-minimal type to its triviality and will be essentially used in the proof of Martin's conjecture in Section \ref{Section Martin}. In particular, Proposition \ref{Proposition mixed kind is trivial} implies that every convex definable weakly o-minimal type over a model (of any first-order $T$) is trivial.

\subsection{$p$-large sets} 

\begin{Definition}
Let $\mathbf p=(p,<)$ be a weakly o-minimal pair over $A$.
\begin{enumerate}[(a)]
    \item A parametrically definable set $D\subset \Mon$ is {\it $p$-large} if there are $a_1\triangleleft^{\mathbf p}a_2$ realizing $p$ with the segment $[a_1,a_2]$ of $(p(\Mon),<)$ completely contained in $D$; otherwise $D$ is $p$-small.
    \item A formula $\phi(x,\bar b)$ is $p$-large if and only if the set $\phi(\Mon,\bar b)$ is $p$-large.
    \item A type is $p$-large if and only if it consists of $p$-large formulae.
\end{enumerate}  
\end{Definition}

\begin{Remark}\label{Remark_p_large} Let $p\in S_1(A)$ be a weakly o-minimal type.
It is obvious that a superset of a $p$-large set is $p$-large,  as well as that any nonforking extension of $p$ is a $p$-large type. 
 
(a) $p$-large sets contain Morley sequences in $p$ of an arbitrary order-type; such a sequence can always be found within the interval $[a_1,a_2]$ that witnesses the $p$-largeness.    
 
(b) For every $p$-large set $D$ and every small set $C\subseteq p(\Mon)$ there exists a $C'\subset D$ such that $C\equiv C'\,(A)$: If $C$ is a small set of realizations of $p$, then $b_1<C<b_2$ holds for some independent pair of realizations of $p$. Moving the interval $[b_1,b_2]$ by an $A$-automorphism into $D$, moves $C$ to $C'$.     

(c) Let $(\Mon,<)\models DLO$, let $p$ be the unique type over $\emptyset$, and let $a,b\in\Mon$ satisfy $a<b$. Then $q\in S_1(a,b)$ determined by $a<x<b$ is a forking, $p$-large extension of $p$. 
\end{Remark}

Large extensions of weakly o-minimal types are characterized by the following lemma.

\begin{Lemma}\label{lemma large characterization}
Let $(p,<)$ be a weakly o-minimal pair over $A$ and let $q\in S(B)$ be an extension of $p$. Denote $\mathbf q=(q,<)$. 
The following conditions are equivalent:
\begin{enumerate}[\hspace{10pt}(1)]
\item $q$ is a $p$-large;
\item $a\triangleleft^{\mathbf q}b$ implies $a\triangleleft^{\mathbf p}b$ for all $a,b\models q$;
\item $\mathcal{D}_p(a)\subseteq\mathcal{D}_q(a)$ for all $a\models q$.
\end{enumerate}
\end{Lemma}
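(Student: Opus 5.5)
I will prove the cycle (1)$\Rightarrow$(3)$\Rightarrow$(2)$\Rightarrow$(1), using throughout that $q(\Mon)$ is a convex subset of $p(\Mon)$, that $a\triangleleft^{\mathbf p}b$ means $a<b$ and $b\notin\mathcal D_p(a)$, and that $\mathcal D_p(a)$ is a convex set containing $a$ (Remark \ref{Remark SEMI rel def set is BC of semiint}(c)).

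\emph{(1)$\Rightarrow$(3).} Let $a\models q$ and $b\in\mathcal D_p(a)$; we need $b\in\mathcal D_q(a)$. First, $b\models q$: since $q$ is $p$-large, every formula $\psi(x)\in q$ contains a segment $[a_1,a_2]$ with $a_1\triangleleft^{\mathbf p}a_2$. Given $a\models q$ and $c\in\mathcal D_p(a)$, use Remark \ref{Remark_p_large}(b) to move the small configuration $\{a,c\}$ into $[a_1,a_2]$. Then argue, by compactness over all $\psi\in q$, that $\mathcal D_p(a)\subseteq q(\Mon)$. Concretely, by $p$-largeness $q$ contains no formula $\psi$ that separates two $\mathcal D_p$-related realizations of $p$. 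Suppose instead that $\psi\in q$ and $\neg\psi$ split $\mathcal D_p(a)$. Then some $A$-formula together with the $B$-parameters would cut every large segment, and since $\psi(\Mon)\cap p(\Mon)$ is a finite union of convex pieces, each piece would lie inside a single $\mathcal D_p$-class. That is a $p$-small set, a contradiction. Once $\mathcal D_p(a)\subseteq q(\Mon)$, witness $b\in\mathcal D_p(a)$ by an $Aa$-formula whose trace on $p(\Mon)$ is a bounded convex set; its trace on $q(\Mon)$ is then bounded in $q(\Mon)$, since the bounds lie in $\mathcal D_p(a)\subseteq q(\Mon)$ after enlarging. Hence $b\in\mathcal D_q(a)$.

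\emph{(3)$\Rightarrow$(2).} Suppose $a\triangleleft^{\mathbf q}b$, i.e.\ $a<b$ and $b\notin\mathcal D_q(a)$. By (3), $b\notin\mathcal D_p(a)$, so $a\triangleleft^{\mathbf p}b$.

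\emph{(2)$\Rightarrow$(1).} Take $a\triangleleft^{\mathbf q}b$ realizing $q$. By (2), $a\triangleleft^{\mathbf p}b$. For any $\psi\in q$, the convex component of $\psi(\Mon)\cap p(\Mon)$ containing $a$ contains all of $q(\Mon)$ by weak o-minimality, hence contains the segment $[a,b]$ of $p(\Mon)$. Since $q(\Mon)$ is convex in $p(\Mon)$, this gives $[a,b]_{p(\Mon)}\subseteq q(\Mon)\subseteq\psi(\Mon)$, so $\psi$ is $p$-large.

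The main obstacle is (1)$\Rightarrow$(3), specifically showing $\mathcal D_p(a)\subseteq q(\Mon)$ from largeness. I would handle it by contraposition. If some $A$-conjugate configuration inside $\mathcal D_p(a)$ were split by $\psi\in q$, then by $A$-invariance of the relation $\mathcal D_p$ and weak o-minimality, the piece of $\psi$ containing $a$ would be contained in $\mathcal D_p(a)$. A single $\mathcal D_p$-class cannot contain $a_1\triangleleft^{\mathbf p}a_2$, so $\psi$ would be $p$-small, contradicting (1).
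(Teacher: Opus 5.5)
Your steps (3)$\Rightarrow$(2) and (2)$\Rightarrow$(1) are correct; the latter is exactly the final step of the paper's (3)$\Rightarrow$(1). However, (1)$\Rightarrow$(3), which you rightly identify as the main obstacle, is not proved.

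\textbf{The splitting inference is false.} You argue that if $\psi\in q$ splits $\mathcal D_p(a)$, then the convex piece of $\psi$ containing $a$ lies in $\mathcal D_p(a)$, so $\psi$ is $p$-small. This uses only the single formula $\psi$, and it fails. In DOAG with $p=\tp(1)$, the set $\mathcal D_p(a)$ is the positive archimedean class of $a$. The formula $x>a/2$ is $p$-large and true of $a$, yet it fails at $a/4\in\mathcal D_p(a)$. No single formula of $q$ can produce the contradiction; you need $p$-largeness of all of $q$ at once.

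\textbf{The configuration $\{a,c\}$ does not help.} Moving $\{a,c\}$ into $[a_1,a_2]\subseteq\psi(\Mon)$ only gives an $A$-conjugate of $(a,c)$ inside $\psi(\Mon)$. It says nothing about $c$ itself realizing $q$.

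\textbf{The final bounding step is also unjustified.} The bounds of a bounded $Aa$-definable set generally cannot be chosen inside $\mathcal D_p(a)$. Moreover, $\mathcal D_p(a)\subseteq q(\Mon)$ alone does not make $\mathcal D_p(a)$ bounded in $q(\Mon)$. For example, in $(\mathbb Q\times\mathbb Q,<_{\mathrm{lex}},E)$ with $E$ meaning ``same first coordinate'', we have $\mathcal D_p(a)=[a]_E$, and $\{x\in[a]_E\mid x>a\}$ has no upper bound in $[a]_E$. For $q=\tp(a/[a]_E)$, with $[a]_E\in\Moneq$, one has $q(\Mon)=\mathcal D_p(a)$ but $\mathcal D_q(a)=\{a\}$.

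\textbf{The missing idea.} Use compactness on the whole type $q$. Since $x\triangleleft^{\mathbf p}y$ is type-definable over $A$ and every formula of $q$ contains a pair $a_1\triangleleft^{\mathbf p}a_2$, there are $c\triangleleft^{\mathbf p}d$ both realizing $q$. Conjugating over $B$ then gives, for every $a\models q$, realizations $d^-\triangleleft^{\mathbf p}a\triangleleft^{\mathbf p}d^+$ of $q$. This is the fact the paper invokes when it says there is $c\models q$ with $b\triangleleft^{\mathbf p}c$.

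\textbf{How your route then goes through.}
\begin{enumerate}
\item Convexity gives $\mathcal D_p(a)\subseteq(d^-,d^+)\subseteq q(\Mon)$.
\item Let $\phi(x,a)$ witness $b\in\mathcal D_p(a)$, with $\phi(p(\Mon),a)$ bounded in $p(\Mon)$. Every element of $\phi(p(\Mon),a)$ forks with $a$ over $A$, so $\phi(p(\Mon),a)\subseteq\mathcal D_p(a)$.
\item Hence $\phi(p(\Mon),a)$ is bounded in $q(\Mon)$ by $d^{\pm}$, and $b\in\mathcal D_q(a)$.
\end{enumerate}

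With this repair your cycle is a legitimate alternative to the paper's. The paper instead proves (1)$\Rightarrow$(2) by a conjugation trick: $a\triangleleft^{\mathbf q}b\triangleleft^{\mathbf p}c$ forces $b\equiv c\,(Ba)$, which yields $d$ with $a\triangleleft^{\mathbf q}d\triangleleft^{\mathbf p}b$. It then derives (2)$\Rightarrow$(3) by a convexity argument.
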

\begin{proof} 
(1)$\Rightarrow$(2) Suppose that $q$ is $p$-large.
Let $a,b\models q$ be such that $a\triangleleft^{\mathbf q}b$; we need to show $a\triangleleft^{\mathbf p}b$.
Since $q$ is $p$-large, there is some $c\models q$ such that $b\triangleleft^{\mathbf p}c$; we have $a\triangleleft^{\mathbf q}b\triangleleft^{\mathbf p}c$. Then $a\triangleleft^{\mathbf q}b<c$ implies $a\triangleleft^{\mathbf q}c$, so $b\equiv c\ (Ba)$ and there is a $d$ such that $db\equiv bc\ (Ba)$; then $a\triangleleft^{\mathbf q}d\triangleleft^{\mathbf p}b$. In particular, $a<d\triangleleft^{\mathbf p}b$ implies $a\triangleleft^{\mathbf p}b$, as needed.

(2)$\Rightarrow$(3) Assume (2) and let $a\models q$. We need to prove $\mathcal D_p(a)\subseteq\mathcal D_q(a)$. Otherwise, there would be some $b\in\mathcal D_p(a)\smallsetminus\mathcal D_q(a)$. Since $\mathcal D_p(a)\cap q(\Mon)$ and $\mathcal D_q(a)$ are convex subsets of $q(\Mon)$ that contain $a$ and since $\mathcal D_q(a)$ is a bounded subset of $q(\Mon)$, such a $b$ could be found within $q(\Mon)$.  
Without loss we may also assume that $\mathcal D_q(a)<b$, that is, $a\triangleleft^{\mathbf q}b$ (the other case, $b<\mathcal D_q(a)$, is similar). By (2), we obtain $a\triangleleft^{\mathbf p}b$, so $b\notin\mathcal D_p(a)$. Contradiction.

(3)$\Rightarrow$(1) Suppose that $\mathcal{D}_p(a)\subseteq\mathcal{D}_q(a)$ for all $a\models q$. Let $a_1,a_2\models q$ be such that $a_1\triangleleft^{\mathbf q}a_2$; we prove $a_1\triangleleft^{\mathbf{p}}a_2$. Clearly, $a_1\triangleleft^{\mathbf q}a_2$ implies $\mathcal{D}_q(a_1)<\mathcal{D}_q(a_2)$, so (3) implies $\mathcal{D}_p(a_1)< \mathcal{D}_p(a_2)$, that is, $a_1\triangleleft^{\mathbf{p}}a_2$. Now, $a_1,a_2\models q$, $a_1\triangleleft^{\mathbf{p}}a_2$, and the convexity of $q(\Mon)$ within $p(\Mon)$ together imply that $q$ is $p$-large, proving (1). 
\end{proof}

\begin{Lemma}\label{Lemma large ext of wom has equal D's}
Let $p\in S(A)$ be a trivial, weakly o-minimal type, and let $q\in S(B)$ be a $p$-large extension of $p$. Then $q$ is trivial and $\mathcal D_q(a)=\mathcal D_p(a)$ for all $a\models q$.  
\end{Lemma}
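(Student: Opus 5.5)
The plan is to prove the equality $\mathcal D_q(a)=\mathcal D_p(a)$ first and then deduce triviality of $q$ from it. The inclusion $\mathcal D_p(a)\subseteq\mathcal D_q(a)$ is given by Lemma \ref{lemma large characterization} $(1)\Rightarrow(3)$, since $q$ is $p$-large. So the real work is the reverse inclusion. Equivalently, for $a,b\models q$ with $a\triangleleft^{\mathbf p}b$ I must show $a\triangleleft^{\mathbf q}b$.

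Suppose instead that $a<b$, $a\triangleleft^{\mathbf p}b$ and $b\in\mathcal D_q(a)$. By Remark \ref{Remark SEMI rel def set is BC of semiint}(c) there is a bounded $\mathbf q$-semiinterval $X_a$, relatively $Ba$-definable, with $b\in X_a<c$ for some $c\models q$.

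First I use triviality of $p$ via \cite[Proposition 5.13]{MTwqom1}: every $\triangleleft^{\mathbf p}$-increasing sequence of realizations of $p$ is a Morley sequence over $A$. The intended plan is to iterate inside $X_a$. Since $\tp(b/Aa)$ is the right nonforking extension $(\mathbf p_r)_{\strok Aa}$, I move $b$ by automorphisms fixing $Ba$ and use the monotone structure of the family $(X_x\mid x\models q)$. This should produce arbitrarily long $\triangleleft^{\mathbf p}$-increasing sequences $a=b_0\triangleleft^{\mathbf p}b_1\triangleleft^{\mathbf p}\dots$ whose union of semiintervals stays below a realization of $q$. Concretely, $b_{i+1}\in X_{b_i}$ gives a chain $X_{b_0}\cup X_{b_1}\cup\dots$. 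By Ramsey and compactness I can then take such a sequence that is indiscernible over $B$ and lies in a $q$-bounded region. Being $\triangleleft^{\mathbf p}$-increasing, it is Morley over $A$.

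The contradiction I aim for is with Fact \ref{Fact_lemma 3_1 wom}, or with the boundedness of $X_a$. The point is that $p$-largeness of $q$ lets me place a copy of any small configuration of realizations of $p$ inside $q(\Mon)$ (Remark \ref{Remark_p_large}(b)). Doing so with the indiscernible sequence, and using that the whole sequence realizes one type over $B$, I expect to produce $x\models q$ with $S_x<y$ and $S_x\cup S_y\subseteq S_z$ for a suitable $Ba$-definable family, which Fact \ref{Fact_lemma 3_1 wom} forbids. This step is the main obstacle: controlling the interaction of the parameters $B$ with the Morley sequence. I would try first to show that $B$ cannot separate two $\triangleleft^{\mathbf p}$-related realizations lying in a $p$-large convex part of $q(\Mon)$.

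Once $\mathcal D_q=\mathcal D_p$ on $q(\Mon)$, the relations $\triangleleft^{\mathbf q}$ and $\triangleleft^{\mathbf p}$ coincide on $q(\Mon)$, by Lemma \ref{lemma large characterization}(2) and the equality. For triviality of $q$, take a $\triangleleft^{\mathbf q}$-increasing sequence $a_1\triangleleft^{\mathbf q}\dots\triangleleft^{\mathbf q}a_n$. It must be Morley over $B$, i.e.\ $a_n\notin\mathcal D_{q'}(a_n)$-style dependence over $Ba_1\dots a_{n-1}$ for $q'=\tp(a_n/Ba_{<n})$. I argue by induction on $n$: the type $\tp(a_n/Ba_{<n})$ is again a $p$-large extension of $p$. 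This holds because the interval above $\mathcal D_q(a_{n-1})$ in $q(\Mon)$ contains a $\triangleleft^{\mathbf p}$-segment. Applying the equality part to it shows that its $\mathcal D$ coincides with $\mathcal D_p$, so $a_n$ is independent from $a_{<n}$ over $B$.
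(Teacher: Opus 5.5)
\textbf{There is a genuine gap in the first part, which carries the content of the lemma.} You build a $\triangleleft^{\mathbf p}$-increasing sequence inside the single class $\mathcal D_q(a)$, make it $B$-indiscernible, and then hope for a violation of Fact \ref{Fact_lemma 3_1 wom}.

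Fact \ref{Fact_lemma 3_1 wom}, applied to $\mathbf q$, uses only the weak o-minimality of $q$. It does not involve $p$ or its triviality. All the content would therefore lie in producing the forbidden configuration $S_x<y$, $S_x\cup S_y\subseteq S_z$ for a $B$-definable family, and you do not do this. You say yourself that controlling $B$-types is ``the main obstacle''. Your proposed fix, that $B$ cannot separate two $\triangleleft^{\mathbf p}$-related realizations in a $p$-large part of $q(\Mon)$, is essentially the statement to be proved.

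Moreover, confining the sequence to one $\mathcal D_q$-class and making it $B$-indiscernible discards exactly what the argument needs: a $B$-formula that changes truth value infinitely often along an $A$-indiscernible sequence.

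\textbf{The missing idea is an alternation/NIP argument.}
\begin{enumerate}
\item Take $a_0\triangleleft^{\mathbf p}b_0$ with $a_0\ntriangleleft^{\mathbf q}b_0$, and $b_0\triangleleft^{\mathbf q}a_1\triangleleft^{\mathbf q}b_1$.
\item Since $a_0b_0\not\equiv a_1b_1\,(B)$, some $\theta(x,y)\in\tp(a_0b_0/B)$ fails at $(a_1,b_1)$.
\item Iterate: choose $a_{2n}b_{2n}a_{2n+1}b_{2n+1}\equiv a_0b_0a_1b_1\,(B)$ with $b_{2n-1}\triangleleft^{\mathbf q}a_{2n}$.
\item By Lemma \ref{lemma large characterization}(2), each $\triangleleft^{\mathbf q}$-step is a $\triangleleft^{\mathbf p}$-step. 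So $a_0,b_0,a_1,b_1,\dots$ is $\triangleleft^{\mathbf p}$-increasing, hence Morley in $\mathbf p_r$ over $A$ by triviality.
\item Thus $(a_nb_n)_n$ is $A$-indiscernible, while $\theta(a_n,b_n)$ alternates. This contradicts NIP of $p$.
\end{enumerate}
The sequence has to leave and re-enter $\mathcal D_q$-classes infinitely often, which is the opposite of your construction.

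\textbf{The second part has the right idea but is misformulated.} At the inductive step you do not need ``$a_n$ is independent from $a_{<n}$''; that does not follow from an equality of $\mathcal D$'s on a locus. What you need is $c_n\triangleleft^{\mathbf q_n}c_{n+1}$, where $q_n=(\mathbf q_r)_{\strok Bc_{<n}}$.

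The paper compares $q_n$ with $p_n=(\mathbf p_r)_{\strok Ac_{<n}}$, which is trivial as a nonforking extension of $p$. The argument runs as follows:
\begin{enumerate}
\item $c_n,c_{n+1}\models q_n$, by the induction hypothesis applied also to the subsequence omitting $c_n$.
\item $c_n\triangleleft^{\mathbf p_n}c_{n+1}$, since $C$ is $A$-Morley.
\item Hence $q_n$ is $p_n$-large, and the first part gives $\mathcal D_{q_n}=\mathcal D_{p_n}$, so $c_n\triangleleft^{\mathbf q_n}c_{n+1}$.
\end{enumerate}
Your justification of largeness, that ``the interval above $\mathcal D_q(a_{n-1})$ contains a $\triangleleft^{\mathbf p}$-segment'', does not show that the locus of the relevant type contains one.
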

\begin{proof}
Let $\mathbf p=(p,<)$ be a weakly o-minimal pair over $A$ and let $\mathbf q=(q,<)$. First, we prove $\mathcal D_q(a)=\mathcal D_p(a)$ for all $a\models q$. Suppose not. Then, by Lemma \ref{lemma large characterization}, we have $\mathcal D_p(a)\subset\mathcal D_q(a)$. 
Let $a_0\models q$ be arbitrary and let $b_0\models q$ be such that $b_0\in\mathcal D_q(a_0)\smallsetminus\mathcal D_p(a_0)$; reversing the order if necessary, we can assume that
$a_0\triangleleft^{\mathbf p}b_0$, but $a_0\ntriangleleft^{\mathbf q}b_0$. Choose $a_1,b_1\models q$ such that $b_0\triangleleft^{\mathbf q}a_1\triangleleft^{\mathbf q}b_1$. Note that $a_0b_0\nequiv a_1b_1\ (B)$, so there is $\theta(x,y)\in\mathcal \tp(a_0,b_0/B)$ such that $\models\lnot\theta(a_1,b_1)$. We have:
\[a_0\triangleleft^{\mathbf p}b_0\triangleleft^{\mathbf q}a_1\triangleleft^{\mathbf q}b_1, \  \ a_0\not\triangleleft^{\mathbf q} b_0  \ \ \text{ and } \ \ \models\theta(a_0,b_0)\land\lnot\theta(a_1,b_1). \]
By recursion, for $n\geqslant 1$ define \ $a_{2n},b_{2n},a_{2n+1},b_{2n+1}\models q$ such that:
\[b_{2n-1}\triangleleft^{\mathbf q}a_{2n}\ \ \ \ \ \mbox{and}\ \ \ \ \ a_{2n}b_{2n}a_{2n+1}b_{2n+1}\equiv a_0b_0a_1b_1\, (B).\]
By Lemma \ref{lemma large characterization}, \ $a_0\triangleleft^{\mathbf p}b_0\triangleleft^{\mathbf p}a_1\triangleleft^{\mathbf p}\dots \triangleleft^{\mathbf p}a_n\triangleleft^{\mathbf p}b_n\triangleleft^{\mathbf p}\dots$. This is a Morley sequence in $\mathbf p_r$ over $A$, because every $\triangleleft^{\mathbf p}$-increasing sequence of realizations of a trivial type is (\cite[Lemma 5.1]{MTwqom1}). In particular, the sequence $(a_nb_n\mid n\in\omega)$ is indiscernible over $A$, so $\models\theta(a_{2n},b_{2n})\land \lnot\theta(a_{2n+1}, b_{2n+1})$ ($n\in\omega$) contradicts the fact that $p$ is NIP.
Therefore, $\mathcal D_q(a)=\mathcal D_p(a)$ for all $a\models q$.

Now, we prove that $q$ is trivial. Let $C=(c_i)_{i\in\omega}$ be a $\triangleleft^{\mathbf q}$-increasing sequence; we need to show that it is Morley in $\mathbf q_r$ over $B$. By Lemma \ref{lemma large characterization} we know that the sequence $C$ is $\triangleleft^{\mathbf p}$-increasing, so the triviality of $p$ implies that $C$ is Morley in $\mathbf p_r$ over $A$. 
By induction on $n$ we prove that $C_n=(c_i)_{i\leqslant n}$ is Morley in $\mathbf q_r$ over $B$. For $n=1$ this is obvious, so assume that $C_n$ is Morley for some $n\geqslant 1$. Let $p_n=(\mathbf p_r)_{\strok A\bar c_{<n}}$ and $q_n=(\mathbf q_r)_{\strok B\bar c_{<n}}$. Then $c_n,c_{n+1}\models q_n$ and, since $C$ is Morley in $\mathbf p_r$ over $A$ we have $c_n \triangleleft^{\mathbf p_n} c_{n+1}$ and hence $\mathcal D_{p_n}(c_n)<\mathcal D_{p_n}(c_{n+1})$. Clearly, this implies that $q_n$ is a $p_n$-large type. Note also that $p_n$ is trivial, as a nonforking extension of a trivial type, so by the above proved we have $\mathcal D_{p_n}(c_i)=\mathcal D_{q_n}(c_{i})$ for $i=n,n+1$. Then $\mathcal D_{q_n}(c_n)<\mathcal D_{q_n}(c_{n+1})$, that is $c_n \triangleleft^{\mathbf q_n} c_{n+1}$. This proves that $C_{n+1}$ is Morley in $\mathbf q$ over $B$ and completes the proof of the lemma.
\end{proof}

\subsection{Convex weakly o-minimal types}\label{Subsection convex}

\begin{Lemma}\label{Lemma_plargeconvex_implies_pconvex}
Every weakly o-minimal type $p$ that has a complete convex $p$-large extension is convex.
\end{Lemma}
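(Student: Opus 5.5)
Let $p\in S(A)$ be weakly o-minimal with weakly o-minimal pair $\mathbf p=(p,<)$, and let $q\in S(B)$, $B\supseteq A$, be a complete $p$-large extension of $p$ that is convex. Recall that convexity of $q$ means, by \cite[Proposition 3.4]{MTwqom1}, that there is a formula $\theta(x)\in q$ (over $B$) such that $q(\Mon)$ is a convex subset of $(\theta(\Mon),<)$; in particular $q(\Mon)$ is relatively convex in a definable set. The plan is to transfer this to $p$. We argue by contradiction: suppose $p$ is not convex. Then for every $\psi(x)\in p$ the locus $p(\Mon)$ is not convex in $\psi(\Mon)$. Using compactness and the weak o-minimality of $p$, this yields, for each $\psi\in p$, elements of $\psi(\Mon)\smallsetminus p(\Mon)$ lying strictly between two realizations of $p$. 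The first goal is to upgrade this to a uniform statement: for any $a_1\triangleleft^{\mathbf p}a_2$ realizing $p$ and any $\psi\in p$, there is $c\in\psi(\Mon)\smallsetminus p(\Mon)$ with $a_1<c<a_2$.

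To get this, take $\psi\in p$ and witnesses $b_1<c<b_2$ with $b_1,b_2\models p$ and $c\in\psi(\Mon)\smallsetminus p(\Mon)$. Choose some $\chi(x)\in p$ with $\neg\chi(c)$. Then the formula $\exists z\,(x<z<y\wedge\psi(z)\wedge\neg\chi(z))$ holds of $(b_1,b_2)$. If $(b_1,b_2)$ can be taken independent, i.e.\ $b_1\triangleleft^{\mathbf p}b_2$, then by homogeneity it holds of every pair $a_1\triangleleft^{\mathbf p}a_2$, since $\tp(a_1,a_2/A)$ is determined; this gives the uniform statement. If instead all such witnesses $b_1,b_2$ are forced to be dependent, then we enlarge to a pair $b_1\le b_1'\triangleleft^{\mathbf p}b_2'\ge b_2$, which still brackets $c$; indeed the independent pairs $\triangleleft^{\mathbf p}$ are cofinal outward. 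This step is routine.

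Now we use $p$-largeness. Choose $a_1\triangleleft^{\mathbf p}a_2$ realizing $p$ such that the segment $[a_1,a_2]$ of $(p(\Mon),<)$ lies inside $q(\Mon)$. Such a choice is available by the definition, because $q$ consists of $p$-large formulas and hence, by compactness and saturation, its locus contains such a segment (this is also implicit in Lemma \ref{lemma large characterization}). Let $\theta(x)\in q$ witness the convexity of $q$, and write $\theta(x)$ as $\theta'(x,\bar b)$ with $\bar b\in B$. The intended contradiction would be to apply the uniform statement to $\psi:=\theta\wedge\varphi$ for some $\varphi\in p$, producing $c\in\theta(\Mon)$, $c\notin p(\Mon)$, with $a_1<c<a_2$. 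By convexity of $q(\Mon)$ in $\theta(\Mon)$ we would get $c\models q$, hence $c\models p$, a contradiction.

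The main obstacle is that $\theta$ is a formula over $B$, not in $p$, so the uniform statement does not apply to it directly. I would first try to replace $\theta$ by an $A$-formula. We have $q(\Mon)\supseteq[a_1,a_2]_{p(\Mon)}$, and $\theta(\Mon)\cap\operatorname{conv}([a_1,a_2])$ contains no non-realizations of $q$. I would consider $\varphi(x):=\exists\bar y\,(\tp(\bar b/A)\text{-conjugate condition}\wedge \ldots)$; more concretely, use compactness on the partial type $\tp(\bar b/A)$. Suppose that for every $\varphi\in p$ there were $c\in\varphi(\Mon)\smallsetminus p(\Mon)$ between some $\triangleleft^{\mathbf p}$-pair. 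By the uniform statement together with a Ramsey/automorphism argument, moving $\bar b$ by an $A$-automorphism that fixes $a_1,a_2$ (possible because $[a_1,a_2]$ contains Morley sequences of any order type, Remark \ref{Remark_p_large}(a)), I would arrange $c\in\theta(\Mon)$. This is where I expect the real work to lie. If that fails, the fallback is to intersect over conjugates: let $\Theta(x)=\bigwedge\{\theta'(x,\bar b')\mid \bar b'\equiv_A\bar b,\ [a_1,a_2]\subseteq\theta'(\Mon,\bar b')\}$. By compactness, finitely many conjuncts together with some $\varphi\in p$ already exclude non-$p$ points from $\operatorname{conv}([a_1,a_2])$. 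Then homogeneity over $A$ transfers this to every independent pair, which yields the convexity of $p$.
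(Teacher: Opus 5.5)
Your reduction is fine up to the point you flag yourself. The uniform statement holds: for every $\psi\in p$ and every $a_1\triangleleft^{\mathbf p}a_2$ there is $c\in\psi(\Mon)\smallsetminus p(\Mon)$ with $a_1<c<a_2$. The choice of $a_1\triangleleft^{\mathbf p}a_2$ with $a_1,a_2\models q$ is also fine, using that $q(\Mon)$ is convex in $p(\Mon)$. The gap is the step you call the main obstacle: turning the $B$-formula $\theta$ into a formula of $p$. Neither route you sketch closes it. The automorphism/Ramsey idea is only a hope. In the fallback, the finitely many conjuncts of $\Theta$ are still formulas over conjugates of $\bar b$. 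Moving them to other independent pairs by $A$-automorphisms yields a pair-dependent family of non-$A$-formulas, never a single formula over $A$ in which $p(\Mon)$ is convex. In fact $\theta$ alone already has no non-$p$ points in $\conv([a_1,a_2])$, so intersecting over conjugates gains nothing.

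The missing idea is one compactness step, based on the convexity of $q(\Mon)$ in $p(\Mon)$. Since $a_1,a_2\models q$, you have $p(x)\cup\{a_1<x<a_2\}\vdash q(x)\vdash\theta(x)$. So there is $\varphi\in p$ with $\varphi(x)\wedge a_1<x<a_2\vdash\theta(x)$. Your uniform statement applied to this $\varphi$ gives $c\in\varphi(\Mon)\smallsetminus p(\Mon)$ with $a_1<c<a_2$. But then $\models\theta(c)$, so $c\models q$ by the convexity of $q(\Mon)$ in $\theta(\Mon)$, hence $c\models p$, a contradiction.

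This is exactly the paper's key step, done there with $x_1,x_2$ free: $q(x_1)\cup\{\phi_p(x)\}\cup q(x_2)\cup\{x_1<x<x_2\}\vdash\theta_A(x)$ for some $\phi_p\in p$. The paper then argues directly rather than by contradiction. It uses Remark \ref{Remark_p_large}(b) to move any pair $a_1<a_2$ of realizations of $p$ into $q(\Mon)$ by an automorphism over the base of $p$. The image of any $a\in\phi_p(\Mon)$ with $a_1<a<a_2$ then satisfies $\theta_A$, hence realizes $q$, so $a\models p$ and $\phi_p$ witnesses the convexity of $p$.
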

\begin{proof}\setcounter{equation}{0}
Suppose that $(p,<)$ is a weakly o-minimal pair over $\emptyset$ and that $q\in S_1(A)$ is a convex $p$-large extension of $p$. Choose $(D,<)$, a $0$-definable extension of $(p(\Mon),<)$. Since $q$ is convex, we can apply \cite[Proposition 3.4]{MTwqom1}: There exists a formula $\theta_A(x)\in q$ such that $(\theta_A(\Mon),<)$ is a definable extension of $(p(\Mon),<)$ and $q(\Mon)$ is a convex subset of $\theta_A(\Mon)$; without loss, assume $\theta_A(\Mon)\subseteq D$.   
Since $q(\Mon)$ is a convex subset of $p(\Mon)$ we have:
$$q(x_1)\cup p(x)\cup q(x_2)\cup\{x_1<x<x_2\}\vdash q(x).$$
Then  \ 
$q(x_1)\cup p(x)\cup q(x_2)\cup\{x_1<x<x_2\}\vdash \theta_A(x)$, so by compactness there is a formula  $\phi_p(x)\in p$ that implies $x\in D$ such that:
\begin{equation}
q(x_1)\cup \{\phi_p(x)\}\cup q(x_2)\cup\{x_1<x<x_2\}\vdash \theta_A(x).
\end{equation} 
We will prove that $\phi_p(x)$ witnesses the convexity of $p$: $p(\Mon)$ is convex in $(\phi_p(\Mon),<)$. Assume that $a_1,a_2\in p(\Mon)$ and $\models \phi_p(a)\land a_1<a<a_2$; we need to prove $a\models p$. Since $q$ is $p$-large, by Remark \ref{Remark_p_large}(b) there are $b_1,b_2\in q(\Mon)$ with $\tp(a_1,a_2)=\tp(b_1,b_2)$. Choose $b\in \phi_p(\Mon)$ with $\tp(b,b_1,b_2)=\tp(a,a_1,a_2)$. Then $b_1,b_2\models q$, $b\in \phi_p(\Mon)$, and $b_1<b<b_2$, so by (1) we derive $\models\theta_A(b)$. 
Now we have: $b_1,b_2\models q$, $b_1<b<b_2$, and $b\in \theta_A(x)$. 
Since $\theta_A(x)$ witnesses the convexity of $q(\Mon)$, we conclude $b\in q(\Mon)$. In particular, $b\models p$ holds. By the choice of $b$ we have $\tp(b)=\tp(a)$, so $a\models p$. Therefore, $p$ is a convex type.
\end{proof}

Since isolated weakly o-minimal types are convex, we have an immediate corollary.

\begin{Corollary}\label{COr_isolated_plarge_is_convex}
If a weakly o-minimal type $p$ has an isolated, $p$-large extension, then $p$ is convex.   
\end{Corollary}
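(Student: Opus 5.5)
This follows directly from Lemma \ref{Lemma_plargeconvex_implies_pconvex}. The only thing to check is that an isolated complete type extending a weakly o-minimal type is convex; the $p$-largeness hypothesis then carries over unchanged.

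Let $p$ be weakly o-minimal, fix a weakly o-minimal pair $(p,<)$ over $\emptyset$, and let $q\in S(A)$ be an isolated $p$-large extension of $p$, with $q$ isolated by $\psi(x)\in q$. First, $q$ is weakly o-minimal because it extends $p$. Next, I show that $q$ is convex. Choose $\theta(x)\in q$ such that $(\theta(\Mon),<)$ is an $A$-definable extension of $(q(\Mon),<)$. Put $\theta'(x):=\theta(x)\land\psi(x)$, which is still in $q$. Every element of $\theta'(\Mon)$ satisfies $\psi$, hence realizes $q$, so $\theta'(\Mon)=q(\Mon)$. In particular, $q(\Mon)$ is trivially convex in $(\theta'(\Mon),<)$, so the formula $\theta'$ witnesses the convexity of $q$. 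This matches the notion of convexity used in \cite[Proposition 3.4]{MTwqom1} and in the proof of Lemma \ref{Lemma_plargeconvex_implies_pconvex}, where a single formula $\theta_A\in q$ is required with $q(\Mon)$ convex in $\theta_A(\Mon)$.

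Therefore $q$ is a complete, convex, $p$-large extension of $p$. Lemma \ref{Lemma_plargeconvex_implies_pconvex} then gives that $p$ is convex.

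No step here is a real obstacle. The only point to be careful about is that the notion of a convex type used in the lemma is exactly the one witnessed by a formula in the type, so that isolation yields convexity outright. If the paper's definition of convexity instead asks for some definable linear extension in which the locus is convex, the same formula $\theta'$ serves, since the locus equals the whole definable set.
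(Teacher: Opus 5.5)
Your proof is correct and matches the paper's argument. The paper also obtains the corollary from Lemma \ref{Lemma_plargeconvex_implies_pconvex}, justified only by the remark that isolated weakly o-minimal types are convex; you verify that remark explicitly by observing that the isolating formula, conjoined with a formula on which $<$ is a linear order, has locus exactly $q(\Mon)$.
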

 
In the proof of Lemma \ref{Lemma_alatomic_plarge}, we will need the following fact.

\begin{Fact}[{\cite[Proposition 7.8(a)]{MTwom}}]\label{Fact_Morley_replace}
Let $\mathbf p=(p,<)$ be a weakly o-minimal pair over $A$. Suppose that $I=(a_j\mid j\in J)$ is a (possibly finite)  Morley sequence in $\mathbf p_r$ over $A$, $a\models p$, and $a\in\conv(I)$ (the $<$-convex hull of $I$).  Then we can remove at most one element from $I$ and insert $a$ in its place so that the sequence is still Morley over $A$.     
\end{Fact}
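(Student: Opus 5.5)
We may assume $a\notin I$; otherwise there is nothing to do. Split $I$ as $I_1<a<I_2$, where $I_1=(a_j\mid j\in J_1)$ and $I_2=(a_j\mid j\in J_2)$ with $J_1<J_2$. Both parts are nonempty because $a\in\conv(I)$. The aim is to show that the sequence $I_1\,a\,I_2$ is Morley in $\mathbf p_r$ over $A$ once at most one element adjacent to $a$ has been deleted. Throughout, I would use the following facts from \cite{MTwom} and \cite{MTwqom1}.
\begin{itemize}
\item $\triangleleft^{\mathbf p}$ is defined by $x\triangleleft^{\mathbf p}y \Leftrightarrow \mathcal D_p(x)<y$.
\item The relation $y\in\mathcal D_p(x)$ is symmetric, and each $\mathcal D_p(x)$ is convex.
\item The composite $x\triangleleft^{\mathbf p}y\leqslant z$ implies $x\triangleleft^{\mathbf p}z$, and dually $x\leqslant y\triangleleft^{\mathbf p}z$ implies $x\triangleleft^{\mathbf p}z$.
\item For any $C$, the locus of $(\mathbf p_r)_{\strok AC}$ is a final part of $p(\Mon)$.
\end{itemize}

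The first step localizes the possible dependence. Suppose $a_i<a_{i'}<a$ with $a_i,a_{i'}\in I_1$. Then $a_i\triangleleft^{\mathbf p}a_{i'}<a$, so $a_i\triangleleft^{\mathbf p}a$. Hence every element of $I_1$ except possibly its last element $a_-$ (if $I_1$ has a last element) is $\triangleleft^{\mathbf p}$-below $a$. Symmetrically, $a\triangleleft^{\mathbf p}$ every element of $I_2$ except possibly its first element $a_+$.

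The second step shows that $a_-$ and $a_+$ cannot both be dependent with $a$, so that removing the single dependent one suffices. Suppose $a_-,a_+\in\mathcal D_p(a)$ while $a_-\triangleleft^{\mathbf p}a_+$. Then $\mathcal D_p(a)$ is a bounded convex set meeting both $\mathcal D_p(a_-)$ and $\mathcal D_p(a_+)$. These two sets are disjoint and lie on either side of $a$. I would derive a contradiction in the spirit of Fact \ref{Fact_lemma 3_1 wom}, i.e.\ \cite[Lemma 4.1]{MTwom}, applied to the family of $\mathbf p$-semiintervals $x\mapsto \mathcal D_p(x)\cap[x,+\infty)$. Note that $\mathcal D_p(x)$ is only a union of relatively definable semiintervals, so this step has to be run by compactness on a single witnessing formula. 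If that formula-level version fails, the fallback is to delete $a_-$. One then checks, using the fact that $a_+$ realizes $(\mathbf p_r)_{\strok AI_1}$ and the final-part property of its locus, that $a$ realizes the same type over $AI_1\smallsetminus\{a_-\}$. This gives the needed independence from that side.

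The third step, which I expect to be the main obstacle, is verifying the Morley property of the modified sequence. Independence from the left is the easy direction: $a$ lies above $I_1$, is $\triangleleft^{\mathbf p}$-above all of $I_1$, and lies below elements of $I_2$ that realize $(\mathbf p_r)_{\strok AI_1}$. Since the locus of $(\mathbf p_r)_{\strok AI_1}$ is a final part of $p(\Mon)$, I would conclude that $a$ realizes it. The harder direction is to show that each $a_k\in I_2$ realizes $(\mathbf p_r)_{\strok AI_1aI_{2,<k}}$, not merely the type over $AI_1I_{2,<k}$. Here I would use the convexity of the locus once more. Let $c$ realize $(\mathbf p_r)_{\strok AI_1 a I_{2,<k}}$ with $c$ very close to $a_k$. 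By Morleyness of $I$, the tuples $c$ and $a_k$ have the same type over $AI_1I_{2,<k}$. Moreover $a<\mathcal D(a_k)$, and pushing by an automorphism over $AI_1I_{2,<k}$ moves $a$ within the convex set of elements cut by $I_1$ and $I_{2,<k}$ (or $I_2$ if $k$ is first). This yields $a\,a_k\equiv a\,c\ (AI_1I_{2,<k})$. Making this automorphism argument precise, with $a$ only known to lie in a convex gap, is where I expect most of the work. I would handle it by induction on $|J_2\cap(-\infty,k)|$, combined with compactness for infinite $J$.
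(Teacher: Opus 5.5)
There is a genuine gap. You decide what to delete by dependence \emph{over $A$}, i.e.\ by whether $a_\pm\in\mathcal D_p(a)$, and in Step 3 you treat the pairwise relations $a_i\triangleleft^{\mathbf p}a\triangleleft^{\mathbf p}a_k$ as if they yielded Morleyness. That inference is valid only for trivial $p$, where $\triangleleft^{\mathbf p}$-increasing sequences are Morley; your argument is essentially a proof of that special case. In general, being Morley is a condition on the type over the whole preceding segment. Step 2 is not where the difficulty lies: $\mathcal D_p$ is a convex equivalence relation, and $x\triangleleft^{\mathbf p}y$ implies $\mathcal D_p(x)<\mathcal D_p(y)$, so Step 2 is immediate.

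For a counterexample, let $T=\mathrm{Th}(\mathbb R,<,+,1)$ and $p=\tp(\pi)$. Then $\mathrm{dcl}(B)$ is the $\mathbb Q$-span of $B\cup\{1\}$ and $p(\Mon)$ is the monad of $\pi$. Since $\mathrm{dcl}(x)\cap p(\Mon)=\{x\}$, we get $\mathcal D_p(x)=\{x\}$, and any $a_1<a_2$ in $p(\Mon)$ is a Morley sequence in $\mathbf p_r$.
\begin{itemize}
\item Put $a=(a_1+a_2)/2$. Neither neighbour of $a$ lies in $\mathcal D_p(a)$, yet $(a_1,a,a_2)$ is not Morley, because $a_2=2a-a_1\in\mathrm{dcl}(a_1a)$. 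So a deletion is forced, contrary to your recipe. The same example defeats your automorphism step: $c\models(\mathbf p_r)_{\strok a_1a}$ and $a_2$ have the same type over $a_1$, but $ac\not\equiv aa_2\,(a_1)$.
\item Now let $(a_1,a_2,a_3)$ be Morley and put $a=2a_2-a_1$. Then $a_2<a<a_3$, because $a_3$ lies above $\mathrm{dcl}(a_1a_2)\cap p(\Mon)$. Also $a_1,a_2\triangleleft^{\mathbf p}a$, and $a<a_3\models(\mathbf p_r)_{\strok a_1a_2}$. But $a\in\mathrm{dcl}(a_1a_2)$, so $a$ does not realize $(\mathbf p_r)_{\strok a_1a_2}$. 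Your ``easy direction'' is therefore a non sequitur: a final part is closed upwards, and lying below one of its elements says nothing.
\end{itemize}

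What is missing is to measure dependence relative to the preceding segment of $I$. The final-part property does settle the left side when applied in the correct direction. For every $a_j<a$ we have $a_j\in(\mathbf p_r)_{\strok Aa_{<j}}(\Mon)$ and $a>a_j$, hence $a\models(\mathbf p_r)_{\strok Aa_{<j}}$. Consequently $I_1\frown a$ is Morley over $A$ in all cases but one. The exception is when $I_1$ has a last element $a_m$ and $a\in\mathcal D_q(a_m)$ for $q=(\mathbf p_r)_{\strok Aa_{<m}}$, as in the second example with $m=2$. In that case $(I_1\smallsetminus\{a_m\})\frown a$ is Morley.

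The real content of the Fact is then the right side. You must show that $I_2$, or $I_2$ without its first element, stays Morley over the new initial segment, namely $Aa_{<m}a$ or $AI_1a$ respectively. The first example shows that deleting $\min I_2$ can be forced with no $\mathcal D_p$-dependence at all. Your proposal gives no argument for this step. It is a statement about types over growing parameter sets, not about $\mathcal D_p$, so it cannot be settled by the pairwise considerations of Steps 1--2.
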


Following Newelski \cite{Newelski}, we say that a tuple $b$ is {\em almost atomic} over $A$ if there is a finite $A_0\subseteq A$ such that $\tp(b/A_1)$ is isolated for all finite $A_1$ with $A_0\subseteq A_1\subseteq A$;
a set $B$ is {\em almost atomic} over $A\subseteq B$ if every finite tuple $b\in B$ is such. 
If the theory is small, then there exist almost atomic models over any countable set. Moreover, if $T$ is small, $A$ is countable, and $\Pi(x)$ is a partial type over $A$ that is locally omissible (no consistent formula $\phi(x)$ implies $\Pi(x)$), then there is a countable $M\models T$ which is almost atomic over $A$ and omits $\Pi(x)$; the same conclusion holds for any countable family of locally omissible partial types.

\begin{Lemma}[$T$ small]\label{Lemma_alatomic_plarge} 
Let $\mathbf p=(p,<)$ be a weakly o-minimal pair over $\emptyset$ with hereditarily simple semiintervals. Suppose that $A=(a_i\mid i\in I)$ is a countable Morley sequence in $\mathbf p_r$, $M$ is countable and almost atomic over $A$, $b\in p(M)$  and $b\in\conv(A)$. Then $b\in \bigcup_{i\in I}\mathcal D_p(a_i)$ or the type $\tp(b/A)$ is $p$-large. 
\end{Lemma}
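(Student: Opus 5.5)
We assume $b\notin\bigcup_{i\in I}\mathcal D_p(a_i)$ and show that $q=\tp(b/A)$ is $p$-large. The first step is to locate $b$ in the sequence. Since $b\in\conv(A)$ and $b$ is $p$-independent from every $a_i$, the set $A$ splits into an initial part $A^-<b$ and a final part $A^+>b$, both nonempty. By Fact \ref{Fact_Morley_replace}, and because $b$ is not $\mathcal D_p$-close to any $a_i$ (so nothing needs to be removed), the sequence $A^-{}^\frown b{}^\frown A^+$ should still be Morley in $\mathbf p_r$. Thus $b$ sits in a cut of $A$ in which $b\triangleleft^{\mathbf p}$-separates $A^-$ from $A^+$.

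The key step uses almost atomicity. Fix a finite $A_0\subseteq A$ witnessing it, and take finite $A_1\supseteq A_0$ that contains elements of $A$ on both sides of $b$ near the cut. Then $\tp(b/A_1)$ is isolated, say by $\phi(x,\bar a_1)$. We want $\phi(\Mon,\bar a_1)$ to contain a whole $\triangleleft^{\mathbf p}$-segment. The type $\tp(b/A_1)$ is the nonforking-like type of a realization of $p$ lying strictly inside a cut of a finite Morley sequence and independent of its members. Our approach is to show that its locus is exactly the set of realizations of $p$ in the convex region between the neighbours $a_-<b<a_+$ in $A_1$ that are outside $\mathcal D_p(a_-)\cup\mathcal D_p(a_+)$. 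This locus contains $[c_1,c_2]$ for some $c_1\triangleleft^{\mathbf p}c_2$ between $\mathcal D_p(a_-)$ and $\mathcal D_p(a_+)$, so $\tp(b/A_1)$ is $p$-large. Since $p$-large formulas of the form $\phi(x,\bar a_1)$ occur for cofinally many $A_1$, and every formula of $\tp(b/A)$ is implied by such an isolating formula over some finite $A_1\supseteq A_0$, every formula of $\tp(b/A)$ contains a $p$-large set. Hence $\tp(b/A)$ is $p$-large.

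The main obstacle is the claim that the locus of $\tp(b/A_1)$ is that full region. In general, a realization of $p$ independent from $a_-,a_+$ and lying between them may have several completions over $A_1$. Here we use hereditary simplicity of semiintervals. Relative to $a_-$, the extension $p'=\tp(\cdot/a_-)$ restricted to the region right of $\mathcal D_p(a_-)$ has simple semiintervals. By Proposition \ref{Prop_simple_semiintervals_basic}, relatively definable binary sets are Boolean combinations of the order and $\mathcal E$-classes. Proposition \ref{Lemma_triple_noshiftsconsequence} and Proposition \ref{Prop_simple_semiintervals_implies_interdef} then give pairwise independence controlled by the meets $m(\cdot,\cdot)$, which here are trivial because the elements are pairwise independent. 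Inductively along $A_1$, we argue that the type over $A_1$ of an element in a given cut, independent from all of $A_1$, is unique; that is, it is the nonforking type determined by the cut. This would show that the isolating formula contains the whole region.

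If this uniqueness fails, the fallback is to argue differently. In that case $\tp(b/A_1)$ would be a $p$-small isolated type, so its locus lies in some $\mathcal D_{p''}$-class over $A_1$. Using Lemma \ref{Lemma_wom_dcl_is_linear} and simplicity, this would force $b$ to share a nontrivial $\dcleq$-element with some $a_i$, contradicting $b\notin\mathcal D_p(a_i)$ via Lemma \ref{Lemma_ssi_implies_a ind b_iff_m(a b)=0}.
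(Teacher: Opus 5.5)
There is a genuine gap, and it is exactly the step you flag as the main obstacle.

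\textbf{The gap.} You claim that the locus of the isolated type $\tp(b/A_1)$ is the whole set of realizations of $p$ lying between $\mathcal D_p(a_-)$ and $\mathcal D_p(a_+)$. That is a triviality statement about $p$. The lemma does not assume triviality: it is used in the proof of Theorem \ref{Theorem_convex} for arbitrary types with hereditarily simple semiintervals. Hereditary simplicity does not supply the claim.

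Put $r=\tp(a_+/a_-)$ and $\mathbf r=(r,<)$. Simplicity of semiintervals of $r$ only says that, for $x\models r$, the type $\tp(x,a_+/a_-)$ is determined by the order and by which $E\in\mathcal E_r$ hold of $(x,a_+)$. Suppose there is a triangle, i.e. some $x$ with $a_-\triangleleft^{\mathbf p}x\triangleleft^{\mathbf p}a_+$ but $x\in\mathcal D_r(a_+)$, so $x\in[a_+]_E$ for a proper $E\in\mathcal E_r$. Then your region contains both such $x$ and elements $x'$ with $x'\triangleleft^{\mathbf r}a_+$. These have different types over $a_-a_+$, so the region is not the locus of a single type.

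Propositions \ref{Prop_simple_semiintervals_implies_interdef} and \ref{Lemma_triple_noshiftsconsequence} only control pairwise independence. Pairwise independence holds inside a triangle, so they cannot rule this case out.

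For the same reason your first step is unjustified. Fact \ref{Fact_Morley_replace} allows removing one element precisely because $b$ may form such a triangle with its neighbours, and $b\notin\bigcup_i\mathcal D_p(a_i)$ does not prevent this.

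The fallback does not repair the gap either. Having several completions does not make $\tp(b/A_1)$ $p$-small. Moreover, any forking you extract lives over a nonempty base. For instance, take $b\equiv a_k\,(A_k)$ with $A_k=A_1\smallsetminus\{a_k\}$, and apply the characterization $a\dep_A b\iff m_A(a,b)\neq\dcleq(A)$ over $A_k$. This yields only a proper $E\in\mathcal E_q$ with $E(a_k,b)$, where $q=\tp(b/A_k)$. That gives an element of $\dcleq(A_ka_k)\cap\dcleq(A_kb)\smallsetminus\dcleq(A_k)$, not of $m(a_k,b)\smallsetminus\dcleq(\emptyset)$. It is therefore fully compatible with $b\notin\mathcal D_p(a_k)$; it is exactly the triangle situation, so no contradiction can be reached.

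\textbf{The missing idea.} You never need to know the locus. The locus of a complete extension of $p$ is convex in $p(\Mon)$, so it suffices to find one $b'\equiv b\,(A_0)$ with $b\triangleleft^{\mathbf p}b'$. The paper obtains it by applying hereditary simplicity to a semiinterval built from the isolating formula itself:
\begin{enumerate}[(i)]
\item By Fact \ref{Fact_Morley_replace}, take $a_k\in A_0$ with $b\equiv a_k\,(A_k)$, and assume without loss $\mathcal D_p(a_k)<b$.
\item Write the isolating formula as $\phi(x,a_k)$ over $A_k$, and let $S_{a_k}=\{x\models q\mid a_k\leqslant x\leqslant t\text{ for some }t\in\phi(\Mon,a_k)\}$. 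This is a $(q,<)$-semiinterval containing $b$, so it is determined by some $E\in\mathcal E_q$, and $E(a_k,b)$ holds.
\item Choose $c$ with $a_kb\equiv bc\,(A_k)$. Then $c\in[a_k]_E$, hence $c\in S_{a_k}$, so some $b'\geqslant c$ satisfies $\phi(x,a_k)$.
\item Transporting $\mathcal D_p(a_k)<b$ along $a_kb\equiv bc\,(A_k)$ gives $\mathcal D_p(b)<c\leqslant b'$, i.e. $b\triangleleft^{\mathbf p}b'$.
\end{enumerate}
This argument works uniformly, including the triangle case where $E$ is proper, which is precisely the case your approach cannot handle.
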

\begin{proof} Assume  $b\notin \bigcup_{i\in I}\mathcal D_p(a_i)$ and we will prove that the type $\tp(b/A)$ is $p$-large, that is, 
 every formula from $\tp(b/A)$ is $p$-large. In fact,  
it suffices to prove that every formula $\phi(x)$ that isolates $\tp(b/A_0)$ for some finite $A_0\subseteq A$ is $p$-large; then, since the type $\tp(b/A)$ is almost atomic over $A$, every formula of that type would be implied by such a $p$-large formula $\phi(x)$, and so would also be $p$-large. 

Suppose that $\phi(x)$ is over the finite parameter set $A_0\subset A$  with $\phi(x)\vdash \tp(b/A_0)$. In addition, we can assume that $A_0$ is large enough such that $b\in \conv (A_0)$. Then Fact \ref{Fact_Morley_replace} applies, so there exists $a_k\in A_0$ with $b\equiv a_k\,(A_0\smallsetminus \{a_k\})$;  denote $A_k=A_0\smallsetminus \{a_k\}$ and let $q=\tp(b/A_k)$. The assumption $b\notin \bigcup_{i\in I}\mathcal D_p(a_i)$ implies $b\notin \mathcal D_p(a_k)$, so $\mathcal D_p(a_k)<\mathcal D_p(b)$ or $\mathcal D_p(b)<\mathcal D_p (a_k )$ holds. From now on, we will assume $\mathcal D_p(a_k)<\mathcal D_p(b)$; the other case is handled similarly.

Now we want to display the parameter $a_k$  in the formula $\phi(x)$: Let $\phi(x,y)\in \tp(b,a_k/A_k)$. Define $$S_{a_k}:=\{x\in q(\Mon)\mid a_k\leqslant x\land (\exists t)(x\leqslant t\land \phi(t,a_k))\}.$$
Note that $S_{a_k}$ is a $(q,<)$-semiinterval containing $b$, so since $p$ has hereditarily simple semiintervals, it is $\mathcal E_q$-determined, say, by $E\in \mathcal E_q$; note that $E(a_k,b)$ is valid. 
Choose $c\in \Mon$ with $a_kb\equiv bc\,(A_k)$. Then $a_k<b<c$, $b\in [a_k]_E$ and $c\in[b]_E$ hold, so $c\in[a_k]_E$. Since $E$ determines $S_{a_k}$, $a_k<c$, and $c\in[a_k]_E$ realizes $q$, we conclude $c\in S_{a_k}$. 
By the definition of $S_{a_k}$, there exists $b'\in\phi(\Mon,a_k)$ with $c\leqslant b'$. Since the formula $\phi(x,a_k)$ isolates $\tp(b/A_0)$, we have $b\equiv b'(A_0)$. Now, $a_kb\equiv bc\,(A_k)$ and $\mathcal D_p(a_k)<b$  imply $\mathcal D_p(b)<c$, which together with $c\leqslant b'$ proves $\mathcal D_p(b)<b'$. Then $b\triangleleft^\pp b'$ and $b\equiv b'(A_0)$ together witness that the locus of $\tp(b/A_0)$ (which is isolated) is a $p$-large set. Therefore, assuming $b\notin \bigcup_{i\in I}\mathcal D_p(a_i)$ we proved that $\tp(b/A)$ is $p$-large; this completes the proof of the lemma. 
\end{proof}

\begin{proof}[Proof of Theorem \ref{Theorem_convex}] Assume that $T$ is countable and $I(T,\aleph_0)<2^{\aleph_0}$; in particular, $T$ is small. Let $\mathbf p=(p,<)$ be a weakly o-minimal pair over $\emptyset$ with hereditarily simple semiintervals.  Assuming that $p$ is not convex, we will reach a contradiction by constructing continuum many nonisomorphic countable models. 
Fix a countable, endless linear order $I$ and let $A=(a_i\mid i\in I)$ be a Morley sequence in $\mathbf p_r$. Since $I$ is endless, it is easy to see that $(\mathbf p_r)_{\strok A}=\bigcup_{i\in I}\tp(a_i/A_{<i})$ holds and implies that $(\mathbf p_r)_{\strok A}$ is a nonisolated type. Since $T$ is small and $A$ is countable, there exists a countable model $M_{\mathbb I}$ that is almost atomic over $A$ and omits $(\mathbf p_r)_{\strok A}$. We {\em claim}:

(1) \ $\tp(b/A)$ is $p$-small for all $b\in p(M_{\mathbb I})$.

Since $M_{\mathbb I}$ is almost atomic over $A$, there is a finite $A_0\subseteq A$ such that $\tp(b/A_0)$ is isolated, by $\theta(x)$ say. 
Since $p$ is not convex, by Corollary  \ref{COr_isolated_plarge_is_convex}, $p$ does not have an isolated $p$-large extension. Therefore, 
the formula $\theta(x)$ is $p$-small. Then any type containing $\theta(x)$ is $p$-small; in particular, the type $\tp(b/A)$ is $p$-small. We {\em claim}:

(2)  \   $\bigcup_{i\in I}(\mathcal D_p(a_i)\cap M_{\mathbb I})$ \ is a final part of  $(p(M_{\mathbb I}),<)$. 

Suppose   that  $b\in p(M_{\mathbb I})$ and  $b\nless \bigcup_{i\in I}\mathcal D_p(a_i)$; we need to prove $b\in \bigcup_{i\in I}\mathcal D_p(a_i)$.  First note that $b\in \conv(A)$ holds: otherwise, since the order $\mathbb I$ is endless, $\bigcup_{i\in I}\mathcal D_p(a_i)<b$  would imply   $b\models(\mathbf p_r)_{\strok A}$  and contradict the choice of $M_{\mathbb I}$. Therefore, $b\in\conv(A)$, so Lemma \ref{Lemma_alatomic_plarge} applies: $b\in \bigcup_{i\in I}\mathcal D_p(a_i)$ or the type $\tp(b/A)$ is $p$-large. The second option is ruled out by (1), so $b\in \bigcup_{i\in I}\mathcal D_p(a_i)$, as needed.

\smallskip
Now, consider $Inv_p(M_{\mathbb I})$, which is the order type of $(\{\mathcal D_p(a)\mid a\in p(M_{\mathbb I})\},<)$. By claim (2), $Inv_p(M_{\mathbb I})$ has a final part of the order-type of $\mathbb I$. 
Consider the class of countable linear orders and the relation: $\mathbb I\sim\mathbb J$ if and only if $\mathbb I$ and $\mathbb J$ have isomorphic final parts; $\sim$ is clearly an equivalence relation. It is not hard to see that there are continuum many $\sim$-classes (e.g.\ see \cite[Section 4.3]{Rast}).  Hence, when the orders $\mathbb I$ and $\mathbb J$ are from distinct $\sim$-classes,  the models $M_{\mathbb I}$ and $M_{\mathbb J}$ are non-isomorphic;  
 $I(T,\aleph_0)=2^{\aleph_0}$ follows. Contradiction. This shows that $p$ is convex, completing the proof of Theorem \ref{Theorem_convex}. 
\end{proof}

Recall \cite[Definition 3.7]{MTwqom1}: A complete first order theory $T$ is {\em almost weakly o-minimal} if every type $p\in S_1(T)$ is weakly o-minimal and convex.
As a corollary of Theorem \ref{Theorem_convex}, we obtain the following theorem, which is a restated Theorem \ref{Theorem main}.

\begin{Theorem}
Every weakly quasi-o-minimal theory with few countable models is almost weakly o-minimal and has simple semiintervals.   
\end{Theorem}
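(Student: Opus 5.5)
The statement has two parts, and I will handle them separately, assuming throughout that $T$ is a countable weakly quasi-o-minimal theory with $I(T,\aleph_0)<2^{\aleph_0}$.

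\emph{Simple semiintervals.} This is exactly Theorem \ref{Theorem few models implies simple semiintervals}, so it needs only a citation. For the record, that result comes from Theorem \ref{Thm_noshifts_iff_simplesemiintervals}(a), which shows that (NS) implies (SH), combined with \cite[Theorem 1]{MTwqom1}, which gives $2^{\aleph_0}$ countable models under (SH).

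\emph{Almost weak o-minimality.} By the definition of weak quasi-o-minimality, every $p\in S_1(T)$ is weakly o-minimal. It therefore remains to show that every such $p$ is convex.

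By the first part, $T$ has simple semiintervals. By definition this means that every $p\in S_1(T)$ has hereditarily simple semiintervals. Theorem \ref{Theorem_convex} applies to any weakly o-minimal type with hereditarily simple semiintervals when $I(T,\aleph_0)<2^{\aleph_0}$, so every $p\in S_1(T)$ is convex. Hence $T$ is almost weakly o-minimal.

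The one point needing care is a compatibility check: Theorem \ref{Theorem_convex} is stated for types over $\emptyset$ in the appropriate sort, and its proof starts from a weakly o-minimal pair $(p,<)$ over $\emptyset$. Since $<$ is $0$-definable, such a pair exists for each $p\in S_1(T)$, so the theorem applies directly. I expect no substantial obstacle, because the work is already contained in Theorems \ref{Theorem_convex} and \ref{Theorem few models implies simple semiintervals}.

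Finally, this yields Theorem \ref{Theorem main}. If condition (1) of Theorem \ref{Theorem main} holds (no simple semiintervals) or condition (2) holds (some non-convex $p\in S_1(T)$), then the present statement fails. Its contrapositive therefore gives $I(T,\aleph_0)=2^{\aleph_0}$.
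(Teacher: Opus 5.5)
Your proposal is correct and follows the paper's own proof. Theorem~\ref{Theorem few models implies simple semiintervals} gives simple semiintervals, which by definition means every $p\in S_1(T)$ has hereditarily simple semiintervals, and Theorem~\ref{Theorem_convex} then makes each such $p$ convex, so $T$ is almost weakly o-minimal.
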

\begin{proof} 
Let $T$ be a countable, weakly quasi-o-minimal theory with few countable models. Then, by Corollary \ref{Theorem few models implies simple semiintervals}, $T$ has simple semiintervals, so all weakly o-minimal types $p\in S_1(T)$ have hereditarily simple semiintervals and hence are convex by Theorem \ref{Theorem_convex}; $T$ is almost weakly o-minimal.
\end{proof}

\subsection{Definability vs triviality of convex types}\label{Subsection def triv}

The notions of left-/right-definability of convex so-pairs were introduced in \cite{MT}; here, we need them only in the context of weakly o-minimal pairs.   

\begin{Definition}
A convex weakly o-minimal pair $\mathbf p=(p,<)$ is left-definable (right-definable) if the type $\mathbf p_l$ ($\mathbf p_r$) is definable.  
\end{Definition}

The definability of convex weakly o-minimal pairs is related to the definability of final parts of $(p(\Mon),<)$ and to nonisolation properties of nonforking extensions of $p$ as follows:

\begin{Remark}\label{remark left right definable} 
Suppose that $p\in S(A)$ is a weakly o-minimal type such that $p(\Mon)$ is a convex subset of $(D,<)$, an $A$-definable linear order. 
\begin{enumerate}[(a)]
    \item By Lemma 7.2 of \cite{MT}, the pair $\mathbf p=(p,<)$ is left-definable if and only if some initial part of $p(\Mon)$ is a definable set if and only if the set $p^-(\Mon)=\{x\in D\mid x<p(\Mon)\}$ is $A$-definable. Similarly for right-definable. 
    \item For all $B\supseteq A$, $\mathbf p=(p,<)$ is left-definable if and only if $((\mathbf p_l)_{\strok B},<)$ is left-definable, and $\mathbf p=(p,<)$ is right-definable if and only if $((\mathbf p_r)_{\strok B},<)$ is right-definable. 
    \item If $\mathbf p$ is not left-definable, then the type $\mathbf p_l$ is finitely satisfied in $p^-(\Mon)$. Indeed, if $\phi(x)\in\mathbf p_l$, then $\phi(\Mon)$ contains an initial part of $p(\Mon)$, so since no initial part of $p(\Mon)$ is definable, $\phi(x)$ is realized in $p^-(\Mon)$. Note that in this case, the type $p$, as well as the type $(\mathbf p_l)_{\strok B}$ (where $B\supseteq A$), is nonisolated. 
    \item If $p$ is simple and $\mathbf p$ is of kind (x,y) ($\mathrm{x,y}\in\{\mathrm{d,nd}\}$), as defined below, then $((\mathbf p_r)_{\strok Aa},<)$ is of kind (d,y) and $((\mathbf p_l)_{\strok B},<)$ is of kind (x,d) for all $a\models p$.
\end{enumerate} 
\end{Remark}

 There are four kinds of convex weakly o-minimal {\it pairs} depending on whether their ends are definable or not. We will label them by (d,d), (d,nd), (nd,d), and (nd,nd); for example, (d,nd) labels the pairs that are left-definable but not right-definable. There are three kinds of convex weakly o-minimal {\it types}.

 \begin{itemize}
     \item $p$ is of kind (d,d) if both global nonforking extensions of $p$ are definable; that is, if every (equivalently, some) weakly o-minimal pair $(p,<_p)$ over $A$ is of kind (d,d). Analogously for: $p$ is of kind (nd,nd). 
     \item $p$ is of {\it mixed kind} if exactly one of its global nonforking extensions is definable; in that case, every (equivalently, some) weakly o-minimal pair $(p,<_p)$ over $A$ is of kind (d,nd) or (nd,d). 
\end{itemize}

\begin{Remark}\label{Remark dd}
(a) A weakly o-minimal convex type $p$ is of kind (d,d) if and only if it is isolated:
By Remark \ref{remark left right definable}, $p$ is of kind (d,d) if and only if the locus $p(\Mon)$ has some initial part and some final part definable, which is easily seen to be equivalent to the definability of $p(\Mon)$. Finally, $p(\Mon)$ is definable if and only if $p$ is isolated. 

(b) Every definable, nonalgebraic, weakly o-minimal type over a model is of mixed kind. 

(c) In the literature, complete 1-types in (weakly) o-minimal theories of kind (nd,nd) are also called cuts or irrational types, while those of mixed kind are called noncuts, rational cuts or quasi-rational types.
\end{Remark}

\begin{Fact}\label{Fact direct no preserves the kind}\cite[Lemma 7.17]{MT}
Directly non-orthogonal convex weakly o-minimal pairs are of the same kind.
\end{Fact}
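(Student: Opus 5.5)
By the symmetry of direct non-orthogonality and by replacing both orders with their reverses (which preserves direct non-orthogonality and swaps ``left'' with ``right''), it suffices to prove one implication: if $\mathbf p=(p,<_p)$ and $\mathbf q=(q,<_q)$ are directly non-orthogonal convex weakly o-minimal pairs over $A$ and $\mathbf p$ is left-definable, then $\mathbf q$ is left-definable. By Remark \ref{remark left right definable}(a), this amounts to showing that the set $q^-(\Mon)=\{y\in D_q\mid y<_q q(\Mon)\}$ is $A$-definable, given that $p^-(\Mon)$ is $A$-definable. Here $(D_p,<_p)$ and $(D_q,<_q)$ are $A$-definable orders in which $p(\Mon)$ and $q(\Mon)$ are convex.

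The first step translates direct non-orthogonality into a definable, order-preserving comparison. Since $p\nwor q$, there is an $L_A$-formula $\phi(x,y)$ that $p(x)\cup q(y)$ does not decide. Replacing $\phi$ by the initial part of $D_q$ that it generates, I will get a relatively $A$-definable map $x\mapsto L(x)\in\overline{D_q}^{\,in}$ on $p(\Mon)$ such that $L(a)\cap q(\Mon)$ is a proper nonempty initial part of $q(\Mon)$ for $a\models p$. Directness means that $L$ can be chosen $<_p$-increasing (rather than decreasing) on $p(\Mon)$. By compactness, as in Lemma \ref{Lemma_new_defextof_seminiterval}, I extend $L$ to an $A$-definable, increasing map on some $\theta(\Mon)\subseteq D_p$ with $\theta\in p$. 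I will also use that $\{L(a)\cap q(\Mon)\mid a\models p\}$ is coinitial among initial parts of $q(\Mon)$ of the form $\mathcal D$-bounded cuts. This follows because, for $b\models q$, the set of $a$ with $b\notin L(a)$ is a proper nonempty final part of $p(\Mon)$.

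The second step defines $q^-$. I propose
\[
y\in q^-(\Mon)\iff y\in D_q\ \land\ (\exists x\in\theta(\Mon))\bigl(x\in p^-(\Mon)\land \neg\,\exists x'\in p^-(\Mon)\,(x<_px')\text{ fails}\ldots\bigr),
\]
but more precisely the cleanest formulation works with global types. Take $C\supseteq A$ large and $a\models(\mathbf p_l)_{\strok C}$. I will show that $(\mathbf q_l)_{\strok C}$ is the unique extension of $q$ to $C$ that is consistent with $y\in L(a)$ together with $y\notin L(a')$ for all $a'\models p$ with $a'<_p a$ lying in the same $\triangleleft^{\mathbf p}$-cut. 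Then, for $\psi(y,c)$, I set
\[
\psi(y,c)\in\mathbf q_l\iff \chi(x,c)\in\mathbf p_l,\quad\text{where }\chi(x,c):=(\exists y'\in L(x))(\forall y\in L(x))(y'<_q y\rightarrow\psi(y,c)).
\]
Definability of $\mathbf p_l$ then gives definability of $\mathbf q_l$, and by Remark \ref{remark left right definable}(a) $\mathbf q$ is left-definable.

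The main obstacle is the correctness of this equivalence, namely that the left global extension of $q$ is exactly ``the left end of $q$ approached through $L$ along the left end of $p$''. This needs two things. First, as $x$ decreases toward the left end of $p(\Mon)$, the cuts $L(x)$ must decrease toward the left end of $q(\Mon)$; I would derive this from the coinitiality property noted above. Second, the ``eventually below $L(x)$'' quantifier in $\chi$ must be insensitive to the choice of $a$, which I would get from weak o-minimality of $q$ over $Ca$: a $Cc$-definable set meets an end of each $L(a)$-segment in a convex piece. If this becomes messy, my fallback is to show directly, using Remark \ref{remark left right definable}(c), that non-left-definability of $\mathbf q$ (i.e.\ $\mathbf q_l$ finitely satisfiable in $q^-$) transfers through $L^{-1}$ to finite satisfiability of $\mathbf p_l$ in $p^-$, contradicting left-definability of $\mathbf p$.
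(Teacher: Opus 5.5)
\textbf{The gap: the decisive equivalence is unproved and, as written, can fail.}

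The paper gives no proof of this Fact; it is quoted from \cite[Lemma 7.17]{MT}. So your argument has to stand on its own.

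Your first step is fine in substance. From $\delta_A(\mathbf p,\mathbf q)$ you get a relatively $A$-definable map $L\colon p(\Mon)\to\overline{D_q}^{\,in}$ such that $L(a)\cap q(\Mon)$ is a proper nonempty initial part of $q(\Mon)$. By the monotonicity theorem, $L$ is increasing at least across the classes of some $E\in\mathcal E_p\smallsetminus\{\mathbf 1_p\}$. It can be smoothed to be genuinely increasing, as in the proof of Lemma \ref{Lemma_notEdetermined_implies_exists_p_monotone}. Moving a witness by an $A$-automorphism shows that for every $b\models q$ some $a\models p$ has $b\notin L(a)$. Note that $\{a\models p\mid b\notin L(a)\}$ is then an \emph{initial} part of $p(\Mon)$, not a final part.

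The trouble is the second step. The equivalence $\psi(y,c)\in\mathbf q_l\iff\chi(x,c)\in\mathbf p_l$ carries the whole proof, and you leave it as ``the main obstacle''. With the strict inequality $y'<_q y$, $\chi(x,c)$ holds vacuously whenever $L(x)$ has a largest element: take $y'$ to be that element. This happens for the most natural choice $L(x)=(-\infty,x]$ when $\mathbf q=\mathbf p$, and then your schema puts both $\psi$ and $\neg\psi$ into $\mathbf q_l$.

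\textbf{The fix.} Replace $y'<_q y$ by $y'\leqslant_q y$. The equivalence then has a short proof. It needs neither your uniqueness claim (whose clause about $a'$ in the same $\triangleleft^{\mathbf p}$-cut plays no role) nor any insensitivity argument.
\begin{enumerate}
\item Suppose $\psi(\Mon,c)\cap q(\Mon)$ contains a nonempty initial part $K$ of $q(\Mon)$.
\item Pick $b\in K$ and $a'\models p$ with $b\notin L(a')$.
\item For all $x\models p$ far enough left of $a'$, $\emptyset\neq L(x)\cap q(\Mon)\subseteq K\subseteq\psi(\Mon,c)$.
\item Hence $\chi(x,c)$ holds on a nonempty initial part of $p(\Mon)$, so $\chi(x,c)\in\mathbf p_l$.
\end{enumerate}
Applying the same argument to $\neg\psi$ gives the converse, because with $\leqslant_q$ the two instances of $\chi$ are contradictory.

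\textbf{A shorter route.} The detour through global types is unnecessary. The fact you already have finishes the proof directly, which is presumably what your abandoned first formula was aiming at.

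Every $L(a)$ with $a\models p$ contains $q^-(\Mon)$ and no element above $q(\Mon)$. Together with your fact this gives $\bigcap_{a\models p}L(a)=q^-(\Mon)$.

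By compactness, choose $\theta\in p$ such that $L$ is increasing on $\theta(\Mon)$. Each $x\in\theta(\Mon)\smallsetminus p^-(\Mon)$ either realizes $p$ or lies above $p(\Mon)$. In the latter case $L(x)\supseteq L(a)\supseteq q^-(\Mon)$ for $a\models p$. Therefore
\[q^-(\Mon)=\{y\in D_q\mid (\forall x)(\theta(x)\land x\notin p^-(\Mon)\rightarrow y\in L(x))\}.\]
This is $A$-definable once $p^-(\Mon)$ is, and Remark \ref{remark left right definable}(a) then gives left-definability of $\mathbf q$.
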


A consequence of this fact is that all convex weakly o-minimal types from the same $\nwor$-class are of the same kind. In different terminology, this was proved for complete 1-types in o-minimal theories by Marker in \cite{Marker1986omitting}, and in weakly o-minimal theories by Baizhanov in \cite{Baizhanov1999}.

In \cite[Proposition 5.18]{MTwqom1} we proved that all definable complete 1-types over a dense o-minimal structure are trivial. Now, we generalize this result.

\begin{Proposition}\label{Proposition mixed kind is trivial}
Every convex weakly o-minimal type of mixed kind is trivial. 
\end{Proposition}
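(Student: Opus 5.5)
We argue by contradiction. Let $p\in S(A)$ be a convex weakly o-minimal type of mixed kind and suppose it is not trivial. Fix a weakly o-minimal pair $\mathbf p=(p,<)$ with $p(\Mon)$ convex in an $A$-definable order $(D,<)$. Reversing the order if necessary, we may assume $\mathbf p$ is of kind (d,nd): the set $p^-(\Mon)=\{x\in D\mid x<p(\Mon)\}$ is $A$-definable (Remark \ref{remark left right definable}(a)), while no final part of $p(\Mon)$ is definable.

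\textbf{Step 1: a $\mathbf p_r$-triangle.} By non-triviality and \cite[Remark 5.8]{MTwqom1}, after enlarging $A$ by a small set we obtain $a_0\triangleleft^{\mathbf p}a_1\triangleleft^{\mathbf p}a_2$ realizing $p$ with $a_1\dep_{Aa_0}a_2$. I will check that this enlargement keeps the kind: by Remark \ref{remark left right definable}(b) and Fact \ref{Fact direct no preserves the kind}, passing to $(\mathbf p_r)_{\strok B}$ preserves the kind. So we may assume $A$ already contains these parameters and $p$ is still of kind (d,nd). Put $q=\tp(a_1/Aa_0)=(\mathbf p_r)_{\strok Aa_0}$. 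By Remark \ref{remark left right definable}(d)-style reasoning, $\mathbf q=(q,<)$ is again convex of kind (d,nd). Concretely, $q(\Mon)$ is the final part of $p(\Mon)$ above $\mathcal D_p(a_0)$, its left end is $Aa_0$-definable, and its right end coincides with that of $p$.

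\textbf{Step 2: a definable family moving the right end.} Take $\phi(x,y)\in\tp(a_1a_2/Aa_0)$ witnessing the forking: $\phi(a_1,\Mon)\cap q(\Mon)$ is bounded in $q(\Mon)$ and contains $a_2\in\mathcal D_p(a_1)^c$ with $a_1<a_2$. Define
\[f(x)=\supin_D\bigl(\phi(x,\Mon)\cap\theta(\Mon)\bigr),\]
a relatively $Aa_0$-definable map $q(\Mon)\to\overline D^{\,in}$ for a suitable $\theta\in q$. Since $a_2\models q$ with $a_1\triangleleft^{\mathbf p}a_2$, we have $f(x)\supsetneq\mathcal D_p(x)$, i.e.\ $f(x)$ passes beyond $\mathcal D_p(x)$ yet stays below the right end of $q(\Mon)$. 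Using the monotonicity theorem \cite[Theorem 3.6]{MTwom}, replace $f$ by a monotone version.

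\textbf{Step 3: contradiction with non-definability of the right end.} The intended contradiction is that the map $x\mapsto f(x)$ on $q(\Mon)$, together with the definable left end $p^-$, lets us define a final part of $p(\Mon)$ or an initial part that is not eventually all of $p^-$. Two candidates:
\begin{enumerate}[(i)]
\item Use Remark \ref{remark left right definable}(c): $\mathbf p_r$ is finitely satisfied in $p^+(\Mon)$ after a suitable symmetric argument. Transport $\phi$ to realizations just below the right end to get an $A$-definable final part.
\item Use the fact that $\mathcal D_p(a_1)$ is cofinal-ly dominated. The set $\{x\in D\mid \exists y\,(\phi(y,\Mon)\ni x\land y\in\text{definable left part})\}$ should be $A$-definable (by uniformity in $a_0$ and convexity) and should cut $p(\Mon)$ properly. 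This would produce a definable proper initial part of $p(\Mon)$, contradicting convexity and weak o-minimality: a definable initial part of $p(\Mon)$ must be $p^-$-determined, since $p(\Mon)$ meets it in a $p$-invariant hence trivial way.
\end{enumerate}
Equivalently, show that $q$ forking over $A$ via $a_0$ forces $\mathcal D_p(a_0)$ to be relatively definable, which contradicts (nd) on one side.

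\textbf{Main obstacle.} The main obstacle is Step 3: producing an $A$-definable set from the $Aa_0$-definable forking formula. The first thing I would try is to exploit that the right end of $q$ equals that of $p$ and is non-definable. Since the mixed kind rules out both ends behaving alike, the forking witnessed near $a_1$ should, by indiscernibility of $a_0a_1a_2$ (a Morley sequence), propagate to an $A$-invariant convex set whose boundary is one of the ends of $p(\Mon)$, or of a $p$-large set.
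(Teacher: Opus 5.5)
\paragraph{Gap 1: Step 3 never reaches a contradiction.}
Step 3 only lists hoped-for outcomes, and you flag it yourself as the main obstacle. Neither candidate works as stated. An $Aa_0$-definable family does not become $A$-definable through ``uniformity in $a_0$''. More to the point, nothing in Steps 2--3 uses the hypothesis that distinguishes mixed kind: one global nonforking extension is \emph{definable}.

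The missing idea is to use the defining schema of that definable type to quantify out the extra parameters. This yields an $L_A$-formula isolating $p$, which is impossible because a convex type of mixed kind is not isolated (Remark \ref{Remark dd}(a)).

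In the paper's normalization (kind (nd,d), so $\mathbf p_r$ is definable) the argument runs as follows. Suppose $(a_1,a_2,a_3)$ is Morley in $\mathbf p_r$ but $(a_3,a_2,a_1)$ is not Morley in $\mathbf p_l$. Pick $\phi(x,a_2,a_3)\in\tp(a_1/Aa_2a_3)$ with $\phi(\Mon,a_2,a_3)$ a bounded subset of $p(\Mon)$. Then $(d_{\mathbf p_r^2}x_2x_3)\,\phi(x,x_2,x_3)\in p$ isolates $p$.

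In your (d,nd) normalization the mirror argument runs with Morley sequences in the definable type $\mathbf p_l$. Working toward the non-definable end $\mathbf p_r$, as you do, gives you no means of eliminating $a_0$.

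\paragraph{Gap 2: Step 1 assumes the kind survives the parameter enlargement.}
This is where the real difficulty is hidden. Your citations do not support it:
\begin{itemize}
\item Remark \ref{remark left right definable}(b) only says that $(\mathbf p_r)_{\strok B}$ keeps the (non)definability of the \emph{right} end.
\item Fact \ref{Fact direct no preserves the kind} compares pairs over a common domain, so it does not apply here.
\item Remark \ref{remark left right definable}(d) requires $p$ to be simple.
\end{itemize}
In fact the claim fails for non-simple $p$. Then the left end of $q=(\mathbf p_r)_{\strok Aa_0}$ is not $Aa_0$-definable, since otherwise $a_0\triangleleft^{\mathbf p}x$ would be relatively definable. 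So $q$ is of kind (nd,nd), and the hypothesis is lost.

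This cannot be patched cheaply. The definability argument above shows that a mixed-kind $p$ has no $p$-triangles over $A$ at all. So if your kind-preservation claim were true, the proposition would follow immediately. The triangle supplied by \cite[Remark 5.8]{MTwqom1} necessarily lives over an enlarged domain, and one must control the kind there.

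The paper does this by choosing $A,p$ so that the length $n$ of a non-Morley $\triangleleft^{\mathbf p}$-chain is minimal; $n\geqslant 4$ by the no-triangle argument. It then splits into two cases:
\begin{itemize}
\item If $p$ is simple, the chain types $\Pi_m$ are relatively definable. Then $(d_{\mathbf p_r}x_2)(\exists x_3\dots x_n)\,\phi$ isolates $p$.
\item If $p$ is not simple, one extends along the \emph{definable} end, taking $q=(\mathbf p_r)_{\strok Aa_1}$. This $q$ stays of mixed kind, because non-simplicity makes its left end non-definable. It carries the shorter non-Morley chain $(a_2,\dots,a_n)$, contradicting minimality.
\end{itemize}
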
 
\begin{proof}
Toward a contradiction, let $p\in S(A)$ be a convex weakly o-minimal type of mixed kind that is not trivial. Without loss, assume that $\mathbf p=(p,<)$ is of kind (nd,d) and choose $(D,<)$, an $A$-definable order, such that $p(\Mon)$ is a final part of $D$. Note that the power $(\mathbf p_r)^n$ is definable over $A$ for all $n\in \mathbb N$.  
Since $p$ is nontrivial, for some $n\geqslant 3$ the partial type $\Pi_n=x_1\triangleleft^{\mathbf p}x_2\triangleleft^{\mathbf p}\ldots\triangleleft^{\mathbf p}x_n$ has at least two completions in $S_{x_1,\dots,x_n}(A)$. Moreover, assume that $A$ and $p$ have been chosen so that $n\geqslant 3$ is as minimal as possible.

First, we {\it claim} that $n\geqslant 4$, that is, there are no $p$-triangles. To prove it, let $(a_1,a_2,a_3)$ be a Morley sequence in $\mathbf p_r$ over $A$. 
By \cite[Lemma 5.9]{MTwqom1}, $p$-triangles do not exist if and only if $(a_3,a_2,a_1)$ is a Morley sequence in $\mathbf p_l$ over $A$.  
If $(a_3,a_2,a_1)$ were not Morley in $\mathbf p_l$, then we would have $a_1\not\models (\mathbf p_l)_{\strok Aa_2a_3}$, so there would be a formula $\phi(x,x_2,x_3)\in \tp(a_1,a_2,a_3/A)$ such that $\phi(\Mon,a_2,a_3)$ is a bounded subset of $p(\Mon)$; 
then the formula $(d_{\mathbf p_r^2} x_2,x_3)\,\phi(x,x_2,x_3)\in p(x)$ would isolate $p$, 
contradicting the fact that $\mathbf p_l$ is nondefinable. This proves the claim. The rest of the proof is divided into two cases.

Case I.  \ \ $p$ is simple. 

In this case, $x\in\mathcal D_p(y)$ is a relatively definable relation on $p(\Mon)^2$, so
$x_1\triangleleft^{\mathbf p}x_2\triangleleft^{\mathbf p}\ldots\triangleleft^{\mathbf p}x_m$ is a relatively definable relation on
$p(\Mon)^m$ for all $m\leqslant n$. Choose $\theta_m\in \Pi_m$ such that the type $\Pi_m(x_1,\dots,x_m)$ is equivalent to $p(x_1)\cup\{\theta_m(x_1,\dots,x_m)\}$. Let $(a_1,\dots,a_n)$ be a realization of $p(x_1)\cup\{\theta_n(x_1,\dots,x_n)\}$ such that $(a_n,\dots,a_1)$ is not a Morley sequence in $\mathbf p_l$ over $A$. By the minimality of $n$, both $(a_1,a_3,\dots,a_n)$ and $(a_2,a_3,\dots,a_n)$ realize $\Pi_{n-1}$, but $a_1$ does not realize $(\mathbf p_l)_{\strok Aa_2\dots a_n}$. 
The latter implies that there is a formula $\phi(x_1,\dots,x_n)\in \tp(a_1,\dots,a_n/A)$ which implies $\theta_{n-1}(x_1,x_3,\dots,x_n)\land \theta_{n-1}(x_2,x_3,\dots,x_n)$ and is such that $\phi(\Mon,a_2,\dots,a_n)$ is a bounded subset of $p(\Mon)$.  
Then the formula $(d_{\mathbf p_r} x_2)(\exists x_3,\dots,x_n)\,\phi(x,x_2,x_3,\dots,x_n)\in p(x)$ isolates $p(x)$. A contradiction.

Case II. \ \  p is not simple.

Let $(a_1,\dots,a_n)$ be a realization of $\Pi_n$ that is not a Morley sequence in $\mathbf p_r$ over $A$; by the minimality of $n$, every subsequence of length $n-1$ is Morley. Denote by $q(x)$ the type $a_1\triangleleft^{\mathbf p} x$ ($q=(\mathbf p_r)_{\strok Aa_1}$) and let $\mathbf q=(q,<)$. We will show that $q$ is of mixed kind and that $(a_2,\dots,a_n)$ witnesses the non-triviaity of $q$.
First, we see that $q$ is finitely satisfied in $\mathcal D_p(a_1)$: If $\psi(x,a_1)\in q$ were not satisfied in $\mathcal D_p(a_1)$, then $\psi(x,y)\land y<x$ would relatively define $y\triangleleft^{\mathbf p}x$ on $p(\Mon)^2$, contradicting the non-simplicity of $p$. Hence, $q$ is finitely satisfied in $\mathcal D_p(a_1)$, which is a final part of $(q^-(\Mon),<)$, so $\mathbf q_l$ is not definable. As $\mathbf p_r$ is definable and $\mathbf q_r=\mathbf p_r$, $\mathbf q$ is of kind (nd,d); $q$ is of mixed kind. Furthermore, notice that the assumed minimality of $n\geqslant 4$ implies that every 3-element subsequence of $(a_1,\dots,a_n)$ is Morley in $\mathbf p_r$ over $A$; in particular, $a_2\triangleleft^{\mathbf q}a_3\triangleleft^{\mathbf q}\dots\triangleleft^{\mathbf q}a_n$. Since $(a_2,\dots,a_n)$ is not a Morley sequence in $\mathbf q_r(=\mathbf p_r)$ over $Aa_1$, this contradicts the minimality property of $n$.   

In both cases, we reached a contradiction, so the proof of the proposition is complete.  
\end{proof}

\section{Martin's conjecture}\label{Section Martin}

Martin's conjecture is a strengthening of Vaught's conjecture. Assume that $T$ is small and let $L_1(T)$ be the smallest countable fragment of $L_{\omega_1,\omega}$ that contains all first-order formulae and formulae $\bigwedge_{\phi(x)\in p}\phi(x)$ for all $p\in S(T)$. For each countable $M\models T$, let $T_1(M)$ denote the $L_1(T)$-theory of $M$.

\smallskip
 {\bf Martin's Conjecture.} If $I(T,\aleph_0)<2^{\aleph_0}$ then $T_1(M)$ is $\aleph_0$-categorical for all countable $M\models T$.

\smallskip
Note that the quantifier rank of any $L_1(T)$-formula is below $\omega+\omega$, so  Martin's conjecture predicts that, assuming $I(T,\aleph_0)<2^{\aleph_0}$, the Scott ranks of countable models of $T$ are at most $\omega+\omega$; this is known to imply Vaught's conjecture. 
 
\begin{Definition}\label{Definition AAC} A complete first-order theory $T$ is almost $\aleph_0$-categorical if for all $n$ and all types $p_1(x_1),\ldots, p_n(x_n)\in S_1(T)$ the type $\bigcup_{1\leqslant i\leqslant n}p_i(x_i)$ has finitely many extensions in $S_n(T)$. 
\end{Definition}
Almost $\aleph_0$-categorical theories were introduced by Ikeda, Pillay, and Tsuboi in \cite{Ikeda}. The following holds in any almost $\aleph_0$-categorical theory:
\begin{itemize}
    \item Let $A\subseteq B$ be finite. Then every complete type over $A$ has finitely many extensions over $B$.
    \item Every complete type over a finite domain that extends an isolated type is also isolated.
    \item The almost $\aleph_0$-categoricity is preserved by naming finitely many parameters. 
\end{itemize}
These facts will be used silently in the proof of Theorem \ref{Theorem_AAC}, where we also need the following folklore fact and its consequence. 

\begin{Fact}[Any countable $T$]\label{Fact no wor sequence in few models}
If there exists a sequence of finite tuples $(a_n)_{n<\omega}$ such that:
\begin{itemize}
    \item[--] $\tp(a_n)$ is non-isolated for all $n<\omega$, and
    \item[--] $\tp(a_n)\wor\tp(\bar a_{<n})$ for all $n<\omega$,
\end{itemize}
then $I(\aleph_0,T)=2^{\aleph_0}$.
\end{Fact}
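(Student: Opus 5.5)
The plan is to build $2^{\aleph_0}$ countable models indexed by subsets $X\subseteq\omega$. For each $X$ the model $M_X$ will realize $\tp(a_n)$ exactly for $n\in X$. Since each $\tp(a_n)$ is non-isolated, the complete type $p_n=\tp(a_n)$ is locally omissible. The key input is the weak orthogonality condition $\tp(a_n)\wor\tp(\bar a_{<n})$, which will let us realize some of the $p_n$ while still omitting the others.

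First I will fix the joint type. By induction on $n$, weak orthogonality gives that $\tp(a_n)\cup\tp(\bar a_{<n})$ has a unique completion, namely $\tp(\bar a_{\le n})$. It follows that for any $X\subseteq\omega$ the union $\bigcup_{n\in X}p_n(x_n)$ determines a complete type $r_X$ in the variables $(x_n)_{n\in X}$. Indeed, any finite subset $n_1<\dots<n_k$ of $X$ has the type of $(a_{n_1},\dots,a_{n_k})$. To see this, note that weak orthogonality of $p_{n_k}$ to $\tp(\bar a_{<n_k})$ passes to the restriction $\tp(a_{n_1},\dots,a_{n_{k-1}})$, since a reduct of the variables preserves $\wor$: every completion over fewer variables extends to one over more.

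Next I will realize $(a_n)_{n\in X}$ and take $M_X$ countable containing these elements while omitting $p_m$ for every $m\notin X$. The key step, and the expected main obstacle, is omitting types over the infinite parameter set $\bar a_X$. By the omitting types theorem over the countable set $\bar a_X$, I need that for $m\notin X$ the type $p_m$ has no isolated extension over any finite $\bar a_F$, $F\subseteq X$. Suppose $\phi(x,\bar a_F)$ were consistent and implied $p_m$, and let $N=\max(F\cup\{m\})$. If $m<N$, then $p_N\wor\tp(\bar a_{<N})$, and it should contradict the non-isolation of $p_N$. If $m>\max F$, then $p_m\wor\tp(\bar a_F)$ means $p_m(x)\cup\tp(\bar a_F)$ is complete. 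Hence $\exists x\,\phi(x,\bar y)\wedge\neg\psi(x)$ is false on $\tp(\bar a_F)$ for every $\psi\in p_m$, while $\phi(x,\bar a_F)$ realizes only $p_m$. I would then argue by compactness that some formula $\chi(\bar y)\in\tp(\bar a_F)$ gives $\exists\bar y(\chi\wedge\phi)$ isolating $p_m$, using that $p_m$ has only one extension over $\bar a_F$. This contradicts the non-isolation of $p_m$.

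The case $m<\max F$ is handled symmetrically by swapping roles: treat $x$ together with $\bar a_{F\cap m}$ against $a_N$ via the orthogonality of $p_N$ to the earlier ones. If the direct argument fails, I would first pass to a subsequence or reindex so that only the "later" case is needed. For example, I would realize the $X$-elements and omit types only from a thinned set, using an $\omega$-coding of subsets into an infinite-coinfinite pattern. Finally, distinct $X$ give nonisomorphic $M_X$, since the set $\{n: M_X \text{ realizes } p_n\}=X$ is an isomorphism invariant. This yields $I(T,\aleph_0)=2^{\aleph_0}$.
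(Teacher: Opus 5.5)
Your overall plan is the paper's. For each $X\subseteq\omega$ you build $M_X\supseteq\bar a_X$ omitting $p_m=\tp(a_m)$ for $m\notin X$ by the omitting types theorem over $\bar a_X$, and you use $\{n : M_X \text{ realizes } p_n\}=X$ as the isomorphism invariant. Your first step (completeness of $\bigcup_{n\in F}p_n(x_n)$ for finite $F$) is also correct.

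The gap is in the step you flag yourself: the case where the finite parameter set $F\subseteq X$ contains indices above $m$. You say this ``should contradict the non-isolation of $p_N$.'' That cannot be the right target. If $p_m$ were isolated (say $a_m\in\mathrm{dcl}(\emptyset)$), a consistent $\phi(x,\bar a_F)\vdash p_m$ would exist while $p_N$ stays non-isolated, so the contradiction has to come from $p_m$. The ``swap roles'' idea is not carried out, and with several indices of $F$ between $m$ and $N$ it would need an induction. The fallback of thinning or reindexing cannot remove this case either. Only countably many $X$ are finite or cofinite, so continuum many $X$ forces infinite coinfinite $X$. Then for $m\notin X$ there are finite $F\subseteq X$ with $\max F>m$.

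The missing fact is that $p_m\wor\tp(\bar a_F)$ for \emph{every} finite $F\subseteq\omega\smallsetminus\{m\}$, regardless of where $m$ sits. This is the paper's claim $\tp(a_n)\wor\tp(\bar a_{\neq n})$, which it proves by induction from the observation that $\tp(a)\wor\tp(b)$ and $\tp(ab)\wor\tp(c)$ imply $\tp(a)\wor\tp(bc)$. You already have it in hand. Your first step applied to $F\cup\{m\}$ shows that $\bigcup_{i\in F\cup\{m\}}p_i(x_i)$ is complete, and since $\tp(\bar a_F)\supseteq\bigcup_{i\in F}p_i$, the type $p_m(x)\cup\tp(\bar a_F)$ is complete.

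With this, one argument covers all cases. Suppose $\phi(x,\bar a_F)$ is consistent and implies $p_m$. Every realization of $p_m$ then has the same type over $\bar a_F$ as a realization of $\phi(x,\bar a_F)$. So $p_m(\Mon)=\phi(\Mon,\bar a_F)$ is a definable $\emptyset$-invariant set, hence $\emptyset$-definable, contradicting the non-isolation of $p_m$.

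This also tidies your ``later'' case. Compactness applied to $\tp(\bar a_F)\cup\{\phi\}\vdash p_m$ gives a separate $\chi$ for each $\psi\in p_m$, not a single $\chi$ for all of them. What works is compactness applied to $p_m(x)\cup\tp(\bar a_F)(\bar y)\vdash\phi(x,\bar y)$. This yields $\psi_0\in p_m$ with $\psi_0(x)\vdash\phi(x,\bar a_F)\vdash p_m$, so $\psi_0$ isolates $p_m$.
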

\begin{proof}
Suppose that a sequence $(a_n)_{n<\omega}$ satisfies the two properties. Recall that if $a,b,c$ are tuples such that $\tp(a)\wor\tp(b)$ and $\tp(ab)\wor\tp(c)$, then $\tp(a)\wor\tp(bc)$ (for if $a\equiv a'$, then $ab\equiv a'b$ as $\tp(a)\wor\tp(b)$, and let $c'$ be such that $a'bc\equiv abc'$; since $abc'\equiv abc$ as $\tp(ab)\wor\tp(c)$, we get $a'bc\equiv abc$, and hence $\tp(a)\wor\tp(bc)$). Now for all $n$, by induction on $m$, we see that $\tp(a_n)\wor\tp(\bar a_{<n}a_{n+1}\dots a_{n+m})$ by the previous fact and the second property of the sequence, whence $\tp(a_n)\wor\tp(\bar a_{\neq n})$.

Let $S\subseteq\omega$ be arbitrary and $\bar a_S=(a_n)_{n\in S}$. We will find a countable model $\mathcal M_S\supseteq\bar a_S$ omitting $\tp(a_n)$ if and only if $n\notin S$; this clearly implies $I(T,\aleph_0)=2^{\aleph_0}$. By Omitting Types Theorem, it is enough to show that for $n\notin S$ there is no consistent formula $\theta(x,\bar a_S)$ implying $\tp(a_n)$. Toward a contradiction, suppose that $\theta(x,\bar a_S)$ is a consistent formula such that $\theta(x,\bar a_S)\vdash\tp(a_n)$. Since $\tp(a_n)\wor\tp(\bar a_S)$ as $\tp(a_n)\wor\tp(\bar a_{\neq n})$, we see that the locus of $\tp(a_n)$ is definable by $\theta(x,\bar a_S)$, and since it is $0$-invariant, we conclude that it is $0$-definable; this contradicts the fact that $\tp(a_n)$ is non-isolated.
\end{proof}      

\begin{Corollary}\label{Corollary no strongly orthogonal sequence} Suppose that $T$ is countable with few countable models.
    \begin{enumerate}[(a)]
        \item There is a finite $C\subseteq\Mon$ such that for each nonisolated type $p\in S_1(T)$, $\tp(C)\nwor p$.
        \item For all $M\models T$ there is a finite $C\subseteq M$ such that for all non-isolated types $p\in S_1(T)$ that are realized in $M$, $\tp(C)\nwor p$ holds.
        \item There do not exist a finite set $A$ and a family $(p_n\mid n\in\omega)\subset S(A)$ such that each $p_n$ is nonisolated, weakly o-minimal, and trivial, such that $p_i\wor p_j$ holds for all $i\neq j$. 
    \end{enumerate}
\end{Corollary}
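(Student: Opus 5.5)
All three parts will be derived from Fact~\ref{Fact no wor sequence in few models}. In each case, a failure of the statement will be used to build a sequence of finite tuples $(a_n)_{n<\omega}$ with non-isolated types such that $\tp(a_n)\wor\tp(\bar a_{<n})$. That would give $I(T,\aleph_0)=2^{\aleph_0}$, contradicting the hypothesis.

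For (a), suppose no finite $C$ works. Recursively choose realizations $a_n$ of non-isolated types $p_n\in S_1(T)$. Having $\bar a_{<n}$, apply the failure of (a) to the finite set $C=\bar a_{<n}$: it yields a nonisolated $p_n\in S_1(T)$ with $\tp(\bar a_{<n})\wor p_n$. Pick $a_n\models p_n$. The resulting sequence satisfies both hypotheses of Fact~\ref{Fact no wor sequence in few models}, which is a contradiction. The only point to check is that the order of the arguments of $\wor$ does not matter, since weak orthogonality is symmetric.

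For (b), argue the same way inside $M$. If the statement fails for $M$, then for each finite $C\subseteq M$ there is a non-isolated $p\in S_1(T)$ realized in $M$ with $\tp(C)\wor p$. Build $a_n\in M$ recursively, taking $C=\bar a_{<n}$ and letting $a_n\in M$ realize the witnessing type. Fact~\ref{Fact no wor sequence in few models} again gives a contradiction. Because the types of the $a_n$ are realized in $M$, we may choose the $a_n$ inside $M$; this is harmless, since the Fact only concerns types.

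For (c), first name the finite set $A$. This keeps $T$ countable, and $I(T_A,\aleph_0)\le I(T,\aleph_0)<2^{\aleph_0}$ because each countable model of $T$ has only countably many expansions by $A$. So assume $A=\emptyset$, and pick $a_n\models p_n$ independently in the sense of $\wor$. The main step is to show $p_n\wor\tp(\bar a_{<n})$ from pairwise weak orthogonality. Here I would use triviality. By \cite[Theorem 5.10]{MTwqom1} and the transfer of weak orthogonality for trivial types to nonforking extensions, $p_i\wor p_j$ for trivial weakly o-minimal types should extend to $p_n\wor \tp(a_0\dots a_{n-1})$. Concretely, I would argue by induction. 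Suppose $p_n\wor \tp(\bar a_{<k})$ and $p_n\wor p_k$. Since $p_k\wor \tp(\bar a_{<k})$, the type $\tp(a_k/\bar a_{<k})$ is the unique extension of $p_k$, hence nonforking. Likewise $\tp(a_n/\bar a_{<k})$ is nonforking, and both types are trivial. Trivial types preserve weak orthogonality under nonforking extensions, so $\tp(a_n/\bar a_{<k})\wor\tp(a_k/\bar a_{<k})$, and therefore $p_n\wor\tp(\bar a_{\le k})$. Fact~\ref{Fact no wor sequence in few models} then finishes the proof.

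The main obstacle is this induction step in (c). Pairwise weak orthogonality does not in general imply joint weak orthogonality, so triviality has to be used exactly there, through the cited transfer result for nonforking extensions of trivial types.
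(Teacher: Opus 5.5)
Your proposal is correct and follows essentially the paper's route. Parts (a) and (b) are the same recursive construction fed into Fact~\ref{Fact no wor sequence in few models}. In (c), your one-variable-at-a-time induction (inner on $k$, with $p_k\wor\tp(\bar a_{<k})$ supplied by an outer induction on $n$) rests on the same key input as the paper: the transfer of triviality and weak orthogonality to nonforking extensions from \cite[Theorem 5.10]{MTwqom1}. It only reorganizes the paper's claim that $\bigcup_{i<n}q_i(x_i)$ has a unique completion, which the paper proves by peeling off $a_0$ and passing to $Aa_0$.

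One small correction: naming $A$ gives $I(T_A,\aleph_0)\leqslant\aleph_0\cdot I(T,\aleph_0)$, not $\leqslant I(T,\aleph_0)$. This weaker bound is what your countable-expansions argument actually proves, and it is still $<2^{\aleph_0}$.
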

\begin{proof}
(a) By recursion, build a sequence of singletons $a_0,a_1,\dots$ as follows. Take any $a_0$ such that $\tp(a_0)$ is nonisolated. Suppose that $a_i$, $i<k$, are chosen. If possible, pick any $a_k$ such that $\tp(a_k)$ is nonisolated and $\tp(a_k)\wor\tp(\bar a_{<k})$. By Fact \ref{Fact no wor sequence in few models} this process ends in finitely many steps, say $n$. Then $C=\{a_i\mid i<n\}$ is as desired. The proof of (b) is similar. 

(c) First, we {\em claim} that for all $n$, all finite $A$, and all sequences of pairwise weakly orthogonal trivial weakly o-minimal types $(q_i\mid i<n)\subseteq S_n(A)$, the type $\bigcup_{i<n}q_i(x_i)$ has a unique completion in $S_{x_0\dots x_{n-1}}(A)$. 
By induction on $n$, suppose that this holds for some $n$. Let $a_0\models q_0$ and let $q_k'\in S(Aa_0)$ ($1\leqslant k\leqslant n$) denote the unique extension of $q_k$. Then, since by \cite[Theorem 5.10]{MTwqom1}, both the triviality of weakly minimal types and the orthogonality of trivial weakly o-minimal types are preserved in nonforking extensions, 
the sequence $(q_k'\mid 1\leqslant k\leqslant n)$ 
satisfies the induction hypothesis: $\bigcup_{1\leqslant i\leqslant n}q_i'(x_i)\vdash \tp(x_1\ldots x_n/Aa_0)$. 
Since $a_0\models q_0$ was arbitrary, $\bigcup_{i\leqslant n}q_i(x_i)\vdash \tp(x_0\ldots x_n/A)$ follows, proving the claim. 

Now, let $(p_i\mid i\in\omega)$ be nonisolated, weakly o-minimal, and trivial. For each $i\in\omega$, choose $a_i\models p_i$. By the claim, the sequence $(a_n)_{n<\omega}$ satisfies the assumptions of Fact \ref{Fact no wor sequence in few models}, so $I(T,\aleph_0)=2^{\aleph_0}$. Contradiction.  \end{proof}

 \subsection{Proof of Theorem \ref{Theorem_AAC}}

Throughout this subsection, we assume: 
\begin{itemize}
    \item $T$ is weakly quasi-o-minimal with respect to $<$;
    \item $I(T,\aleph_0)<2^{\aleph_0}$; in particular, $T$ is small;
    \item $T$ is almost $\aleph_0$-categorical.
\end{itemize}
As a consequence of Theorem \ref{Theorem main} we also have:
\begin{itemize}
    \item $T$ has simple semiintervals;
    \item $T$ is almost weakly 0-minimal: all types $p\in S_1(A)$ are convex for all $A$.
\end{itemize}

The strategy for proving Theorem \ref{Theorem_AAC} is the following. Let $M\models T$ be countable. We will find a finite set $C_M\subset M$, called a strong base for $M$, and $\mathcal U_M$, a set of global $0$-invariant 1-types, such that the following two conditions hold for all finite $D$ with $C_M\subseteq D\subset M$:
\begin{enumerate}[label=--]
\item $\mathfrak p_{\strok D}$ is trivial and realized in $M$ for all $\mathfrak p\in\mathcal U_M$; and 
\item if $a\in M$ and $\tp(a/D)$ is nonisolated, then $a\models \mathfrak p_{\strok D}$ for some $\mathfrak p\in\mathcal U_M$.
\end{enumerate}
We will prove that $\tp(C_M)$ and $\mathcal U_M$ can be reconstructed from the theory $T_1(M)$. Then for any countable $N$ with $M\equiv_{L_1(T)}N$, by back-and-forth, we easily construct an isomorphism between $M$ and $N$.

\begin{Lemma}\label{Lemma ALMOST isolation of extensionss}
Let $\mathbf p=(p,<)$ be a weakly o-minimal pair over $\emptyset$. Suppose that $C$ is finite and $\tp(C)\nwor p$. Let $S_p(C)=\{q\in S_1(C)\mid p\subseteq q\}$. 
    \begin{enumerate}[(a)]
       \item If $q\in S_p(C)$ is a forking extension of $p$, then $q$ is isolated.
       \item If $q=(\mathbf p_{l})_{\strok C}$, then the pair $\mathbf q=(q,<)$ is right-definable; $q$ is isolated if and only if $\mathbf p_l$ is definable. The dual assertion holds for $q=(\mathbf p_{r})_{\strok C}$.
    \end{enumerate}
\end{Lemma}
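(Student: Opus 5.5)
The plan is to combine almost $\aleph_0$-categoricity with the convexity of $p$. Two preliminary facts drive everything. First, since $T$ is almost $\aleph_0$-categorical and $C$ is finite, the set $S_p(C)$ is finite. Second, by \cite[Proposition 3.4]{MTwqom1} and the convexity of $p$, there is $\theta(x)\in p$ such that $(\theta(\Mon),<)$ is a $0$-definable extension of $(p(\Mon),<)$ in which $p(\Mon)$ is convex.

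By weak o-minimality, the loci of the members of $S_p(C)$ are finitely many convex pieces partitioning $p(\Mon)$, linearly ordered by $<$. The nonforking extensions $(\mathbf p_l)_{\strok C}$ and $(\mathbf p_r)_{\strok C}$ are the leftmost and rightmost pieces, and the forking extensions lie strictly between them. Finiteness of $S_p(C)$ gives, for each $q\in S_p(C)$, a formula $\psi_q(x)$ over $C$ with $p(x)\cup\{\psi_q(x)\}\vdash q(x)$; this formula is false on the other pieces. The hypothesis $\tp(C)\nwor p$ guarantees that $|S_p(C)|\geqslant 2$.

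For (a), let $q\in S_p(C)$ be forking. Then some $\phi(x,c)\in q$ defines a bounded subset of $(p(\Mon),<)$. As in the proof of Lemma \ref{Lemma_ssi_implies_a ind b_iff_m(a b)=0}, compactness lets me modify $\phi$ so that $e_1<\phi(\Mon,c)<e_2$ for some $e_1,e_2\models p$. Let $\chi(x)$ define the convex component, in $(\theta(\Mon),<)$, of $\phi(\Mon,c)\cap\theta(\Mon)\cap\psi_q(\Mon)$ that contains a fixed realization of $q$; this is $C$-definable by weak o-minimality. Its convex hull lies in $[e_1,e_2]_{\theta(\Mon)}$, which is contained in $p(\Mon)$ by the convexity of $p(\Mon)$ in $\theta(\Mon)$. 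Hence $\chi(\Mon)\subseteq p(\Mon)\cap\psi_q(\Mon)=q(\Mon)$. Conversely, $q(\Mon)$ is convex and lies in that component, so $\chi(x)$ isolates $q$.

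For (b), let $q=(\mathbf p_l)_{\strok C}$. Then $q(\Mon)$ is an initial part of $p(\Mon)$. Let $r$ be the next type of $S_p(C)$ to the right of $q$; it exists since $|S_p(C)|\geqslant 2$. Let $K$ be the convex component of $\theta(\Mon)\cap\psi_q(\Mon)$ that contains $q(\Mon)$; it is $C$-definable. Every realization of $r$ satisfies $\lnot\psi_q$, and $r(\Mon)$ begins immediately after $q(\Mon)$ within $p(\Mon)$. Therefore the right end of $K$ is exactly the cut between $q(\Mon)$ and $r(\Mon)$, and $\{x\in\theta(\Mon)\mid K<x\}$ is a $C$-definable set. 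This gives that $q^+(\Mon)$ is definable, so $\mathbf q$ is right-definable by Remark \ref{remark left right definable}(a).

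For the isolation claim, Remark \ref{remark left right definable}(b) says that $\mathbf q$ is left-definable if and only if $\mathbf p$ is left-definable, that is, if and only if $\mathbf p_l$ is definable. Together with right-definability, Remark \ref{Remark dd}(a) then says that $q$ is isolated exactly when $\mathbf p_l$ is definable. The dual statement for $(\mathbf p_r)_{\strok C}$ follows by reversing the order.

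The main obstacle I expect is the boundary argument in (b) when $r=(\mathbf p_r)_{\strok C}$ is itself nonforking. There one must make sure that $K$ does not leak past the $q$/$r$ cut by re-entering $\psi_q$ outside $p(\Mon)$. Taking the component of $\psi_q\wedge\theta$ containing $q(\Mon)$, rather than $\psi_q\wedge\theta$ itself, is intended to handle exactly this point, and this is where I would check the details carefully.
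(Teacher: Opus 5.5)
Your proof is correct and follows essentially the same route as the paper. You use almost $\aleph_0$-categoricity to make $S_p(C)$ finite, separating formulas for its convex pieces, and confinement inside $p(\Mon)$ to isolate a forking piece. For (b) you show the right end of $q(\Mon)$ is $C$-definable and then apply Remarks \ref{remark left right definable}(a),(b) and \ref{Remark dd}(a) exactly as the paper does. The differences are cosmetic: you bound via the forking formula and take the convex component $K$, where the paper cuts with realizations $a_1\models q_0$, $a_2\models q_n$ (resp.\ $b\models(\mathbf p_r)_{\strok C}$) and uses $C$-invariance. The leakage you worry about cannot occur: $K$ is convex, contains $q(\Mon)$, and misses $r(\Mon)\subseteq\theta(\Mon)$, which lies to the right of $q(\Mon)$.
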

\begin{proof}Since $p$ is convex, without loss of generality, we may assume that $p(\Mon)$ is a convex subset of $\Mon$. Then the loci of the types from $S_p(C)$ are convex subsets of $\Mon$ that are linearly ordered by $<$. By almost $\aleph_0$-categoricity, the space $S_p(C)$ is finite, say $S_p=\{q_0,\dots,q_n\}$ where $q_0(\Mon)<q_2(\Mon)<\dots<q_n(\Mon)$. Note that $q_0=(\mathbf p_l)_{\strok C}$ and $q_n=(\mathbf p_r)_{\strok C}$; also note that $p\nwor \tp(C)$ implies $q_0\neq q_n$. We see that each $q_k(\Mon)$ is relatively definable within $p(\Mon)$: Fix $k\leqslant n$ and for each $i\neq k$ choose $\phi_i(x)\in q_k(x)\smallsetminus q_i(x)$. Then $q_k(\Mon)$ is relatively defined by $\sigma_k(x)=\bigwedge_{i\leqslant n,i\neq k}\phi_i(x)$. 

(a) Suppose that $q_k$ is a forking extension of $p$, that is, $0<k<n$. Choose $a_1\models q_0$ and $a_2\models q_n$. Then it is easy to see that $q_k(\Mon)$ is defined by $a_1<x<a_2\land \sigma_k(x)$. Since the set $q_k(\Mon)$ is definable and it is $C$-invariant, it is $C$-definable, so $q_k$ is isolated.   

(b) Consider $q=q_0=(\mathbf p_{l})_{\strok C}$ and let $\sigma(x)=\sigma_0(x)\land x<b$, where $b\models (\mathbf p_{r})_{\strok C}$. Then $q(\Mon)$ is a final part of $(\sigma(\Mon),<)$ so, by Remark \ref{remark left right definable}, $\mathbf q$ is right-definable, proving the first claim. To prove the second, note that $\mathbf q$ is of kind (nd,d) or (d,d) and that, by Remark \ref{Remark dd}, the latter holds if and only if $q$ is isolated. 
Since $q=(\mathbf p_{l})_{\strok C}$ implies $\mathbf p_l=\mathbf q_l$, $\mathbf p$ is left-definable if and only if $\mathbf q$ is left-definable. Therefore, $\mathbf q$ is of kind (d,d) if and only if 
$\mathbf p_l$ is definable. This completes the proof.   
\end{proof}

For the rest of this subsection, fix the following notation: 
\begin{itemize}
\item $\mathcal P$ is the family of all global, $0$-invariant, nondefinable 1-types.
    \item For each $\mathfrak p\in\mathcal P$ let $p=\mathfrak p_{\strok\emptyset}$, and let 
    $\lessdot$ be $<$ or $>$ such that $(p,\lessdot)_r=\mathfrak p$;
    \item For $\mathfrak p=(p,\lessdot)_r\in\mathcal P$ and finite $C$, denote  $\mathfrak p_C=(\mathfrak p_{\strok C},\lessdot)$.
    \item For a model $M\models T$ define:
        \begin{itemize}
            \item $\mathcal R_M=\{\mathfrak p\in\mathcal P\mid \mathfrak p_{\strok \emptyset}\text{ is realized in }M\}$;
            \item $\mathcal U_M=\{\mathfrak p\in\mathcal P\mid  \mathfrak p_{\strok C}\text{ is realized in }M \text{ for all finite } C\subseteq M\}$;
            \item $\mathcal B_M=\mathcal R_M\smallsetminus\mathcal U_M$.
        \end{itemize}
\end{itemize}

\begin{Definition}
    Let $M\models T$. A finite subset $C\subseteq M$ is called a {\em base of $M$} if $\tp(C)\nwor\mathfrak p_{\strok\emptyset}$ for each $\mathfrak p\in\mathcal R_M$; $C$ is a {\em strong base} if for every $a\in M$ and every finite $D\supseteq C$, if $\tp(a/D)$ is nonisolated then $\tp(a/D)=\mathfrak p_{\strok D}$ for some $\mathfrak p\in\mathcal U_M$.
\end{Definition}

\begin{Remark}\label{Remark ALMOST bases}
    \begin{enumerate}[(a)]
    \item For all $p\in S_1(T)$ and $\lessdot\in\{<,>\}$:  $\mathfrak p=(p,\lessdot)_r\in \mathcal P$ if and only if the pair $(p,\lessdot)$ is right-nondefinable.
    \item If $p\in S_1(T)$ is of kind (nd,nd) and $\mathbf p=(p,<)$, then $\mathbf p_r, \mathbf p_l\in \mathcal P$. 
    \item  $\mathfrak p_{\strok \emptyset}$ is not isolated for all $\mathfrak p\in \mathcal P$ since, by definition, $\mathfrak p$ is not definable.
    \item If $p\in S_1(T)$ is nonisolated, then $p=\mathfrak p_{\strok \emptyset}$ for some $\mathfrak p\in \mathcal P$.
    \item Every $M\models T$ has a base; this follows from Corollary \ref{Corollary no strongly orthogonal sequence}(b). 
    \item Clearly, if $C$ is a (strong) base of $M$ and $D\supseteq C$ is finite, then $D$ is also a (strong) base of $M$.
    \end{enumerate}
\end{Remark}

\begin{Lemma}\label{Lemma ALMOST sva priprema}
Suppose that $C$ is a base of a model $M\models T$.
    \begin{enumerate}[(a)]
        \item For all $a\in M$, if $\tp(a/C)$ is nonisolated, then there is $\mathfrak p\in\mathcal R_M$ such that $\tp(a/C)=\mathfrak p_{\strok C}$.
        \item For all $\mathfrak p\in\mathcal R_M$, the pair $\mathfrak p_C$ is of kind (d,nd). In particular, the type $\mathfrak p_{\strok C}$ is trivial and nonisolated.
        \item For all $\mathfrak p,\mathfrak q\in\mathcal R_M$ we have the following two cases:
\begin{enumerate}[(I)]
        \item $\mathfrak p\nwor\mathfrak q$. In this case, $\mathfrak p_{\strok C}\nwor\mathfrak q_{\strok C}$ and $\delta_C(\mathfrak p_C,\mathfrak q_C)$.
        
        \item $\mathfrak p\wor\mathfrak q$. In this case, $\mathfrak p_{\strok C}\wor\mathfrak q_{\strok C}$. 
\end{enumerate} 
        \item $\mathcal R_M$ has finitely many $\nwor$-classes. 
        \item If $\mathfrak p_1,\dots,\mathfrak p_n$ are representatives of all $\nwor$-classes in $\mathcal B_M$, then $C$ is a strong base if and only if each $(\mathfrak p_i)_{\strok C}$ is omitted in $M$.
        \item Strong bases of $M$ exist.
        \item $\mathcal P$ has finitely many $\nwor$-classes. 
    \end{enumerate}
\end{Lemma}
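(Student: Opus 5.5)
Throughout, $C$ is a base of $M$, and all types in play are convex and weakly o-minimal. The plan is to run through parts (a)--(g) in order, using Lemma \ref{Lemma ALMOST isolation of extensionss} as the main tool for what happens over $C$.

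For (a), let $a\in M$ with $\tp(a/C)$ nonisolated, and put $p=\tp(a)$. Then $p$ is nonisolated, since in an almost $\aleph_0$-categorical theory every extension of an isolated type over a finite domain is isolated. By Remark \ref{Remark ALMOST bases}(d), $p=\mathfrak p_{\strok\emptyset}$ for some $\mathfrak p\in\mathcal P$, and $\mathfrak p\in\mathcal R_M$ because $a\in M$; hence $\tp(C)\nwor p$. Lemma \ref{Lemma ALMOST isolation of extensionss}(a) then forces $\tp(a/C)$ to be a nonforking extension, i.e.\ $(\mathbf p_l)_{\strok C}$ or $(\mathbf p_r)_{\strok C}$. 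By part (b) of that lemma, such a type is nonisolated exactly when the corresponding global extension is nondefinable. So $\tp(a/C)=\mathfrak q_{\strok C}$, where $\mathfrak q$ is that nondefinable global type, which is $0$-invariant and hence lies in $\mathcal R_M$.

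For (b), write $\mathfrak p=(p,\lessdot)_r$. The dual of Lemma \ref{Lemma ALMOST isolation of extensionss}(b) says $\mathfrak p_C$ is left-definable, and it is right-nondefinable because $\mathfrak p$ is not definable (Remark \ref{remark left right definable}(b)). So $\mathfrak p_C$ is of kind (d,nd). Proposition \ref{Proposition mixed kind is trivial} then gives triviality, and Remark \ref{Remark dd} gives nonisolation.

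For (c), case (II) should follow from the fact that weak orthogonality transfers to nonforking extensions of trivial types \cite[Theorem 5.10]{MTwqom1}. Triviality of $p$ and $q$ over $\emptyset$ is not yet available, but I would pass through the trivial types $\mathfrak p_{\strok C},\mathfrak q_{\strok C}$ and argue with global invariant types directly. In case (I), $\mathfrak p\nwor\mathfrak q$ for invariant global types restricts to $\nwor$ over $C$, and direct non-orthogonality $\delta_C$ should be read off from the orientations $\lessdot$ fixed by the two types. The main obstacle I expect is this bookkeeping of orientations, together with Fact \ref{Fact direct no preserves the kind}, which rules out a mismatch of kinds.

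For (d), I would choose one realization in $M$ from each pairwise $\wor$ family of restrictions to $C$. By (b) and (c)(II) these give pairwise weakly orthogonal, trivial, nonisolated weakly o-minimal types over the finite set $C$, so Corollary \ref{Corollary no strongly orthogonal sequence}(c) bounds the number of classes.

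For (e), I would show that $\nwor$-equivalent members of $\mathcal R_M$ are simultaneously in $\mathcal U_M$ or $\mathcal B_M$, and simultaneously realized or omitted over $C$, via a definable bijection between the loci coming from (c)(I) and directness. Then, using (a) applied over every finite $D\supseteq C$ (also a base, by Remark \ref{Remark ALMOST bases}(f)), $C$ is a strong base iff no nonisolated $\tp(a/D)$ comes from $\mathcal B_M$. By the same equivalence this holds iff each $(\mathfrak p_i)_{\strok C}$ is omitted, using that an omitted $\mathfrak p_{\strok C}$ stays omitted over larger $D$.

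For (f), for each of the finitely many representatives $\mathfrak p_i\in\mathcal B_M$, pick a finite $D_i$ with $(\mathfrak p_i)_{\strok D_i}$ omitted in $M$, and apply (e) to $C\cup\bigcup D_i$.

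For (g), apply (d) to a countable saturated model or a model realizing all of $S_1(T)$, which exists since $T$ is small.
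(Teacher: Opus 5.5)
Your treatment of (a), (b), (f), (g) follows the paper, and (d) will follow once (c)(II) is repaired. But (c) and (e) each contain a step that fails as stated.

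\textbf{Part (c).} You have the two halves backwards. The transfer theorem \cite[Theorem 5.10(c)]{MTwqom1} says that if $\mathfrak p_{\strok C}\wor\mathfrak q_{\strok C}$, with both types trivial by (b), then weak orthogonality persists to the nonforking extensions, so $\mathfrak p\wor\mathfrak q$. Its contrapositive is exactly the first assertion of (I). That assertion is not automatic from invariance; it needs triviality.

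Case (II) needs the converse: $\mathfrak p_{\strok C}\nwor\mathfrak q_{\strok C}$ must force $\mathfrak p\nwor\mathfrak q$. The transfer theorem says nothing about this, and orientation is essential. For example, in DLO $\mathbf p_r\wor\mathbf p_l$ even though $p\nwor p$: with $\mathfrak q=\mathbf p_l=(\mathbf p^*)_r$, the non-orthogonality is of the form $\delta(\mathfrak p_C,\mathfrak q_C^*)$, and it does not lift.

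The missing argument runs as follows.
\begin{enumerate}[(1)]
\item If $\mathfrak p_{\strok C}\nwor\mathfrak q_{\strok C}$, then $\delta_C(\mathfrak p_C,\mathfrak q_C)$ or $\delta_C(\mathfrak p_C,\mathfrak q_C^*)$ holds.
\item The second option is excluded: $\mathfrak p_C$ and $\mathfrak q_C$ are of kind (d,nd) by (b), so $\mathfrak q_C^*$ is of kind (nd,d), and Fact \ref{Fact direct no preserves the kind} forbids direct non-orthogonality between pairs of different kinds.
\item Then $\delta_C(\mathfrak p_C,\mathfrak q_C)$, together with the triviality of $\mathfrak p_{\strok C}$ and $\mathfrak q_{\strok C}$, yields $\mathfrak p\nwor\mathfrak q$ by \cite[Corollary 5.11(a)]{MTwqom1}.
\end{enumerate}
So the kind bookkeeping you reserved for (I) is what actually drives (II), and your (d) rests on (II).

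\textbf{Part (e).} Non-weak-orthogonality of weakly o-minimal types does not give a definable bijection between their loci. For instance, take a dense order with a dense--codense colour $P$ and named constants $c_0<c_1<\dots$ with no supremum, and a base $C=\{d\}$ with $d$ above all $c_n$ and $d\in P$. The $P$-part and the $\neg P$-part of the cut between the $c_n$ and $d$ are directly non-orthogonal types of kind (d,nd), in the orientation $>$ used for $\mathcal R_M$. There is no definable bijection between them, since definable closure is trivial there. The claim that $\nwor$-equivalent members of $\mathcal R_M$ are realized simultaneously is true, but it needs the paper's mechanism:
\begin{enumerate}[(1)]
\item Suppose $a\in M$ realizes $\mathfrak p_{\strok D}$ and $\mathfrak p\nwor\mathfrak p_i$. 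By (b) and (c), $\delta_D(\mathfrak p_D,(\mathfrak p_i)_D)$ holds with both types trivial.
\item The density property \cite[Corollary 2.13(b)]{MTwqom1} gives $b$ with $D\triangleleft^{(\mathfrak p_i)_D}b\triangleleft^{\mathfrak p_D}a$.
\item Since $(\mathfrak p_i)_D$ is of kind (d,nd) and $\tp(a/D)\nwor(\mathfrak p_i)_{\strok D}$, apply Lemma \ref{Lemma ALMOST isolation of extensionss} over $D$, with $a$ as the new parameter. It shows that $(\mathfrak p_i)_{\strok Da}$ is the only nonisolated completion of $(\mathfrak p_i)_{\strok D}$ over $Da$.
\item The element $b$ lies on the other side of $a$, so it does not realize that completion. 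Hence $\tp(b/Da)$ is isolated and therefore realized in $M$, so $(\mathfrak p_i)_{\strok C}$ is realized in $M$.
\end{enumerate}
This is what makes the backward direction of (e) go through. Consequently, (f), which applies (e), also depends on this repair.
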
 
\begin{proof}
(a) Let $a\in M$ be such that $\tp(a/C)$ is nonisolated. Then, by almost $\aleph_0$-categoricity, $p=\tp(a)$ is nonisolated. Since $C$ is a base, we have $p\nwor \tp(C)$, so Lemma \ref{Lemma ALMOST isolation of extensionss}(a) applies: $\tp(a/C)$ is a nonforking extension of $p=\tp(a)$. There are two possibilities: $\tp(a/C)=((p,<)_{l})_{\strok C}$ and $\tp(a/C)=((p,<)_r)_{\strok C}$. In the first case, by Lemma \ref{Lemma ALMOST isolation of extensionss}(b), the pair $(\tp(a/C),<)$ is right-definable so, since $\tp(a/C)$ is nonisolated, it is of kind (nd,d), in which case $\mathfrak p=(p,>)_r\in\mathcal R_M$ and $\tp(a/C)=\mathfrak p_{\strok C}$. Similarly, in the second case, $\mathfrak p=(\tp(a),<)_r\in\mathcal R_M$ and $\tp(a/C)=\mathfrak p_{\strok C}$.

(b) Let $\mathfrak p=(p,\lessdot)_r\in\mathcal R_M$. Then $\tp(C)\nwor p$. Assume that $\lessdot$ equals $<$; the proof when $\lessdot$ equals $>$ is similar. Note that $\mathfrak p$ is not definable as $\mathfrak p\in \mathcal P$, so $\mathfrak p=(\mathfrak p_C)_r$ implies that the pair $\mathfrak p_C$ is not right-definable. 
Since, by Lemma \ref{Lemma ALMOST isolation of extensionss}(b), $\mathfrak p_C$ is left-definable, it is of kind (d,nd).
Hence, $\mathfrak p_{\strok C}$ is of mixed kind, so it is trivial by Proposition \ref{Proposition mixed kind is trivial}. 

(c) First, assume $\mathfrak p\nwor \mathfrak q$. Then $\mathfrak p_{\strok C}\wor\mathfrak q_{\strok C}$ is impossible since by \cite[Theorem 5.10(c)]{MTwqom1}, $\wor$ of trivial types transfers to their nonforking extensions. Hence, $\mathfrak p_{\strok C}\nwor \mathfrak q_{\strok C}$, so one of $\delta_C(\mathfrak p_C,\mathfrak q_C)$ and $\delta_C(\mathfrak p_C,\mathfrak q_C^*)$ is true. By (b), the pairs $\mathfrak p_C$ and $\mathfrak q_C$ are of kind (d,nd). Clearly, $\mathfrak q^*_C$ is of kind (nd,d), so $\mathfrak p_C$ and $\mathfrak q_C^*$ have different kinds, and hence $\delta_C(\mathfrak p_C,\mathfrak q_C^*)$ is impossible by Fact \ref{Fact direct no preserves the kind}. We conclude $\delta_C(\mathfrak p_C,\mathfrak q_C)$.

Now, assume $\mathfrak p\wor \mathfrak q$. Toward a contradiction, assume $\mathfrak p_{\strok C}\nwor \mathfrak q_{\strok C}$. Then since $\mathfrak p_C$ and $\mathfrak q_C$ are of kind (d,nd), arguing as in the previous paragraph, we conclude $\delta_C(\mathfrak p_C,\mathfrak q_C)$. By \cite[Corollary 5.11(a)]{MTwqom1}, this, together with the triviality of $\mathfrak p_{\strok C}$ and $\mathfrak q_{\strok C}$, implies $\mathfrak p\nwor \mathfrak q$. Contradiction. 

(d) By way of contradiction, suppose that $(\mathfrak p_n\in\mathcal R_M\mid n<\omega)$, are pairwise weakly orthogonal. Without loss of generality, by passing to a subsequence and possibly reversing the order, we can assume that $\mathfrak p_n=(p_n,<)_r$, where $p_n\in S_1(C)$.  
By (b), each $p_n$ is nonisolated and trivial, and by (c), $(p_n\mid n\in\omega)$ are pairwise weakly orthogonal. This contradicts Corollary \ref{Corollary no strongly orthogonal sequence}(c).

(e) Suppose that $\mathfrak p_1,\dots,\mathfrak p_n$ are $\nwor$-representatives in $\mathcal B_M$. By (b), each $(\mathfrak p_i)_{\strok C}$ is nonisolated, so if $C$ is a strong base, it must be omitted in $M$ as $\mathfrak p_i\in\mathcal B_M$. 
For the reverse implication, suppose that each $(\mathfrak p_i)_{\strok C}$ is omitted in $M$ and we show that $C$ is a strong base of $M$. So suppose that $\tp(a/D)$ is nonisolated, where $a\in M$ and $D\supseteq C$ is finite. By (a) and the fact that $D$ is a base of $M$, $\tp(a/D)=\mathfrak p_{\strok D}$ for some $\mathfrak p\in\mathcal R_M$. 
It remains to prove $\mathfrak p\in \mathcal U_M$. By way of contradiction, suppose that $\mathfrak p\in \mathcal B_M$; then $\mathfrak p\nwor \mathfrak p_i$ for some $i$. By (b) and (c), both $\mathfrak p_{\strok D}$ and $(\mathfrak p_i)_{\strok D}$ are trivial, and $\delta_D(\mathfrak p_D,(\mathfrak p_i)_D)$. Since $D\triangleleft^{\mathfrak p_D}a$, by the density property \cite[Corollary 2.13(b)]{MTwqom1}, there is $b\in\Mon$ such that $D\triangleleft^{(\mathfrak p_i)_D}b \triangleleft^{\mathfrak p_D}a$. By (b), $(\mathfrak p_i)_D$ is of kind (d,nd) and by (c), $\tp(a/D)\nwor(\mathfrak p_i)_{\strok D}$, so by Lemma \ref{Lemma ALMOST isolation of extensionss} the only nonisolated extension of $(\mathfrak p_i)_{\strok D}$ over $Da$ is $(\mathfrak p_i)_{\strok Da}$. Together with $D\triangleleft^{(\mathfrak p_i)_D}b \triangleleft^{\mathfrak p}a$ this implies that $\tp(b/Da)$ must be isolated and therefore realized in $M$. Then $(\mathfrak p_i)_{\strok D}\subseteq \tp(b/Da)$ is realized in $M$. This contradicts the assumption that $(\mathfrak p_i)_{\strok C}$ is omitted in $M$.

(f) Let $\mathfrak p_1,\dots,\mathfrak p_n$ be a set of representatives of all $\nwor$-classes in $\mathcal B_M$. Choose a finite $D\subseteq C$ such that each $(\mathfrak p_i)_{\strok D}$ is omitted in $M$; this is possible by the definition of $\mathcal B_M$. By (e), $D$ is a strong base of $M$. 

(g) Let $N\models T$ be countable and saturated. Then $\mathcal P=\mathcal R_N$, so by (d), $\mathcal P$ has finitely many $\nwor$-classes.
\end{proof}

\begin{proof}[Proof of Theorem \ref{Theorem_AAC}]
    Suppose that $M,N\models T$ are countable models such that $M\equiv_{L_1(T)}N$. Then 
    $$\mathfrak p\in\mathfrak R_M \ \Leftrightarrow \  M\models (\exists x)\ \mathfrak p_{\strok\emptyset}(x) \ \Leftrightarrow \ N\models (\exists x)\ \mathfrak p_{\strok\emptyset}(x) \ \Leftrightarrow \ \mathfrak p\in\mathcal R_N$$ 
    proves $\mathcal R_M=\mathcal R_N$.  Next, we prove $\mathcal B_M=\mathcal B_N$. Let $\mathfrak p\in\mathcal B_M$. Then there is a finite $C\subseteq M$ such that $\mathfrak p_{\strok C}$ is omitted in $M$. Let $q(X)=\tp(C)$, and let $r(C,y)=\mathfrak p_{\strok C}(y)$; note that for $C'\models q$, $r(C',y)=\mathfrak p_{\strok C'}(y)$ by $0$-invariance of $\mathfrak p$. Since $M\models (\exists X)(q(X)\land\lnot(\exists y)\ r(X,y))$, $N\models(\exists X)(q(X)\land\lnot(\exists y)\ r(X,y))$ as well, so there is $D\subseteq N$ such that $D\models q$, and $r(D,y)=\mathfrak p_{\strok D}(y)$ is omitted in $N$. Thus, $\mathfrak p\in\mathcal B_N$, and $\mathcal B_M\subseteq\mathcal B_N$ follows. By a symmetric argument we get $\mathcal B_M=\mathcal B_N$. As a corollary, we conclude $\mathcal U_M=\mathcal U_N$.

Let $C_M\subseteq M$ be a strong base of $M$; it exists by Lemma \ref{Lemma ALMOST sva priprema}(f). We {\it claim} that there is $D_N$, a strong base for $N$, with $C_M\equiv D_N$. By Lemma \ref{Lemma ALMOST sva priprema}(d), $\mathcal R_M$ has finitely many $\nwor$-classes. Choose $\mathfrak p_1,\dots,\mathfrak p_n\in\mathcal B_M$ representing all $\nwor$-classes of $\mathcal B_M$. Since $C_M$ is a strong base, all $(\mathfrak p_i)_{\strok C_M}$ are omitted in $M$ by Lemma \ref{Lemma ALMOST sva priprema}(e), so we have:
\setcounter{equation}{0}
    \begin{equation} 
    M\models(\exists X)\left(q(X)\land\bigwedge_{i=1}^n\lnot(\exists y)\ r_i(X,y)\right),
    \end{equation}
where, as in the previous paragraph, $q(X)=\tp(C_M)$ and $r_i(C_M,y)=(\mathfrak p_i)_{\strok C_M}(y)$. The same $L_1(T)$-sentence holds in $N$, so there is some $D_N\subseteq N$ such that $D_N\models q$ and all $(\mathfrak p_i)_{\strok D_N}$ are omitted in $N$. We show that $D_N$ is a strong base for $N$: Since $C_M$ is a base of $M$, $\mathfrak p_{\strok \emptyset}\nwor \tp(C_M)$ holds for all $\mathfrak p\in \mathcal R_M$, so $\mathcal R_M=\mathcal R_N$ and $C_M\equiv D_N$ together imply that $D_N$ is a base of $N$. Furthermore, note that $\mathfrak p_1,\dots,\mathfrak p_n$ are representatives of $\nwor$-classes in $\mathfrak B_N$ as $\mathfrak B_M=\mathfrak B_N$ and, by (1) and $M\equiv_{L_1(T)}N$, each $(\mathfrak p_i)_{\strok D_N}$ is omitted in $N$. By Lemma \ref{Lemma ALMOST sva priprema}(e), this implies that $D_N$ is a strong base of $N$, proving the claim.

    We now prove $M\cong N$ by back-and-forth. Let $f_0$ be a partial (elementary) isomorphism mapping $C_M$ to $D_N$. Suppose that $f:C'\mapsto D'$ is a finite partial isomorphism that extends $f_0$. We prove that for all $a\in M$ we can extend $f$ to $C'a$. There are two cases to consider. 

    \smallskip\noindent {\em Case 1.} $\tp(a/C')$ is isolated, say, by  $\phi(x,C')$. Then $\phi(x,D')$ also isolates a type, so we can find $b\in N$ such that $\models\phi(b,D')$. Setting $f(a)=b$, we clearly extend $f$ to a finite partial isomorphism.

    \smallskip\noindent {\em Case 2.} $\tp(a/C')$ is nonisolated. Since $C_M$ is a strong base, $\tp(a/C')=\mathfrak p_{\strok C'}$ for some $\mathfrak p\in\mathcal U_M$. Note that $\mathfrak p\in\mathcal U_N$ as $\mathcal U_M=\mathcal U_N$, so, in particular, $\mathfrak p_{\strok D'}$ is realized in $N$ by, say, $b\in N$. By $0$-invariance of $\mathfrak p$, again by setting $f(a)=b$ we extend $f$ to a finite partial isomorphism.

\smallskip
    By a symmetric argument, for $b\in N$ we can find some $a\in M$ such that $f(a)=b$ extends $f$ to a finite partial isomorphism. Thus, the back-and-forth goes through, and the proof of Theorem \ref{Theorem_AAC} is finished. 
\end{proof}

\begin{proof}[{\it Proof of Theorem \ref{Theorem_bin_ros_quasio}.}]
Suppose that $T$ is a weakly quasi-o-minimal theory with an additional feature: binary, rosy, quasi-o-minimal, or has a finite convexity rank. Assume that $I(T,\aleph_0)<2^{\aleph_0}$. By Corollary \ref{Cor_few_ros_ finconvquasi bin_are_binary} $T$ is binary, so by \cite[Lemma 8.1] {MT} $T$ is almost $\aleph_0$-categorical. By Theorem \ref{Theorem_AAC} Martin's conjecture holds for $T$.
\end{proof}

\section{Concluding remarks}\label{Section 7}

Let $T$ be a weakly quasi-o-minimal theory. 
If $T$ is also binary, then all complete 1-types (over any domain) are trivial. It is interesting to know if the converse is true. The answer to this question is negative if we relax the assumption to: every complete 1-type over a finite subset of $\Mon$ is trivial, as \cite[Example 7.11]{MTwqom1} shows.
However, by Proposition \ref{Prop_equivalents_of_R}, the answer is positive for theories with simple semiintervals. 

\begin{Question}
If a weakly quasi-o-minimal theory has all complete 1-types (over any domain) trivial, must it be binary?
\end{Question}

For a weakly quasi-o-minimal theory $T$ with few countable models, we know that every type $p\in S_1(A)$ is convex and has simple semiintervals. For further understanding of weak non-orthogonality, weakly o-minimal types in $T^\mathrm{eq}$ may prove useful; for example, in explaining when a forking extension of $p$ is $\nwor$ to a nonforking extension of some $q\in S_1(A)$. So, it would be quite convenient that the answer to the following two general questions is affirmative even for types in $T^\mathrm{eq}$. 

\begin{Question}\label{Question_womtype has simple ssi}
Does every weakly o-minimal type over a finite domain in a first-order theory with few countable models have simple semiintervals?
\end{Question}

According to Theorem \ref{Theorem_convex}, a positive answer to this question would also answer the following question in the affirmative.

\begin{Question}\label{Question is wom type convex}
Is every weakly o-minimal type over a finite domain in a first-order theory with few countable models convex?    
\end{Question}

For the rest of this section, let $T$ be countable and weakly quasi-o-minimal with few countable models. Denote
\begin{itemize}
    \item[ ] $\mathcal P$ -- the collection of all global 1-types that are invariant over some finite parameter set but are not definable.
\end{itemize}
Analyzing the proof of Theorem \ref{Theorem_AAC} leads to the following two conjectures; we believe that their confirmation would lead to the confirmation of Martin's conjecture for $T$.  

\begin{Conjecture}
$\mathcal P$ has finitely many $\nwor$-classes.    
\end{Conjecture}

\begin{Conjecture}\label{C2}
For every $\mathfrak p\in\mathcal P$ there is a finite $A$ such that $\mathfrak p$ is $A$-invariant and $\mathfrak p_{\strok A}$ is trivial.
\end{Conjecture}
 
We state two equivalents of Conjecture \ref{C2}.  
Say that a weakly o-minimal type is eventually trivial (eventually simple) if some nonforking extension of $p$ over a finitely extended domain is trivial (simple).   

\smallskip 
  {\bf (2')} \  Every complete nonisolated 1-type over a finite domain is eventually trivial.  

\smallskip 
 {\bf  (2'')} \ Every complete nonisolated 1-type over a finite domain is eventually simple.  

\medskip
Here, (2') is easily seen to be equivalent to Conjecture \ref{C2}, and (2')$\Rightarrow$(2'') follows by   \cite[Theorem 5.22]{MTwqom1}, which states that every trivial weakly o-minimal type over a finite domain in a theory with few countable models is simple. For (2'')$\Rightarrow$(2')  assume that $p\in S_1(A)$ is nonisolated and eventually simple and let $\mathbf p=(p,<)$ be a weakly o-minimal pair over $A$. If $p$ is of mixed kind, then it is trivial by Proposition \ref{Proposition mixed kind is trivial} and we are done. So, assume that it is not of mixed kind; then it is of kind (nd,nd). 
Let $B\supseteq A$ be such that $(\mathbf p_r)_{\strok B}$ is simple, and let $a\models (\mathbf p_r)_{\strok B}$. Since $(\mathbf p_r)_{\strok B}$ is simple,  the pair $((\mathbf p_r)_{\strok Ba},<)$ is left-definable, so it is of kind (d,nd); $(\mathbf p_r)_{\strok B}$ is trivial by Proposition \ref{Proposition mixed kind is trivial}.

\printbibliography

@article{Alibek,
  title={Discrete order on a definable set and the number of models},
  author={Alibek, Aida A and Baizhanov, Bektur S and Zambarnaya, Tanya S},
  journal={Mathematical Journal},
  volume={14},
  number={3},
  pages={5--13},
  year={2014}
}

@article{Altayeva2020,
  title={Binarity of almost $\omega$-categorical quite o-minimal theories},
  author={Altayeva, Aizhan B and Kulpeshov, Beibut Sh},
  journal={Siberian Mathematical Journal},
  volume={61},
  number={3},
  pages={379--390},
  year={2020},
  publisher={Springer}
}

@incollection{Baizhanov1999,
  title={Orthogonality of one-types in weakly o-minimal theories},
  author={Baizhanov, Bektur S},
  booktitle={Algebra and Model Theory {II}},
  pages={5--28},
  year={1999},
  editor={A.G. Pinus and K.N. Ponomaryov},
  publisher={Novosibirsk State Technical University},
  adress={Novosibirsk}
}

@incollection{Baizhanov2006,
  title={On behaviour of 2-formulas in weakly o-minimal theories},
  author={Baizhanov, Bektur S and Kulpeshov, Beibut Sh},
  booktitle={Proceedings of the 9th Asian conference},
  pages={31--40},
  year={2006},
  publisher={World Scientific},
  doi = {10.1142/9789812772749_0003},
}

@article{Baizhanov2025,
title = {Constructing models of small ordered theories with maximal countable spectrum},
author={Baizhanov, Bektur S and Zambarnaya, Tanya S},
journal = {Annals of Pure and Applied Logic},
volume = {177},
number = {2},
pages = {103659},
year = {2026}
}

@article{ealy2007characterizing,
  title={Characterizing rosy theories},
  author={Ealy, Clifton and Onshuus, Alf},
  journal={The Journal of Symbolic Logic},
  volume={72},
  number={3},
  pages={919--940},
  year={2007},
  publisher={Cambridge University Press},
  doi={10.2178/jsl/1191333848}
}

@article{Herwig,
  title={On $\aleph_0$-categorical weakly o-minimal structures},
  author={Herwig, Bernhard and Macpherson, Dugald H and Martin, Gary A and Nurtazin, Abyz T and Truss, John K},
  journal={Annals of Pure and Applied Logic},
  volume={101},
  number={1},
  pages={65--93},
  year={1999},
  publisher={Elsevier},
  doi = {10.1016/S0168-0072(99)00029-9},
}

@article{Ikeda,
  title={On theories having three countable models},
  author={Ikeda, Koichiro and Tsuboi, Akito and Pillay, Anand},
  journal={Mathematical Logic Quarterly},
  volume={44},
  number={2},
  pages={161--166},
  year={1998},
  publisher={Wiley Online Library},
  doi={10.1002/malq.19980440203}
}

@article{kulpeshov1998,
  title={Weakly o-minimal structures and some of their properties},
  author={Kulpeshov, Beibut Sh},
  journal={The Journal of Symbolic Logic},
  volume={63},
  number={4},
  pages={1511--1528},
  year={1998},
  publisher={Cambridge University Press},
  doi={10.2307/2586664}
}

@article{Kulpeshov2007,
  title={Criterion for binarity of $\aleph_0$-categorical weakly o-minimal theories},
  author={Kulpeshov, Beibut Sh},
  journal={Annals of Pure and Applied Logic},
  volume={145},
  number={3},
  pages={354--367},
  year={2007},
  publisher={Elsevier},
  doi = {10.1016/j.apal.2006.10.004},
}

@article{Kulpeshov2020,
  title={Vaught’s conjecture for weakly $o$-minimal theories of finite convexity rank},
  author={Kulpeshov, Beibut Sh},
  journal={Izvestiya: Mathematics},
  volume={84},
  number={2},
  pages={324--347},
  year={2020},
  publisher={IOP Publishing},
  doi={10.1070/IM8894}
}

@article{Kulpeshov2021,
  title={A Criterion for Binarity of Almost $\omega$-Categorical Weakly o-Minimal Theories},
  author={Kulpeshov, Beibut Sh},
  journal={Siberian Mathematical Journal},
  volume={62},
  number={6},
  pages={1063--1075},
  year={2021},
  publisher={Springer},
  doi={10.1134/S0037446621060082}
}

@article{Mac,
  title={Weakly o-minimal structures and real closed fields},
  author={Macpherson, Dugald and Marker, David and Steinhorn, Charles},
  journal={Transactions of the American Mathematical Society},
  volume={352},
  number={12},
  pages={5435--5483},
  year={2000},
  doi={10.1090/S0002-9947-00-02633-7}
}

@article{Marker1986omitting,
  title={Omitting types in o-minimal theories},
  author={Marker, David},
  journal={The Journal of Symbolic Logic},
  volume={51},
  number={1},
  pages={63--74},
  year={1986},
  publisher={Cambridge University Press},
  doi={10.2307/2273943}
}

@article{MT,
  title={Stationarily ordered types and the number of countable models},
  author={Moconja, Slavko and Tanovi{\'c}, Predrag},
  journal={Annals of Pure and Applied Logic},
  volume={171},
  number={3},
  pages={102765},
  year={2020},
  publisher={Elsevier},
  doi = {10.1016/j.apal.2019.102765},
}

@article{MTwom,
  title={Weakly o-minimal types},
  author={Moconja, Slavko and Tanovi{\'c}, Predrag},
  journal={Annals of Pure and Applied Logic},
  volume = {176},
  number = {9},
  pages={103605},
  year={2025},
  doi = {https://doi.org/10.1016/j.apal.2025.103605},
}

@misc{MTwqom1,
      title={Countable models of weakly quasi-o-minimal theories {I}}, 
      author={Moconja, Slavko and Tanovi{\'c}, Predrag},
      eprint={2412.20589},
      archivePrefix={arXiv},
      url={https://arxiv.org/abs/2412.20589}, 
      year={2024}
}

@article{Newelski,
  title={Very simple theories without forking},
  author={Newelski, Ludomir},
  journal={Archive for Mathematical Logic},
  volume={42},
  number={6},
  pages={601--616},
  year={2003},
  doi={10.1007/s00153-003-0172-4}
}

@article{Rast,
  title={The Borel complexity of isomorphism for o-minimal theories},
  author={Rast, Richard and Sahota, Davender Singh},
  journal={The Journal of Symbolic Logic},
  volume={82},
  number={2},
  pages={453--473},
  year={2017},
  publisher={Cambridge University Press},
  doi={10.1017/jsl.2017.17}
}

\end{document}